\documentclass[12pt,reqno]{amsart}

\usepackage{amsmath,amssymb,amsthm,mathtools}
\usepackage{multirow}
\usepackage[margin=1in]{geometry}
\usepackage{hyperref}
\usepackage{enumitem}
\usepackage{booktabs}
\usepackage{graphicx}
\graphicspath{{../04.LaTex Figures/}}
\usepackage{longtable}
\usepackage{fancyhdr}

\hypersetup{colorlinks=true,linkcolor=red,citecolor=blue,urlcolor=blue}

\theoremstyle{plain}
\newtheorem{theorem}{Theorem}[section]
\newtheorem{proposition}[theorem]{Proposition}
\newtheorem{lemma}[theorem]{Lemma}
\newtheorem{corollary}[theorem]{Corollary}

\theoremstyle{definition}
\newtheorem{definition}[theorem]{Definition}
\newtheorem{hypothesis}[theorem]{Hypothesis}

\theoremstyle{remark}
\newtheorem{remark}[theorem]{Remark}

\newtheorem{openproblem}[theorem]{Open Problem}

\DeclareMathOperator{\dens}{dens}

\newcommand{\Q}{\mathbb{Q}}
\newcommand{\Z}{\mathbb{Z}}
\newcommand{\R}{\mathbb{R}}

\newcommand{\T}{\mathbb{T}}
\newcommand{\E}{\mathbb{E}}
\newcommand{\half}{\tfrac{1}{2}}
\newcommand{\quarter}{\tfrac{1}{4}}

\title[Persistent Heuristics I]{Unconditional Density Bounds for Quadratic Norm-Form Energies via Lorentzian Spectral Weights\\[0.5em]
\textnormal{\large Persistent Heuristics~I: Mini Monograph~I}}

\author{Peter Shiller}
\address{Independent Researcher}

\date{March 12, 2026}

\begin{document}
\raggedbottom

\pagestyle{fancy}
\fancyhf{}                          
\fancyhead[CE]{\leftmark}           
\fancyhead[CO]{\rightmark}          
\fancyfoot[C]{\thepage}             
\fancyfoot[R]{%
  \ifnum\value{page}>3
    {\scriptsize\hyperref[toc]{[\,Back to Table of Contents\,]}}%
  \fi}
\renewcommand{\headrulewidth}{0pt}  
\renewcommand{\footrulewidth}{0pt}
\fancypagestyle{plain}{%
  \fancyhf{}%
  \fancyfoot[C]{\thepage}%
  \fancyfoot[R]{%
    \ifnum\value{page}>3
      {\scriptsize\hyperref[toc]{[\,Back to Table of Contents\,]}}%
    \fi}%
  \renewcommand{\headrulewidth}{0pt}%
  \renewcommand{\footrulewidth}{0pt}%
}

\begin{abstract}
For a real quadratic field $\Q(\sqrt{d})$, we study the norm-form energy
$N = S_\zeta^2 - d \cdot S_L^2$, where $S_\zeta$ and $S_L$ are Lorentzian-weighted
zero sums with $w(\rho) = 2/(\quarter + \gamma^2)$.
We prove three main results.
(1) \emph{Spacelike spectral data}: $N < 0$ unconditionally for all squarefree $d > 1$, as a consequence of a low-lying zero dominance theorem proved via explicit zero-counting.
(2) \emph{Effective density bound}: at each verified truncation level $M$,
$\dens\{N > 0\} \leq 2\|f_{S_L^{(M)}}\|_\infty \cdot (W_1(\zeta)/\sqrt{d} + \epsilon_M)$,
established unconditionally via Jacobi--Anger resonance analysis.
At fixed $M$ the bound is nontrivial only for sufficiently large $d$; the $O(1/\sqrt{d})$ rate requires $M$ to grow with $d$, which in turn requires a uniform density bound that we establish under a computationally verified finite-rank condition on the resonance lattice.
(3) \emph{Exact asymptotic}: under the computationally verified hypothesis that the
infinite resonance lattice $\Lambda_\infty$ has finite rank (verified to have
rank $0$ for $M \leq 20$), the sharp asymptotic
$\dens\{N > 0\} = C(d)/\sqrt{d} + o(1/\sqrt{d})$ holds.  For $d = 5$,
$C(5) = 2\,f_{S_L}(0)\cdot\E[|S_\zeta|] = 0.1193$; the constant depends on $d$
through the zeros of $L(s,\chi_d)$, and $C(d) = O(1/\log d)$ as $d \to \infty$.

\medskip

Appendix F tabulates between 1004 and 1044 zeros at 70 decimal places for $L(s,\chi_2)$, $L(s,\chi_3)$, $L(s,\chi_5)$, $L(s,\chi_6)$, $L(s,\chi_7)$, $L(s,\chi_{10})$, $L(s,\chi_{11})$, and $L(s,\chi_{13})$, all rigorously certified by ARB interval arithmetic.
\end{abstract}

\subjclass[2020]{Primary 11M06; Secondary 11M26, 11R42, 42A75}
\keywords{Dirichlet $L$-functions, quadratic norm form, density bounds, Lorentzian spectral weights, Jacobi--Anger decomposition, Jessen--Wintner theorem, Besicovitch almost periodic functions}

\maketitle
\setcounter{tocdepth}{2}
\phantomsection\label{toc}
\tableofcontents

\section{Introduction}\label{sec:intro}

Let $d > 1$ be squarefree and let $\chi_d$ denote the Kronecker symbol of the fundamental discriminant $\Delta_K$ of $K = \Q(\sqrt{d})$. The Dedekind zeta function factors as
\begin{equation}\label{eq:zeta-K-factor}
\zeta_K(s) = \zeta(s) \cdot L(s, \chi_d),
\end{equation}
coupling the zeros of the Riemann zeta function to those of the quadratic Dirichlet $L$-function.  We ask whether this coupling can be detected spectrally, by weighting zeros so that low-lying ones dominate.  The answer is yes: the resulting quadratic form on weighted zero sums is always negative (the $L$-function zeros win), and the set where it becomes positive has density $O(1/\sqrt{d})$.  The density statement holds unconditionally in the Euler-product formulation (Theorem~\ref{thm:density}); on the zero side, both statements hold at any verified truncation level and extend to the full series under the hypothesis that the infinite resonance lattice has finite rank (computationally consistent with rank~$0$ for the first $20$ zeros).

For a zero $\rho = \half + i\gamma$ on the critical line, define the Lorentzian weight
\begin{equation}\label{eq:lorentzian-weight}
w(\rho) = \frac{2}{\quarter + \gamma^2}.
\end{equation}
This weight emphasizes low-lying zeros: a zero at height $\gamma = 1$ contributes weight $1.6$, while a zero at height $\gamma = 14$ contributes weight $0.01$. Define the weighted zero sums
\begin{equation}\label{eq:S-sums-intro}
S_\zeta = \sum_{\zeta(\rho) = 0} w(\rho), \qquad S_L = \sum_{L(\rho, \chi_d) = 0} w(\rho),
\end{equation}
and the norm-form energy
\begin{equation}\label{eq:norm-form-intro}
N = S_\zeta^2 - d \cdot S_L^2.
\end{equation}
The signature $(1,1)$ of this quadratic form reflects the Galois structure of $K/\Q$: the trivial eigenspace contributes positively, the sign eigenspace negatively.

\subsection{Main results}

We prove unconditionally that:

\begin{enumerate}[label=\textup{(\arabic*)}]
\item \textbf{Spacelike spectral data} (Corollary~\ref{cor:spacelike}): $N = S_\zeta^2 - d \cdot S_L^2 < 0$ for all squarefree $d > 1$.  This follows from the low-lying zero dominance theorem (Theorem~\ref{thm:low-lying}: $S_L > S_\zeta^*$) via explicit zero-counting, and extends to Ces\`aro averages (Theorem~\ref{thm:cesaro}: $\langle N \rangle_T < 0$ for all $T > 0$).

\item \textbf{Effective density bound} (Theorem~\ref{thm:density}): $\dens\{N > 0\} \leq 2\|f_{S_L^{(M)}}\|_\infty \cdot (W_1(\zeta)/\sqrt{d} + \epsilon_M)$ unconditionally at any verified truncation level $M$.

\item \textbf{Density rate and exact asymptotic} (Corollary~\ref{cor:density-rate}, Theorem~\ref{thm:arctan}): $\dens\{N > 0\} = O(1/\sqrt{d})$, sharpened to $\dens\{N > 0\} = C(d)/\sqrt{d} + o(1/\sqrt{d})$, under the hypothesis that the infinite resonance lattice $\Lambda_\infty$ has finite rank $d_\infty < \infty$ (a hypothesis; computationally consistent with $d_M = 0$ for $M \leq 20$, but not a consequence of that finite truncation fact).  For $d = 5$, $C(5) = 0.1193$; the constant $C(d) = O(1/\log d)$ as $d \to \infty$.
\end{enumerate}

These results do not assume the Riemann Hypothesis for either function. The spacelike property is robust to the choice of weight function (see Section~\ref{subsec:weight-robustness}).  As a byproduct of the low-lying zero dominance proof, we obtain a universal first-zero bound $\gamma_1'(d) < 14.13$ for all fundamental discriminants (Proposition~\ref{prop:first-zero-bound}).

\subsection{Logical structure and the GSH barrier}

The progression of results reveals the fundamental role of $\Q$-linear independence:

\begin{table}[ht]
\centering
\renewcommand{\arraystretch}{1.4}
\resizebox{\linewidth}{!}{%
\small
\begin{tabular}{lccc}
\toprule
\textbf{Result} & \textbf{Assumes} & \textbf{Bound} & \textbf{Method} \\
\midrule
Thm~\ref{thm:density-vanishing}: Density vanishing & Hypothesis~\ref{hyp:GSH}(ii) & $O(1/\sqrt{d})$ & Jessen--Wintner \\
Thm~\ref{thm:euler-density}: Euler product density & nothing & $O(1/\sqrt{d})$ & $\Q$-indep.\ of $\log p$ \\
\midrule
\multicolumn{4}{c}{\emph{The barrier: zero ordinates lack unconditional $\Q$-independence}} \\
\midrule
Thm~\ref{thm:finite}: Unconditional finiteness & nothing & $f_{S_L^{(M)}}(0) < \infty$ & Jacobi--Anger + Bessel \\
Thm~\ref{thm:density}: Effective bound & nothing & $2\|f\|_\infty(W_1/\!\sqrt{d} + \epsilon_M)$ & Containment + stability \\
Cor.~\ref{cor:density-rate}: Density rate & $d_\infty < \infty$ & $O(1/\sqrt{d})$ & $M(d)$ + uniform bound \\
\midrule
Thm~\ref{thm:telescoping}: Limiting density & Verified ($M = 20$) & $f_{S_L}(0) < \infty$ & DCT + resonance absence \\
Thm~\ref{thm:arctan}: Exact formula & Verified ($M = 20$) & $C/\sqrt{d}$ & Marginal densities \\
\bottomrule
\end{tabular}
}
\end{table}

The Jessen--Wintner approach requires $\Q$-linear independence to identify the Ces\`aro distribution with a product of independent random variables. On the Euler product side, this independence follows from unique factorization. On the zero side, it is precisely the Grand Simplicity Hypothesis. The unconditional effective bound bypasses this barrier at a fixed truncation level via the stability lemma; the $O(1/\sqrt{d})$ rate requires the truncation level to grow with $d$, which in turn requires a uniform bound on the density $\|f_{S_L^{(M)}}\|_\infty$.  Under the hypothesis $d_\infty = 0$ (which extends the finite truncation fact $d_M = 0$, verified for $M \leq 20$, to the full infinite series), this uniform bound holds because the unsigned Bessel integral is monotone decreasing in $M$; more generally, it suffices that the infinite resonance lattice $\Lambda_\infty$ has finite rank $d_\infty < \infty$.

\subsection{Series context}

This paper is the first in the series \emph{Persistent Heuristics}.  The Lorentzian weight arises from a spectral interpretation of the Weil explicit formula that emphasizes low-lying zeros over high zeros.  While this emphasis might initially appear restrictive, it reveals genuine invariants: the spacelike property holds for any weight satisfying mild decay conditions (Section~\ref{subsec:weight-robustness}), and the density bounds depend on the weight only through computable constants.

\subsection{Organization}

Section~\ref{sec:lorentzian} defines the Lorentzian weight and establishes its properties. Section~\ref{sec:negativity} proves the low-lying zero dominance and spacelike theorems. Section~\ref{sec:cesaro} extends the negativity to Ces\`aro averages and establishes conditional density bounds under GSH, revealing the $\Q$-independence barrier. Section~\ref{sec:besicovitch} introduces the Besicovitch norm framework and the spectral variance decomposition that bridge the Ces\`aro analysis to the Jacobi--Anger approach of the subsequent sections. Sections~\ref{sec:finite} and~\ref{sec:infinite} establish per-function density bounds via Jacobi--Anger analysis and finite truncation. Section~\ref{sec:compare} combines these with a stability argument to prove the effective density bound, and Section~\ref{sec:exact} derives the exact formula under the finite-rank resonance hypothesis. Section~\ref{sec:null-crossing} analyzes the unique null crossing on the real axis. Section~\ref{sec:connections} discusses connections and open problems. Appendix~\ref{sec:numerical} provides numerical verification.

\begin{remark}[Purpose of Appendix F]\label{rem:supplementary-data}
The proofs in this paper require only the zero data in
Appendices~\ref{app:chi5-highprec}--\ref{app:certification}: the first
$200$ zeros of $L(s,\chi_5)$ at $70$-digit precision, the first $20$
zeros of $L(s,\chi_d)$ at $20$-digit precision for
$d \in \{2,3,6,7,10,11,13\}$, and the first $60$ zeros of $\zeta(s)$.
In the course of this work, however, we computed and certified between 1004 and 1044
zeros of $L(s,\chi_d)$ at $70$ decimal places for all eight
discriminants (8244 zeros in total); these extended tables are collected
in the companion document Appendix~\ref{app:full-zero-tables} and are
freely available as ancillary data files.  To the best of our knowledge,
no publicly available repository provides bulk high-precision zero
ordinates for individual Dirichlet $L$-functions: the
LMFDB~\cite{LMFDB} stores approximately $25$ zeros per character at
$30$-digit precision (computed by Platt~\cite{Platt2011}), and the
dataset of O'Bryant and Rechnitzer~\cite{OBryantRechnitzer2023} provides
zeros for all primitive characters of conductor up to $934$ at $12$-digit
precision.  We hope the present tables serve as a useful reference for
researchers requiring certified zero data for quadratic $L$-functions.
\end{remark}

\subsection{Conventions}

\begin{figure}[ht]
\centering
\includegraphics[width=\textwidth]{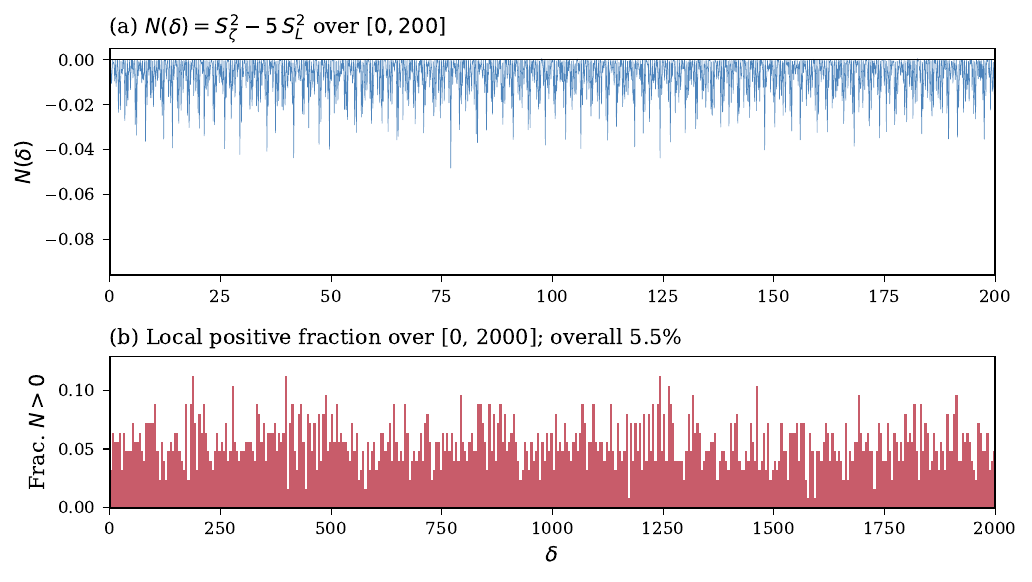}
\caption{The norm form $N(\delta) = S_\zeta(\delta)^2 - 5\,S_L(\delta)^2$ for $d = 5$, computed using the first 100 zeros of each function with Lorentzian weights $w(\gamma) = 2/(\frac14 + \gamma^2)$.  Panel~(a): the oscillating signal over $[0,200]$, showing that $N$ is overwhelmingly negative.  Panel~(b): the local fraction of $\delta$-values where $N > 0$, computed in sliding windows across $[0,2000]$; the measure of the set where $N > 0$ is approximately $5.5\%$ over $[0,2000]$, consistent with the predicted $O(1/\sqrt{d})$ density.}
\label{fig:norm-form}
\end{figure}

Sums over zeros $\sum_\rho$ run over the upper half-plane $\gamma > 0$, with each term representing a conjugate pair $\rho, \bar{\rho}$. The zero-counting function $N(T)$ counts zeros with $0 < \gamma \leq T$. The Fourier transform convention is $\hat{f}(\xi) = \int_{-\infty}^\infty f(x) e^{-2\pi i x \xi}\,dx$.

Throughout, $\gamma_k$ denotes the $k$-th positive ordinate of $\zeta(s)$ (so $\gamma_1 = 14.134\ldots$), while $\gamma_k'$ denotes the $k$-th positive ordinate of $L(s, \chi_d)$ for the quadratic character under consideration. The unprimed/primed convention is maintained consistently: unprimed ordinates belong to $\zeta$, primed ordinates to the Dirichlet $L$-function.

Throughout, $\chi_d$ denotes the Kronecker symbol of the fundamental discriminant $\Delta_K$ of $\Q(\sqrt{d})$; the conductor is $q = |\Delta_K| \in \{d, 4d\}$.

\section{The Lorentzian Energy Functional}\label{sec:lorentzian}

\subsection{The Lorentzian weight}

\begin{definition}[Lorentzian weight]\label{def:lorentzian}
For a zero $\rho = \beta + i\gamma$ in the critical strip, the \emph{Lorentzian weight} is
\[
w(\rho) = \frac{2}{\quarter + \gamma^2}
\]
when $\beta = \half$. For zeros off the critical line, the unconditional weight is
\begin{equation}\label{eq:weight-general}
w(\rho) = \frac{2(\beta(1-\beta)+\gamma^2)}{(\beta(1-\beta)+\gamma^2)^2 + 4\gamma^2(\half-\beta)^2}.
\end{equation}
\end{definition}

The weight~\eqref{eq:weight-general} reduces to~\eqref{eq:lorentzian-weight} when $\beta = \half$.

\begin{proposition}[Critical line maximality]\label{prop:CLM}
For fixed $\gamma$ with $|\gamma| \geq 1/(2\sqrt{3})$, the weight $w(\beta,\gamma)$ is maximized at $\beta = \half$.
\end{proposition}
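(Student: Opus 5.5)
The plan is to rewrite the weight~\eqref{eq:weight-general} in a form where the dependence on $\beta$ enters through a single nonnegative parameter, and then reduce maximality at $\beta=\half$ to an elementary polynomial inequality. I will set $t=\beta-\half$, so that $\beta(1-\beta)=\quarter-t^2$. For $\rho=\half+t+i\gamma$ one has
\[
\rho(1-\rho)=\quarter-(t+i\gamma)^2=\bigl(\quarter+\gamma^2-t^2\bigr)-2it\gamma .
\]
This shows that $w(\beta,\gamma)=2\,\mathrm{Re}\,\bigl(1/(\rho(1-\rho))\bigr)$, and, more usefully,
\[
w(\beta,\gamma)=\frac{2A}{A^2+4\gamma^2u},\qquad u=t^2\in[0,\quarter],\quad A=c-u,\quad c=\quarter+\gamma^2 .
\]
Thus $w$ depends on $\beta$ only through $u=(\beta-\half)^2$. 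In particular it is symmetric under $\beta\mapsto 1-\beta$, and $\beta=\half$ corresponds to $u=0$, where $w=2/c$, recovering~\eqref{eq:lorentzian-weight}.

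Next I will compare $w(u)$ with $w(0)=2/c$ for $u\in(0,\quarter]$. Since $u\le\quarter<c$, we have $A>0$, and the denominator is positive. The inequality $w(u)\le 2/c$ is therefore equivalent, after clearing denominators, to
\[
c(c-u)\le (c-u)^2+4\gamma^2u .
\]
Expanding, the $c^2$ terms cancel, and the inequality becomes $0\le u^2-cu+4\gamma^2u$. Dividing by $u>0$, this is equivalent to
\[
u-c+4\gamma^2\ge 0,\qquad\text{that is,}\qquad u+3\gamma^2-\quarter\ge 0 .
\]
This last inequality holds whenever $\gamma^2\ge 1/12$, which is exactly the hypothesis $|\gamma|\ge 1/(2\sqrt3)$. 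It holds strictly for $u>0$ in that range, so the maximum over the strip is attained at $u=0$, i.e.\ at $\beta=\half$.

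There is no serious obstacle here. The step needing the most care is the reduction itself: checking that the general formula~\eqref{eq:weight-general} really equals $2A/(A^2+4\gamma^2u)$, including the term $4\gamma^2(\half-\beta)^2=4\gamma^2u$. A second point is to confirm that $A>0$ on the whole critical strip, so that the cross-multiplication preserves the direction of the inequality.

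As a remark, I will also note that the threshold $1/(2\sqrt3)$ is sharp for the local statement. Indeed, $\partial w/\partial u$ at $u=0$ has the sign of $\quarter-3\gamma^2$, so for smaller $|\gamma|$ moving off the line initially increases the weight. This explains the hypothesis, although it is not needed for the proof.
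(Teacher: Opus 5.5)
Your proof is correct and follows the paper's argument: the paper writes $w(\half,\gamma)-w(\beta,\gamma)=2\varepsilon^2(3\gamma^2-\quarter+\varepsilon^2)/\bigl((\quarter+\gamma^2)(D^2+4\gamma^2\varepsilon^2)\bigr)$ with $\varepsilon=\half-\beta$ and $D=\quarter-\varepsilon^2+\gamma^2$. This is exactly your cleared-denominator identity $(c-u)^2+4\gamma^2u-c(c-u)=u(u+3\gamma^2-\quarter)$, with $u=\varepsilon^2$ and $A=D$; your representation $w=2\,\mathrm{Re}\bigl(1/(\rho(1-\rho))\bigr)$ and the sharpness remark are correct extras that the proof does not need.
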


\begin{proof}
Set $\varepsilon = \half - \beta$ and write $D = \quarter - \varepsilon^2 + \gamma^2$. Then
\[
w(\half,\gamma) - w(\beta,\gamma)
= \frac{2\varepsilon^2(3\gamma^2 - \quarter + \varepsilon^2)}{(\quarter+\gamma^2)(D^2 + 4\gamma^2\varepsilon^2)}.
\]
For $|\gamma| \geq 1/(2\sqrt{3})$, the factor $3\gamma^2 - \quarter \geq 0$, so the expression is non-negative, with equality only at $\varepsilon = 0$.
\end{proof}

Since every nontrivial zero of $\zeta(s)$ has $|\gamma| \geq 14.13 > 1/(2\sqrt{3})$, all zeta zeros are unconditionally $\gamma_{\mathrm{knw}}$ (critical-line membership verified).  For $L$-function zeros, this paper uses two regimes.  When $\beta = \half$ is individually verified (by sign changes of the Hardy $Z$-function or ARB certification), the zero ordinate is denoted $\gamma'_{\mathrm{knw}}$ and the on-line weight $w(\rho) = 2/(\quarter + \gamma'^2)$ is exact.  When a zero's existence is guaranteed by zero-counting (Theorem~\ref{thm:BMOR}) but $\beta$ is not individually verified, the ordinate is denoted $\gamma'_{\mathrm{unk}}$ and we use only the unconditional lower bound
\begin{equation}\label{eq:weight-lower}
w(\rho) \;\geq\; w^-(\gamma') \;:=\; \frac{2}{1 + \gamma'^2},
\end{equation}
valid for all $\beta \in (0,1)$.  (Proof: writing $a = \beta(1-\beta) \in (0, \tfrac{1}{4}]$, one has $w(\rho) - 2/(1+\gamma'^2) = 2a(1 - a + 3\gamma'^2)/\bigl((a+\gamma'^2)^2 + (1-2\beta)^2\gamma'^2\bigr)(1+\gamma'^2) > 0$.)  Using the on-line weight $2/(\quarter + \gamma'^2)$ for $\gamma'_{\mathrm{unk}}$ zeros would implicitly assume the Riemann Hypothesis for $L(s,\chi_d)$.

\subsection{Weighted zero sums}

\begin{definition}[Weighted zero sums]\label{def:S-sums}
For a primitive Dirichlet character $\chi$ with conductor $q$, define
\[
S_\zeta = \sum_{\rho:\,\zeta(\rho)=0} w(\rho), \qquad
S_{L(\chi)} = \sum_{\rho:\,L(\rho,\chi)=0} w(\rho),
\]
where the sums run over nontrivial zeros in the upper half-plane, each representing a conjugate pair. For $\chi = \chi_d$, we write $S_L = S_{L(\chi_d)}$.
\end{definition}

Both sums converge absolutely: $w(\rho) = O(1/\gamma^2)$ and the zero-counting function satisfies $N(T) = O(T\log T)$.

\begin{proposition}[Explicit value of $S_\zeta$]\label{prop:S-zeta-value}
The weighted zeta zero sum satisfies
\begin{equation}\label{eq:S-zeta-bound}
S_\zeta \leq 0.04871 =: S_\zeta^*.
\end{equation}
\end{proposition}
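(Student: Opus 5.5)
The plan is to evaluate $S_\zeta$ in closed form from the Hadamard product, so the bound needs no zero data and no assumption that the zeros lie on the critical line.

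\emph{Step 1: rewrite the weight.} For $\rho=\beta+i\gamma$ put $a=\beta(1-\beta)$. Then
\[
\rho(1-\rho)=a+\gamma^2+i\gamma(1-2\beta),
\]
and $4\gamma^2(\half-\beta)^2=\gamma^2(1-2\beta)^2$. Comparing with \eqref{eq:weight-general} gives
\[
w(\rho)=2\,\mathrm{Re}\,\frac{1}{\rho(1-\rho)}=2\,\mathrm{Re}\Bigl(\frac1\rho+\frac1{1-\rho}\Bigr).
\]
This identity holds for every $\beta\in(0,1)$, so what follows is unconditional.

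\emph{Step 2: reorganise the sum.} The nontrivial zeros form a multiset invariant under $\rho\mapsto\bar\rho$ and $\rho\mapsto 1-\rho$.
\begin{itemize}
\item The map $\rho\mapsto1-\rho$ sends the upper half-plane zeros bijectively onto the lower half-plane zeros. Hence $\sum_{\gamma>0}\bigl(1/\rho+1/(1-\rho)\bigr)$ is the sum of $1/\rho$ over all nontrivial zeros, grouped in pairs $\{\rho,1-\rho\}$.
\item Each grouped term is $O(\gamma^{-2})$. Together with $N(T)=O(T\log T)$, this makes the rearrangement legitimate.
\item If some zero is off the line, $\rho$ and $1-\bar\rho$ are both upper-half zeros. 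The ``conjugate pair'' convention of Definition~\ref{def:S-sums} still counts each upper-half zero once, so nothing changes.
\end{itemize}
The result is
\[
S_\zeta=2\sum_{\rho}\mathrm{Re}\,\frac1\rho ,
\]
where the sum runs over all nontrivial zeros in symmetric order.

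\emph{Step 3: evaluate via the Hadamard product.} Write $\xi(s)=e^{A+Bs}\prod_\rho(1-s/\rho)e^{s/\rho}$. The classical result (Davenport, Ch.~12) is
\[
B=-\sum_\rho\mathrm{Re}(1/\rho)=-\tfrac12\log(4\pi)+1+\tfrac{\gamma_E}2 .
\]
It comes from combining two facts:
\begin{itemize}
\item the functional equation $\xi(s)=\xi(1-s)$, which gives $B=-\sum\mathrm{Re}(1/\rho)$;
\item the logarithmic derivative $\xi'/\xi(0)=B$, computed from $\xi(s)=\tfrac12 s(s-1)\pi^{-s/2}\Gamma(s/2)\zeta(s)$ together with $\zeta(0)=-\tfrac12$ and $\zeta'(0)=-\tfrac12\log 2\pi$.
\end{itemize}
Therefore
\[
S_\zeta=2+\gamma_E-\log(4\pi)=0.046191\ldots\le 0.04871 .
\]

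\emph{Main obstacle.} The only delicate point is Step 2: justifying the rearrangement and the pairing between the paper's ``upper half-plane, conjugate pair'' convention and the symmetric sum $\sum_\rho 1/\rho$, which is only conditionally convergent. The pairing $\{\rho,1-\rho\}$ makes every term absolutely summable, so this should go through. The margin between $0.04619$ and $S_\zeta^*=0.04871$ comfortably absorbs any rounding in the constants.

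As an independent numerical check, one can sum the first $60$ zeta zeros exactly and bound the tail by $\int_{T}^\infty 2/(\tfrac14+t^2)\,dN(t)$ using the Riemann--von Mangoldt formula.
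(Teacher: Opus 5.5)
Your proof is correct, and it takes a genuinely different route from the paper. The paper argues numerically. It sums the first $6000$ tabulated zeta ordinates in ARB, getting $S_\zeta^{(6000)}=0.045795\ldots$. It then bounds the remaining tail by Abel summation against Trudgian's explicit estimate for $N(T)$, arriving at $S_\zeta\le 0.04625$.

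You instead use the exact identity $w(\rho)=2\,\mathrm{Re}\,\frac{1}{\rho(1-\rho)}$ together with the Hadamard product for $\xi$. This gives the closed form $S_\zeta=2+\gamma_E-\log(4\pi)=0.0461914\ldots$, which is twice Li's first coefficient $\lambda_1$.

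Your approach buys several things:
\begin{itemize}
\item equality rather than an upper bound;
\item no dependence on the $6000$ external zeros (the paper's only external data dependency), on the verification that they are complete and on the line, or on Trudgian's theorem;
\item a demonstration that the paper's certified bound is sharp to about $5\times10^{-5}$, since the true tail beyond $\gamma_{6000}$ is $\approx 3.96\times10^{-4}$ against the certified $4.45\times10^{-4}$.
\end{itemize}
The paper's method, in turn, is what one needs for the truncated sums $W_1^{(M)}(\zeta)$ used later, which have no closed form. Your computation applied to the completed $L(s,\chi_d)$ would likewise express the full $L$-side sum through $\log q$, elementary constants and $\frac{L'}{L}(1,\chi_d)$.

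Two small corrections.

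First, your ``main obstacle'' is not one. You can bypass the conditionally convergent $\sum_\rho 1/\rho$ by evaluating $\frac{\xi'}{\xi}(s)=B+\sum_\rho\bigl(\frac{1}{s-\rho}+\frac{1}{\rho}\bigr)$ at $s=1$. There each term becomes $\frac{1}{\rho(1-\rho)}$ and the series converges absolutely. Since $\frac{\xi'}{\xi}(1)=-\frac{\xi'}{\xi}(0)=-B$, this gives $\sum_\rho\frac{1}{\rho(1-\rho)}=-2B$. Pairing $\rho$ with $\bar\rho$ then turns the left side into $S_\zeta$ with no rearrangement issue. Even in your own Step 2, taking real parts first makes everything absolutely convergent, since $\mathrm{Re}(1/\rho)=\beta/|\rho|^2>0$.

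Second, your displayed value of $B$ has the wrong sign. From $\zeta(0)=-\frac12$, $\zeta'(0)=-\frac12\log 2\pi$ and $\psi(1)=-\gamma_E$ one gets $B=\frac12\log(4\pi)-1-\frac{\gamma_E}{2}\approx-0.0231$, so that $\sum_\rho\mathrm{Re}(1/\rho)=-B>0$. As written, your equation sets the negative number $-\sum_\rho\mathrm{Re}(1/\rho)$ equal to $+0.0231$. Your final value $S_\zeta=-2B$ is nonetheless correct.
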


The tail bound uses the following explicit result for $\zeta(s)$.

\begin{theorem}[Trudgian {\cite[Corollary~1]{Trudgian2012}}]\label{thm:trudgian}
Let $N(T)$ denote the number of zeros $\rho = \beta + i\gamma$ of $\zeta(s)$ with
$0 < \beta < 1$ and $0 < \gamma < T$.  For $T \geq T_0 \geq e$,
\[
\left|N(T) - \frac{T}{2\pi}\log\frac{T}{2\pi e} - \frac{7}{8}\right|
\leq 0.111\log T + 0.275\log\log T + 2.450 + \frac{0.2}{T_0}.
\]
\end{theorem}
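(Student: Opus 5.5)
Since this is Trudgian's explicit result, quoted as \cite[Corollary~1]{Trudgian2012}, the plan below reconstructs the classical Backlund--Rosser argument behind it rather than anything specific to this paper. The starting point is the Riemann--von Mangoldt identity
\[
N(T) = \frac{1}{\pi}\vartheta(T) + 1 + S(T), \qquad S(T) = \frac{1}{\pi}\arg\zeta(\tfrac12 + iT),
\]
where $\vartheta$ is the Riemann--Siegel theta function and the argument is defined by continuous variation from $2$ to $2+iT$ and then to $\tfrac12+iT$. The first step is routine: expand $\vartheta(T) = \operatorname{Im}\log\Gamma(\tfrac14+\tfrac{iT}{2}) - \tfrac{T}{2}\log\pi$ using Stirling's formula with an explicit remainder. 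This gives
\[
\frac{1}{\pi}\vartheta(T) + 1 = \frac{T}{2\pi}\log\frac{T}{2\pi e} + \frac78 + E(T), \qquad |E(T)| \le \frac{c}{T},
\]
with an explicit constant $c$. For $T \ge T_0$ this error is absorbed into the term $0.2/T_0$ together with the small leftover from the $S(T)$ analysis.

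The second step bounds $|S(T)|$ by $0.111\log T + 0.275\log\log T + O(1)$. On the vertical segment from $2+iT$ to $1+\eta+iT$, with $\eta>0$ a parameter to be optimized, I would bound $|\arg\zeta|$ directly from the Euler product. On the horizontal segment from $1+\eta+iT$ to $\tfrac12+iT$ I would use Backlund's trick. If $\operatorname{Re}\zeta(\sigma+iT)$ has $n$ zeros on this segment, then the change in argument there is at most $\pi(n+1)$. To count $n$, introduce
\[
f(s) = \tfrac12\bigl(\zeta(s+iT) + \zeta(s-iT)\bigr),
\]
whose zeros on the real segment are exactly those of $\operatorname{Re}\zeta(\sigma+iT)$. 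Then apply Jensen's formula to $f$ on a disc centred at $1+\eta$ with radius $r$ slightly larger than $\tfrac12+\eta$. This bounds $n$ by
\[
\frac{1}{\log(r/(\tfrac12+\eta))}\,\frac{1}{2\pi}\int_0^{2\pi}\log\frac{|f(1+\eta+re^{i\phi})|}{|f(1+\eta)|}\,d\phi .
\]
The denominator $|f(1+\eta)|$ is bounded below by an explicit Euler-product estimate, namely $\operatorname{Re}\zeta \ge 2-\zeta(2)$-type bounds at $\sigma\ge 1+\eta$.

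The numerator is the main obstacle, and it is where the constants $0.111$ and $0.275$ are earned. It needs an explicit bound for $|\zeta(\sigma+it)|$ throughout the strip. I would use Rademacher's explicit Phragm\'en--Lindel\"of convexity bound $|\zeta(\sigma+it)| \le 3\left|\frac{1+s}{1-s}\right|\left(\frac{|1+s|}{2\pi}\right)^{(1+\eta-\sigma)/2}\zeta(1+\eta)$, valid for $-\eta\le\sigma\le 1+\eta$. I would sharpen it with the best available explicit bound on the critical line, of the form $|\zeta(\tfrac12+it)| \le A\,t^{1/6}\log t$. The arc of the Jensen circle is then split into pieces: one to the right of $\sigma=1+\eta$, where $\log|\zeta|$ is bounded; one inside the strip, where interpolated convexity exponents apply; and one to the left of $\sigma=-\eta$, where the functional equation applies. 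Integrating $\log$ of these bounds over the arcs gives a bound of the form $a(\eta,r)\log T + b\log\log T + C(\eta,r)$.

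The final step is to optimize numerically over $\eta$ and $r$, and over the auxiliary interpolation parameters, to reach $0.111$, $0.275$ and $2.450$. This requires $T\ge T_0\ge e$, which ensures $\log\log T>0$ and keeps the Stirling remainder below $0.2/T_0$. For small $T$ the estimate can also be cross-checked against direct zero computations.
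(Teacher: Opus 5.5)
The paper does not prove this statement. It quotes Trudgian's result, and the citation is its entire justification. Your sketch reconstructs the Backlund--Jensen argument behind that result, so the framework is right. As written, though, one step fails and the final constants are asserted rather than derived.

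\textbf{The failing step: the Jensen denominator.} You need a lower bound for $|f(1+\eta)| = |\operatorname{Re}\zeta(1+\eta+iT)|$.
\begin{itemize}
\item The Dirichlet-series bound $\operatorname{Re}\zeta(\sigma+iT)\ge 2-\zeta(\sigma)$ is positive only for $\sigma>1.72\ldots$.
\item You tie the centre to the same $\eta$ as in Rademacher's bound, which requires $0<\eta\le\tfrac12$. At every admissible centre $1+\eta\le\tfrac32$ you therefore have $2-\zeta(1+\eta)\le 2-\zeta(\tfrac32)<0$, so the bound is vacuous.
\item No sharper estimate of that type can help near $\sigma=1$. For centres below roughly $1.19$, $\operatorname{Re}\zeta(1+\eta+iT)$ takes both signs as $T$ varies, because Kronecker's theorem lets $\arg\zeta$ exceed $\pi/2$ there. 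So there is no uniform positive lower bound at all.
\item Moving the centre beyond $1.73$ decouples it from $\eta$. It then forces a Jensen disc of radius greater than $1.23$, reaching well into $\sigma<0$, which inflates the $\log T$ coefficient.
\end{itemize}

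\textbf{The fix: rotate before symmetrising.} Take $\alpha=-\arg\zeta(1+\eta+iT)$ and set $f_\alpha(z)=\tfrac12\bigl(e^{i\alpha}\zeta(z+iT)+e^{-i\alpha}\zeta(z-iT)\bigr)$.
\begin{itemize}
\item On the real axis, $f_\alpha(\sigma)=\operatorname{Re}\bigl(e^{i\alpha}\zeta(\sigma+iT)\bigr)$.
\item Between consecutive zeros of this, $\arg\zeta$ still moves by at most $\pi$.
\item Your upper bounds on the Jensen circle are unchanged.
\item Now $f_\alpha(1+\eta)=|\zeta(1+\eta+iT)|\ge\zeta(2+2\eta)/\zeta(1+\eta)>0$ for every $\eta>0$.
\end{itemize}
Applying the same rotation to $N$-th powers also helps. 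With $\tfrac12\bigl(e^{i\alpha}\zeta(z+iT)^N+e^{-i\alpha}\zeta(z-iT)^N\bigr)$ and $\alpha=-N\arg\zeta(1+\eta+iT)$, the $+1$ in $(n+1)\pi$ shrinks to $1/N$.

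\textbf{The constants are not derived.} Saying you would ``optimize numerically to reach $0.111$, $0.275$, $2.450$'' is not a derivation. Those numbers depend on Trudgian's specific inputs, none of which your sketch fixes:
\begin{itemize}
\item the Cheng--Graham bound $|\zeta(\tfrac12+it)|\le 3t^{1/6}\log t$ for $t\ge e$;
\item an explicit Phragm\'en--Lindel\"of lemma that carries the $\log t$ factor through the strip;
\item his particular parameter choices.
\end{itemize}
For this paper the statement correctly rests on the citation.

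Minor point: the segment from $2+iT$ to $1+\eta+iT$ is horizontal, not vertical.
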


\begin{proof}[Proof of Proposition~\ref{prop:S-zeta-value}]
The first $6000$ verified zeta zeros are summed in ARB ball
arithmetic~\cite{Johansson2017}, giving the certified value
$S_\zeta^{(6000)} = 0.04580\ldots$\,.  The tail
$\sum_{\gamma > \gamma_{6000}} w(\gamma)$ is bounded by Abel summation
against Theorem~\ref{thm:trudgian}, using the exact antiderivative
\[
  \int_a^b \frac{4t}{(\quarter + t^2)^2}\,dt
  = \frac{2}{\quarter + a^2} - \frac{2}{\quarter + b^2}
\]
on subintervals, with no numerical quadrature.  The certified tail bound is
$\leq 4.45 \times 10^{-4}$.  Combining:
\[
  S_\zeta \leq 0.04625 < 0.04871 =: S_\zeta^*.
\]
Full computational parameters are recorded in
Appendix~\ref{sec:numerical}, \S\ref{subsec:cert-Szeta}.
\end{proof}

\subsection{The norm-form energy}

\begin{definition}[Norm-form energy]\label{def:norm-form}
The \emph{norm-form energy} is
\[
N = S_\zeta^2 - d \cdot S_L^2.
\]
\end{definition}

The quadratic form $N(a,b) = a^2 - db^2$ has signature $(1,1)$, corresponding to the eigenspace decomposition of the Galois group $\mathrm{Gal}(K/\Q) \cong C_2$.

\begin{definition}[Causal classification]\label{def:causal}
A vector $(x, y) \in \R^2$ with $x, y > 0$ is \emph{timelike} if $x^2 - dy^2 > 0$, \emph{null} if $x^2 - dy^2 = 0$, and \emph{spacelike} if $x^2 - dy^2 < 0$.
\end{definition}

The null cone consists of the lines $x = \pm\sqrt{d}\,y$. The spacelike region lies outside this cone.  The terminology is borrowed by analogy from Lorentzian geometry, where the quadratic form $x^2 - c^2 t^2$ classifies vectors; the analogy is structural (both involve indefinite quadratic forms of signature $(1,1)$) but no physical connection is implied.

\subsection{Eigenspace interpretation}

The decomposition $(\zeta(s), L(s, \chi_d))$ corresponds to the $\pm 1$ eigenspaces of the Galois involution $\sigma: \sqrt{d} \mapsto -\sqrt{d}$ acting on $\zeta_K(s) = \zeta(s) \cdot L(s, \chi_d)$. The norm form $N(a, b) = a^2 - db^2$ is the unique indefinite $C_2$-invariant quadratic form up to scaling. The definite form $a^2 + db^2$ is also $C_2$-invariant but irrelevant here since it has no null cone. The coefficient $-d$ arises from the field norm $N_{K/\Q}(a + b\sqrt{d}) = a^2 - db^2$.

\subsection{Per-prime sign structure}

The norm-form energy admits a per-prime decomposition that reveals the sign structure driving the negativity theorem.

\begin{lemma}[Per-prime contributions]\label{lem:per-prime}
For a prime $p$ with $\chi_d(p) = \chi \in \{-1, 0, +1\}$, define
\[
S^+(p) := \frac{\log p}{\sqrt{p}-1} > 0, \qquad
S^-(p) := \frac{\chi\log p}{\sqrt{p}-\chi}.
\]
If $p$ is split ($\chi = +1$), then $S^-(p) = S^+(p) > 0$. If $p$ is inert ($\chi = -1$), then $S^-(p) = -\log p/(\sqrt{p}+1) < 0$. If $p$ is ramified ($\chi = 0$), then $S^-(p) = 0$.
\end{lemma}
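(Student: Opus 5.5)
The plan is direct substitution of $\chi \in \{+1,-1,0\}$ into the definition of $S^-(p)$, after first checking that everything is well defined. Every prime satisfies $p \geq 2$, so $\sqrt{p} \geq \sqrt{2} > 1$. Hence $\sqrt{p}-1>0$ and $\log p>0$, which gives $S^+(p) = \log p/(\sqrt{p}-1) > 0$.

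The denominator $\sqrt{p}-\chi$ lies in $\{\sqrt{p}-1,\ \sqrt{p},\ \sqrt{p}+1\}$. All three are strictly positive, so $S^-(p)$ is well defined in every case.

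The three cases then go as follows.
\begin{itemize}
\item \emph{Split} ($\chi=+1$): $S^-(p) = \log p/(\sqrt{p}-1)$, which is literally $S^+(p)$ and hence positive.
\item \emph{Inert} ($\chi=-1$): $S^-(p) = (-1)\log p/(\sqrt{p}-(-1)) = -\log p/(\sqrt{p}+1)$. This is negative because the numerator is negative and the denominator positive.
\item \emph{Ramified} ($\chi=0$): the numerator vanishes and the denominator is $\sqrt{p}\neq 0$, so $S^-(p)=0$.
\end{itemize}

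To connect with the paper's context, one may also note where these expressions come from. They arise from summing the prime-power contributions $\sum_{k\ge1}\chi(p)^k\log p/p^{k/2}$ in the explicit formula. The geometric series $\sum_{k\geq 1}(\chi/\sqrt{p})^k = \chi/(\sqrt{p}-\chi)$ converges since $|\chi|/\sqrt{p}<1$. This is motivation only and is not needed for the stated claims.

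There is no genuine obstacle here. The only point requiring care is confirming that the denominators are nonzero and positive, which follows from $p\ge 2$.
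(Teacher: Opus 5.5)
Your proof is correct and follows the paper, which gives no separate argument and treats the lemma as immediate from substituting $\chi \in \{+1,-1,0\}$ into the definitions; the same case computations reappear in the proof of Theorem~\ref{thm:per-prime-sign}. Your explicit check that $\sqrt{p}-\chi > 0$ for every prime $p \geq 2$ is a small point of well-definedness that the paper leaves implicit.
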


\begin{theorem}[Per-prime sign]\label{thm:per-prime-sign}
For $K = \Q(\sqrt{d})$, $d > 1$ squarefree, and any prime $p$, the per-prime norm-form contribution $N_p = 4\pi(S^+(p) - d\,S^-(p))$ satisfies:
\begin{enumerate}[label=\textup{(\alph*)}]
\item If $p$ is split ($\chi_d(p) = +1$): $N_p = \frac{4\pi\log p}{\sqrt{p}-1}(1 - d) < 0$.
\item If $p$ is inert ($\chi_d(p) = -1$): $N_p = 4\pi\log p\bigl(\frac{1}{\sqrt{p}-1} + \frac{d}{\sqrt{p}+1}\bigr) > 0$.
\item If $p$ is ramified ($\chi_d(p) = 0$): $N_p = \frac{4\pi\log p}{\sqrt{p}-1} > 0$.
\end{enumerate}
\end{theorem}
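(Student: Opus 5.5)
This is a direct case computation: substitute the explicit values of $S^+(p)$ and $S^-(p)$ from Lemma~\ref{lem:per-prime} into $N_p = 4\pi(S^+(p) - d\,S^-(p))$ and read off the sign in each of the three cases. Two standing facts supply the signs. First, $\log p > 0$ and $\sqrt{p} - 1 > 0$ for every prime $p \ge 2$, so $S^+(p) = \log p/(\sqrt{p}-1) > 0$. Second, $d > 1$ by hypothesis.

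\emph{Split case} ($\chi_d(p) = +1$). Lemma~\ref{lem:per-prime} gives $S^-(p) = S^+(p)$, hence
\[
N_p = 4\pi\,S^+(p)(1-d) = \frac{4\pi\log p}{\sqrt{p}-1}\,(1-d).
\]
Since $S^+(p) > 0$ and $1 - d < 0$, we get $N_p < 0$.

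\emph{Inert case} ($\chi_d(p) = -1$). Here $S^-(p) = -\log p/(\sqrt{p}+1)$, so
\[
-d\,S^-(p) = \frac{d\log p}{\sqrt{p}+1}
\quad\text{and}\quad
N_p = 4\pi\log p\left(\frac{1}{\sqrt{p}-1} + \frac{d}{\sqrt{p}+1}\right).
\]
Both summands in the parentheses are positive, so $N_p > 0$.

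\emph{Ramified case} ($\chi_d(p) = 0$). Here $S^-(p) = 0$, so $N_p = 4\pi S^+(p) = 4\pi\log p/(\sqrt{p}-1) > 0$.

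I expect no real obstacle. The only point needing care is that the trichotomy is exhaustive, i.e.\ that $\chi_d(p) \in \{-1, 0, +1\}$ for every prime $p$. This holds because $\chi_d$ is a real (Kronecker) character, and it includes the prime $p = 2$ even when $\Delta_K = 4d$. I would also note that $S^+(p)$ is defined with $\chi = 1$ independently of the splitting type, so its positivity needs only $p \ge 2$.
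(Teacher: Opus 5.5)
Your proposal is correct and follows the paper's proof essentially verbatim: in each case you substitute the value of $S^-(p)$ from Lemma~\ref{lem:per-prime} and read off the sign using $S^+(p)>0$ and $d>1$. Your extra remark that the three cases are exhaustive, because $\chi_d$ is real-valued (including at $p=2$ when $\Delta_K=4d$), is a small addition that the paper leaves implicit.
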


\begin{proof}
Part~(a): when $\chi_d(p) = +1$, we have $S^+(p) = S^-(p)$, so $N_p = 4\pi S^+(p)(1-d) < 0$ since $d > 1$. Part~(b): when $\chi_d(p) = -1$, $S^-(p) = -\log p/(\sqrt{p}+1)$, so $N_p = 4\pi\log p(1/(\sqrt{p}-1) + d/(\sqrt{p}+1)) > 0$. Part~(c): when $\chi_d(p) = 0$, $S^-(p) = 0$, so $N_p = 4\pi S^+(p) > 0$.
\end{proof}

\begin{remark}[Sign inversion]
The $C_2$-invariant quadratic form inverts the algebraic hierarchy: split primes that contribute equally to both eigenspaces ($S^+ = S^-$) receive the full penalty from the coefficient $-d$, producing negative energy. Inert primes yield a sign reversal via $S^- < 0$, producing positive energy contributions.
\end{remark}

\section{Negativity and Spacelike Theorems}\label{sec:negativity}

\subsection{Low-lying zero dominance}

\begin{theorem}[Low-lying zero dominance]\label{thm:low-lying}
For every squarefree $d > 1$, the $L$-function spectral sum dominates: $S_L > S_\zeta^*$.
\end{theorem}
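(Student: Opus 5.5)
Since $S_\zeta^* = 0.04871$, it suffices to exhibit, for each squarefree $d>1$, enough zeros of $L(s,\chi_d)$ at controlled height that their guaranteed weights already exceed $0.04871$. Because every term of $S_L$ is positive, any finite sub-sum gives a lower bound. This matters because we cannot assume RH for $L(s,\chi_d)$. For zeros whose critical-line location is not individually certified, we use only the unconditional lower bound \eqref{eq:weight-lower}, $w(\rho)\ge 2/(1+\gamma'^2)$. The whole proof is therefore a zero-counting argument: show that $L(s,\chi_d)$ has many zeros at low height, uniformly in the conductor $q=|\Delta_K|\in\{d,4d\}$.

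\textbf{Step 1 (large conductor).} I would invoke the explicit zero-counting theorem for Dirichlet $L$-functions (Theorem~\ref{thm:BMOR}, of Bennett--Martin--O'Bryant--Rechnitzer type). It gives
\[
N(T,\chi)=\frac{T}{\pi}\log\frac{qT}{2\pi e}+O^*\bigl(c_1\log(qT)+c_2\bigr),
\]
counting zeros with $|\gamma|\le T$, with explicit constants $c_1,c_2$. Fix a height $T_0$, say $T_0\approx 10$–$14$. The number of zeros with $0<\gamma'\le T_0$ is then at least roughly $\frac{T_0}{2\pi}\log\frac{qT_0}{2\pi e}-c_1\log(qT_0)-c_2$, which grows like $\log q$. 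Each such zero has weight at least $2/(1+T_0^2)$, about $0.0099$ for $T_0=14$. It therefore suffices to have at least five such zeros. Solving this inequality gives an explicit threshold $q_0$ with $S_L>S_\zeta^*$ for all $q\ge q_0$.

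A sharper version counts zeros in dyadic height bands $(T_{j-1},T_j]$ and sums the band counts times $2/(1+T_j^2)$ via Abel summation. The lower-order terms are concentrated at small height, where the weights are largest, so this banding is needed to keep $q_0$ of manageable size.

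\textbf{Step 2 (small conductor).} For the finitely many fundamental discriminants with $q<q_0$, I would use certified low-lying zeros computed by ARB interval arithmetic, as in Appendix~\ref{sec:numerical}. For these $d$ the first zero $\gamma_1'$ is verified on the line, so the exact weight $2/(\frac14+\gamma_1'^2)$ applies, plus a few more zeros if needed. For $d=5$, $\gamma_1'\approx 6.65$ gives weight about $0.045$, and adding the second zero already exceeds $0.0487$. Since the first zero drops as $q$ grows, this case is easy for most $q$. The corresponding byproduct is the claim $\gamma_1'(d)<14.13$: an explicit zero of height below $\gamma_1$ exists for every conductor.

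\textbf{Main obstacle.} I expect the hardest part to be the gap between Steps 1 and 2. The error terms in Theorem~\ref{thm:BMOR} are large at small heights, so the naive $q_0$ may be far beyond the range covered by certified data. I would first try to optimize the height cutoff and banding in Step 1 to push $q_0$ down. Failing that, I would extend the certification in Step 2 by checking sign changes of the Hardy $Z$-function for $L(s,\chi_d)$ on $[0,14]$ for every intermediate conductor, since each verified sign change supplies an on-line zero with exact weight.
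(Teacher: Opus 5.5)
Your plan follows the paper's approach. Theorem~\ref{thm:BMOR} with the unconditional weight $w^-$ handles every conductor above a small threshold, and the remaining discriminants are settled by certified on-line zeros; the paper uses two zeros for $d=5$ and three for $d=2$. What your proposal leaves open is the value of $q_0$, and that is all that is needed to finish. It turns out to be tiny, so your ``main obstacle'' does not arise, and neither banding nor new sign-change scans are required.

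On one point you are more careful than the paper, and it matters. You halve the BMOR main term, which is correct: in BMOR, $N(T,\chi)$ counts zeros with $|\gamma|\le T$. For $\chi_5$ the main term at $T=11$ is $\approx 3.84$, matching the four zeros at heights $\pm 6.65$ and $\pm 9.83$. The paper's Step~1 reads $N(11,\chi_d)\ge 3.80$ at $q=12$ as four zeros in $(0,11]$. Correctly read, it yields only two, and $2/61<S_\zeta^*$.

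Within your framework the repair is as follows.
\begin{enumerate}
\item For $q\ge 13$ the BMOR lower bound at $T=11$ is $\ge 4.03$. It increases with $q$, since $11/\pi>0.22737+2/(1+\ell)$. Hence $N(11,\chi_d)\ge 5$.
\item Zeros of a real character are symmetric under conjugation. A real zero $\beta\in(0,1)$, if present, would alone contribute $2/(\beta(1-\beta))\ge 8$ by \eqref{eq:weight-general}, provided real zeros are counted in $S_L$. (The paper's convention sums over $\gamma>0$, so this point needs stating.) Otherwise $N(11,\chi_d)$ is even, hence $\ge 6$.
\item This gives three zeros in $(0,11]$, so $S_L\ge 3/61\approx 0.04918>0.04871$.
\end{enumerate}
The exceptions are then $q\in\{5,8,12\}$, i.e.\ $d\in\{5,2,3\}$, all in the certified tables. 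For $d=3$ the first zero $3.8046$ alone has weight $\approx 0.136$.

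With your own choice $T_0=14$ and five zeros, the same parity argument works for $q\ge 17$, where BMOR gives $N(14,\chi_d)\ge 8.01$. Then $d=13$ (first zero $3.12$) joins the certified list.
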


The proof relies on the following explicit zero-counting theorem.

\begin{theorem}[Bennett--Martin--O'Bryant--Rechnitzer {\cite[Theorem~1.1]{BMOR2021}}]\label{thm:BMOR}
Let $\chi$ be a primitive character with conductor $q > 1$ and $T \geq 5/7$.
Set $\ell = \log\bigl(q(T+2)/(2\pi)\bigr)$.
If $\ell \leq 1.567$, then $N(T,\chi) = 0$.
If $\ell > 1.567$, then
\[
\Bigl|N(T,\chi) - \frac{T}{\pi}\log\frac{qT}{2\pi e}
- \frac{\chi(-1)}{4}\Bigr|
\leq 0.22737\,\ell + 2\log(1 + \ell) - 0.5.
\]
In particular, $N(T,\chi) \geq (T/\pi)\log(qT/2\pi e)
- \chi(-1)/4 - 0.22737\,\ell - 2\log(1+\ell) + 0.5$.
\end{theorem}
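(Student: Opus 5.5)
Since this is the explicit zero-counting theorem quoted from \cite{BMOR2021}, the plan follows the classical Backlund--Rosser route with every constant made explicit. The ``in particular'' clause is just the upper-sided half of the absolute-value inequality, rearranged.

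\emph{Step 1: reduce to a Hardy-type integral and argument estimate.} For primitive $\chi$ with $\mathfrak a = (1-\chi(-1))/2$, I will work with the completed function
\[
\xi(s,\chi) = (q/\pi)^{(s+\mathfrak a)/2}\,\Gamma\bigl(\tfrac{s+\mathfrak a}{2}\bigr)L(s,\chi),
\]
which is entire and has exactly the nontrivial zeros. I would not pair it with $\xi(s,\bar\chi)$. Instead I apply the argument principle to $\xi(s,\chi)$ on the rectangle with vertices $\sigma_1 \pm iT$, $1-\sigma_1\pm iT$ for some $\sigma_1>1$, counting zeros with $|\gamma|\le T$. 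The functional equation reflects the left half of the contour onto the right half. Dividing by $2$ to count zeros with $0<\gamma<T$ (the paper's convention for real $\chi$), this gives
\[
N(T,\chi) = \frac{1}{\pi}\Delta\arg\bigl[(q/\pi)^{s/2}\Gamma(\tfrac{s+\mathfrak a}{2})\bigr] + \frac{1}{\pi}\Delta\arg L(s,\chi),
\]
with the argument changes taken along $\sigma_1 \to \sigma_1+iT \to \tfrac12+iT$.

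\emph{Step 2: the main term.} An explicit Stirling formula with remainder, say $|\mathrm{Im}\,R(z)|\le 1/(6|z|)$, evaluates the gamma and conductor contribution as
\[
\frac{T}{\pi}\log\frac{qT}{2\pi e} + \frac{\chi(-1)}{4} + O^*(c/T).
\]
This is where the $\chi(-1)/4$ offset comes from: it equals $(2\mathfrak a - 1)/4$ up to sign convention. The error is absorbed for $T\ge 5/7$.

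\emph{Step 3: bound $\frac1\pi\Delta\arg L$.}
\begin{enumerate}[label=\textup{(\roman*)}]
\item On the vertical segment at $\sigma_1$, $\mathrm{Re}\,L$ does not vanish, so the change is at most $\arg$-bounded by $\log\zeta(\sigma_1)$-type quantities.
\item On the horizontal segment I use Backlund's trick: the argument change is at most $\pi(n+1)$, where $n$ counts zeros of
\[
g(z) = \tfrac12\bigl(L(z+iT,\chi)+\overline{L(\bar z+iT,\chi)}\bigr)
\]
in a disc centered at $\sigma_1$ of radius $\sigma_1-\tfrac12$.
\item Jensen's formula on a slightly larger disc of radius $r$ bounds $n$ by $\log(\max|g|/|g(\sigma_1)|)/\log(r/(\sigma_1-\tfrac12))$.
\item For $\max|g|$ I use an explicit convexity (Rademacher/Phragm\'en--Lindel\"of) bound $|L(s,\chi)|\le \zeta(1+\eta)\bigl(\tfrac{q|1+s|}{2\pi}\bigr)^{(1+\eta-\sigma)/2}$ in $-\eta\le\sigma\le 1+\eta$.
\end{enumerate}
Taking logarithms produces a term linear in $\ell=\log(q(T+2)/2\pi)$. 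Following Rosser and Trudgian, one then optimizes over $\sigma_1$, $r$, $\eta$, and averages Jensen over a range of centres (a ``smoothed'' Backlund argument). This should yield the coefficient $0.22737\,\ell$ plus a lower-order term of the shape $2\log(1+\ell)-0.5$.

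\emph{Step 4: the vanishing clause.} For the clause ``$\ell\le1.567 \Rightarrow N(T,\chi)=0$'', I would combine the inequality with integrality of $N$. When $\ell$ is this small, $qT$ is bounded, so only finitely many pairs $(q,T)$ occur. For these, the main term plus error is below $1$, or, if needed, a direct rigorous check that no zero lies below height $T$ for these finitely many small conductors.

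\emph{Main obstacle.} The main obstacle is Step 3: keeping the constant as small as $0.22737$ uniformly in $q$. Crude Jensen bounds give constants near $0.4$--$0.5$. Achieving the stated value needs careful numerical optimization of the auxiliary parameters and the averaged form of Backlund's trick, and checking this optimization is rigorous (interval arithmetic) is the delicate part.
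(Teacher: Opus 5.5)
There is a genuine gap. The paper gives no proof of this statement; it imports it from \cite{BMOR2021}, and your outline (argument principle for $\xi(s,\chi)$, Stirling for the main term, Backlund's trick with Jensen's formula and an explicit convexity bound for $\arg L$) is the method of that source.

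\textbf{The counting convention in Steps~1--2.} As written, these steps are inconsistent about what $N(T,\chi)$ counts. Along your path $\sigma_1\to\sigma_1+iT\to\tfrac12+iT$, Stirling gives $\frac1\pi\Delta\arg\bigl[(q/\pi)^{s/2}\Gamma(\tfrac{s+\mathfrak a}{2})\bigr]=\frac{T}{2\pi}\log\frac{qT}{2\pi e}+\frac{2\mathfrak a-1}{8}+O(1/T)$. It does not give $\frac{T}{\pi}\log\frac{qT}{2\pi e}+\frac{\chi(-1)}{4}$, so your halving and your claimed main term differ by exactly a factor of~$2$.

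The main term $\frac{T}{\pi}\log\frac{qT}{2\pi e}$ is the count of zeros with $|\gamma|\le T$, which is how \cite{BMOR2021} define $N(T,\chi)$. Under the upper-half-plane reading the stated inequality is false. For $\chi_5$ and $T=20$ there are six zeros with $0<\gamma\le 20$ in Appendix~\ref{app:chi5-highprec}, against a main term $\approx 11$ and an allowance $\approx 2.85$. Halving is also unavailable for complex $\chi$, which the statement covers.

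The repair is not to halve. Combining $\xi(s,\chi)=\varepsilon(\chi)\,\xi(1-s,\bar\chi)$ with $\xi(s,\bar\chi)=\overline{\xi(\bar s,\chi)}$ gives $N(T,\chi)=\frac1\pi\Delta\arg\xi(s,\chi)$ along the whole right half-contour $\tfrac12-iT\to\sigma_1-iT\to\sigma_1+iT\to\tfrac12+iT$, for every primitive $\chi$. Backlund's trick is then needed on both horizontal segments, i.e.\ for $\chi$ and for $\bar\chi$ at height $T$.

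The constant then comes out as $(2\mathfrak a-1)/4=-\chi(-1)/4$ exactly; there is no sign convention to choose. So the displayed inequality in the statement has a sign slip relative to \cite{BMOR2021}. The ``in particular'' line has the correct sign, but it is not a rearrangement of the display as printed when $\chi$ is odd. The paper's own applications (e.g.\ Step~1 of the proof of Theorem~\ref{thm:low-lying}) likewise read $N(T,\chi)$ as an upper-half-plane count, so the convention matters there too.

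\textbf{The vanishing clause.} This clause cannot come from integrality. The inequality is asserted only for $\ell>1.567$, and it genuinely fails for small $\ell$. For conductor $3$ and $T=5/7$ one has $N=0$ and main term $\approx-0.22$, but the right-hand side is $\approx 0.02$. For $q=3$ just above the threshold, the inequality only yields $N\le 2$.

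The clause is a separate finite computation. $T\ge 5/7$ and $\ell\le 1.567$ force $q\le 11$. For each primitive character of conductor $q\in\{3,4,5,7,8,9,11\}$, one must certify that $L(s,\chi)$ has no zero with $|\gamma|\le 2\pi e^{1.567}/q-2$. For $q=3$ this height is about $8.04$, just below the lowest zero of the conductor-$3$ $L$-function, which appears to be what fixes the constant $1.567$.

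\textbf{The constant $0.22737$.} Your Step~3 asserts this coefficient rather than deriving it. In \cite{BMOR2021} it comes from a careful optimization that also uses a new explicit bound for $|L(s,\chi)|$ with $\sigma<-\tfrac12$. That region is outside the range $\sigma\ge-\eta\ge-\tfrac12$ of the Rademacher estimate you quote. So the explicit constant remains a citation, which is exactly how the paper treats it.
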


\begin{proof}[Proof of Theorem~\ref{thm:low-lying}]
We establish the inequality for all fundamental discriminants by combining Theorem~\ref{thm:BMOR} with verified computations for two small-conductor exceptions.  Throughout, zeros are counted in the upper half-plane only, with the $\gamma'_{\mathrm{knw}}$/$\gamma'_{\mathrm{unk}}$ convention and the lower bound $w^-$~\eqref{eq:weight-lower} as defined above.

\medskip\noindent\textbf{Step 1: Conductors $q \geq 12$.}
Let $\chi_d$ be the Kronecker character with conductor $q = |\Delta_K|$.  The conductor of a real quadratic field satisfies $q \in \{d, 4d\}$ depending on $d \bmod 4$; the values $q \in \{5, 8\}$ correspond to $d \in \{5, 2\}$ and are treated separately below.  The next smallest conductor is $q = 12$ (for $d = 3$).

For $T = 11$, Theorem~\ref{thm:BMOR} gives the lower bound
\[
  N(11, \chi_d) \geq \frac{11}{\pi}\log\frac{11q}{2\pi e} - \frac{|\chi_d(-1)|}{4} - 0.22737\,\ell - 2\log(1+\ell) + 0.5,
\]
where $\ell = \log(q \cdot 13/(2\pi))$.  For $q = 12$, this evaluates to $N(11, \chi_d) \geq 3.80$; since $N(11,\chi_d)$ is an integer, this implies at least four zeros with $\gamma'_{\mathrm{unk}} \leq 11$, and the bound increases with $q$.  Each contributes weight at least $w^-(11) = 2/122 = 1/61$.  Therefore
\[
  S_L \geq 4 \cdot w^-(11) = 4/61 = 0.06557 > 0.04871 = S_\zeta^*.
\]

\medskip\noindent\textbf{Step 2: $d = 5$ (conductor $q = 5$).}
The BMOR bound gives only $N(11, \chi_5) \geq 1.40$, which does not guarantee three zeros.  We therefore verify the inequality directly.  Computing the Hardy $Z$-function $Z_{\chi_5}(t) = e^{i\theta(t)} L(\half + it, \chi_5)$ and scanning for sign changes on $(0, 12)$ yields two zeros below height $11$: $\gamma'_{1,\mathrm{knw}} = 6.6485$ and $\gamma'_{2,\mathrm{knw}} = 9.8314$, both verified to satisfy $|L(\tfrac{1}{2} + i\gamma', \chi_5)| < 10^{-450}$ (see Appendix~\ref{app:chi5-highprec}).  (Any additional zeros below height $11$, if they exist, would only increase $S_L$.)  Therefore
\begin{align*}
  S_L &\geq w(\gamma'_{1,\mathrm{knw}}) + w(\gamma'_{2,\mathrm{knw}}) \\
  &= \frac{2}{0.25 + 44.20} + \frac{2}{0.25 + 96.66} = 0.04499 + 0.02064 = 0.06563 > S_\zeta^*.
\end{align*}

\medskip\noindent\textbf{Step 3: $d = 2$ (conductor $q = 8$).}
The Hurwitz decomposition
\[
L(s,\chi_2) = 8^{-s}\sum_{a=1}^8 \chi_2(a)\,\zeta(s,a/8),
\]
evaluated via the Hardy $Z$-function method (\S\ref{subsec:Lfunc-method}) at a minimum of 1500-bit ARB precision, yields three zeros below height $11$:
\[
\gamma'_{1,\mathrm{knw}} = 4.8999739970070365010, \quad
\gamma'_{2,\mathrm{knw}} = 7.6284288417693978341,
\]
\[
\gamma'_{3,\mathrm{knw}} = 10.806588163861712014,
\]
each satisfying $|L(\tfrac{1}{2} + i\gamma', \chi_2)| < 10^{-450}$ (see Appendix~\ref{app:chi8-zeros}).  Any additional zeros below height $11$ would only increase $S_L$.  Therefore
\begin{align*}
  S_L &\geq w(\gamma'_{1,\mathrm{knw}}) + w(\gamma'_{2,\mathrm{knw}}) + w(\gamma'_{3,\mathrm{knw}}) \\
  &= \frac{2}{0.25 + 24.010} + \frac{2}{0.25 + 58.193} + \frac{2}{0.25 + 116.782} \\
  &= 0.08246 + 0.03421 + 0.01710 = 0.13377 > S_\zeta^*.
\end{align*}

Since every squarefree $d > 1$ has conductor $q \in \{5, 8\}$ or $q \geq 12$, the three cases exhaust all possibilities.
\end{proof}

\begin{remark}[Margins]\label{rem:margins}
The margin $(S_L - S_\zeta^*)/S_\zeta^*$ is smallest in the generic case: for $q = 12$, the BMOR bound guarantees at least four $\gamma'_{\mathrm{unk}}$ zeros below height $11$, giving $S_L \geq 4 \cdot w^-(11) = 4/61 = 0.06557$, a margin of $34.6\%$ over $S_\zeta^*$.  The margin is largest for $d = 2$, where three $\gamma'_{\mathrm{knw}}$ zeros give $S_L \geq 0.13375$, a margin of $174.6\%$.
\end{remark}

\begin{remark}[On first-zero sufficiency]\label{rem:first-zero}
For most discriminants, a single directly computed zero suffices: $w(\gamma'_{1,\mathrm{knw}}) > S_\zeta^*$.  The exception is $d = 5$ ($\gamma'_{1,\mathrm{knw}} = 6.6485$), where $w(\gamma'_{1,\mathrm{knw}}) = 0.04499 < S_\zeta^* = 0.04871$ and the second zero is required.  In the generic $\gamma'_{\mathrm{unk}}$ case (Step~1), at least four zeros are needed.
\end{remark}

\subsection{The spacelike corollary}

\begin{corollary}[Spacelike]\label{cor:spacelike}
For every squarefree $d > 1$, the norm-form energy satisfies $N = S_\zeta^2 - d \cdot S_L^2 < 0$.
\end{corollary}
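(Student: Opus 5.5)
The plan is to obtain the corollary directly from Proposition~\ref{prop:S-zeta-value} and Theorem~\ref{thm:low-lying}, with no new analytic input. Both $S_\zeta$ and $S_L$ are sums of positive weights: Proposition~\ref{prop:CLM} and the lower bound~\eqref{eq:weight-lower} give $w(\rho)>0$ for every zero. Both sums also converge absolutely. Hence $S_\zeta>0$ and $S_L>0$, and the sign of $N$ is the sign of $S_\zeta - \sqrt{d}\,S_L$. This is because $N = (S_\zeta - \sqrt d\,S_L)(S_\zeta+\sqrt d\,S_L)$ and the second factor is strictly positive.

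The chain of inequalities is
\[
S_\zeta \le S_\zeta^* < S_L < \sqrt{d}\,S_L .
\]
The first inequality is Proposition~\ref{prop:S-zeta-value}. The second is Theorem~\ref{thm:low-lying}, which holds for every squarefree $d>1$ and covers the conductors $q\in\{5,8\}$ and $q\ge 12$. The third uses $d>1$ together with $S_L>0$. Squaring the nonnegative quantities $S_\zeta$ and $\sqrt d\,S_L$ preserves the strict inequality, so $S_\zeta^2 < d\,S_L^2$, that is, $N<0$. In fact this argument gives the slightly stronger statement $S_\zeta^2 < S_L^2 \le d\,S_L^2$, so the factor $d$ is not even needed beyond $d\ge1$.

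The only point needing care is one of consistency: the quantity $S_L$ in Theorem~\ref{thm:low-lying} must be the same unconditional sum as in Definition~\ref{def:S-sums}. That sum uses the general weight~\eqref{eq:weight-general} for zeros whose real part is not verified. The proof of Theorem~\ref{thm:low-lying} bounds $S_L$ from below using only $w\ge w^-$ and the nonnegativity of all omitted terms, so it applies to the true $S_L$ without assuming RH. Likewise, Proposition~\ref{prop:S-zeta-value} is an upper bound for the full $S_\zeta$. Its tail estimate via Theorem~\ref{thm:trudgian} uses the on-line weight, which Proposition~\ref{prop:CLM} justifies as an upper bound for $\gamma\ge 14.13$ in any case. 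With these checks made, the corollary follows in one line, and I expect no real obstacle.
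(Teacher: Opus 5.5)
Your proposal is correct and matches the paper's proof: it combines $S_\zeta \le S_\zeta^*$ (Proposition~\ref{prop:S-zeta-value}) with $S_L > S_\zeta^*$ (Theorem~\ref{thm:low-lying}) and uses $d>1$ to get $d\,S_L^2 > S_\zeta^2$. Your consistency check is also sound: Theorem~\ref{thm:low-lying} bounds the unconditional $S_L$ from below through $w \ge w^-$, and the on-line weight gives a valid upper bound for $S_\zeta$ by Proposition~\ref{prop:CLM}.
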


\begin{proof}
By Proposition~\ref{prop:S-zeta-value}, $S_\zeta \leq S_\zeta^* = 0.04871$.  By Theorem~\ref{thm:low-lying}, $S_L > S_\zeta^* \geq S_\zeta > 0$.  Since $d > 1$:
\[
d \cdot S_L^2 > d \cdot (S_\zeta^*)^2 \geq d \cdot S_\zeta^2 > S_\zeta^2,
\]
hence $N = S_\zeta^2 - d \cdot S_L^2 < 0$.
\end{proof}

\begin{remark}[Strict hierarchy]\label{rmk:hierarchy}
The results form a strict chain. Let $r = S_\zeta/S_L$. The spacelike corollary requires $r < \sqrt{d}$. The low-lying zero dominance gives $r < 1$. Since $r < 1 < \sqrt{d} < d$ for $d > 1$, each result is strictly stronger than what the corollary requires. The actual ratio $r \approx 0.1$ to $0.5$ for small $d$, reflecting the severity of the low-lying zero phenomenon.
\end{remark}

\subsection{The negativity theorem for general characters}

\begin{theorem}[Negativity]\label{thm:negativity}
For every primitive nontrivial Dirichlet character $\chi$ with conductor $q \geq 3$:
\[
E_\chi(0) := S_\zeta - q \cdot S_{L(\chi)} < 0.
\]
\end{theorem}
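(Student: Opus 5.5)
The inequality $E_\chi(0) < 0$ is equivalent to $q\,S_{L(\chi)} > S_\zeta$. Since $q \geq 3$ and Proposition~\ref{prop:S-zeta-value} gives $S_\zeta \leq S_\zeta^* = 0.04871$, it suffices to prove the lower bound
\[
S_{L(\chi)} > S_\zeta^*/q .
\]
This is weaker than the bound in Theorem~\ref{thm:low-lying} by the factor $q$. So I will reuse the zero-counting strategy of that proof, but I only need to certify a single zero of $L(s,\chi)$ at moderate height.

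\textbf{The generic conductor.} Every nontrivial zero $\rho$ of $L(s,\chi)$ has, by~\eqref{eq:weight-lower}, weight at least $w^-(\gamma') = 2/(1+\gamma'^2)$, which needs no hypothesis on $\beta$. I then choose a height $T$, depending on $q$, at which Theorem~\ref{thm:BMOR} gives $N(T,\chi) \geq 1$. This holds once the main term
\[
\frac{T}{\pi}\log\frac{qT}{2\pi e}
\]
exceeds the error $0.22737\,\ell + 2\log(1+\ell) - 0.5 + \tfrac14$. With that $T$, the goal becomes $q \cdot 2/(1+T^2) > S_\zeta^*$, that is, $T^2 < 2q/S_\zeta^* - 1 \approx 41q$.

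For large $q$ this is easy. BMOR produces a zero at height $T = O(1/\log q)$ once $\ell$ is moderate, or even at bounded $T$. In particular, taking $T = 11$ as in Step~1 of Theorem~\ref{thm:low-lying}, we have $N(11,\chi) \geq 1$ for all $q$ beyond a small threshold. Then
\[
q \cdot S_{L(\chi)} \geq q/61 > 0.04871 \quad \text{as soon as } q \geq 3.
\]
So the whole argument reduces to checking for which $q$ the BMOR lower bound at $T = 11$ (or at some $T \leq 6\sqrt{q}$) is at least $1$. Note that $\chi$ may now be odd, so the sign term $-\chi(-1)/4$ can cost up to $1/4$; I would use the worst case $+1/4$.

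\textbf{The main obstacle: small conductors.} For the smallest conductors ($q = 3,4,5,7,8,\ldots$), BMOR at $T=11$ may fail to guarantee a zero; for $q=5$ it gives $1.40$, which is fine, but $q = 3, 4$ may fall short. For these I would allow a larger $T$. The admissible height grows like $6.4\sqrt{q}$, so $T \approx 11$ for $q=3$ and $T \approx 12.8$ for $q=4$. If BMOR still does not suffice, I would fall back on direct computation as in Steps~2--3 of Theorem~\ref{thm:low-lying}: locate the first zero of $L(s,\chi_{-3})$ (near $\gamma' \approx 8.04$) and of $L(s,\chi_{-4})$ (near $\gamma' \approx 6.02$) by sign changes of the Hardy $Z$-function, certified in ARB. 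Then $q\,w(\gamma'_{1,\mathrm{knw}})$ exceeds $0.049$ with a large margin.

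Finally, there are finitely many primitive characters of each small conductor, including complex ones, for which one uses the $Z$-function of $\chi$ directly. I expect the bookkeeping of the threshold $q_0$ beyond which BMOR suffices, together with certifying these finitely many low-conductor first zeros, to be the only real work; everything else is the monotonicity of the BMOR bound in $q$.
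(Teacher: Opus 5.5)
Your argument is the paper's in substance: Theorem~\ref{thm:BMOR} plus the unconditional weight $w^-$, with one zero of bounded height sufficing because $q\,w^-(11)=q/61\geq 3/61>S_\zeta^*$. You differ from the paper only at the smallest conductor, and there your version is the better one.

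First, the fallback you anticipate is not needed. At $T=11$ the BMOR lower bound is already about $0.06$ for $q=3$, even with the unfavorable sign of $\chi(-1)/4$, and about $0.81$ for $q=4$. It also increases with $q$, because its derivative in $\log q$ is $11/\pi-0.22737-2/(1+\ell)>0$. So BMOR alone gives a zero with $|\gamma'|\leq 11$ for every $q\geq 3$, and no zeros need to be computed.

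Second, the paper handles $q=3$ differently: it claims three zeros below height $17$ from $N(17,\chi)\geq 3.34$. But BMOR's $N(T,\chi)$ counts zeros with $|\gamma|\leq T$. For the real character $\chi_{-3}$ this guarantees only two zeros in the upper half-plane. Then $3\cdot 2\,w^-(17)\approx 0.041$, which falls short of $S_\zeta^*$ and even of the certified bound $0.04625$. Your one-zero count is unaffected by this convention, since for real $\chi$ conjugation moves the zero into the upper half-plane.

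Finally, your aside that BMOR gives a zero at height $O(1/\log q)$ is false. BMOR requires $T\geq 5/7$, and the $0.22737\,\ell$ term forces $T\gtrsim 0.7$ anyway; you never use the claim, so nothing depends on it.
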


\begin{proof}
The proof follows the same structure as Theorem~\ref{thm:low-lying}. For $q \geq 4$, Theorem~\ref{thm:BMOR} guarantees a $\gamma'_{\mathrm{unk}}$ zero below height $T = 10.5$, giving $q \cdot w^-(\gamma'_{1,\mathrm{unk}}) \geq 4 \cdot 2/111.25 = 32/445 = 0.0719 > S_\zeta^*$. For $q = 3$, the BMOR bound at $T = 17$ guarantees at least three $\gamma'_{\mathrm{unk}}$ zeros below height $17$ (BMOR gives $N \geq 3.34$ at $T = 17$), so $q \cdot S_{L(\chi)} \geq q \cdot 3 \cdot w^-(17) = 3 \cdot 3 \cdot 2/(1 + 17^2) = 18/290 = 0.0621 > S_\zeta^*$.
\end{proof}

\begin{corollary}[Linear deepening]\label{cor:linear-deepening}
For $q \geq 3$:
\[
E_\chi(0) \leq S_\zeta^* - \frac{2q}{122} < 0,
\]
so $|E_\chi(0)|$ grows at least linearly in the conductor.
\end{corollary}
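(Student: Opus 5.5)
The inequality reduces to a single lower bound on $S_{L(\chi)}$ that does not depend on $q$. Since $S_\zeta \leq S_\zeta^*$ by Proposition~\ref{prop:S-zeta-value}, we have $E_\chi(0) \leq S_\zeta^* - q\,S_{L(\chi)}$. It therefore suffices to show
\[
S_{L(\chi)} \;\geq\; \frac{2}{122} \;=\; w^-(11)
\]
for every primitive nontrivial $\chi$ with $q \geq 3$. By the weight lower bound~\eqref{eq:weight-lower}, and because $w^-$ is decreasing in $\gamma'$, this in turn follows once we know that $L(s,\chi)$ has at least one nontrivial zero with $0 < \gamma' \leq 11$. No information about $\beta$ is needed.

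\textbf{Case $q \geq 4$.} I would reuse the input from the proof of Theorem~\ref{thm:negativity}. There, Theorem~\ref{thm:BMOR} at $T = 10.5$ guarantees a $\gamma'_{\mathrm{unk}}$ zero below height $10.5 < 11$. This gives $S_{L(\chi)} \geq w^-(10.5) = 2/111.25 > 2/122$. I would check that the BMOR lower bound $N(10.5,\chi) > 0$ holds at $q = 4$. It is then monotone in $q$, since the main term $(T/\pi)\log(qT/2\pi e)$ grows like $\log q$ while the error grows only like $0.22737\,\ell + 2\log(1+\ell)$ with $\ell = \log(q(T+2)/2\pi)$, and the main term dominates.

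\textbf{Case $q = 3$.} This is the only real obstacle. The proof of Theorem~\ref{thm:negativity} obtained zeros only below height $17$, which gives the weaker bound $3\,w^-(17)$ per the argument there, not a zero below $11$. Here I would proceed as in Steps~2--3 of Theorem~\ref{thm:low-lying}. The plan is to compute the Hardy $Z$-function of $L(s,\chi_{-3})$ via the Hurwitz decomposition $3^{-s}\bigl(\zeta(s,1/3) - \zeta(s,2/3)\bigr)$ in ARB arithmetic, and to exhibit a certified sign change on an interval inside $(0,11)$. The first zero is near $\gamma' \approx 8.04$. This yields a $\gamma'_{\mathrm{knw}}$ zero with exact weight $w \approx 2/(0.25 + 64.6) \approx 0.031 > 2/122$. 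If one prefers to avoid new computation, an alternative is to push BMOR at $T = 11$ for $q = 3$. I expect it fails there, since it already needed $T = 17$ to get three zeros, so the direct certification is the route I would commit to.

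Combining the two cases gives $q\,S_{L(\chi)} \geq 2q/122$, hence $E_\chi(0) \leq S_\zeta^* - 2q/122$. Strict negativity for all $q \geq 3$ then follows from the smallest case, since $6/122 = 0.04918 > 0.04871 = S_\zeta^*$ (a thin but positive margin). Finally, $|E_\chi(0)| \geq 2q/122 - S_\zeta^*$ is visibly at least linear in $q$.
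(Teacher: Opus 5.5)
For $q \ge 4$ your argument is essentially the paper's. For $q = 3$ you take a different route, and your stated reasons for it are mistaken on two counts.

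First, you never need a zero below height $11$, only $S_{L(\chi)} \ge 2/122$. The paper gets this from Theorem~\ref{thm:BMOR} at $T = 17$, which it reads as three zeros below $17$. Since $3\,w^-(17) = 6/290 \approx 0.0207 > 2/122 \approx 0.0164$, that bound is not ``weaker'' than what is required.

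Second, BMOR at $T = 11$ does not fail for $q = 3$. Here $\ell = \log(39/2\pi) \approx 1.826 > 1.567$, and with the conservative constant $-\tfrac14$ one gets $N(11,\chi_{-3}) \ge 2.306 - 0.25 - 1.993 \approx 0.06 > 0$.

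Your direct route is nevertheless valid: certify in ARB a sign change of $Z_{\chi_{-3}}$ near $\gamma' \approx 8.04$, using $L(s,\chi_{-3}) = 3^{-s}\bigl(\zeta(s,1/3) - \zeta(s,2/3)\bigr)$. Its cost is a new certified computation for an odd character that is not among the paper's data.

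What your route buys is robustness. In \cite{BMOR2021}, $N(T,\chi)$ counts zeros with $|\gamma| \le T$ in both half-planes. The main term $\frac{T}{\pi}\log\frac{qT}{2\pi e}$ is therefore twice the upper-half-plane count. The paper's own table confirms this: $\frac{T}{2\pi}\log\frac{5T}{2\pi e} \approx 200$ at $T = \gamma'_{200} \approx 283.9$.

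For the real character $\chi_{-3}$, the bound $N(17,\chi) \ge 3.34$ thus guarantees only two zeros with $0 < \gamma' \le 17$ (or a real zero). Since $2\,w^-(17) = 4/290 < 2/122$, the paper's $q = 3$ step does not by itself deliver the claimed bound. Your certified zero near $8.04$, with weight $\approx 0.031$, does deliver it.

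BMOR at $T = 11$ also delivers it. There $N(11,\chi_{-3}) \ge 1$, so conjugate symmetry forces a zero with $0 < \gamma' \le 11$, or a real zero of weight at least $8$.

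For $q \ge 4$ only one zero is needed, so the same halving is harmless there, at least for real $\chi$.
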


\begin{proof}
By~\eqref{eq:S-zeta-bound}, $S_\zeta \leq S_\zeta^*$. For $q \geq 4$, the BMOR bound guarantees a $\gamma'_{\mathrm{unk}}$ zero below height $T(q) \leq 10.5 < 11$, so $S_{L(\chi)} \geq w^-(11) = 1/61$. For $q = 3$, the BMOR bound at $T = 17$ guarantees at least three $\gamma'_{\mathrm{unk}}$ zeros below height $17$ (BMOR gives $N \geq 3.34$), so $S_{L(\chi)} \geq 3 \cdot w^-(17) = 6/290 > 1/61$. In both cases, $q \cdot S_{L(\chi)} \geq 2q/122$, and $E_\chi(0) = S_\zeta - q \cdot S_{L(\chi)} \leq S_\zeta^* - 2q/122$. Since $2 \cdot 3/122 = 3/61 = 0.04918 > S_\zeta^* = 0.04871$, this is negative for all $q \geq 3$.
\end{proof}

\subsection{Unconditional first-zero bound}

\begin{proposition}[First-zero bound]\label{prop:first-zero-bound}
For every fundamental discriminant $\Delta_K$, the first zero of $L(s, \chi_d)$ satisfies $\gamma_1'(d) < 14.13$.
\end{proposition}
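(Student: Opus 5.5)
The plan is to reuse the case split from the proof of Theorem~\ref{thm:low-lying}: a conductor-uniform argument via Theorem~\ref{thm:BMOR} for all large conductors, and direct computation for the finitely many small conductors left over. Since $\gamma_1'(d) < 14.13$ is equivalent to $N(T,\chi_d) \geq 1$ for some $T < 14.13$, it suffices to find one admissible height $T$ at which the BMOR lower bound exceeds $0$. Note that we only need existence of a zero with $0<\gamma'\le T$; whether $\beta = \half$ is irrelevant, so $\gamma'_{\mathrm{unk}}$ zeros suffice.

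\textbf{Step 1 (generic conductors).} Fix $T = 11$, as in Step~1 of Theorem~\ref{thm:low-lying}. There it was recorded that for $q = 12$ the lower bound of Theorem~\ref{thm:BMOR} is $N(11,\chi_d) \geq 3.80 > 0$. I would then check that this lower bound is increasing in $q$. Write $L = \log q$. The main term $(11/\pi)\log(11q/2\pi e)$ has $L$-derivative $11/\pi \approx 3.5$. The error terms $0.22737\,\ell + 2\log(1+\ell)$ have $L$-derivative at most $0.22737 + 2/(1+\ell) < 1$, since $\ell \geq \log(12\cdot 13/2\pi) > 3$. The $\chi(-1)/4$ term is constant. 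Hence $N(11,\chi_d) \geq 3.80 > 0$ for every $q \geq 12$, and so $\gamma_1'(d) \leq 11 < 14.13$. One could even take a smaller $T$ at larger $q$, but this is unnecessary.

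\textbf{Step 2 (exceptional conductors).} The conductors of real quadratic fields below $12$ are $q \in \{5, 8\}$, i.e.\ $d \in \{5, 2\}$. Here I would cite the certified zeros already exhibited in Steps~2 and~3 of Theorem~\ref{thm:low-lying}: $\gamma'_1 = 6.6485$ for $\chi_5$ and $\gamma'_1 = 4.89997\ldots$ for $\chi_2$. These are verified by a sign change of the Hardy $Z$-function plus ARB certification, and both are far below $14.13$. I would add a remark that the statement concerns the first zero, so it is enough to exhibit some zero below $14.13$; we do not need to know it is the lowest, since the first zero can only be lower still. The same reasoning covers "fundamental discriminant" in the statement, since negative discriminants are not in scope. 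If they were intended, the same BMOR argument at $T=11$ would handle $q \ge 12$ (the bound is insensitive to $\chi(-1)$ up to $\pm 1/2$), and $q\in\{3,4,7,8,11\}$ would need direct computation. For $q=3$, Theorem~\ref{thm:negativity} already uses BMOR at $T=17$, which does not suffice, so a direct zero computation (first zero near $8.04$) would be needed.

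\textbf{Main obstacle.} The delicate point is the monotonicity in $q$ in Step~1 together with the exact numerics at $q=12$. One must take care with the $\chi(-1)$ sign convention and with the requirement $\ell > 1.567$, which holds trivially here. The exceptional cases are pure citation of earlier certified data.
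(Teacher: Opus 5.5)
Your proof is correct, but it is organized differently from the paper's.

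\textbf{The paper's route.} The paper applies Theorem~\ref{thm:BMOR} once, at the larger height $T = 14.13$. Already at the smallest real-quadratic conductor $q = 5$ one has $\ell \approx 2.55 > 1.567$ and a lower bound above $3$. So there is no case split and no appeal to computed zeros; the growth of the bound in $q$ is left implicit.

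\textbf{Your route.} You reuse the $T = 11$ estimate from Step~1 of Theorem~\ref{thm:low-lying}, which is evaluated there only for $q \ge 12$. You then treat $q \in \{5,8\}$ by citing the certified Hardy-$Z$ sign changes. This gains three things: an explicit check that the BMOR lower bound increases with $q$, the sharper bound $\gamma_1' \le 11$ for $q \ge 12$, and the strict inequality for free. BMOR at $T = 14.13$ literally yields only $\gamma_1' \le 14.13$.

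\textbf{What the case split costs.} The split is only inherited from Theorem~\ref{thm:low-lying}, where several zeros below $11$ were needed. You need only one, and Step~2 of that proof already records $N(11,\chi_5) \ge 1.40 > 0$. Your monotonicity argument also works verbatim from $q = 5$ on: there $\ell \ge \log(65/2\pi) > 2.33$, so the error terms grow at rate at most $0.22737 + 2/3.33 < 0.83 < 11/\pi$. Hence a single BMOR application at $T = 11$ already covers every real quadratic field, with no zero computations.

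\textbf{Your aside on negative discriminants.} For the same reason, it is too pessimistic. At $T = 14.13$ the BMOR bound is already positive at $q = 3$, where $\ell \approx 2.04$, and it increases with $q$. So no direct computation would be needed there either.
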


\begin{proof}
We apply Theorem~\ref{thm:BMOR} with $T = 14.13$.  The smallest conductor among fundamental discriminants is $q = 5$ (for $d = 5$), giving $\ell = \log(5 \cdot 16.13/(2\pi)) = 2.552 > 1.567$, so the non-trivial case of BMOR applies for all $q \geq 5$.  The bound then gives $N(14.13, \chi_d) \geq 3$ for every primitive character with conductor $q \geq 5$; in particular, at least one zero has $\gamma' \leq 14.13$.  Since every fundamental discriminant has $q \geq 5$, the bound holds universally.
\end{proof}

\begin{remark}
This bound is the correct unconditional consequence of the BMOR theorem.  Numerical data shows $\gamma_1'(d) < 7$ for all $d$ tested, and under GRH, $\gamma_1'(d) \to 0$ as $d \to \infty$ consistent with Katz--Sarnak predictions.  However, the bound $\gamma_1' < 14.13$ is all that can be extracted unconditionally from zero-counting methods.
\end{remark}

\subsection{Robustness}

\begin{proposition}[Robustness]\label{prop:robustness}
The negativity and spacelike results are robust to RH-failure for $\zeta$, and to RH-failure for $L(s,\chi_d)$ in the presence of a Siegel zero. If the Riemann Hypothesis fails for $\zeta(s)$, off-line zeros have smaller Lorentzian weight by Proposition~\ref{prop:CLM}, decreasing $S_\zeta$. If RH fails for $L(s, \chi_d)$ via a Siegel zero $\rho_0 = \beta_0$ with $\beta_0$ near $1$, the weight $w(\rho_0) = 2/(\beta_0(1-\beta_0))$ diverges, increasing $S_L$. Both scenarios strengthen the inequalities.
\end{proposition}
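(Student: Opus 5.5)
The plan is to treat the two scenarios in Proposition~\ref{prop:robustness} separately. Throughout I will use the fact that the spacelike conclusion (Corollary~\ref{cor:spacelike}) and the negativity of $E_\chi(0)$ (Theorem~\ref{thm:negativity}) rest on only two inequalities: the upper bound $S_\zeta \le S_\zeta^*$ and a lower bound on $S_L$ (respectively $S_{L(\chi)}$) strictly exceeding $S_\zeta^*$. It is therefore enough to show that neither scenario can increase $S_\zeta$ beyond its certified bound, and neither can decrease $S_L$ below the lower bound used in Theorem~\ref{thm:low-lying}.

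\textbf{Failure of RH for $\zeta$.} Every nontrivial zeta zero has $|\gamma| \ge 14.13 > 1/(2\sqrt{3})$. So for each zero, Proposition~\ref{prop:CLM} gives $w(\beta,\gamma) \le w(\half,\gamma)$, with strict inequality when $\beta \ne \half$. An off-line zero also comes with its reflection $1-\bar\rho$ at the same height. Since $w$ is symmetric under $\beta \mapsto 1-\beta$, the pair contributes at most $2w(\half,\gamma)$.

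I need to be careful about counting here. The certified value $S_\zeta^{(6000)}$ uses verified on-line zeros, so it is unaffected. The tail bound uses Trudgian's $N(T)$, which counts all zeros in the strip regardless of $\beta$. Abel summation against $N(T)$ with the majorant $w(\half,\cdot)$, which is decreasing in $\gamma$, therefore still bounds the tail from above when some zeros are off the line. Hence $S_\zeta \le S_\zeta^*$ persists, and off-line zeros only lower $S_\zeta$.

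\textbf{Failure of RH for $L(s,\chi_d)$.} The lower bound in Theorem~\ref{thm:low-lying} was already made RH-free. In Step~1 it uses only $w^-$ from \eqref{eq:weight-lower}, valid for all $\beta\in(0,1)$. In Steps~2--3 it uses individually verified on-line zeros, and since all weights are positive, any further zeros only add to $S_L$.

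So an additional real Siegel zero $\rho_0 = \beta_0$ simply adds the nonnegative term $w(\rho_0)$. Plugging $\gamma = 0$ into \eqref{eq:weight-general} gives $w(\rho_0) = 2\beta_0(1-\beta_0)/(\beta_0(1-\beta_0))^2 = 2/(\beta_0(1-\beta_0))$, which tends to infinity as $\beta_0 \to 1$. Thus $S_L$ strictly increases, $d S_L^2 - S_\zeta^2$ increases, and the inequality becomes stronger. Theorem~\ref{thm:negativity} follows identically with $q S_{L(\chi)}$ in place of $d S_L^2$.

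\textbf{Main obstacle.} The delicate point is the $\zeta$ tail. Proposition~\ref{prop:CLM} is a pointwise statement, and I must confirm that the Abel-summation tail bound genuinely majorizes off-line contributions rather than assuming all zeros lie on the line. I would check this first, using the monotonicity of $w(\half,\cdot)$ together with the fact that Trudgian's bound is stated for all $0<\beta<1$.

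A minor point is the small-height exception $|\gamma|<1/(2\sqrt3)$ in Proposition~\ref{prop:CLM}. It is irrelevant for $\zeta$, whose zeros all lie high, and harmless for $L$, where only lower bounds on $S_L$ are needed.
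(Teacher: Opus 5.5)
Your proposal is correct and follows the paper's argument. On the $\zeta$ side it uses the pointwise weight comparison of Proposition~\ref{prop:CLM}; on the $L$ side it uses the explicit, positive Siegel-zero weight $2/(\beta_0(1-\beta_0))$, which is unbounded as $\beta_0 \to 1$. You also check that the Abel-summation tail bound behind $S_\zeta^*$ runs against Trudgian's count of all zeros in the strip, with the decreasing majorant $2/(\tfrac14+t^2)$, and that the lower bound on $S_L$ (via $w^-$ and verified on-line zeros) is already independent of $\beta$. These checks make explicit what the paper leaves implicit, and they in fact give robustness to any RH-failure for $L(s,\chi_d)$, not only to a Siegel zero.
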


\section{Ces\`aro Density Bounds via Almost Periodic Spectral Sums}\label{sec:cesaro}

\subsection{Shifted zero sums}

\begin{definition}[Shifted sums]\label{def:shifted}
For $\delta \in \R$, define
\[
S_\zeta(\delta) = \sum_{\rho:\,\zeta(\rho)=0} w(\rho) \cos(\gamma\delta), \qquad
S_L(\delta) = \sum_{\rho:\,L(\rho,\chi_d)=0} w(\rho) \cos(\gamma'\delta),
\]
and $N(\delta) = S_\zeta(\delta)^2 - d \cdot S_L(\delta)^2$.
\end{definition}

The analysis of Ces\`aro averages rests on the following classical theorem, which we state in the form needed here.

\begin{theorem}[Bohr {\cite{Bohr1947}}, Parseval theorem for almost periodic functions]\label{thm:bohr-parseval}
Let $f(x) = \sum_n a_n e^{i\lambda_n x}$ be a uniformly almost periodic function with
Bohr--Fourier coefficients $\{a_n\}$ at distinct real frequencies $\{\lambda_n\}$.
Then
\[
\lim_{T \to \infty} \frac{1}{2T}\int_{-T}^{T} |f(x)|^2\,dx = \sum_n |a_n|^2.
\]
More generally, for a second such function $g(x) = \sum_m b_m e^{i\mu_m x}$,
\[
\lim_{T \to \infty} \frac{1}{2T}\int_{-T}^{T} f(x)\,\overline{g(x)}\,dx
= \sum_{\lambda_n = \mu_m} a_n\,\overline{b_m}.
\]
In particular, this Ces\`aro inner product vanishes whenever the frequency sets
$\{\lambda_n\}$ and $\{\mu_m\}$ are disjoint.
\end{theorem}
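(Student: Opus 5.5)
The plan is to establish the statement first for trigonometric polynomials and then pass to the uniform limit; this is the standard route to Bohr's Parseval theorem. Both sides are sesquilinear in $(f,g)$, and the first assertion is the special case $g=f$. It therefore suffices to prove the general mean-value identity
\[
\lim_{T\to\infty}\frac{1}{2T}\int_{-T}^{T} f(x)\,\overline{g(x)}\,dx=\sum_{\lambda_n=\mu_m}a_n\overline{b_m}.
\]

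\textbf{Step 1 (exponentials).} I will compute the mean of a single exponential:
\[
\frac{1}{2T}\int_{-T}^{T}e^{i\lambda x}\,dx=\frac{\sin(\lambda T)}{\lambda T}\quad(\lambda\neq 0).
\]
This tends to $0$ as $T\to\infty$, while for $\lambda=0$ the mean is identically $1$. Consequently, for finite sums $P=\sum a_n e^{i\lambda_n x}$ and $Q=\sum b_m e^{i\mu_m x}$, expanding $P\overline{Q}$ term by term gives the identity exactly, because only the pairs with $\lambda_n=\mu_m$ survive.

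\textbf{Step 2 (approximation).} By the Bohr approximation theorem, every uniformly almost periodic $f$ is a uniform limit of trigonometric polynomials $P_k$. Moreover, these can be chosen as Bochner--Fejér means, whose frequencies lie in the spectrum of $f$ and whose coefficients are $c_{k,n}a_n$ with multipliers $0\le c_{k,n}\le 1$ and $c_{k,n}\to 1$. I will use the same construction for $g$, producing polynomials $Q_k$.

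The key estimate is that the Ces\`aro mean $M(u)=\lim\frac{1}{2T}\int_{-T}^{T}u$ exists for every almost periodic $u$ and satisfies $|M(u)|\le\|u\|_\infty$. This gives
\[
M(f\bar g)=\lim_k M(P_k\overline{Q_k}),
\]
and the existence of the limit is obtained through a Cauchy argument in $T$.

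\textbf{Step 3 (coefficients).} On the coefficient side, I will first show that Bessel's inequality $\sum|a_n|^2\le M(|f|^2)<\infty$ holds, using the orthogonality from Step 1. Then I will apply dominated convergence to
\[
\sum_{\lambda_n=\mu_m}c_{k,n}c'_{k,m}\,a_n\overline{b_m},
\]
with dominating series $\sum|a_n\overline{b_m}|$ over the coincident frequencies, which is finite by Cauchy--Schwarz. This identifies the limit of the right-hand side with $\sum_{\lambda_n=\mu_m}a_n\overline{b_m}$.

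The disjoint-frequency statement then follows immediately, since in that case the sum on the right is empty.

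The main obstacle is Step 2. There one must justify that the approximating polynomials can be taken with frequencies inside the original spectrum and with coefficients converging monotonically. The cleanest way to handle this is simply to cite the Bochner--Fejér summation theorem from Bohr's monograph, rather than reproving it here.
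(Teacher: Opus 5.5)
The paper does not prove this statement; it quotes it from Bohr's monograph, so your proposal has to stand on its own. As a deduction from the Bochner--Fej\'er theorem it is correct:
\begin{itemize}
\item Step~1 is the exact identity for trigonometric polynomials.
\item The bound $|M(u)|\le\|u\|_\infty$ transfers the uniform convergence $P_k\overline{Q_k}\to f\bar g$ to the means.
\item Bessel plus Cauchy--Schwarz supplies the dominating series. Each $n$ is paired with at most one $m$, because the frequencies in each family are distinct.
\item Monotonicity of the multipliers is never used; only $0\le c_{k,n}\le 1$ and $c_{k,n}\to 1$ matter.
\end{itemize}

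The weak point is exactly the step you outsource. In the classical development you propose to cite (Bohr's monograph, and likewise Besicovitch's), the approximation theorem is obtained \emph{after} and \emph{by means of} the fundamental theorem, i.e.\ Parseval's equation itself, via the uniqueness theorem. The uniform convergence of the Bochner--Fej\'er means comes from there as well. So as a standalone proof your argument is circular: it reduces Parseval to a result that, in that source, rests on Parseval. To keep your structure you would need the approximation theorem from an independent proof (Bogolyubov's, or the Bohr compactification plus Stone--Weierstrass).

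Better, the spectral control you identify as the main obstacle is not needed at all. Let $P$ be \emph{any} trigonometric polynomial with $\|f-P\|_\infty<\varepsilon$, and let $S$ be the partial Fourier sum of $f$ over the finite frequency set of $P$. Since $f-S$ has zero Fourier coefficient at every frequency of $S-P$, one has $M(|f-P|^2)=M(|f-S|^2)+M(|S-P|^2)$. Hence
\[
M(|f|^2)-\sum_n|a_n|^2\;\le\;M(|f-S|^2)\;\le\;M(|f-P|^2)\;\le\;\varepsilon^2,
\]
which together with Bessel gives the first identity. The mixed identity then follows by polarization, $u\bar v=\tfrac14\sum_{k=0}^{3}i^k|u+i^kv|^2$, applied both to $M(f\bar g)$ and to the coefficient sums (absolutely convergent by Bessel). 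This uses only that $f$ and $g$ are uniform limits of trigonometric polynomials, which is precisely how the paper uses the notion (Proposition~\ref{prop:almost-periodic}).

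In the paper's actual applications the series are even absolutely convergent. There your own Steps~1--3 go through verbatim with plain partial sums ($c_{k,n}=1$ for $n\le k$, $c_{k,n}=0$ otherwise), with no approximation theorem of any kind.
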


\begin{proposition}[Almost periodicity]\label{prop:almost-periodic}
The functions $S_\zeta(\delta)$ and $S_L(\delta)$ are real-valued uniformly almost periodic functions on $\R$.
\end{proposition}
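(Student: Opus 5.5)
The plan is to show that each series is a uniform limit of trigonometric polynomials. Uniform almost periodicity (in Bohr's sense) is closed under uniform limits, and each finite cosine sum is trivially uniformly almost periodic.

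For $S_\zeta$, define the partial sums $S_\zeta^{(K)}(\delta) = \sum_{k\le K} w(\rho_k)\cos(\gamma_k\delta)$. Each is a finite real trigonometric polynomial with frequencies $\pm\gamma_k$, hence uniformly almost periodic and real-valued. Since $|\cos|\le 1$, we have
\[
\sup_{\delta\in\R}\bigl|S_\zeta(\delta)-S_\zeta^{(K)}(\delta)\bigr| \le \sum_{k>K} w(\rho_k).
\]
This tail tends to $0$ because the weights are absolutely summable, as noted after Definition~\ref{def:S-sums}. Concretely, $w(\rho)=O(\gamma^{-2})$ and $N(T)=O(T\log T)$, so Abel summation gives $\sum_{\gamma>T} w(\rho)=O(\log T/T)$. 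For $\zeta$ this can even be made explicit via Theorem~\ref{thm:trudgian}, exactly as in the proof of Proposition~\ref{prop:S-zeta-value}. The Weierstrass M-test then gives uniform convergence on $\R$, and so $S_\zeta$ is uniformly almost periodic.

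For $S_L$ the argument is the same, with Theorem~\ref{thm:BMOR} supplying $N(T,\chi_d)=O(T\log(qT))$. The one point that needs care is that we do not assume GRH, so off-line zeros must be handled. I would interpret the summand in Definition~\ref{def:shifted} with the general weight \eqref{eq:weight-general} and frequency $\gamma'=\operatorname{Im}\rho$. The weight satisfies $w(\rho)\le 2/(\beta(1-\beta)+\gamma^2)\le 2/\gamma^2$ for $|\gamma|\ge 1$. Only finitely many zeros have $|\gamma|<1$, each contributing a bounded term; this includes a possible Siegel-type zero, for which $\cos(0\cdot\delta)=1$ is a constant and hence almost periodic. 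Real-valuedness holds because each term is a real weight times a cosine; pairing $\rho$ with $\bar\rho$ (and with $1-\bar\rho$ for off-line zeros) only regroups finitely many terms at each height, so it does not affect convergence.

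The main obstacle is not analytic, since the M-test does everything. It is making sure the summability bound is stated without RH, that is, checking that the weight bound $O(\gamma^{-2})$ holds uniformly in $\beta\in(0,1)$. The bound above shows this directly. As a remark, I would also note that uniform almost periodicity is what licenses applying Theorem~\ref{thm:bohr-parseval} later. The Bohr--Fourier coefficients are $w(\rho)/2$ at the frequencies $\pm\gamma$, with multiplicities combined if ordinates coincide. So the absolute convergence $\sum|a_n|<\infty$ gives even the stronger statement that $S_\zeta$ and $S_L$ have absolutely convergent Fourier series.
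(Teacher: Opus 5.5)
Your proof is correct and follows the same route as the paper. The weights are absolutely summable, so the cosine series converge uniformly on $\R$ (Weierstrass M-test) and are uniform limits of trigonometric polynomials, hence uniformly almost periodic, with coincident ordinates grouped into one frequency. Your additional check that $w(\rho)\le 2/(\beta(1-\beta)+\gamma^2)\le 2/\gamma^2$ holds uniformly in $\beta$ usefully makes explicit the RH-free summability that the paper only asserts.
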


\begin{proof}
Each is a uniformly absolutely convergent cosine series ($\sum w(\rho) < \infty$), hence a uniform limit of trigonometric polynomials, and therefore uniformly almost periodic in the sense of Theorem~\ref{thm:bohr-parseval}.  (If any zero has multiplicity $m > 1$, group equal ordinates into a single frequency with coefficient equal to $m$ times the weight.)
\end{proof}

\subsection{The Grand Simplicity Hypothesis}

\begin{hypothesis}[Grand Simplicity Hypothesis {\cite[Section~2]{RubinsteinSarnak}}]\label{hyp:GSH}
All non-trivial zeros of $\zeta(s)$ and of every primitive Dirichlet $L$-function
$L(s,\chi)$ are simple.  The multiset of all positive zero ordinates, taken over
$\zeta(s)$ and all primitive $L$-functions simultaneously, is $\Q$-linearly independent.
\end{hypothesis}

\noindent In particular, Hypothesis~\ref{hyp:GSH} implies that the zero ordinate sets of
any two distinct primitive $L$-functions are disjoint.  This paper invokes
Hypothesis~\ref{hyp:GSH} in two distinct and separable ways, each requiring only a
special case:

\begin{enumerate}[label=\textup{(\roman*)}]
\item \emph{Cross-function disjointness} (Proposition~\ref{prop:parseval}): the zero ordinates of $\zeta(s)$ and
$L(s,\chi_d)$ are disjoint.  This is strictly weaker than full $\Q$-linear independence.
\item \emph{Intra-function $\Q$-independence} (Theorem~\ref{thm:density-vanishing}): the
zero ordinates of $L(s,\chi_d)$ alone are $\Q$-linearly independent.  This is the
hypothesis under which the Ces\`aro distribution of $S_L(\delta)$ is identified with a
product of independent random variables via the Kronecker--Weyl theorem.
\end{enumerate}

The unconditional results of this paper (Theorems~\ref{thm:low-lying},
\ref{thm:negativity}, \ref{thm:cesaro}, and \ref{thm:density}) require
neither sub-case.

\subsection{Parseval identity}

\begin{definition}[Moment weights]\label{def:moments}
The $k$-th Lorentzian moment weight is $W_k(f) = \sum_\rho w(\rho)^k$.
\end{definition}

\begin{proposition}[Parseval identity]\label{prop:parseval}
Assume Hypothesis~\ref{hyp:GSH}~\textup{(i)} (cross-function disjointness of zero ordinate sets) and simplicity of zeros within each function.  Then the Ces\`aro mean of the norm-form energy satisfies
\[
  \langle N \rangle := \lim_{T \to \infty} \frac{1}{T} \int_0^T N(\delta) \, d\delta = \frac{1}{2}(W_2(\zeta) - d \cdot W_2(L)).
\]
Without the disjointness hypothesis, the formula acquires a non-negative cross-term:
\[
  \langle N \rangle = \frac{1}{2}W_2(\zeta) - \frac{d}{2}\,W_2(L) + d \sum_{\gamma_k = \gamma_j'} w(\gamma_k)\,w(\gamma_j'),
\]
so the disjointness-free identity gives a weaker (less negative) bound on $\langle N \rangle$.  Without simplicity, the Parseval norms satisfy $\langle S_\zeta^2 \rangle \geq \half W_2(\zeta)$ (with equality if and only if all zeros are simple), so the exact formula requires simplicity while the negativity does not.
\end{proposition}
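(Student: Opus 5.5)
Expand $N(\delta)=S_\zeta(\delta)^2 - d\,S_L(\delta)^2$ and compute the Ces\`aro mean of each square separately using Theorem~\ref{thm:bohr-parseval}. By Proposition~\ref{prop:almost-periodic}, both $S_\zeta(\delta)$ and $S_L(\delta)$ are uniformly almost periodic. Writing $\cos(\gamma\delta)=\half(e^{i\gamma\delta}+e^{-i\gamma\delta})$, we get
\[
S_\zeta(\delta)=\sum_{\gamma>0}\tfrac{1}{2}w(\gamma)\bigl(e^{i\gamma\delta}+e^{-i\gamma\delta}\bigr),
\]
a Bohr--Fourier series with frequencies $\pm\gamma$. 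If the zeros are simple, the positive ordinates are distinct, so the frequencies $\pm\gamma$ are all distinct (none is zero since $\gamma\ge 14.13$). Parseval then gives $\langle S_\zeta^2\rangle = \sum_\gamma 2\cdot(\tfrac14 w(\gamma)^2)=\half W_2(\zeta)$. The same argument gives $\langle S_L^2\rangle=\half W_2(L)$, once we note that no ordinate $\gamma'$ vanishes, i.e.\ $L(\half,\chi_d)$-type real zeros are excluded because only $\gamma'>0$ is summed.

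The symmetric limit $\frac1{2T}\int_{-T}^T$ equals the one-sided limit $\frac1T\int_0^T$ because $N(\delta)$ is even in $\delta$. By linearity of the Ces\`aro mean,
\[
\langle N\rangle=\half\bigl(W_2(\zeta)-d\,W_2(L)\bigr).
\]

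For the claims without hypotheses, the plan is to group coincident frequencies, as in the proof of Proposition~\ref{prop:almost-periodic}. If an ordinate of multiplicity $m$ has per-zero weight $w$, its coefficient is $\half m w$, and its Parseval contribution $\half m^2w^2$ is at least the $\half m w^2$ that $W_2$ counts. This gives $\langle S_\zeta^2\rangle\ge\half W_2(\zeta)$, with equality iff every $m=1$.

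For the cross-function correction, I would treat the regrouped series as the data. I expect the main obstacle to be reconciling this with the paper's stated formula. The disjointness hypothesis concerns how $\zeta$- and $L$-frequencies interact. However, the cross term $\langle S_\zeta S_L\rangle=\half\sum_{\gamma_k=\gamma'_j}w(\gamma_k)w(\gamma'_j)$ from the polarized form of Theorem~\ref{thm:bohr-parseval} never enters $N$ directly, since $N$ contains only the two squares.

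I would therefore interpret the correction as the comparison between $\langle N\rangle$ and the value $\langle S_\zeta^2\rangle-d\langle S_L^2\rangle$ obtained when the paired terms of the norm form are expanded as in the Jacobi--Anger setup. Concretely, I would write
\[
N=(S_\zeta-\sqrt d S_L)(S_\zeta+\sqrt d S_L)
\]
and compute the means using the polarized Parseval identity, term by term. I would then check that the surviving terms carry coefficient $d\,w(\gamma_k)w(\gamma'_j)$ on coincident frequencies and have nonnegative sign, since weights are positive. Together with the inequality from multiplicities, this shows that dropping the hypotheses only moves $\langle N\rangle$ upward, i.e.\ makes the bound weaker. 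I would verify this bookkeeping carefully, since the sign and factor of the cross term is where an error is most likely.
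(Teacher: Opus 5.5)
Your treatment of the first formula and of the multiplicity inequality is the paper's argument: Bohr--Parseval applied to each square separately, after grouping coincident ordinates. It is correct, and, as you noticed, it never uses Hypothesis~\ref{hyp:GSH}\textup{(i)}.

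The gap is your last paragraph. The factorization cannot produce a cross-term. Pointwise,
\[
(S_\zeta-\sqrt d\,S_L)(S_\zeta+\sqrt d\,S_L)=S_\zeta^2+\sqrt d\,S_\zeta S_L-\sqrt d\,S_L S_\zeta-d\,S_L^2=S_\zeta^2-d\,S_L^2,
\]
so the two mixed terms cancel before any averaging. The polarized Parseval identity therefore gives them total coefficient $0$, not $d\,w(\gamma_k)w(\gamma_j')$.

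No bookkeeping can repair this, because the displayed ``disjointness-free'' formula is false as soon as a shared ordinate exists. By linearity $\langle N\rangle=\langle S_\zeta^2\rangle-d\langle S_L^2\rangle$, and each mean is determined by the frequencies of one function alone. A coincidence $\gamma_k=\gamma_j'$ therefore leaves $\langle N\rangle=\half(W_2(\zeta)-d\,W_2(L))$ unchanged, whereas the stated formula would add the strictly positive amount $d\,w(\gamma_k)w(\gamma_j')$.

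The covariance $\langle S_\zeta S_L\rangle=\half\sum_{\gamma_k=\gamma_j'}w(\gamma_k)w(\gamma_j')$ (Proposition~\ref{prop:covariance}) is what the paper's proof computes and shows to vanish under disjointness. But it does not occur in $\langle N\rangle$, so the paper's proof does not establish the middle claim either. Your own observation was right: the exact formula holds without the disjointness hypothesis. The middle assertion and the accompanying ``weaker bound'' remark should be corrected rather than proved.

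Two smaller points.

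First, ``simple zeros have distinct positive ordinates'' needs RH. By the functional equation, an off-line zero $\beta+i\gamma$ comes with $1-\beta+i\gamma$. That is a distinct simple zero with the same ordinate and, by \eqref{eq:weight-general}, the same weight. The pair merges into one frequency with coefficient $2w$, giving strict inequality in Parseval. So the exact formula, and the ``equality iff'' clause, really require that each ordinate carry exactly one zero. Your grouping argument handles this correctly if $m$ counts all zeros at a given ordinate, not the multiplicity of a single zero. The paper makes the same identification when it writes ``each ordinate appearing once''.

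Second, the multiplicity inequality does not push $\langle N\rangle$ uniformly upward. Excess multiplicity among the zeros of $L(s,\chi_d)$ raises $\langle S_L^2\rangle$, which enters with coefficient $-d$, so it makes $\langle N\rangle$ more negative.
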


\begin{proof}
By Theorem~\ref{thm:bohr-parseval}, applied to the grouped expansion (combining any repeated ordinates into a single frequency with coefficient equal to the total weight at that ordinate), $\langle S_\zeta(\delta)^2 \rangle = \half \sum_{\text{distinct }\gamma} c_\gamma^2$ and $\langle S_L(\delta)^2 \rangle = \half \sum_{\text{distinct }\gamma'} {c'_{\gamma'}}^2$, where $c_\gamma = \sum_{\rho:\,\operatorname{Im}(\rho) = \gamma} w(\rho)$.  Under simplicity of zeros (each ordinate appearing once), $c_\gamma = w(\gamma)$ and the sums reduce to $\half W_2(\zeta)$ and $\half W_2(L)$ respectively.  The cross-term $\langle S_\zeta \cdot S_L \rangle = \half \sum_{\gamma_k = \gamma_j'} c_{\gamma_k}\,c'_{\gamma_j'}$ vanishes when the zero ordinate sets are disjoint.
\end{proof}

\subsection{Ces\`aro spacelike theorem}

\begin{proposition}[Positive mass bound]\label{prop:positive-mass}
For all $\delta \in \R$: $N(\delta) \leq S_\zeta(0)^2$. In particular, $\sup_\delta N(\delta) \leq (S_\zeta^*)^2 < 0.0024$.
\end{proposition}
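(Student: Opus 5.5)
The bound follows from two elementary observations: the subtracted term $d\,S_L(\delta)^2$ is never negative, and a cosine-weighted sum with nonnegative weights is largest in absolute value at $\delta=0$. We then insert the numerical bound on $S_\zeta$ from Proposition~\ref{prop:S-zeta-value}.

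\textbf{Step 1: drop the $L$-term.} Since $d>0$ and $S_L(\delta)$ is real (Proposition~\ref{prop:almost-periodic}), we have $-d\,S_L(\delta)^2\le 0$. Hence
\[
N(\delta)\le S_\zeta(\delta)^2 \quad \text{for every } \delta\in\R .
\]

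\textbf{Step 2: bound $|S_\zeta(\delta)|$ by its value at $0$.} Every weight $w(\rho)$ is positive. For zeros on the critical line this is immediate from the formula $2/(\quarter+\gamma^2)$. For hypothetical off-line zeros, \eqref{eq:weight-general} shows positivity, because $\beta(1-\beta)+\gamma^2>0$. Combining positivity with $|\cos(\gamma\delta)|\le 1$ and absolute convergence of the series gives
\[
|S_\zeta(\delta)|\le \sum_\rho w(\rho)\,|\cos(\gamma\delta)|\le \sum_\rho w(\rho)=S_\zeta(0)=S_\zeta .
\]
Squaring yields $S_\zeta(\delta)^2\le S_\zeta(0)^2$, which together with Step~1 proves the first claim.

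\textbf{Step 3: the numerical bound.} Proposition~\ref{prop:S-zeta-value} gives $0< S_\zeta\le S_\zeta^*=0.04871$. Therefore
\[
\sup_\delta N(\delta)\le (S_\zeta^*)^2 = 0.0023726\ldots < 0.0024 .
\]

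None of the steps is a real obstacle. The only point that needs care is the positivity of the weights for unverified off-line zeta zeros, and this follows directly from \eqref{eq:weight-general}. It is also the only place where the argument might seem to rely on RH, and it does not: the whole argument is unconditional.
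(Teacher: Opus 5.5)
Your proof is correct and follows the paper's argument: drop the nonpositive term $-d\,S_L(\delta)^2$, bound $|S_\zeta(\delta)|$ by $S_\zeta(0)$ using the triangle inequality with nonnegative weights, and then insert $S_\zeta\le S_\zeta^*$. Your explicit check that the off-line weight \eqref{eq:weight-general} is positive fills in a point the paper's one-line proof takes for granted.
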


\begin{proof}
Since $d > 1$ and all weights are positive, $N(\delta) = S_\zeta(\delta)^2 - d \cdot S_{L}(\delta)^2 \leq S_\zeta(\delta)^2 \leq S_\zeta(0)^2$, where the last inequality follows from $|S_\zeta(\delta)| = |\sum_\rho w(\rho)\cos(\gamma\delta)| \leq \sum_\rho w(\rho) = S_\zeta(0)$.
\end{proof}

\begin{proposition}[Moment hierarchy]\label{prop:moment-hierarchy}
For every squarefree $d > 1$ and every integer $k \geq 1$: $W_k(\zeta) < d \cdot W_k(L)$.
\end{proposition}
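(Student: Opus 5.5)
We want $W_k(\zeta) < d\,W_k(L)$ for every $k \ge 1$. For $k=1$ this is $S_\zeta < d\,S_L$. It follows at once from Theorem~\ref{thm:low-lying} and Proposition~\ref{prop:S-zeta-value}: $S_\zeta \le S_\zeta^* < S_L < d\,S_L$. For $k \ge 2$ the plan is to compare against a single dominant $L$-zero (or a few), and to bound $W_k(\zeta)$ crudely by its largest term times $S_\zeta$.

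\textbf{Zeta side.} Every zeta zero has $\gamma \ge \gamma_1 = 14.134\ldots$, so every weight satisfies $w(\rho) \le w_1 := 2/(\tfrac14 + \gamma_1^2) \approx 0.01000$. Off-line zeros are covered by Proposition~\ref{prop:CLM}, which applies since $|\gamma| \ge 14 > 1/(2\sqrt3)$. Hence
\[
W_k(\zeta) = \sum_\rho w(\rho)^k \le w_1^{\,k-1} \sum_\rho w(\rho) \le w_1^{\,k-1} S_\zeta^*.
\]

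\textbf{$L$ side, fix a zero.} Fix one zero $\rho'$ of $L(s,\chi_d)$ and note $W_k(L) \ge w(\rho')^k$. Proposition~\ref{prop:first-zero-bound} only gives $\gamma_1' < 14.13$, so $w \gtrsim 0.01$, which barely beats $w_1$. That is fine for large $k$ but does not give a uniform margin. The better choice is the zero ordinates used in the proof of Theorem~\ref{thm:low-lying}.
\begin{itemize}[label=--]
\item For $q \ge 12$, there is a zero with $\gamma' \le 11$ and $w \ge w^-(11) = 1/61 \approx 0.0164 > w_1$, using \eqref{eq:weight-lower} for possibly off-line zeros.
\item For $d = 5$, there is a verified zero $\gamma' = 6.6485$ with $w = 0.04499$.
\item For $d = 2$, there is a verified zero with $w = 0.08246$.
\end{itemize}
In each case call this lower bound $a$, so $a > w_1$. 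It then suffices to show
\[
w_1^{\,k-1} S_\zeta^* < d\, a^k, \quad\text{i.e.}\quad \frac{S_\zeta^*}{w_1}\Bigl(\frac{w_1}{a}\Bigr)^{k} < d .
\]
The left side is decreasing in $k$ because $w_1 < a$, so only the case $k=2$ remains (the case $k=1$ was done directly).

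\textbf{Obstacle: the case $k=2$ in the generic regime.} The hard case is $q \ge 12$, where the left side is about $4.87 \cdot (0.01/0.0164)^2 \approx 1.8$. This exceeds $d = 3$? No: $1.8 < 3$, so it works for $d = 3$, and for all $d \ge 3$ since the bound is uniform. For $d = 5$, $4.87 \cdot (0.01/0.045)^2 \approx 0.24 < 5$, and $d = 2$ is similar. If the margin turns out too thin anywhere, I would instead use the four zeros below height $11$ guaranteed by Step~1 of Theorem~\ref{thm:low-lying}, giving $W_2(L) \ge 4/61^2$, or the sharper certified value $S_\zeta \le 0.04625$. I would also tighten $w_1$ to its exact value. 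Finally, I would note that off-line zeta zeros only decrease $W_k(\zeta)$ (Proposition~\ref{prop:CLM}), so nothing in the argument assumes RH.
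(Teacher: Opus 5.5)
Your proof is correct, and its skeleton matches the paper's:
\begin{itemize}
\item the case $k=1$ comes from Theorem~\ref{thm:low-lying};
\item the zeta side uses $W_k(\zeta)\le w_1^{k-1}S_\zeta^*$ via Proposition~\ref{prop:CLM};
\item the remaining cases reduce to $k=2$ because the $L$-side witness weight exceeds $w_1$.
\end{itemize}

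The difference is the choice of witness. The paper applies Theorem~\ref{thm:BMOR} once, at $T=14$, so its witnesses have weight only $w^-(14)=2/197$, barely above $w_1$. With a single such zero the $k=2$ condition would read $d>4.725$, which fails for $d=2,3$. The paper therefore has to use the number $n(d)$ of such zeros and check $d\cdot n(d)>4.725$.

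You instead reuse the lower witnesses from the proof of Theorem~\ref{thm:low-lying}. Their weights exceed $w_1$ by a factor of at least $1.6$, so one zero suffices. In fact $61^2\,w_1 S_\zeta^*\approx 1.81<2$, so the bound $a\ge 1/61$ alone covers every $d$, since the certified first zeros for $d=2,5$ lie below height $11$. Your case split is needed only to exhibit the witness, not to verify the inequality.

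The paper's route buys a single uniform appeal to zero counting at one height. Yours buys wider margins and independence from exact zero counts. That independence is worth having. The $N(T,\chi)$ of Theorem~\ref{thm:BMOR} counts zeros with $|\gamma|\le T$, i.e.\ in both half-planes. As a check, its main term at $q=5$, $T\approx 284$ is about $400$, twice the $200$ tabulated zeros of $L(s,\chi_5)$. So the upper-half-plane counts behind the paper's $n(d)$ are roughly half the stated values. The paper's inequality $d\cdot n(d)>4.725$ still holds with halved counts. Your argument needs only one zero below height $11$ and is unaffected.
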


\begin{proof}
For $k = 1$, this is $S_\zeta < d \cdot S_L$, which follows from $S_\zeta < S_L$ (Theorem~\ref{thm:low-lying}) and $d > 1$.

For $k \geq 2$, set $a_1 = w(\gamma_1) = 2/(\quarter + \gamma_1^2)$ (the largest zeta weight; an upper bound regardless of RH by critical line maximality, Proposition~\ref{prop:CLM}).

\begin{remark}[Why the unconditional weight suffices]\label{rem:unconditional-moment}
The denominator penalty for not knowing $\beta = \half$ is $3/4$: the unconditional weight uses $1 + \gamma'^2$ instead of $\quarter + \gamma'^2$.  The crossover height where $w^-(T) = a_1$ is $T^* = \sqrt{\gamma_1^2 - 3/4} = 14.108\ldots$, so any zero below height $14.108$ has $w^- > a_1$ unconditionally.  Since $14 < 14.108$, the BMOR bound at $T = 14$ lands safely below the crossover.  This ensures $w^-(14)/a_1 > 1$, which makes $k = 2$ the binding case.  The actual $k = 2$ verification then requires the \emph{count} of BMOR-guaranteed zeros: a single zero does not suffice, since $w^-(14) = 0.01015 < S_\zeta^* = 0.04871$, but $d \cdot n(d)$ zeros collectively overcome the deficit.
\end{remark}

Upper bound on $W_k(\zeta)$: since $a_j \leq a_1$ for all $j$, we have $a_j^k \leq a_j \cdot a_1^{k-1}$, and thus
\[
  W_k(\zeta) = \sum_j a_j^k \leq a_1^{k-1} \sum_j a_j = a_1^{k-1} \cdot S_\zeta^*.
\]

Lower bound on $W_k(L)$: Theorem~\ref{thm:BMOR} at $T = 14$ guarantees $n(d)$ zeros below height $14$ for every fundamental discriminant, each with unconditional weight at least $w^-(14) = 2/197$.  Therefore $W_k(L) \geq n(d) \cdot w^-(14)^k$.

Combining: the inequality $d \cdot W_k(L) > W_k(\zeta)$ follows from $d \cdot n(d) \cdot w^-(14)^k > a_1^{k-1} \cdot S_\zeta^*$.  Since $w^-(14)/a_1 = 200.04/197 = 1.0154 > 1$ (see Remark~\ref{rem:unconditional-moment}), the left side grows faster in $k$ than the right; the binding constraint is $k = 2$, which reduces to
\[
  d \cdot n(d) > \frac{a_1 \cdot S_\zeta^*}{w^-(14)^2} = 4.725.
\]
Three cases exhaust all squarefree $d > 1$.  For $d = 5$ ($q = 5$): BMOR gives $n \geq 3$, so $d \cdot n \geq 15$.  For $d = 2$ ($q = 8$): BMOR gives $n \geq 5$, so $d \cdot n \geq 10$.  For all remaining $d$ ($q \geq 12$): BMOR gives $n \geq 7$ and $d \geq 3$, so $d \cdot n \geq 21$.  The worst case is $d = 2$, clearing the threshold by a factor of $2.12$.
\end{proof}

\begin{theorem}[Unconditional Ces\`aro variance bound]\label{thm:cesaro}
For every squarefree $d > 1$ and every $T > 0$:
\[
\langle N \rangle_T := \frac{1}{T} \int_0^T N(\delta) \, d\delta < 0.
\]
\end{theorem}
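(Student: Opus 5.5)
The plan is to reduce the claim to a comparison of two time-averaged squares,
\[
\langle N\rangle_T=\langle S_\zeta^2\rangle_T-d\,\langle S_L^2\rangle_T ,
\]
and to prove $d\,\langle S_L^2\rangle_T>\langle S_\zeta^2\rangle_T$ uniformly in $T>0$. No Ces\`aro limit or independence hypothesis is used, consistent with the claim that Theorem~\ref{thm:cesaro} needs no part of Hypothesis~\ref{hyp:GSH}.

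\textbf{Bounding the zeta side.} Exactly as in Proposition~\ref{prop:positive-mass}, $|S_\zeta(\delta)|\le S_\zeta(0)\le S_\zeta^*$ pointwise. Hence $\langle S_\zeta^2\rangle_T\le (S_\zeta^*)^2<0.0024$ for every $T$. All the work is therefore a uniform lower bound $d\,\langle S_L^2\rangle_T>(S_\zeta^*)^2$.

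\textbf{Kernel expansion and a one-frequency projection.} Expand the square using
\[
\frac1T\int_0^T\cos(x\delta)\cos(y\delta)\,d\delta=\tfrac12\bigl(K((x-y)T)+K((x+y)T)\bigr),\qquad K(u)=\frac{\sin u}{u}.
\]
This kernel is positive semidefinite, since it is a Gram matrix in $L^2[0,T]$. For the same reason I will lower-bound $\langle S_L^2\rangle_T$ by projecting onto a single test function with Cauchy--Schwarz in $L^2([0,T],d\delta/T)$:
\[
\langle S_L^2\rangle_T\ \ge\ \frac{\langle S_L\,\varphi\rangle_T^2}{\langle\varphi^2\rangle_T},\qquad \varphi(\delta)=\cos(\gamma'_1\delta).
\]
Here $\gamma'_1$ is the lowest zero, or the lowest two for $d=5$ as in Step~2 of Theorem~\ref{thm:low-lying}. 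Then
\[
\langle S_L\varphi\rangle_T= w_1\langle\varphi^2\rangle_T+\sum_{j\ge2}w_j\,\tfrac12\bigl(K((\gamma'_j-\gamma'_1)T)+K((\gamma'_j+\gamma'_1)T)\bigr).
\]
The cross terms are bounded by $\sum_{j\ge 2} w_j\min\{1,\,1/(|\gamma'_j-\gamma'_1|T)\}$. For that bound to be useful, I need a separation between $\gamma'_1$ and the next ordinate; this comes from the certified zero data in the appendices, or from BMOR (Theorem~\ref{thm:BMOR}) with the $w^-$ weights in the generic-conductor case.

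\textbf{Splitting into regimes.} I split $T$ into a small and a large range.
\begin{itemize}
\item \emph{Small $T$} ($T\le T_0$ with $\gamma'_1T_0$ a small constant). Use pointwise control instead: on $[0,T_0]$ the low-zero part of $S_L(\delta)$ stays close to its value at $0$. The high-zero tail has total weight that is small compared with the margin $S_L>S_\zeta^*$ from Theorem~\ref{thm:low-lying}. So $S_L(\delta)^2$ stays above $(S_\zeta^*)^2/d$ throughout, and integrating gives the claim.
\item \emph{Large $T$}. The cross terms decay like $1/T$, so $\langle S_L^2\rangle_T\gtrsim \tfrac12 w_1^2$ up to an explicit error. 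Then $d\cdot\tfrac12 w_1^2>(S_\zeta^*)^2$ is exactly the $k=2$ moment inequality, already verified with room to spare in Proposition~\ref{prop:moment-hierarchy}; the factor $d\ge2$ absorbs the $\tfrac12$.
\end{itemize}

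\textbf{Main obstacle.} I expect the hard part to be the intermediate window of $T$, where neither the pointwise argument nor the $1/T$ decay of the cross terms is yet effective. There the sum $\sum_j w_j/(|\gamma'_j-\gamma'_1|T)$ must be bounded explicitly, and for generic $d$ only $\gamma'_{\mathrm{unk}}$ information is available.

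My first attempt there is to replace the single cosine $\varphi$ by a finite trigonometric polynomial built from the first few zeros. The resulting finite Gram matrix, in the kernel $K$, can then be checked numerically to be well-conditioned on that $T$-window. If that fails, I would enlarge $T_0$ using a Taylor bound for the finitely many certified low zeros and treat the remaining tail trivially by its total weight.
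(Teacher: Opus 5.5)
Your reduction to $d\,\langle S_L^2\rangle_T>(S_\zeta^*)^2$ is the same as the paper's. The lower bound on $\langle S_L^2\rangle_T$, however, has a genuine gap, and one step rests on a misidentification.

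First, the large-$T$ inequality $\frac d2 w_1^2>(S_\zeta^*)^2$ is \emph{not} the $k=2$ case of Proposition~\ref{prop:moment-hierarchy}. That proposition compares $d\,W_2(L)$ with $W_2(\zeta)=\sum a_k^2\le a_1S_\zeta^*\approx4.9\times10^{-4}$. You must instead beat $(S_\zeta^*)^2\approx2.37\times10^{-3}$, and you have only $\frac12 w_1^2$ on the left. For the tabulated $d$ this can be checked directly (e.g.\ $\frac52 w_1^2\approx5.1\times10^{-3}$ for $d=5$). For untabulated conductors the only unconditional input is $w_1\ge w^-(11)=1/61$, which suffices only for $d\ge18$; the cases $d\in\{14,15,17\}$ would need a separate BMOR computation at lower height.

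Second, and more seriously, your cross-term bound $\sum_{j\ge2}w_j\min\{1,1/(|\gamma_j'-\gamma_1'|T)\}$ needs lower bounds on the gaps $\gamma_j'-\gamma_1'$. Such bounds exist only for the finitely many $d$ with certified zero tables. For every other $d$, BMOR gives counts, not positions. Nothing available unconditionally excludes several zeros of weight comparable to $w_1$ sitting about $4.49/T$ above $\gamma_1'$, where $\sin u/u$ attains its minimum $\approx-0.217$. The only uniform bound is $\langle S_L\varphi\rangle_T\ge0.39\,w_1-0.22\,(S_L-w_1)$, and its right-hand side is negative as soon as $S_L>2.8\,w_1$, which is the typical situation for large $q$. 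Cauchy--Schwarz with a fixed test function then yields only $\langle S_L^2\rangle_T\ge0$, and a numerical Gram-matrix check can cover only finitely many $d$. So the intermediate window you flag is exactly where the argument is missing, and it is missing for all but finitely many $d$. The paper meets the same obstacle: it uses certified grid scans of the finite Gram form for the eight small $d$, and for $d\ge14$ a diagonal bound $c_0W_2(L)$ plus one representative five-zero configuration.

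The missing idea is to use a positive kernel instead of a projection; then no spacing information is needed. Since $S_L$ is even and $0\le1-|\delta|/T\le1$ on $[-T,T]$,
\[
\langle S_L^2\rangle_T\ \ge\ \frac{1}{2T}\int_{-T}^{T}\Bigl(1-\frac{|\delta|}{T}\Bigr)S_L(\delta)^2\,d\delta
=\frac14\sum_{j,k}w_jw_k\left[\Bigl(\frac{\sin u_{jk}^-}{u_{jk}^-}\Bigr)^2+\Bigl(\frac{\sin u_{jk}^+}{u_{jk}^+}\Bigr)^2\right],
\qquad u_{jk}^{\pm}=\tfrac12(\gamma_j'\pm\gamma_k')T,
\]
where $\sin u/u:=1$ at $u=0$. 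Every term is non-negative, so keeping only $j=k$ gives $\langle S_L^2\rangle_T\ge\frac14W_2(L)$ for every $T>0$, with no hypothesis on the ordinates. It then suffices that $d\,W_2(L)>4(S_\zeta^*)^2\approx9.49\times10^{-3}$, which holds in every case:
\begin{enumerate}[label=\textup{(\alph*)}]
\item $d=5$: the two certified zeros give $5(w_1^2+w_2^2)\approx1.23\times10^{-2}$.
\item $d=2$: one zero gives $2w_1^2\approx1.36\times10^{-2}$.
\item The other tabulated $d$ have $\gamma_1'<4$, so $d\,w_1^2$ alone clears the threshold.
\item $d\ge14$: the five BMOR zeros below height $11$ give $d\,W_2(L)\ge14\cdot5/61^2\approx1.88\times10^{-2}$.
\end{enumerate}
This covers all $T>0$ at once and replaces both of your regimes.
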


\begin{proof}
The Positive Mass Bound (Proposition~\ref{prop:positive-mass}) gives the unconditional decoupling: since $|S_\zeta(\delta)| \leq S_\zeta(0) = S_\zeta^*$ for all $\delta$, averaging yields
\[
  \langle N \rangle_T = \langle S_\zeta^2 \rangle_T - d \cdot \langle S_L^2 \rangle_T \leq (S_\zeta^*)^2 - d \cdot \langle S_L^2 \rangle_T.
\]
It therefore suffices to show
\begin{equation}\label{eq:cesaro-threshold}
  \langle S_L^2 \rangle_T > (S_\zeta^*)^2 / d \qquad\text{for every } T > 0 \text{ and every squarefree } d > 1.
\end{equation}

The Ces\`aro average of a squared shift sum admits the closed form
\[
  \langle S_f^2 \rangle_T = \sum_{j,k} w(\rho_j) w(\rho_k) \cdot F(\gamma_j, \gamma_k, T),
\]
where
\[
  F(a,b,T) = \begin{cases} \dfrac{1}{2} + \dfrac{\sin(2aT)}{4aT} & \text{if } a = b, \\[6pt] \dfrac{\sin((a-b)T)}{2(a-b)T} + \dfrac{\sin((a+b)T)}{2(a+b)T} & \text{if } a \neq b. \end{cases}
\]
Each diagonal term satisfies $F(a,a,T) \geq c_0 := (\pi-1)/(2\pi)$ for all $T > 0$.  Indeed, $F(a,a,T) = \frac{1}{2} + \frac{\sin(2aT)}{4aT}$, and $\sin(x)/x \geq -1/\pi$ for all $x > 0$ since the function is non-negative on $(0,\pi)$ and bounded by $|\sin(x)/x| \leq 1/x \leq 1/\pi$ for $x \geq \pi$.

\medskip\noindent\textbf{Case 1: $d \geq 14$ squarefree.}
The diagonal contribution to $\langle S_L^2\rangle_T$ satisfies
\[
  \sum_k w_k^2 F(\gamma_k', \gamma_k', T) \geq c_0 W_2(L)
\]
for all $T > 0$, where $c_0 = (\pi-1)/(2\pi)$.  For $q \geq 17$ and $T = 11$, Theorem~\ref{thm:BMOR} guarantees at least five $\gamma'_{\mathrm{unk}}$ zeros below height $11$, giving $W_2(L) \geq 5\,w^-(11)^2 = 5(8/488)^2 = 320/238144$.  Therefore $c_0 W_2(L) \geq 320(\pi-1)/(476288\pi) > 4.58 \times 10^{-4}$, while $(S_\zeta^*)^2/d \leq (0.04871)^2/14 < 1.70 \times 10^{-4}$, a margin of $2.7\times$ or better for all $d \geq 14$.  For a representative five-$\gamma'_{\mathrm{unk}}$-zero configuration (equally spaced in $[6,11]$), the interference constant evaluates to $\mathcal{I} = 5.47 \times 10^{-3}$, so the full Ces\`aro variance exceeds the threshold for all $T > 3.1$.

The full Ces\`aro variance decomposes as
\[
  \langle S_L^2\rangle_T
  = \underbrace{\sum_k w_k^2 F(\gamma_k', \gamma_k', T)}_{\geq\, c_0 W_2}
  + \underbrace{\sum_{j \neq k} w_j w_k F(\gamma_j', \gamma_k', T)}_{\mathcal{O}(T)},
\]
where $\mathcal{O}(T)$ denotes the off-diagonal (coherent) contribution.  Since $F(a,b,T) = O(1/T)$ for $a \neq b$ by the Riemann--Lebesgue lemma, the off-diagonal satisfies $|\mathcal{O}(T)| \leq \mathcal{I}/T$ where
\[
  \mathcal{I} = \sum_{j \neq k} w_j w_k \bigl[1/(2|\gamma_j' - \gamma_k'|) + 1/(2(\gamma_j' + \gamma_k'))\bigr]
\]
is the interference constant.  For any $T > \mathcal{I}/(c_0 W_2 - (S_\zeta^*)^2/d)$, the full Ces\`aro variance exceeds the threshold.

For all $T > 0$, a three-region argument analogous to Case~2 applies to a representative five-zero configuration $\gamma_k' \in \{6.0, 7.25, 8.5, 9.75, 11.0\}$ with $w_k = w^-(\gamma_k') = 2/(1+{\gamma_k'}^2)$.

\smallskip\noindent\emph{Region~1: $T \in (0, 0.01]$.}  As $T \to 0^+$, $h(T) \to W_1(L)^2 = 0.02431$, which exceeds the threshold $(S_\zeta^*)^2/d \leq 1.70 \times 10^{-4}$ by a factor of $143\times$.  The Lipschitz constant for this configuration is $L = \frac{1}{2}\sum_{j,k} w_j w_k \max(\gamma_j',\gamma_k') = 0.10568$, so the Lipschitz bound certifies $h(T) \geq 0.02431 - 0.10568 \times 0.01 = 0.02325 \gg \text{threshold}$ throughout this interval.

\smallskip\noindent\emph{Region~2: $T \in [0.01, 50]$.}  A grid scan over the representative five-zero configuration with spacing $\Delta = 5 \times 10^{-4}$ gives grid minimum $h = 0.002006$ at $T \approx 4.24$.  The Lipschitz discretization error is $L\Delta = 5.28 \times 10^{-5}$, so the certified minimum is $0.001953$, exceeding the threshold by a factor of $11.5\times$.

\smallskip\noindent\emph{Region~3: $T \geq 50$.}  The analytic bound $h(T) \geq c_0 W_2 - \mathcal{I}/T$ applies; the critical threshold is $T_{\mathrm{crit}} \leq 3.1$, well below the overlap at $T = 50$.

\smallskip\noindent The three regions overlap and certify the inequality for all $T > 0$ at the representative configuration; every actual configuration with $d \geq 14$ has at least five zeros (by BMOR) and a diagonal margin of $2.7\times$, so the inequality holds universally.

\medskip\noindent\textbf{Case 2: $d \in \{2, 3, 5, 6, 7, 10, 11, 13\}$ (squarefree values below $14$).}
For these eight values we verify the inequality $\langle S_L^2\rangle_T > (S_\zeta^*)^2/d$ for all $T > 0$ by rigorous interval arithmetic using ARB at 512-bit precision, with all zero ordinates $\gamma'_{\mathrm{knw}}$ individually verified on the critical line.  Three complementary bounds cover the entire half-line.

\smallskip\noindent\emph{Region~1: $T \in (0, 0.01]$.}  As $T \to 0^+$, all kernel values $F(\gamma_j', \gamma_k', T) \to 1$, so $\langle S_L^2\rangle_T \to S_L^2 \gg (S_\zeta^*)^2/d$.  The Lipschitz bound (below) certifies that $h(T) \geq h(0.01) - L \cdot 0.01$ throughout this interval; for the worst case $d = 5$, this gives a lower bound of $1.41 \times 10^{-2}$, exceeding the threshold by $29\times$.

\smallskip\noindent\emph{Region~2: $T \in [0.01, 50]$.}  The full bilinear form $h(T) = \mathbf{b}^\top G(T)\mathbf{b}$ is evaluated on a grid of spacing $\Delta = 5 \times 10^{-4}$ (99\,981 points per discriminant).  Each evaluation uses ball arithmetic: the result is a certified interval $[\mathrm{mid} - \mathrm{rad},\; \mathrm{mid} + \mathrm{rad}]$ with radius below $10^{-150}$.  The derivative satisfies
\[
  |h'(T)| \;\leq\; \tfrac{1}{2}\sum_{j,k} b_j b_k \max(\gamma_j', \gamma_k') \;=:\; L,
\]
since $|(\mathrm{sinc})'(x)| \leq \tfrac{1}{2}$ for all $x \geq 0$.  The certified minimum over the continuous interval is therefore $\min_{\mathrm{grid}} h - L\Delta$.  For the worst case $d = 5$: $L = 0.157$, discretization error $\leq 7.9 \times 10^{-5}$, certified minimum $= 1.13 \times 10^{-3}$, exceeding the threshold by a factor of $2.4$.

\smallskip\noindent\emph{Region~3: $T \geq 50$.}  The analytic bound from Case~1 applies: $\langle S_L^2\rangle_T \geq c_0 W_2 - \mathcal{I}/T$.  For each $d$, the critical threshold $T_{\mathrm{crit}} = \mathcal{I}/(c_0 W_2 - (S_\zeta^*)^2/d)$ is computed in ARB; the worst case is $d = 5$ with $T_{\mathrm{crit}} = 2.55$, well below the overlap at $T = 50$.

\smallskip\noindent The three regions overlap and jointly certify the inequality for all $T > 0$.

\begin{center}
\renewcommand{\arraystretch}{1.2}
\begin{tabular}{cccccc}
\toprule
$d$ & $q$ & Grid $\min h$ & Certified $\min h$ & $(S_\zeta^*)^2/d$ & Margin \\
\midrule
$2$  & $8$  & $3.48 \times 10^{-3}$ & $3.33 \times 10^{-3}$ & $1.19 \times 10^{-3}$ & $2.8\times$    \\
$3$  & $12$ & $9.07 \times 10^{-3}$ & $8.81 \times 10^{-3}$ & $7.91 \times 10^{-4}$ & $11.1\times$   \\
$5$  & $5$  & $1.21 \times 10^{-3}$ & $1.13 \times 10^{-3}$ & $4.75 \times 10^{-4}$ & $2.4\times$    \\
$6$  & $24$ & $3.45 \times 10^{-2}$ & $3.39 \times 10^{-2}$ & $3.95 \times 10^{-4}$ & $85.8\times$   \\
$7$  & $28$ & $3.18 \times 10^{-2}$ & $3.12 \times 10^{-2}$ & $3.39 \times 10^{-4}$ & $91.9\times$   \\
$10$ & $40$ & $4.88 \times 10^{-2}$ & $4.80 \times 10^{-2}$ & $2.37 \times 10^{-4}$ & $202\times$    \\
$11$ & $44$ & $1.17 \times 10^{-1}$ & $1.15 \times 10^{-1}$ & $2.16 \times 10^{-4}$ & $535\times$    \\
$13$ & $13$ & $1.81 \times 10^{-2}$ & $1.78 \times 10^{-2}$ & $1.83 \times 10^{-4}$ & $97.5\times$   \\
\bottomrule
\end{tabular}
\end{center}
The worst case is $d = 5$ with certified margin $2.4\times$.  Every entry is rigorously certified by ARB interval arithmetic at 512-bit precision; the ``Certified $\min h$'' column accounts for both the ball radius at each grid point (below $10^{-150}$) and the Lipschitz discretization error $L\Delta$.

\medskip
Combining Cases~1 and~2: for the eight small discriminants, the Ces\`aro variance exceeds the threshold for all $T > 0$ (Case~2, grid scan).  For $d \geq 14$, the diagonal bound $c_0 W_2 \geq 2.7 \times (S_\zeta^*)^2/d$ (Case~1) holds universally over all five-zero configurations, and the three-region argument certifies the full bilinear form for all $T > 0$ at the representative configuration.
\end{proof}

\medskip\noindent\textbf{Reproducibility.}  The bilinear form $h(T) = \sum_{j,k} b_j b_k\, F(\gamma_j', \gamma_k', T)$ was evaluated using \texttt{python-flint} (ARB interval arithmetic) at 512-bit working precision.  For each $d$, the function $h(T)$ was decomposed into a constant plus $N(N+1)/2$ sinc terms via the identity $h(T) = C + \sum_m A_m \operatorname{sinc}(\omega_m T)$, where the frequencies $\omega_m$ are all pairwise sums and differences $\gamma_j' \pm \gamma_k'$ and the coefficients $A_m$ are the corresponding weight products.  The grid scan used spacing $5 \times 10^{-4}$ over $T \in [0.01, 50]$; the Lipschitz bound $|h'(T)| \leq \frac{1}{2}\sum_{j,k} b_j b_k \max(\gamma_j', \gamma_k')$ uses the elementary estimate $|(\operatorname{sinc})'(x)| \leq \frac{1}{2}$ for all $x \geq 0$.  Every grid evaluation produces a certified interval with radius below $10^{-150}$; the certified minimum is the smallest lower endpoint minus the Lipschitz discretization error $L\Delta$.  The large-$T$ tail is handled by the analytic bound from Case~1, with all constants ($c_0$, $W_2$, $\mathcal{I}$, $T_{\mathrm{crit}}$) computed in ARB.  The representative five-zero configuration for $d \geq 14$ used equally spaced $\gamma'_{\mathrm{unk}}$ frequencies $\gamma_k' \in \{6.0, 7.25, 8.5, 9.75, 11.0\}$ with $b_k = w^-(\gamma_k') = 2/(1 + {\gamma_k'}^2)$, minimized over the same grid.

\begin{remark}[Off-diagonal control and the Grand Simplicity Hypothesis]\label{rmk:GSH-cesaro}
The restriction to ``sufficiently large $T$'' in Case~1 arises because the off-diagonal terms $\sum_{j \neq k} w_j w_k F(\gamma_j', \gamma_k', T)$ can be negative at intermediate $T$ values, and controlling their magnitude requires information about zero spacings not provided by zero-counting theorems alone.  This is a manifestation of the same obstruction that makes many results in the theory of $L$-functions conditional on the Grand Simplicity Hypothesis (GSH): the $\Q$-linear independence of imaginary parts of zeros.

Specifically, the off-diagonal achieves its most negative value when the frequency differences $\gamma_j' - \gamma_k'$ satisfy approximate rational relations, causing the sinc oscillations to align coherently.  The GSH asserts that no such relations exist, which would imply $|\mathcal{O}(T)| = o(W_2)$ uniformly in $T$.  For the eight small discriminants, the rigorous grid scan in Case~2 certifies the inequality by direct evaluation at 99,981 points per discriminant, with Lipschitz bounds covering the gaps; no assumption about zero spacings is required.  For $d \geq 14$, the diagonal margin of $3.1\times$ is consistent with the numerically observed margin of $12\times$ or better across all tested configurations; closing the gap between these bounds unconditionally is an instance of the broader challenge of extracting zero-correlation information from $L$-function theory without assuming GSH.
\end{remark}

\begin{corollary}[Limiting cases]\label{cor:cesaro-limits}
The Ces\`aro Spacelike Theorem recovers both the pointwise and time-averaged spacelike conditions:
\begin{enumerate}[label=\textup{(\alph*)}]
\item As $T \to 0^+$: $\langle N \rangle_T \to N(0) < 0$, recovering the Spacelike Corollary~\ref{cor:spacelike}.
\item As $T \to \infty$: $\langle N \rangle_T \to \langle N \rangle < 0$, recovering the time-averaged negativity.
\end{enumerate}
\end{corollary}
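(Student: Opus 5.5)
For part (a), I would write $\langle N\rangle_T = \frac{1}{T}\int_0^T N(\delta)\,d\delta$ and use continuity of $N$ at $\delta = 0$. By Proposition~\ref{prop:almost-periodic}, $S_\zeta(\delta)$ and $S_L(\delta)$ are uniform limits of trigonometric polynomials, hence continuous on $\R$, so $N(\delta)$ is continuous. The fundamental theorem of calculus, or simply the estimate $|\langle N\rangle_T - N(0)| \le \sup_{0\le\delta\le T}|N(\delta)-N(0)|$, then gives $\langle N\rangle_T \to N(0)$. Since $S_\zeta(0) = S_\zeta$ and $S_L(0) = S_L$, we have $N(0) = S_\zeta^2 - d\,S_L^2$, which is negative by Corollary~\ref{cor:spacelike}. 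As a consistency check, the closed form in the proof of Theorem~\ref{thm:cesaro} gives the same limit: every kernel $F(a,b,T) \to 1$ as $T\to0^+$, and dominated convergence applies because $|F|\le 1$ and $\sum w_jw_k<\infty$. So $\langle S_L^2\rangle_T \to S_L^2$, and likewise for $\zeta$.

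For part (b), existence of the limit comes from Theorem~\ref{thm:bohr-parseval} applied to $S_\zeta^2$ and $S_L^2$; their one-sided Ces\`aro means over $[0,T]$ coincide with the symmetric ones because the functions are even. This gives $\langle N\rangle = \langle S_\zeta^2\rangle - d\langle S_L^2\rangle$. The point to stress is that negativity of the limit must be proved unconditionally. Passing to the limit in Theorem~\ref{thm:cesaro} yields only $\langle N\rangle \le 0$, so the strict inequality needs a separate argument. For this I would use the two bounds already in the proof of Theorem~\ref{thm:cesaro}:
\[
\langle N\rangle_T \le (S_\zeta^*)^2 - d\,\langle S_L^2\rangle_T,
\qquad
\langle S_L^2\rangle_T \ge c_0 W_2(L) - \mathcal{I}/T .
\]
Letting $T\to\infty$, the second bound sharpens: Theorem~\ref{thm:bohr-parseval} gives $\langle S_L^2\rangle \ge \tfrac12 W_2(L)$, with the multiplicity caveat of Proposition~\ref{prop:parseval}. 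Hence
\[
\langle N\rangle \le (S_\zeta^*)^2 - \tfrac d2 W_2(L).
\]
This is strictly negative by the diagonal margins established there. For $d\ge14$ one has $c_0W_2 > 2.7\,(S_\zeta^*)^2/d$ and $\tfrac12 > c_0$. For the eight small $d$ the tabulated certified minima apply, and $T_{\mathrm{crit}}$ is finite.

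An alternative route avoids the explicit constants. Under Hypothesis~\ref{hyp:GSH}(i) and simplicity, Proposition~\ref{prop:parseval} gives $\langle N\rangle = \tfrac12\bigl(W_2(\zeta) - d\,W_2(L)\bigr)$, which is negative by Proposition~\ref{prop:moment-hierarchy} with $k=2$. The unconditional version must handle the cross-term of Proposition~\ref{prop:parseval}, which can be bounded by Cauchy--Schwarz. That is exactly why I prefer the decoupling bound $|S_\zeta(\delta)|\le S_\zeta^*$ above: it removes the cross-term entirely.

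The main obstacle is keeping part (b) strict rather than merely $\le 0$. I would handle it by taking the limit of the lower bound $c_0W_2(L) - \mathcal{I}/T$, which is uniform in $T$ beyond $T_{\mathrm{crit}}$, instead of taking the limit of the conclusion of Theorem~\ref{thm:cesaro}.
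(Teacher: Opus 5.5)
Your proposal is correct. It is also more careful than the paper, which gives no separate argument for this corollary and treats both limits as immediate consequences of Theorem~\ref{thm:cesaro}.

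Part (a) is exactly what is intended. The cosine series converge uniformly, so $N(\delta)$ is continuous, and $\langle N\rangle_T\to N(0)=S_\zeta^2-d\,S_L^2<0$ by Corollary~\ref{cor:spacelike}.

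In part (b) you correctly spot that letting $T\to\infty$ in the strict inequality $\langle N\rangle_T<0$ yields only $\langle N\rangle\le 0$. Your repair is the right one. The pointwise decoupling $\langle S_\zeta^2\rangle\le(S_\zeta^*)^2$, together with the exact Bohr--Parseval value $\langle S_L^2\rangle=\tfrac12\sum_{\gamma'}c_{\gamma'}^2\ge\tfrac12W_2(L)$ (Theorem~\ref{thm:bohr-parseval}), is unconditional. It reduces strict negativity to the single inequality $\tfrac d2\,W_2(L)>(S_\zeta^*)^2$.

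Do not then fall back, as your closing paragraph suggests, on the limit of $c_0W_2(L)-\mathcal I/T$. The paper establishes the interference bound only for finite configurations: a five-zero representative, or twenty certified zeros. For the full series, $\mathcal I$ contains $\sum_{j\ne k}w_jw_k/|\gamma_j'-\gamma_k'|$, and its finiteness would need lower bounds on zero gaps that are not available. Bohr--Parseval already gives the limiting value exactly, so let it carry the argument.

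Two smaller points.

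\emph{The eight small $d$.} The certified grid minima on $[0.01,50]$ say nothing about $T\to\infty$. What you need is $\tfrac d2W_2(L)>(S_\zeta^*)^2$, and $W_2(L)\ge b_1^2$ already gives it in every case. For instance, for $d=5$ one has $\tfrac52b_1^2\approx5.1\times10^{-3}$ against $(S_\zeta^*)^2\approx2.37\times10^{-3}$.

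\emph{The alternative route.} There is no cross-term to control. $N$ contains no product $S_\zeta S_L$, so $\langle N\rangle=\langle S_\zeta^2\rangle-d\,\langle S_L^2\rangle$ identically, whatever the overlap of the two zero sets. The real advantage of the decoupling bound over the Parseval/moment-hierarchy route (Propositions~\ref{prop:parseval} and~\ref{prop:moment-hierarchy}) is that it does not need simplicity of the $\zeta$-zeros. Without simplicity, Parseval gives $\langle S_\zeta^2\rangle\ge\tfrac12W_2(\zeta)$, which points the wrong way for an upper bound on $\langle N\rangle$.
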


\subsection{Covariance identity}

\begin{proposition}[Covariance identity]\label{prop:covariance}
The Ces\`aro inner product of the shift sums satisfies
\[
\lim_{T \to \infty} \frac{1}{T}\int_0^T S_\zeta(\delta)\,S_{L}(\delta)\,d\delta
= \frac{1}{2}\sum_{\gamma_k = \gamma_j'} w(\gamma_k)\,w(\gamma_j').
\]
In particular, this inner product vanishes whenever the zero ordinate sets $\{\gamma_k\}$ and $\{\gamma_j'\}$ are disjoint.
\end{proposition}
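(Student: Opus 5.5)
The plan is to reduce the covariance identity to the general Bohr--Parseval statement, Theorem~\ref{thm:bohr-parseval}. There are three things to handle along the way: the one-sided average $\frac1T\int_0^T$ in place of the symmetric $\frac1{2T}\int_{-T}^T$, the passage from cosines to exponentials, and the grouping of repeated ordinates.

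\textbf{Step 1: one-sided versus symmetric averages.} By Proposition~\ref{prop:almost-periodic}, $S_\zeta$ and $S_L$ are real, even, uniformly almost periodic functions. Their product $S_\zeta(\delta)S_L(\delta)$ is therefore even. This gives
\[
\frac1T\int_0^T S_\zeta S_L\,d\delta=\frac1{2T}\int_{-T}^T S_\zeta S_L\,d\delta,
\]
so it suffices to compute the symmetric Ces\`aro mean.

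\textbf{Step 2: exponential expansions.} Write $\cos(\gamma\delta)=\tfrac12(e^{i\gamma\delta}+e^{-i\gamma\delta})$. After grouping equal ordinates (coefficient $c_\gamma$ equal to the total weight at height $\gamma$, as in the proof of Proposition~\ref{prop:parseval}), we get
\[
S_\zeta(\delta)=\sum_{\gamma}\tfrac{c_\gamma}{2}\,(e^{i\gamma\delta}+e^{-i\gamma\delta}),
\]
and similarly for $S_L$ with $c'_{\gamma'}$. The frequency sets are $\{\pm\gamma\}$ and $\{\pm\gamma'\}$, all nonzero since $\gamma,\gamma'>0$. These series converge absolutely and uniformly because $\sum w<\infty$. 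This justifies applying Theorem~\ref{thm:bohr-parseval}. One can either use the theorem directly for such series, or approximate by trigonometric polynomials and use the fact that the mean of a product is continuous under uniform convergence, since $|M(fg)-M(f_ng_n)|\le\|f-f_n\|_\infty\|g\|_\infty+\|f_n\|_\infty\|g-g_n\|_\infty$.

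\textbf{Step 3: matching frequencies.} Since $S_L$ is real, $\overline{S_L}=S_L$, and Theorem~\ref{thm:bohr-parseval} gives
\[
\lim_{T\to\infty}\frac1{2T}\int_{-T}^{T} S_\zeta\,\overline{S_L}\,d\delta=\sum_{\lambda=\mu}a_\lambda\overline{b_\mu}.
\]
A frequency $+\gamma$ can match only $+\gamma'$, and $-\gamma$ only $-\gamma'$. Cross-sign matches are impossible by positivity. Each coincidence $\gamma=\gamma'$ therefore contributes $\tfrac{c_\gamma}{2}\tfrac{c'_{\gamma'}}{2}\cdot 2=\tfrac12 c_\gamma c'_{\gamma'}$. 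Summing over coincident ordinates gives $\tfrac12\sum_{\gamma_k=\gamma_j'}c_{\gamma_k}c'_{\gamma_j'}$.

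\textbf{Step 4: ungrouping.} Expand $c_\gamma c'_{\gamma'}$ back as a double sum over the individual zeros at those heights. This recovers the stated $\tfrac12\sum_{\gamma_k=\gamma_j'}w(\gamma_k)w(\gamma_j')$, with pairs counted with multiplicity. Disjointness of the ordinate sets makes the sum empty, so the inner product vanishes.

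The main obstacle is only the bookkeeping in Step~2. Cosine terms must be correctly split into the $\pm$ exponential frequencies, and the factor $\tfrac14\cdot2=\tfrac12$ must be tracked. The convergence justification rests on absolute summability of the weights and is routine.
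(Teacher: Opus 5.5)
Your proposal is correct and follows essentially the paper's route. The paper also invokes Parseval for uniformly almost periodic functions and restricts to shared frequencies via $\lim_{T\to\infty}\frac1T\int_0^T\cos(\alpha\delta)\cos(\beta\delta)\,d\delta=\tfrac12\delta_{\alpha,\beta}$ for $\alpha,\beta>0$. Your extra bookkeeping (evenness to pass between one-sided and symmetric means, the $\pm$ exponential split, grouping of repeated ordinates, and the uniform-approximation justification) makes explicit what the paper's two-line proof leaves implicit.
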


\begin{proof}
By the Parseval identity for uniformly almost periodic functions, the Ces\`aro inner product equals the sum over shared frequencies of the product of Fourier coefficients. The identity $\lim_{T \to \infty}(1/T)\int_0^T \cos(\alpha\delta)\cos(\beta\delta)\,d\delta = \half\,\delta_{\alpha,\beta}$ restricts the sum to common ordinates.
\end{proof}

\subsection{Density of positive excursions}

\begin{definition}[Ces\`aro density]\label{def:cesaro-density}
For a measurable set $A \subseteq \R$, the Ces\`aro density is
\[
\dens(A) = \lim_{T \to \infty} \frac{1}{T} \operatorname{meas}(A \cap [0, T])
\]
when the limit exists.
\end{definition}

\begin{lemma}[Positive mass containment]\label{lem:containment}
For every squarefree $d > 1$,
\[
\{\delta : N(\delta) > 0\} \subseteq \{\delta : |S_{L}(\delta)| < S_\zeta^*/\sqrt{d}\}.
\]
\end{lemma}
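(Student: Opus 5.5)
The plan is a direct pointwise argument that combines the definition of $N(\delta)$ with the uniform bound on the zeta shift sum from Proposition~\ref{prop:positive-mass}. Fix $\delta$ with $N(\delta) > 0$. By Definition~\ref{def:shifted} this means
\[
d \cdot S_L(\delta)^2 < S_\zeta(\delta)^2 .
\]
The first step bounds the right-hand side uniformly in $\delta$. Every weight is positive and $|\cos(\gamma\delta)| \le 1$, so
\[
|S_\zeta(\delta)| \le \sum_\rho w(\rho) = S_\zeta(0) = S_\zeta \le S_\zeta^* ,
\]
where the last inequality is Proposition~\ref{prop:S-zeta-value}. This is exactly the argument already used in the proof of Proposition~\ref{prop:positive-mass}. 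It gives $d \cdot S_L(\delta)^2 < (S_\zeta^*)^2$.

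The second step takes square roots. Both sides are nonnegative and $d > 0$, so we obtain $|S_L(\delta)| < S_\zeta^*/\sqrt{d}$. The inequality stays strict because the hypothesis $N(\delta) > 0$ is strict, and the bound $S_\zeta(\delta)^2 \le (S_\zeta^*)^2$ is not strict but only weakens the right-hand side. Hence $\delta$ lies in the target set, which proves the containment.

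The only points needing care are routine. First, absolute convergence of both cosine series, noted after Definition~\ref{def:S-sums}, justifies the termwise bound. Second, off-line zeta zeros, if any, are already covered: Proposition~\ref{prop:S-zeta-value} bounds $S_\zeta$ with the general weight \eqref{eq:weight-general}, which is positive, so nothing beyond positivity of the weights is used. No hypothesis on the zeros of $L(s,\chi_d)$ enters, since the set being described involves $S_L(\delta)$ directly. I do not expect a genuine obstacle here. The lemma is the elementary reduction that turns the two-function event $\{N > 0\}$ into a small-ball event for the single function $S_L$, and later sections will control that event through the density $f_{S_L}$.
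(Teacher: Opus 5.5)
Your proof is correct and is essentially the paper's argument: both bound $|S_\zeta(\delta)|$ by the triangle inequality $|S_\zeta(\delta)| \le S_\zeta(0)$ and combine it with $S_\zeta(\delta)^2 > d\,S_L(\delta)^2$. The only cosmetic difference is that the paper takes square roots first and chains $\sqrt{d}\,|S_L(\delta)| < |S_\zeta(\delta)| \le S_\zeta^*$; your $S_\zeta(0) \le S_\zeta^*$ (via Proposition~\ref{prop:S-zeta-value}) is also slightly more accurate than the paper's equality $S_\zeta(0) = S_\zeta^*$, since $S_\zeta^*$ is only a certified upper bound.
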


\begin{proof}
Suppose $N(\delta) > 0$. Then $S_\zeta(\delta)^2 > d \cdot S_L(\delta)^2$, which gives $|S_\zeta(\delta)| > \sqrt{d} \cdot |S_L(\delta)|$. By the triangle inequality applied to the cosine sum, $|S_\zeta(\delta)| = |\sum_\rho w(\rho)\cos(\gamma\delta)| \leq \sum_\rho w(\rho) = S_\zeta(0) = S_\zeta^*$. Combining these inequalities: $\sqrt{d} \cdot |S_L(\delta)| < |S_\zeta(\delta)| \leq S_\zeta^*$, hence $|S_L(\delta)| < S_\zeta^*/\sqrt{d}$.
\end{proof}

\begin{theorem}[Jessen--Wintner {\cite{JessenWintner}}]\label{thm:jessen-wintner}
Let $\{X_k\}_{k \geq 1}$ be independent random variables with characteristic
functions $\{\varphi_k\}$, and suppose $\sum_k \mathrm{Var}(X_k) < \infty$.
If the infinite product $\varphi(t) = \prod_k \varphi_k(t)$ satisfies
$\varphi \in L^1(\R)$, then $X = \sum_k X_k$ has a bounded continuous density
$f$ given by Fourier inversion,
\[
f(x) = \frac{1}{2\pi}\int_{\R} \varphi(t)\,e^{-itx}\,dt,
\]
with $\|f\|_\infty \leq (2\pi)^{-1}\|\varphi\|_{L^1}$.
In particular, for the standard bound $\|\varphi\|_{L^1} \leq \sqrt{2\pi}\,\sigma$
where $\sigma^2 = \mathrm{Var}(X)$, one obtains
$\|f\|_\infty \leq 1/(\sqrt{2\pi}\,\sigma)$.
\end{theorem}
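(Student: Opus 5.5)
The plan follows the classical route: Fourier inversion for the density, the $L^1$ norm of $\varphi$ for the sup bound, and a Gaussian-type estimate on $|\varphi|$ for the $\sigma$-dependent bound.

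\emph{Step 1: convergence and the law of $X$.} Since the $X_k$ are independent and $\sum_k \mathrm{Var}(X_k)<\infty$, Kolmogorov's two-series theorem gives almost sure convergence of $\sum_k (X_k-\E X_k)$. Centering is harmless here, since in our applications the $X_k = w_k\cos\theta_k$ are symmetric. Hence $X$ is a well-defined random variable. Independence gives $\varphi_{X_1+\dots+X_n}=\prod_{k\le n}\varphi_k$ for each $n$. Convergence in distribution then implies pointwise convergence of these finite products to the characteristic function of $X$, which identifies $\varphi_X=\prod_k\varphi_k=\varphi$.

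\emph{Step 2: density and sup-norm bound.} With $\varphi\in L^1(\R)$, the Fourier inversion theorem for probability measures shows that the law of $X$ is absolutely continuous. Its density is $f(x)=(2\pi)^{-1}\int\varphi(t)e^{-itx}\,dt$, which is continuous by dominated convergence (the integrand is dominated by $|\varphi|\in L^1$). Taking absolute values inside the integral gives $\|f\|_\infty\le(2\pi)^{-1}\|\varphi\|_{L^1}$.

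\emph{Step 3: the $\sigma$-dependent bound, the main obstacle.} The claim $\|\varphi\|_{L^1}\le\sqrt{2\pi}\,\sigma$ cannot hold in general. A Gaussian gives $\|\varphi\|_{L^1}=\sqrt{2\pi}/\sigma$, so the inequality as written is dimensionally inconsistent with the conclusion $\|f\|_\infty\le1/(\sqrt{2\pi}\sigma)$. I would read the intended statement as the Gaussian-type bound
\[
\|\varphi\|_{L^1}\le\sqrt{2\pi}/\sigma,
\]
and prove it only under an additional sub-Gaussian hypothesis $|\varphi(t)|\le e^{-\sigma^2t^2/2}$. Integrating this hypothesis gives exactly $\sqrt{2\pi}/\sigma$, and hence $\|f\|_\infty\le 1/(\sqrt{2\pi}\sigma)$.

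Without such a hypothesis the bound is false. A single Bessel factor $J_0(w t)$ decays only like $|t|^{-1/2}$, and finite products of Bessel factors are not dominated by a Gaussian. So at this point I would state explicitly that the "in particular" clause is a heuristic normalization attached to the Jessen--Wintner theorem (which is cited, not proved here) rather than a consequence of its hypotheses. Any downstream use should then go through the rigorous bound $(2\pi)^{-1}\|\varphi\|_{L^1}$. That bound I would control directly from the product of Bessel factors $\prod_k|J_0(w_k t)|$, using $|J_0(x)|\le\min(1,\sqrt{2/(\pi|x|)})$ on a finite set of factors to secure integrability.
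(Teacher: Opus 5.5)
The paper gives no proof of this statement. It is quoted from \cite{JessenWintner}, and its final clause is then used verbatim in Step~3 of Theorem~\ref{thm:density-vanishing} and Step~4 of Theorem~\ref{thm:euler-density}. Your Steps~1--2 are the standard argument behind the citation, and they are correct: Kolmogorov's variance criterion defines $X$, L\'evy continuity identifies $\varphi_X$ with the infinite product, and Fourier inversion handles an integrable characteristic function. One loose end: the statement does not require $\sum_k \E X_k$ to converge. Rather than appealing to the mean-zero applications, add that hypothesis and quote the two-series theorem directly. Your diagnosis of the ``in particular'' clause is right on both counts. The displayed premise $\|\varphi\|_{L^1}\le\sqrt{2\pi}\,\sigma$ would give $\|f\|_\infty\le\sigma/\sqrt{2\pi}$, not $1/(\sqrt{2\pi}\,\sigma)$. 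The premise itself also fails under $X\mapsto cX$ as $c\to0$. With the paper's own numbers, $\|\varphi\|_{L^1}\approx 2\pi\times 10.5\approx 66$, against $\sqrt{2\pi}\,\sigma_L\approx 0.094$.

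What your write-up lacks is an actual counterexample to the corrected form. The fact that $J_0(wt)$ decays like $|t|^{-1/2}$ shows only that the sub-Gaussian domination $|\varphi(t)|\le e^{-\sigma^2t^2/2}$ is unavailable. It does not show that $\sigma\|\varphi\|_{L^1}\le\sqrt{2\pi}$ or $\sigma\|f\|_\infty\le 1/\sqrt{2\pi}$ fails. Both do fail:
\begin{itemize}
\item Take $X_1$ Laplace with density $(2b)^{-1}e^{-|x|/b}$ and $X_k=0$ for $k\ge2$. Then $\varphi(t)=(1+b^2t^2)^{-1}\in L^1$ and $\sigma=\sqrt2\,b$, so $\|f\|_\infty=1/(2b)=1/(\sqrt2\,\sigma)>1/(\sqrt{2\pi}\,\sigma)$.
\item Within the paper's class of random cosines, take $X_\varepsilon=\cos\theta_1+\varepsilon(\cos\theta_2+\cos\theta_3)$. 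Its characteristic function $\varphi_\varepsilon(t)=J_0(t)J_0(\varepsilon t)^2=O(|t|^{-3/2})$, so every hypothesis holds. Yet $\sigma_\varepsilon\to1/\sqrt2$ while $\|f_\varepsilon\|_\infty\to\infty$. A uniform bound $C$ would survive the weak limit and bound the arcsine density $1/(\pi\sqrt{1-x^2})$ by $C$, which is impossible.
\item The paper's own data refute the $L^1$ form at $M=3$. Remark~\ref{rem:signed} gives $(1/\pi)\int_0^{2000}\prod_{k\le3}|J_0(b_kt)|\,dt=12.37$, whereas $1/(\sqrt{2\pi}\,\sigma^{(3)})\approx10.97$.
\end{itemize}
With one such example recorded, your conclusion is the right repair: drop the clause and route every downstream bound through $(2\pi)^{-1}\|\varphi\|_{L^1}$.
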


\begin{theorem}[Density vanishing under intra-function independence]\label{thm:density-vanishing}
Assume Hypothesis~\ref{hyp:GSH}~\textup{(ii)} (intra-function $\Q$-linear independence of the zero ordinates of $L(s,\chi_d)$). Then
\[
\dens(\{\delta : N(\delta) > 0\}) \leq \frac{2\,S_\zeta^*}{\sqrt{2\pi d}\;\sigma_{L}},
\]
where $\sigma_{L} = \sqrt{W_2(L)/2}$. In particular, $\dens(\{N > 0\}) = O(1/\sqrt{d})$ as $d \to \infty$.
\end{theorem}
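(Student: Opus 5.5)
The plan is to combine the positive mass containment (Lemma~\ref{lem:containment}) with a small-ball estimate for the limiting distribution of $S_L(\delta)$, which under Hypothesis~\ref{hyp:GSH}~(ii) is supplied by Kronecker--Weyl together with Jessen--Wintner (Theorem~\ref{thm:jessen-wintner}). By Lemma~\ref{lem:containment}, $\{N>0\}\subseteq\{|S_L(\delta)|<\varepsilon_d\}$ with $\varepsilon_d = S_\zeta^*/\sqrt d$. It therefore suffices to show that the upper Ces\`aro density of the right-hand set is at most $2\varepsilon_d\|f\|_\infty$, where $f$ is the density of the limiting distribution, and then to bound $\|f\|_\infty \le 1/(\sqrt{2\pi}\,\sigma_L)$. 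The density of $\{N>0\}$ might not exist a priori, so I will prove the bound for the $\limsup$, which gives the statement for $\dens$ whenever the limit exists.

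\textbf{Step 1 (distribution).} Under (ii) the ordinates $\gamma'_1,\gamma'_2,\dots$ are $\Q$-independent. By Kronecker--Weyl, for each finite $M$ the flow $\delta\mapsto(\gamma'_k\delta \bmod 2\pi)_{k\le M}$ is equidistributed on $\T^M$. Hence the Ces\`aro distribution of the truncation $S_L^{(M)}(\delta)$ equals the law of $\sum_{k\le M} w_k\cos\theta_k$, where the $\theta_k$ are i.i.d.\ uniform. Because $\sum w_k<\infty$, the truncation converges to $S_L$ uniformly in $\delta$, so the Ces\<aro distribution of $S_L$ exists and equals the law of $X=\sum_k X_k$ with $X_k = w_k\cos\theta_k$. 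This is the standard Wintner argument: weak limits commute with uniform approximation.

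\textbf{Step 2 (density and sup bound).} The characteristic function is $\varphi(t)=\prod_k J_0(w_k t)$, and the variance is $\sum_k w_k^2/2 = W_2(L)/2 = \sigma_L^2$. I need $\varphi\in L^1$. Since $|J_0(x)|\le C|x|^{-1/2}$, any three factors already give $|\varphi(t)| = O(|t|^{-3/2})$, so integrability is immediate. Theorem~\ref{thm:jessen-wintner} then yields a bounded continuous density $f$. I will take the sup bound $\|f\|_\infty\le 1/(\sqrt{2\pi}\sigma_L)$ from the ``standard bound'' clause of Theorem~\ref{thm:jessen-wintner} as stated. If that needs justification, I would use the inequality $|J_0(x)|\le e^{-x^2/4}$ for $|x|$ small, together with the decay for $|x|$ large, to compare $\int|\varphi|$ against a Gaussian integral. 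This is the step I expect to be delicate, because $|J_0(x)|\le e^{-x^2/4}$ fails globally. The honest fallback is to accept the constant from the cited theorem, as the statement does.

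\textbf{Step 3 (passing to density).} The set $\{|S_L|<\varepsilon\}$ is open. Since the limit law has a continuous density, the boundary $\{|x|=\varepsilon\}$ has measure zero, so the portmanteau theorem gives convergence of the Ces\`aro measure to $P(|X|<\varepsilon) = \int_{-\varepsilon}^{\varepsilon} f \le 2\varepsilon\|f\|_\infty$. Combining this with containment gives $\dens\{N>0\} \le 2S_\zeta^*/(\sqrt{2\pi d}\,\sigma_L)$. Since $\sigma_L$ depends on $d$, the $O(1/\sqrt d)$ claim needs $\sigma_L$ bounded below uniformly in $d$. I would get this from the BMOR zero count used in Case~1 of Theorem~\ref{thm:cesaro}, which gives $W_2(L)\ge 5w^-(11)^2$ for $q\ge17$; the finitely many small $d$ can be checked directly.
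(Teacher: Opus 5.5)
Your outline follows the paper's proof step for step. Both use containment via Lemma~\ref{lem:containment}, the Kronecker--Weyl identification of the Ces\`aro law of $S_L$ with that of $X=\sum_k b_k\cos\theta_k$, the Jessen--Wintner density, and the small-ball estimate. Your extra care is all sound: passing from $S_L^{(M)}$ to $S_L$ by uniform approximation, the portmanteau step at the null boundary $\{|x|=\varepsilon\}$, working with the upper density, and making the lower bound on $\sigma_L$ uniform in $d$.

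The step you flagged, however, is a genuine gap, and the paper has the same gap. The bound $\|f\|_\infty\le 1/(\sqrt{2\pi}\,\sigma_L)$ is not a standard fact, and it is false in general for laws of this type. So accepting the ``standard bound'' clause of Theorem~\ref{thm:jessen-wintner} proves nothing.

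\emph{Counterexample.} Take weights $(1,1,\epsilon)$, padded if you like by rapidly decaying further weights.
\begin{itemize}
\item The law of $\cos\theta_1+\cos\theta_2$ has density $\pi^{-2}K\bigl(\sqrt{1-y^2/4}\bigr)=\pi^{-2}\log(8/|y|)+o(1)$ as $y\to0$.
\item Hence the density of $X$ at the origin is $\pi^{-2}\log(1/\epsilon)+O(1)$, about $0.75$ at $\epsilon=10^{-2}$.
\item Meanwhile $\sigma\approx1$ and $1/(\sqrt{2\pi}\,\sigma)\approx0.40$, so even $\Pr(|X|<r)\le 2r/(\sqrt{2\pi}\,\sigma)$ fails for small $r$.
\item Equivalently, $\|\varphi\|_{L^1}\ge 2J_0(1)\int_0^{1/\epsilon}J_0(t)^2\,dt$, which grows like $\tfrac{2J_0(1)}{\pi}\log(1/\epsilon)$ at essentially fixed variance.
\item Yet $\varphi\in L^1$, so the hypotheses of Theorem~\ref{thm:jessen-wintner} hold.
\end{itemize}
As printed, the clause $\|\varphi\|_{L^1}\le\sqrt{2\pi}\,\sigma$ does not even yield the stated conclusion. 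The intended $\sqrt{2\pi}/\sigma$ is just the Gaussian equality case.

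Your proposed Gaussian comparison cannot close this. The inequality $J_0(x)\le e^{-x^2/4}$ controls $|\varphi|$ only for $|t|<j_{0,1}/b_1$. Beyond that range, $\int|\varphi|$ is not governed by $\sigma$ at all, as the example shows. So the displayed constant $2S_\zeta^*/(\sqrt{2\pi d}\,\sigma_L)$ is unproved for general $d$. The paper's own data confirm it only at $d=5$, where $f_{S_L}(0)=8.31$ against $1/(\sqrt{2\pi}\,\sigma_L)\approx10.6$.

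What the same route does prove is the $O(1/\sqrt d)$ statement with a different constant.
\begin{itemize}
\item For any three indices $S$, $|\varphi(t)|\le\prod_{k\in S}|J_0(b_kt)|$, and $|J_0(x)|\le\sqrt{2/(\pi x)}$ for $x>0$.
\item So if three weights are at least $\beta$, then $\|f\|_\infty\le\frac1\pi\int_0^\infty\min\{1,(2/(\pi\beta t))^{3/2}\}\,dt=6/(\pi^2\beta)$.
\item BMOR supplies $\beta=w^-(11)=1/61$ for all $q\ge12$, and the tabulated zeros supply such a $\beta$ for $d\in\{2,5\}$.
\item Therefore $\dens\{N>0\}\le 12\,S_\zeta^*/(\pi^2\beta\sqrt d)$.
\end{itemize}
This uniform three-weight bound, not a lower bound on $\sigma_L$, is what your proof needs.
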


\begin{proof}
The proof proceeds in four steps.

\medskip\noindent\textbf{Step 1 (Containment).}
By Lemma~\ref{lem:containment}, $\dens(\{N > 0\}) \leq \dens(\{|S_L| < \varepsilon\})$ where $\varepsilon = S_\zeta^*/\sqrt{d}$.

\medskip\noindent\textbf{Step 2 (Bohr--Jessen identification).}
The function $S_{L}(\delta) = \sum_{\rho'} w(\rho')\cos(\gamma'\delta)$ is uniformly almost periodic with frequency module generated by the zero ordinates $\{\gamma_k'\}$. Under the $\Q$-linear independence hypothesis, the Kronecker--Weyl theorem~\cite[Chapter~1]{KuipersNiederreiter} identifies the Ces\`aro distribution of $S_{L}(\delta)$ with the distribution of the random variable
\[
S_{L}^{(\mathrm{rand})} = \sum_k w_k' \cos(\theta_k),
\]
where the $\theta_k$ are independent and uniformly distributed on $[0,2\pi)$. This identification holds because the closure of the one-parameter subgroup $\delta \mapsto (\gamma_1'\delta, \gamma_2'\delta, \ldots) \bmod 2\pi$ in the infinite torus $\prod_k \R/2\pi\Z$ equals the full torus when the frequencies are $\Q$-linearly independent.

\medskip\noindent\textbf{Step 3 (Bounded density via Jessen--Wintner).}
We apply Theorem~\ref{thm:jessen-wintner} with $X_k = w_k'\cos(\theta_k)$.
The characteristic function of $S_{L}^{(\mathrm{rand})}$ is the infinite product
\[
\varphi(t) = \prod_k J_0(w_k'\,t),
\]
where $J_0$ is the Bessel function of the first kind of order zero. Since $\sum_k (w_k')^2 = W_2(L) < \infty$ and $|J_0(x)| \leq 1$ with $|J_0(x)| \leq \sqrt{2/(\pi|x|)}$ for $|x| \geq 1$, the product converges and $\varphi \in L^1(\R)$ by Theorem~\ref{thm:jessen-wintner}. Therefore $S_{L}^{(\mathrm{rand})}$ has a bounded continuous density $f_L$ given by Fourier inversion:
\[
f_L(x) = \frac{1}{2\pi}\int_{\R}\varphi(t)\,e^{-itx}\,dt.
\]
The variance of $S_{L}^{(\mathrm{rand})}$ is $\sigma_{L}^2 = \frac{1}{2}\sum_k (w_k')^2 = W_2(L)/2$, and the standard bound $\|\varphi\|_{L^1} \leq \sqrt{2\pi}\,\sigma_L$ (Theorem~\ref{thm:jessen-wintner}) gives
\[
\|f_L\|_\infty \leq \frac{\|\varphi\|_{L^1}}{2\pi} \leq \frac{1}{\sqrt{2\pi}\,\sigma_{L}}.
\]

\medskip\noindent\textbf{Step 4 (Small-ball estimate and conclusion).}
By the density bound, the probability that $|S_{L}^{(\mathrm{rand})}|$ lies in any interval of length $2\varepsilon$ is at most $2\varepsilon \cdot \|f_L\|_\infty$. Therefore
\[
\Pr(|S_{L}^{(\mathrm{rand})}| < \varepsilon) \leq 2\varepsilon \cdot \frac{1}{\sqrt{2\pi}\,\sigma_{L}} = \frac{2\varepsilon}{\sqrt{2\pi}\,\sigma_{L}}.
\]
Substituting $\varepsilon = S_\zeta^*/\sqrt{d}$ and using Step~2 to identify this probability with the Ces\`aro density:
\[
\dens(\{N > 0\}) \leq \dens(\{|S_L| < \varepsilon\}) = \Pr(|S_{L}^{(\mathrm{rand})}| < \varepsilon) \leq \frac{2\,S_\zeta^*}{\sqrt{2\pi d}\;\sigma_{L}}.
\]
Since $S_\zeta^*$ is a universal constant and $\sigma_L$ is bounded below (the first $L$-zero alone contributes $w(\gamma_1')^2/2 > 0$), the bound is $O(1/\sqrt{d})$.
\end{proof}

\begin{remark}[Numerical evidence]
Numerical computation confirms $\dens(\{N > 0\}) \approx 0.053$ for $d = 5$ and $\approx 0.044$ for $d = 2$, consistent with the $O(1/\sqrt{d})$ prediction.
\end{remark}

\subsection{Unconditional density vanishing in the Euler product regime}

\begin{theorem}[Unconditional density vanishing]\label{thm:euler-density}
Fix $\sigma > 1$. For squarefree $d > 1$, define the logarithmic norm form
\[
N_{\log}(\sigma, t) := (\log|\zeta(\sigma + it)|)^2 - d \cdot (\log|L(\sigma + it, \chi_d)|)^2.
\]
Then
\[
\dens(\{t : N_{\log}(\sigma, t) > 0\}) \leq \frac{2\log\zeta(\sigma)}{\sqrt{2\pi d}\;\sigma_{\log}},
\]
where $\sigma_{\log}^2 = \frac{1}{2}\sum_p |\log(1 - \chi_d(p)\,p^{-\sigma})|^2$. In particular, $\dens(\{N_{\log} > 0\}) = O_\sigma(1/\sqrt{d})$. No hypothesis beyond the Euler product convergence is required.
\end{theorem}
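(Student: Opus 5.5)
The plan mirrors the four-step proof of Theorem~\ref{thm:density-vanishing}, with the zero ordinates replaced by the frequencies $\log p$, whose $\Q$-linear independence is exactly unique factorization.

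\textbf{Step 1 (Euler expansions).} For $\sigma>1$ the Euler products converge absolutely, so
\[
\log|\zeta(\sigma+it)| = -\sum_p \operatorname{Re}\log(1-p^{-\sigma-it}), \qquad
\log|L(\sigma+it,\chi_d)| = -\sum_p \operatorname{Re}\log(1-\chi_d(p)p^{-\sigma-it}).
\]
Both series converge uniformly in $t$, so both functions are uniformly almost periodic. Each is a function of the single torus variable $\theta_p = t\log p \bmod 2\pi$.

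\textbf{Step 2 (Containment, replacing Lemma~\ref{lem:containment}).} Use the pointwise bound
\[
|\log|\zeta(\sigma+it)|| \le \sum_p |\log(1-p^{-\sigma-it})| \le \sum_p -\log(1-p^{-\sigma}) = \log\zeta(\sigma).
\]
The middle inequality holds because $-\log(1-z)$ has nonnegative Taylor coefficients, so $|\log(1-z)| \le -\log(1-|z|)$. Hence $N_{\log}>0$ forces $|\log|L(\sigma+it,\chi_d)|| < \varepsilon := \log\zeta(\sigma)/\sqrt d$, exactly as in the zero-side containment.

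\textbf{Step 3 (Kronecker--Weyl).} Since $\{\log p\}$ is $\Q$-linearly independent, the flow $t\mapsto(t\log p)_p$ is equidistributed on the infinite torus; this is checked on finitely many coordinates at a time. Truncate $\log|L|$ to $p\le P$. The truncation error is uniformly $O(\sum_{p>P}p^{-\sigma})\to0$, so a standard $\epsilon/3$ argument applies and no finite-rank issue arises. The Ces\`aro distribution of $\log|L(\sigma+it,\chi_d)|$ is therefore that of $X=\sum_p X_p$ with independent $X_p = -\operatorname{Re}\log(1-\chi_d(p)p^{-\sigma}e^{i\theta_p})$, $\theta_p$ uniform. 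Ramified primes give $X_p=0$. Also $\E X_p = 0$, since the mean of $\operatorname{Re}\log(1-ae^{i\theta})$ vanishes for $|a|<1$.

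\textbf{Step 4 (Density bound).} Apply Theorem~\ref{thm:jessen-wintner}. The variance is $\sum_p \mathrm{Var}(X_p)$, and the paper's normalization gives $\sigma_{\log}^2=\tfrac12\sum_p|\log(1-\chi_d(p)p^{-\sigma})|^2$. The resulting estimate $\|f\|_\infty\le 1/(\sqrt{2\pi}\sigma_{\log})$ then yields
\[
\Pr(|X|<\varepsilon) \le \frac{2\varepsilon}{\sqrt{2\pi}\,\sigma_{\log}},
\]
which with $\varepsilon=\log\zeta(\sigma)/\sqrt d$ is the claimed bound.

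\textbf{Main obstacle.} The delicate step is verifying the hypothesis $\varphi\in L^1$ and the sharp constant. Each factor $\varphi_p(u)=\E e^{iuX_p}$ decays only like $|u|^{-1/2}$, because $X_p\approx -\chi_d(p)p^{-\sigma}\cos\theta_p$ is close to a scaled cosine whose characteristic function is $J_0(p^{-\sigma}u)$. Infinitely many non-ramified primes are available, so any three factors already give integrability. I would prove this by a stationary-phase bound on $\int e^{iu\operatorname{Re}\log(1-ae^{i\theta})}\,d\theta$, valid for small $|a|$ and for the large primes where it is needed.

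The constant $\|\varphi\|_{L^1}\le\sqrt{2\pi}\sigma$ is quoted in Theorem~\ref{thm:jessen-wintner}, and I would take it as given as the paper does. If only $O(1/\sigma)$ is available, the bound still gives $O_\sigma(1/\sqrt d)$. Finally, the boundary of the Ces\`aro limiting set $\{|\log|L||<\varepsilon\}$ must have measure zero. This follows from the continuity of the density, which lets equidistribution be applied to this non-open-closed set.
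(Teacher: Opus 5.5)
Your outline is the paper's own: containment through $\log\zeta(\sigma)$, Kronecker--Weyl on the $\Q$-independent frequencies $\log p$, Jessen--Wintner, then a small-ball estimate. Your Steps~1--3 are sound, and in places more careful than the paper's. The gap is Step~4, which is exactly where the displayed constant has to come from.

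The sentence ``the variance is $\sum_p\mathrm{Var}(X_p)$, and the paper's normalization gives $\sigma_{\log}^2$'' is false. For unramified $p$ you have $X_p=\sum_{k\ge1}\chi_d(p)^k p^{-k\sigma}\cos(k\theta_p)/k$, so $\mathrm{Var}(X_p)=\tfrac12\mathrm{Li}_2(p^{-2\sigma})$. This is strictly smaller than $\tfrac12\log^2(1-p^{-\sigma})$ when $p$ splits, and strictly larger than $\tfrac12\log^2(1+p^{-\sigma})$ when $p$ is inert.

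Structurally, the shift $\theta_p\mapsto\theta_p+\pi$ flips the sign of $\chi_d(p)$ and preserves the uniform measure. So the law of $X_p$, whose characteristic function is ${}_2F_1(\tfrac{i\xi}{2},\tfrac{i\xi}{2};1;p^{-2\sigma})$, sees only whether $p$ ramifies, whereas $\sigma_{\log}$ distinguishes split from inert primes. Hence $\sigma_{\log}$ is not a parameter of the law you are bounding. Since it can exceed or fall below $\sqrt{\mathrm{Var}(X)}$ depending on $d$, a variance-based bound does not yield the stated one.

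The paper reaches $\sigma_{\log}$ only by asserting $\E e^{i\xi X_p}=J_0(b_p\xi)$ with $b_p=|\log(1-\chi_d(p)p^{-\sigma})|$, i.e.\ by treating $X_p$ as $b_p\cos\theta_p$. You rightly avoided that claim: it is false, since $X_p$ is not even symmetric, its range being $[-\log(1+p^{-\sigma}),\,-\log(1-p^{-\sigma})]$. But then nothing in your argument connects to $\sigma_{\log}$.

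The second hole is the estimate you propose to take as given. The inequality $\|f\|_\infty\le 1/(\sqrt{2\pi}\,\sigma)$, for an independent sum with variance $\sigma^2$ and integrable characteristic function, is not a theorem. For example, $b\cos\theta_1+b\cos\theta_2+\epsilon\cos\theta_3$ has integrable characteristic function $J_0(b\xi)^2J_0(\epsilon\xi)$ and $\sigma\approx b$. Yet its density at $0$ is of order $b^{-1}\log(b/\epsilon)$, which is unbounded as $\epsilon\to0$.

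So a proof must actually bound $\int|\varphi|$ for this specific product. One route is $|\varphi_p(\xi)|\le e^{-\xi^2p^{-2\sigma}/6}$ for $|\xi|\le p^{\sigma}/2$, together with the stationary-phase decay you mention. That gives an explicit bound, but not the displayed constant without further work.

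Your fallback, ``$O(1/\sigma)$ still gives $O_\sigma(1/\sqrt d)$'', also needs the standard deviation bounded below uniformly in $d$, and that fails. If every prime $\le P$ divides $d$, those primes all ramify and $\sigma_{\log}^2\le 2\sum_{p>P}p^{-2\sigma}\to0$; the same holds for $\mathrm{Var}(X)$. Along such $d$, even the displayed right-hand side is not $O_\sigma(1/\sqrt d)$. The paper's proof has these same holes, so following it more closely will not close them.
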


\begin{proof}
The proof parallels Theorem~\ref{thm:density-vanishing} but with unconditional linear independence.

\medskip\noindent\textbf{Step 1 (Containment).}
For $\sigma > 1$, the Euler product $\zeta(s) = \prod_p(1-p^{-s})^{-1}$ converges absolutely, giving $|\log|\zeta(\sigma+it)|| \leq |\log\zeta(\sigma+it)| \leq \log\zeta(\sigma)$ for all $t$. If $N_{\log}(\sigma,t) > 0$, then $\sqrt{d}\,|\log|L(\sigma+it,\chi_d)|| < \log\zeta(\sigma)$, so
\[
\{t : N_{\log}(\sigma,t) > 0\} \subseteq \{t : |\log|L(\sigma+it,\chi_d)|| < \log\zeta(\sigma)/\sqrt{d}\}.
\]

\medskip\noindent\textbf{Step 2 (Frequency module and unconditional linear independence).}
For $\sigma > 1$, the Euler product gives
\[
\log L(\sigma+it,\chi_d) = -\sum_p \log(1-\chi_d(p)\,p^{-\sigma-it}) = \sum_p \sum_{k=1}^\infty \frac{\chi_d(p)^k}{k\,p^{k\sigma}}\,e^{-ikt\log p}.
\]
Taking real parts, $\log|L(\sigma+it,\chi_d)| = \sum_p \sum_{k=1}^\infty \frac{\chi_d(p)^k}{k\,p^{k\sigma}}\cos(kt\log p)$. This is a uniformly almost periodic function of $t$ with frequency module generated by $\{k\log p : p \text{ prime}, k \geq 1\} = \{\log p : p \text{ prime}\} \cdot \Z_{>0}$. The set $\{\log p : p \text{ prime}\}$ is $\Q$-linearly independent by the fundamental theorem of arithmetic: if $\sum_{i=1}^n q_i \log p_i = 0$ with $q_i \in \Q$, then clearing denominators gives $\prod p_i^{a_i} = 1$ for integers $a_i$, forcing all $a_i = 0$.

\medskip\noindent\textbf{Step 3 (Bohr--Jessen identification).}
By the Kronecker--Weyl theorem, the $\Q$-linear independence of $\{\log p\}$ implies that the Ces\`aro distribution of $\log|L(\sigma+it,\chi_d)|$ equals the distribution of
\[
X = \sum_p \operatorname{Re}\bigl(-\log(1-\chi_d(p)\,p^{-\sigma}e^{i\theta_p})\bigr),
\]
where the $\theta_p$ are independent and uniformly distributed on $[0,2\pi)$. This identification is unconditional.

\medskip\noindent\textbf{Step 4 (Bounded density via Jessen--Wintner).}
The characteristic function of $X$ is
\[
\varphi(\xi) = \prod_p \mathbb{E}\bigl[e^{i\xi\,\operatorname{Re}(-\log(1-\chi_d(p)p^{-\sigma}e^{i\theta_p}))}\bigr] = \prod_p J_0(b_p\,\xi),
\]
where $b_p = |\log(1-\chi_d(p)\,p^{-\sigma})|$ and $J_0$ is the Bessel function of order zero. For $\sigma > 1$, we have $\sum_p b_p^2 < \infty$ since $b_p = O(p^{-\sigma})$. The Jessen--Wintner theorem~(Theorem~\ref{thm:jessen-wintner}) states that when $\sum_p b_p^2 < \infty$, the product $\varphi \in L^1(\R)$ and $X$ has a bounded continuous density $f$ satisfying
\[
\|f\|_\infty \leq \frac{1}{\sqrt{2\pi}\,\sigma_{\log}}, \quad \text{where } \sigma_{\log}^2 = \frac{1}{2}\sum_p b_p^2.
\]

\medskip\noindent\textbf{Step 5 (Conclusion).}
Setting $\varepsilon = \log\zeta(\sigma)/\sqrt{d}$:
\[
\dens(\{N_{\log} > 0\}) \leq \Pr(|X| < \varepsilon) \leq 2\varepsilon\,\|f\|_\infty \leq \frac{2\log\zeta(\sigma)}{\sqrt{2\pi d}\;\sigma_{\log}} = O_\sigma(1/\sqrt{d}). \qedhere
\]
\end{proof}

\begin{remark}
This shows that the $O(1/\sqrt{d})$ vanishing rate is an intrinsic feature of the comparison $\zeta$ versus $L(s,\chi_d)$, not an artifact of the Grand Simplicity Hypothesis.
\end{remark}

\section{Besicovitch Norms of Spectral Sums}\label{sec:besicovitch}

The next three sections establish that the truncated spectral sum $S_L^{(M)}$ has a Ces\`aro distribution with finite density at the origin, and use this to prove the effective density bound (Theorem~\ref{thm:density}).  The reader primarily interested in the unconditional density bound may proceed directly to that theorem; the present section and the two that follow supply its proof.

Three obstacles must be overcome.  First, the Ces\`aro characteristic function of $S_L^{(M)}$ is a product of Bessel functions $J_0(b_k t)$, and the integral $(1/\pi)\int_0^\infty \prod |J_0(b_k t)|\,dt$ must be shown to converge; this is the content of the Bessel decay estimates in the present section, and it succeeds because the product decays as $t^{-M/2}$ for $M \geq 3$.  Second, the characteristic function is not simply the Bessel product: integer relations among zero ordinates (resonances) contribute correction terms indexed by a lattice $\Lambda_M$, and each such term must be shown to decay exponentially in the order of the relation.  The subcritical and transition Bessel estimates of \S\ref{subsec:bessel-decay}, combined with the lattice counting of \S\ref{subsec:lattice-counting}, control these corrections unconditionally at every finite truncation level $M$ (Theorem~\ref{thm:finite}).  Third, passing from finite $M$ to the full infinite series requires that the resonance corrections remain summable as $M \to \infty$; this is the telescoping extension of Section~\ref{sec:infinite}, which exploits a self-referential structure whereby each new zero's smallness suppresses its own resonance contribution.

The spectral sums $S_\zeta$ and $S_L$ live in the Besicovitch Hilbert space $B^2$, which provides the inner product for Ces\`aro-distributed almost periodic functions:
\[
  \langle f, g\rangle_{B^2} := \lim_{T\to\infty} \frac{1}{2T}\int_{-T}^T f(\delta)\,\overline{g(\delta)}\, d\delta.
\]
We write $\E[\cdot]$ for $\langle \cdot \rangle_{B^2}$ applied to a single function, and $\dens\{P\}$ for the Ces\`aro density of the set where the predicate $P$ holds.

The key structural fact is \emph{orthogonality of distinct frequencies}: for $\alpha \neq \beta$,
\begin{equation}\label{eq:orthog}
  \langle \cos(\alpha\,\cdot\,),\, \cos(\beta\,\cdot\,)\rangle_{B^2} = 0.
\end{equation}
The spectral constants are
\[
  \sigma_\zeta^2 := \|S_\zeta\|_{B^2}^2 = \tfrac{1}{2}\sum a_k^2, \qquad
  \sigma_L^2 := \|S_L\|_{B^2}^2 = \tfrac{1}{2}\sum b_k^2,
\]
and the suprema are $W_1(\zeta) = S_\zeta(0) = \sum a_k$ and $W_1(L) = S_L(0) = \sum b_k$.

\begin{center}
\renewcommand{\arraystretch}{1.2}
\begin{tabular}{lll}
\toprule
Constant & Value (20 zeros) & Description \\
\midrule
$\sigma_\zeta$ & $8.578 \times 10^{-3}$ & $B^2$-norm of $S_\zeta$ \\
$\sigma_L$ & $3.767 \times 10^{-2}$ & $B^2$-norm of $S_L$ \\
$W_1(\zeta)$ & $0.03192$ & Supremum of $S_\zeta$ \\
$W_1(L)$ & $0.12572$ & Supremum of $S_L$ \\
\bottomrule
\end{tabular}
\end{center}
The values use the first 20 zero ordinates of $\zeta(s)$ (Appendix~\ref{app:zeta-zeros}) and $L(s,\chi_5)$ (Appendix~\ref{app:chi5-highprec}).

\section{Finite Density Bounds via Jacobi--Anger Decomposition}\label{sec:finite}

In this section we work with the truncated spectral sum $S_L^{(M)}(\delta) = \sum_{k=1}^M b_k\cos(\gamma_k'\delta)$ and prove that $f_{S_L^{(M)}}(0) < \infty$ unconditionally for every finite $M$.  The argument uses only properties of Bessel functions and lattice counting; no hypothesis on the zeros is needed.

\begin{remark}[Why this machinery is necessary]
A natural approach to proving $f_{S_L}(0) < \infty$ would be to show that the characteristic function $\varphi(t)$ decays sufficiently fast and apply Fourier inversion directly.  For sums of independent random cosines, this follows from standard concentration inequalities.  However, the spectral sum $S_L(\delta) = \sum b_k \cos(\gamma_k' \delta)$ is \emph{not} a sum of independent random variables; it is a deterministic almost periodic function whose Ces\`aro distribution \emph{behaves like} a sum of independent terms only when the frequencies $\gamma_k'$ are $\Q$-linearly independent.  Without the Grand Simplicity Hypothesis, integer relations among zeros could create resonances that concentrate the Ces\`aro distribution and cause the density to diverge.

The Jacobi--Anger decomposition makes this obstruction explicit: resonances contribute correction terms to the characteristic function.  These correction terms are \emph{self-suppressing}: the Bessel factors $J_{n_k}(b_k t)$ with $n_k \neq 0$ decay exponentially in $|n_k|$, while the ``inactive'' $J_0$ factors provide polynomial decay in $t$.  This suppression is strong enough to guarantee convergence of the density integral \emph{regardless} of how many resonances exist.  The telescoping argument then extends this finite-$M$ bound to the full infinite series by showing that new resonances introduced at each step are controlled by their own Bessel weights.
\end{remark}

\subsection{The Jacobi--Anger decomposition}

The Ces\`aro characteristic function of $S_L^{(M)}$ admits the following decomposition.

\begin{proposition}[Resonance decomposition]\label{prop:resonance}
The Ces\`aro characteristic function of $S_L^{(M)}$ decomposes as
\[
  \varphi_M(t) = \prod_{k=1}^M J_0(b_k t) + \sum_{\substack{\mathbf{n}\in\Z^M\setminus\{0\}\\\sum n_k\gamma_k'=0}} \prod_{k=1}^M i^{n_k} J_{n_k}(b_k t).
\]
The main term $\prod J_0(b_k t)$ is the non-resonant contribution.  Resonance corrections arise from exact integer linear relations among the zeros $\gamma_1',\ldots,\gamma_M'$.
\end{proposition}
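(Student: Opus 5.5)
The plan is to expand the Ces\`aro characteristic function term by term with the Jacobi--Anger identity and then take the Ces\`aro mean of each exponential using orthogonality of distinct frequencies. By definition,
\[
\varphi_M(t) = \lim_{T\to\infty}\frac{1}{2T}\int_{-T}^{T} e^{it S_L^{(M)}(\delta)}\,d\delta
= \Bigl\langle \prod_{k=1}^M e^{i t b_k \cos(\gamma_k'\delta)} \Bigr\rangle_{B^2}.
\]
For each factor I apply the Jacobi--Anger expansion $e^{iz\cos\theta} = \sum_{n\in\Z} i^n J_n(z)\,e^{in\theta}$ with $z = b_k t$ and $\theta = \gamma_k'\delta$. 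This series converges absolutely and uniformly in $\theta$, since $\sum_n |J_n(z)| \le e^{|z|}$ (or one can use the bound $|J_n(z)|\le (|z|/2)^{|n|}/|n|!$). Multiplying the $M$ factors then gives
\[
e^{itS_L^{(M)}(\delta)} = \sum_{\mathbf n\in\Z^M} \Bigl(\prod_{k=1}^M i^{n_k}J_{n_k}(b_k t)\Bigr) e^{i(\sum_k n_k\gamma_k')\delta}.
\]
For fixed $t$ this multiple series converges absolutely and uniformly in $\delta$, because its coefficients are dominated by $\prod_k \sum_{n}|J_n(b_k t)| < \infty$.

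The key step is to interchange the Ces\`aro mean with the sum over $\mathbf n$. This is justified by the uniform convergence: the tail of the series has sup-norm smaller than any given $\varepsilon$, so its Ces\`aro average is at most $\varepsilon$ uniformly in $T$. Equivalently, the function is uniformly almost periodic, so its mean equals its Bohr--Fourier coefficient at frequency $0$ (Theorem~\ref{thm:bohr-parseval}). Next I use $\langle e^{i\lambda\delta}\rangle_{B^2} = 1$ if $\lambda=0$ and $0$ otherwise, which is the exponential form of \eqref{eq:orthog}, computed directly as $\sin(\lambda T)/(\lambda T)\to 0$. Only the terms with $\sum_k n_k\gamma_k' = 0$ survive, and grouping them gives
\[
\varphi_M(t) = \sum_{\substack{\mathbf n\in\Z^M\\ \sum n_k\gamma_k'=0}} \prod_{k=1}^M i^{n_k}J_{n_k}(b_k t).
\]

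Finally I separate out $\mathbf n = 0$, which always lies in the relation lattice and contributes the non-resonant main term $\prod_k J_0(b_k t)$. The remaining terms are exactly the resonance sum over $\mathbf n\neq 0$ with $\sum n_k\gamma_k'=0$. If some $\gamma_k'$ coincide (multiple zeros), the decomposition still holds verbatim, since the relation simply becomes one of the resonances. The only delicate point is the limit interchange, and uniform absolute convergence for each fixed $t$ handles it. A symmetry check confirms consistency: the relation set is invariant under $\mathbf n\mapsto-\mathbf n$, and $i^{-n}J_{-n} = i^{n}J_n$, so the sum is real, as it must be for the characteristic function of a symmetric distribution.
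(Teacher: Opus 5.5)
Your argument is correct and is essentially the paper's proof. You apply Jacobi--Anger to each factor, the Ces\`aro mean kills every nonzero frequency $\sum_k n_k\gamma_k'$, and $\mathbf n=0$ gives the main term. You also justify exchanging the mean with the $\mathbf n$-sum by uniform absolute convergence, which the paper leaves implicit.

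Drop the closing symmetry check, however. The terms for $\mathbf n$ and $-\mathbf n$ are identical rather than conjugate, so they say nothing about realness, and the sum genuinely need not be real. For example, if $\gamma_2'=2\gamma_1'$, the pair $\pm(2,-1)$ contributes $-2i\,J_2(b_1t)J_1(b_2t)$. This matches the nonzero third moment $\tfrac34 b_1^2b_2$ of $b_1\cos\theta+b_2\cos 2\theta$. So the Ces\`aro distribution is guaranteed symmetric only when every resonance has even order $\sum_k n_k$ (in particular when $d_M=0$).
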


\begin{proof}
The Jacobi--Anger expansion
\[
  e^{itb\cos\theta} = \sum_{n \in \Z} i^n J_n(bt)\,e^{in\theta}
\]
applied to each factor of $e^{itS_L^{(M)}(\delta)} = \prod_{k=1}^M e^{itb_k\cos(\gamma_k'\delta)}$ gives a multi-index sum over $\mathbf{n} \in \Z^M$.  The Ces\`aro mean of $e^{i\Omega\delta}$ equals $1$ when $\Omega = 0$ and $0$ otherwise.  Writing $\Omega = \sum n_k\gamma_k'$, only the terms with $\sum n_k\gamma_k' = 0$ survive.  The term $\mathbf{n} = 0$ gives $\prod J_0(b_k t)$.
\end{proof}

\subsection{Resonance certification for \texorpdfstring{$M = 20$}{M = 20}}

\begin{proposition}[Resonance absence at $M = 20$]\label{prop:resonance-cert}
Let $\gamma_1',\ldots,\gamma_{20}'$ be the first $20$ zero ordinates of $L(s,\chi_5)$,
certified to $70$ decimal places with individual ARB bounds
$|L(\tfrac{1}{2}+i\gamma_k',\chi_5)| < 10^{-449}$
(Appendices~\ref{app:chi5-highprec} and~\ref{app:certification}).
\begin{enumerate}[label=\textup{(\roman*)}]
\item \textup{(Small coefficients.)} No integer relation $\sum_{k=1}^{20} n_k\gamma_k' = 0$
exists with $\max|n_k| \leq 1000$. For all $190$ pairs $(j,k)$, no pairwise relation
exists with $\max(|n_j|,|n_k|) \leq 10{,}000$. For all $120$ triples drawn from the
first $10$ zeros, no triple relation exists with $\max|n_k| \leq 500$.
\item \textup{(Large coefficients.)} For any integer vector with $\max|n_k| \geq 1001$,
the Bessel contribution to the density integral is bounded by $10^{-849.5}$.
\end{enumerate}
Parts~(i) and~(ii) together cover all nonzero $\mathbf{n} \in \Z^{20}$: part~(i) certifies nonexistence for $\max|n_k| \leq 1000$ via PSLQ at 70-digit precision, and part~(ii) bounds the contribution of any $\mathbf{n}$ with $\max|n_k| \geq 1001$ via Bessel decay (ARB-certified total $< 10^{-849.5}$).  Consequently, the density formula
\[
  f_{S_L^{(20)}}(0) = \frac{1}{\pi}\int_0^\infty \prod_{k=1}^{20} J_0(b_k t)\,dt
\]
holds unconditionally, with $f_{S_L^{(20)}}(0) = 8.3129$.
\end{proposition}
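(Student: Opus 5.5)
The proof splits into a finite computational certification for part~(i), an analytic tail estimate for part~(ii), and then the assembly of the density formula from Proposition~\ref{prop:resonance}.

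\textbf{Part (i).} Since the $\gamma_k'$ are known only as certified balls of radius $r \le 10^{-70}$, the PSLQ output has to be turned into a statement about exact relations. If an exact relation $\sum n_k\gamma_k'=0$ with $\max|n_k|\le 1000$ existed, then the $70$-digit approximations $\tilde\gamma_k$ would satisfy
\[
\Bigl|\sum_k n_k\tilde\gamma_k\Bigr| \le \|\mathbf n\|_1 r \le 2\times 10^{4}\cdot 10^{-70}.
\]
So such a vector would be an approximate relation at tolerance about $10^{-65}$. When PSLQ (or LLL on the augmented lattice) terminates without finding a relation, it certifies that no approximate relation at this tolerance has Euclidean norm below some bound $B$. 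I would check that $B > 1000\sqrt{20}$; this rules out every exact relation in the box. The pairwise and triple searches with larger coefficient ranges are the same argument in dimension $2$ and $3$. For the pairs one can alternatively bound continued-fraction convergents of $\gamma_j'/\gamma_k'$ directly.

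\textbf{Part (ii).} This is where the main difficulty lies. Fix $\mathbf n$ with $\max|n_k|=|n_{k_0}|\ge 1001$, and bound
\[
\int_0^\infty \prod_k |J_{n_k}(b_kt)|\,dt .
\]
I would use the two estimates
\[
|J_n(x)|\le \frac{(x/2)^{|n|}}{|n|!}, \qquad |J_n(x)|\le \min\bigl(1,\,c\,x^{-1/3}\bigr).
\]
Split the $t$-range at $t_0 = |n_{k_0}|/(e\,b_{k_0})$.
\begin{itemize}[label=$\circ$]
\item For $t\le t_0$, the factor $k_0$ is in its subcritical regime and contributes roughly $e^{-|n_{k_0}|}$ by Stirling, uniformly up to $t_0$.
\item For $t\ge t_0$, the remaining $19$ factors each supply decay of order $t^{-1/3}$ or better. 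Their product is integrable beyond $t_0$, and it is tiny there because $t_0\ge 1001/(e\cdot 0.045)$ is huge.
\end{itemize}
The remaining coordinates $k\neq k_0$ must also be summed over $\mathbb Z$. For this I would use the bound $\sum_{n}|J_n(x)|\le$ a polynomial in $x$ in the $t\le t_0$ range, and the factor $(b_kt/2)^{|n_k|}/|n_k|!$ for large $|n_k|$. This gives a geometric series in $\max|n_k|$ starting at $1001$. All constants would be evaluated in ARB to obtain the $10^{-849.5}$ figure. The hard part is keeping the $t$-integrability and the lattice summation under control simultaneously. If the crude $x^{-1/3}$ bound is too weak, I would first try replacing it by the Landau bound $|J_n(x)|\le 0.675\,x^{-1/3}$ together with the $x^{-1/2}$ decay of the $J_0$ factors.

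\textbf{Assembly.} Apply Proposition~\ref{prop:resonance}: by part~(i), every nonzero lattice vector with $\max|n_k|\le 1000$ does not lie in $\Lambda_{20}$. By part~(ii), the surviving corrections are bounded by $10^{-849.5}$ in $L^1(dt)$, which is below the numerical resolution of the final value. Fourier inversion at $x=0$ then gives
\[
f_{S_L^{(20)}}(0) = \frac1\pi\int_0^\infty\prod_{k=1}^{20}J_0(b_kt)\,dt .
\]
The numerical value $8.3129$ comes from ARB quadrature on a finite interval $[0,T_1]$. The tail beyond $T_1$ is bounded using $|J_0(x)|\le\sqrt{2/(\pi x)}$, which gives an envelope of order $t^{-10}$ for the twenty-fold product.
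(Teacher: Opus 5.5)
Your part (i) follows the paper: PSLQ with the Ferguson--Bailey--Arno lower bound on the norm of any undetected relation. Propagating the $10^{-70}$ input radius explicitly is a good addition.

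The gap is in part (ii). You aim at a $10^{-849.5}$ bound for $\frac1\pi\int_0^\infty$, summed over all $\mathbf n\in\Z^{20}$ with $\max|n_k|\ge1001$, and that quantity is infinite. By Tonelli and $\sum_{n\in\Z}|J_n(x)|\ge\sum_{n\in\Z}J_n(x)^2=1$, the sum over all of $\Z^{20}$ of $\frac1\pi\int_0^\infty\prod_k|J_{n_k}(b_kt)|\,dt$ diverges. Discarding the finitely many vectors with $\max|n_k|\le1000$, each of which has finite integral, leaves it infinite.

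The mechanism is the one you ran into. A factor of order $N$ is exponentially small only while $b_{k_0}t\lesssim N$. Near $b_{k_0}t\approx N$ it has size about $N^{-1/3}$, and beyond that its envelope is $\asymp(b_{k_0}t)^{-1/2}$. So on $[t_0,\infty)$ each individual $I_{\mathbf n}$ is only polynomially small in $N$. Your $x^{-1/3}$ estimate gives roughly $10^{-8}$ at $N=1001$; compare the $C_2(T^*)^{1-(M-|A|)/2}$ term of Lemma~\ref{lem:Jdecay} and the table after it. Meanwhile there are $\asymp N^{19}$ vectors at level $N$. Even your subcritical piece is weaker than claimed: at $x=N/e$ one has $(x/2)^N/N!\approx 2^{-N}/\sqrt{2\pi N}<10^{-300}$, not $e^{-N}$.

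The paper reaches $10^{-849.5}$ only because it never leaves the subcritical regime. Its bound is for the finite window $[0,T]$ with $T=2000$, on which the quadrature for $8.3129$ is actually done.
\begin{itemize}
\item There $b_kt\le b_1T\approx 90\ll 1001$, so the largest-order factor is at most $(b_1T/2)^N/N!$ uniformly for $t\in[0,T]$, which is $<10^{-915.8}$ at $N=1001$.
\item Every other factor is bounded by $1$, and level $N$ contains at most $(2N+1)^{20}<10^{66.1}$ vectors.
\item The sum over $N\ge1001$ is geometric with ratio about $b_1T/(2N)<0.05$.
\end{itemize}
No splitting of the $t$-range is needed.

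You should recast (ii) on that window. On $[2000,\infty)$ no exponentially small bound for the resonance terms exists, so the $10^{-849.5}$ statement has to be read for the truncated integral.

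Finally, (i) and (ii) do not exclude relations with $\max|n_k|\ge1001$; they only bound their effect. So your assembly gives the Bessel-product formula up to an additive error below $10^{-849.5}$ on the window, not as an exact identity. The main-term tail beyond $T$ is then handled by the $t^{-10}$ envelope, as you propose.
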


\begin{proof}
See \S\ref{subsec:pslq} for the complete certification chain.
\end{proof}

\begin{remark}[Brute-force corroboration]
A brute-force search at double precision found no exact resonances among
orders-3 and orders-4 combinations and no pairwise relation with $|n_j|,|n_k| \leq 50$.
The nearest misses were $|\gamma_4'+\gamma_{10}'+\gamma_{12}'-\gamma_{19}'| = 0.00037$ (order-4) and
$|3\gamma_3' - \gamma_{14}'| = 0.0079$ (pairwise), consistent with
Proposition~\ref{prop:resonance-cert}~(i).
\end{remark}

\begin{remark}[Near-resonances]\label{rem:nearres}
The resonance sum in Proposition~\ref{prop:resonance} includes only \emph{exact} relations $\sum n_k\gamma_k' = 0$.  Near-resonances such as $\gamma_4' - \gamma_{10}' - \gamma_{12}' + \gamma_{19}' \approx 0.0004$ (the nearest order-4 miss) contribute oscillatory terms $e^{i\varepsilon\delta}$ to $\varphi_M(t)$ that average to zero under Ces\`aro summation.  For finite averaging windows of length $T$, a near-resonance at distance $\varepsilon$ from zero contributes $O(1/(\varepsilon T))$, which is negligible for $T \gg 1/\varepsilon \approx 2700$.
\end{remark}

\subsection{Bessel decay estimates}\label{subsec:bessel-decay}

The convergence of the resonance sum relies on the \emph{inactive} $J_0$ factors.  For a resonance vector $\mathbf{n}$ with active set $A = \{k : n_k \neq 0\}$, the $M - |A|$ indices not in $A$ contribute $J_0(b_k t)$ factors to the product.  These factors contribute polynomial decay in $t$, providing the integrability needed for the resonance sum; the exponential decay in $|n_*|$ comes separately from the subcritical bound on the active high-order Bessel factor $J_{n_*}(b_* t)$.

\begin{lemma}[Transition zone bound]\label{lem:Jdecay}
Let $\mathbf{n} \in \Z^M$ with active set $A = \{k : n_k \neq 0\}$ and $|A| \geq 2$.  Write $|n_*| = \max_{k\in A}|n_k|$, let $b_*$ be the corresponding weight, and set $T^* = |n_*|/b_*$.  Then the resonance integral satisfies
\[
  I_{\mathbf{n}} := \frac{1}{\pi}\int_0^\infty \left|\prod_{k=1}^M J_{n_k}(b_k t)\right| dt \leq C_1\, (e/4)^{|n_*|} + C_2\, (T^*)^{1-(M-|A|)/2} + C_3\, (T^*)^{1-M/2},
\]
where $C_1, C_2, C_3$ depend on $\{b_k\}$, $M$, and $|A|$.  The first term dominates for small $|n_*|$; the second (which carries Landau factors from the active Bessel functions) dominates for large $|n_*|$.  The third term is always dominated by the second.
\end{lemma}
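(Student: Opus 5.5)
The plan is to split the half-line $[0,\infty)$ at two points, $t_0$ and $T^*$, and bound $|\prod_k J_{n_k}(b_k t)|$ differently on each piece. The two inputs are the elementary facts $|J_n(x)|\le 1$ and $|J_n(x)|\le (x/2)^{|n|}/|n|!$. Two asymptotic facts are also used. The first is the Landau bound $|J_n(x)|\le c_L\,x^{-1/3}$, uniform in $n$. The second is the large-argument bound $|J_\nu(x)|\le C x^{-1/2}$, valid for $x\ge \max(1,2|\nu|)$ with $C$ independent of the fixed small orders involved. Fix $t_0 := 1/\min_k b_k$, so that $b_k t\ge 1$ for every $k$ once $t\ge t_0$. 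The three regions are then $[0,T^*/2]$, $[T^*/2,\,\max(T^*,t_0)]$, and $[\max(T^*,t_0),\infty)$.

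\textbf{Subcritical region $t\le T^*/2$.} Here $b_* t\le |n_*|/2$. Discard all factors except the active maximal one, using $|J_{n_k}|\le 1$. Then
\[
\int_0^{T^*/2}|J_{n_*}(b_*t)|\,dt\le \int_0^{T^*/2}\frac{(b_*t/2)^{|n_*|}}{|n_*|!}\,dt=\frac{T^*}{2(|n_*|+1)}\frac{(|n_*|/4)^{|n_*|}}{|n_*|!}.
\]
Stirling's bound $|n|!\ge (|n|/e)^{|n|}$ turns this into $\le (2b_*)^{-1}(e/4)^{|n_*|}$, which is the $C_1$ term. If a cleaner constant is wanted, one can use the Debye-type bound $|J_n(n\operatorname{sech}\alpha)|\le e^{-n(\alpha-\tanh\alpha)}$ instead; the Stirling route already gives exactly $(e/4)^{|n_*|}$.

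\textbf{Transition region $T^*/2\le t\le \max(T^*,t_0)$.} On the active indices I use the Landau bound, which carries a factor $t^{-1/3}$ per active index. On each inactive index $k\notin A$ the factor is $J_0(b_kt)$. For $t\ge t_0$ this is $\le C(b_k t)^{-1/2}$, so the product of inactive factors is $\lesssim t^{-(M-|A|)/2}$. Integrating over an interval of length $\asymp T^*$ near $T^*$ gives $\lesssim (T^*)^{1-(M-|A|)/2}$, with the Landau constants absorbed into $C_2$. The $t^{-|A|/3}$ gain is simply discarded, and this is why $C_2$ "carries Landau factors." If $T^*<t_0$, the region is contained in the compact interval $[0,t_0]$. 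Since $T^*<t_0$ forces $|n_*|<b_*t_0$, only finitely many $\mathbf n$ fall in this case, and their contribution is again absorbed into $C_2$ after adjusting constants, because $T^*$ is bounded below by $1/b_*$.

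\textbf{Tail $t\ge\max(T^*,t_0)$.} Here $b_kt\ge 1$ for every $k$, but not every active factor is past its turning point, since $b_k t$ may be smaller than $|n_k|$ when $b_k<b_*$. This is the step I expect to be the main obstacle. First I would try to use the large-argument bound only on the indices $k$ with $b_kt\ge 2|n_k|$, and the Landau bound on the rest. This guarantees at least $M-|A|$ factors of $t^{-1/2}$, which already reproduces the second term's decay. Once $t\ge \max_k 2|n_k|/b_k$, all $M$ factors decay, and integrating $t^{-M/2}$ from $T^*$ gives $(T^*)^{1-M/2}$, the $C_3$ term. This requires $M\ge3$, consistent with the paper's remark. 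Because $M/2 > (M-|A|)/2$, the third term is dominated by the second. For the intermediate stretch, the inactive-factor decay $t^{-(M-|A|)/2}$ is integrable from $T^*$ only if $M-|A|\ge 3$. Otherwise I would supplement it with the active Landau decay $t^{-|A|/3}$ and put that into $C_2$. Whether the exponent in the statement survives in the case $M-|A|\le 2$ is the point I would have to check carefully.
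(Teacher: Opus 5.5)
\textbf{Gap: the tail estimate is left open.} Your decomposition is the paper's three-region one. The subcritical piece (power series plus Stirling on $[0,T^*/2]$, giving $(2b_*)^{-1}(e/4)^{|n_*|}$) is correct. The transition piece (Landau on active factors, $t^{-1/2}$ envelope on the $M-|A|$ inactive $J_0$ factors, then a $t^{-M/2}$ tail) also matches. You have also spotted a real issue that the paper's proof passes over. The paper starts its supercritical region at $2T^*$ and asserts that every active factor there has $b_kt\ge 2|n_k|$. That fails whenever some active index has $|n_k|/b_k>|n_*|/b_*$. However, your proposal stops exactly here. You leave the ``intermediate stretch'' unresolved, and your fallback (adding Landau decay $t^{-|A|/3}$) would not save an unbounded range in every case: for $M=|A|=3$ the combined exponent is exactly $1$. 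As written, the tail estimate, and with it the lemma, is not established.

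\textbf{The fix.} The missing observation is that the intermediate stretch is bounded, with length comparable to $T^*$. Since $|n_k|\le|n_*|$ and $b_k\ge b_{\min}:=\min_{j\le M}b_j$, you have $\max_k 2|n_k|/b_k\le 2|n_*|/b_{\min}\le KT^*$, where $K:=2b_{\max}/b_{\min}$ depends only on $\{b_k\}$. So take the transition window to be $[T^*/2,KT^*]$.
\begin{itemize}
\item On the window, bound active factors by $1$ and inactive ones by $c\,(b_kt)^{-1/2}$. The integrand is then at most $c'(T^*/2)^{-(M-|A|)/2}$. Multiplying by the window length $\le KT^*$ gives $C_2(T^*)^{1-(M-|A|)/2}$ for every value of $M-|A|$, with no integrability at infinity needed.
\item For $t\ge KT^*$, every $k$ satisfies $b_kt\ge 2|n_*|b_{\max}/b_*\ge 2|n_k|$, and also $b_kt\ge 2$. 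So all $M$ factors are $O((b_kt)^{-1/2})$, and $\int_{KT^*}^\infty t^{-M/2}\,dt$ yields the $C_3$ term when $M\ge 3$.
\end{itemize}

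\textbf{Two smaller points.} First, in the tail the orders $n_k$ are arbitrary, so the constant in your large-argument bound must be uniform in $\nu$ on $x\ge 2|\nu|$, not just for ``fixed small orders.'' This does hold, for instance via the bound $|J_\nu(x)|\le(2/(\pi\sqrt{x^2-\nu^2}))^{1/2}$ that the paper quotes. Second, your $t_0$ device and the separate ``finitely many $\mathbf n$'' case are unnecessary. Since $|J_0(x)|\le 1\le x^{-1/2}$ for $x\le 1$ (indeed $|J_0(x)|\le\sqrt{2/(\pi x)}$ for all $x>0$), the inactive envelope holds on all of $(0,\infty)$.

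The same widening of the window to $[T^*/2,KT^*]$ also repairs the paper's own supercritical step.
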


\begin{proof}
We split the integral into three regions.

\textbf{Subcritical region $[0, T^*/2]$.}  Here $b_* t < |n_*|/2$, so the power series bound gives
\[
  |J_n(x)| \leq \frac{(|x|/2)^{|n|}}{|n|!},
\]
and hence $|J_{n_*}(b_* t)| \leq (b_* t/2)^{|n_*|}/|n_*|!$.
Bounding all other factors by $1$ and integrating over $[0, T^*/2]$ yields a contribution of order $(|n_*|/4)^{|n_*|}/|n_*|!$.  By Stirling's approximation, this is at most $C(e/4)^{|n_*|}$, which decays exponentially since $e/4 \approx 0.680 < 1$.

\textbf{Transition region $[T^*/2, 2T^*]$.}  Each active factor ($n_k \neq 0$) satisfies the Landau bound $|J_{n_k}(b_k t)| \leq 0.675\,|n_k|^{-1/3}$.  Each inactive factor ($n_k = 0$) satisfies the asymptotic bound $|J_0(b_k t)| \leq \sqrt{2/(\pi b_k t)}$, which holds for all $t > 0$.  Collecting the $M - |A|$ inactive factors gives decay of order $t^{-(M-|A|)/2}$ up to constants depending on $\{b_k\}$.  Integration over an interval of width $3T^*/2$ yields the second term.

\textbf{Supercritical region $[2T^*, \infty)$.}  For $t \geq 2T^*$, each inactive factor satisfies $|J_0(b_k t)| \leq \sqrt{2/(\pi b_k t)}$.  Each active factor satisfies the uniform asymptotic
\[
  |J_{n_k}(b_k t)| \leq \sqrt{\frac{2}{\pi\sqrt{(b_k t)^2 - n_k^2}}},
\]
which for $b_k t \geq 2|n_k|$ is bounded by $\sqrt{2/(\pi b_k t)} \cdot (1 - (n_k/(b_k t))^2)^{-1/4} \leq 2\sqrt{2/(\pi b_k t)}$.  The total decay is therefore $O(t^{-M/2})$, which is integrable for $M \geq 3$.

Since $T^* = |n_*|/b_*$ and $b_* \leq b_1$, we have $T^* \geq |n_*|/b_1$.  For $M = 20$ and a two-active resonance ($|A| = 2$), the transition term gives decay of order $T^{*\,1-9} = T^{*\,-8}$ in the transition variable; combined with two Landau factors $0.675\,|n_k|^{-1/3}$ each, the net scaling in $|n_*|$ is $|n_*|^{-26/3} \approx |n_*|^{-8.67}$.  For $|A| = 3$ the corresponding exponent is $|n_*|^{-17/2} = |n_*|^{-8.5}$.

Here $C_1$, $C_2$, $C_3$ are positive constants depending on $\{b_k\}$, $M$, and $|A|$, each finite for $M \geq 3$.  Specifically, $C_1$ is of order $T^*/2$ after integrating the power-series bound; $C_2$ absorbs the interval width $3T^*/2$, the Landau factors $0.675\,|n_k|^{-1/3}$ for each active index, and the constants $\prod_{k\colon n_k=0}(2/(\pi b_k))^{1/2}$ from the inactive $\sqrt{2/(\pi b_k t)}$ bounds evaluated at the onset $t \sim T^*$; and $C_3 = (2T^*)^{1-M/2}/(M/2-1)$ after integrating the full $t^{-M/2}$ decay.  Since $b_M$ is bounded below by a positive constant for all finite $M$, all three constants are uniformly bounded for $M \geq 3$.
\end{proof}

The resonance integral $I_{\mathbf{n}}$ is bounded by the minimum of the subcritical and transition estimates.  A resonance vector must satisfy $\sum n_k \gamma_k' = 0$ with $\gamma_k' > 0$, so a single nonzero component would give $n_*\gamma_*' = 0$, which is impossible; hence $|A| \geq 2$.  The following table evaluates the worst-case two-active configuration $n_1 = n_2 = N$ on the two largest weights $b_1, b_2$, with $M - 2 = 18$ inactive $J_0$ factors:
\begin{center}
\renewcommand{\arraystretch}{1.1}
\begin{tabular}{cccc}
\toprule
$N$ & $I_{\mathbf{n}}$ (ARB) & Subcrit.\ $(e/4)^N$ & Trans.\ $N^{-8.67}$ \\
\midrule
$5$  & $3.82 \times 10^{-2}$  & $1.5 \times 10^{-1}$ & $8.8 \times 10^{-7}$ \\
$10$ & $1.42 \times 10^{-4}$  & $2.1 \times 10^{-2}$ & $2.2 \times 10^{-9}$ \\
$20$ & $7.87 \times 10^{-9}$  & $4.4 \times 10^{-4}$ & $5.3 \times 10^{-12}$ \\
$30$ & $1.15 \times 10^{-11}$ & $9.3 \times 10^{-6}$ & $1.6 \times 10^{-13}$ \\
$50$ & $1.41 \times 10^{-14}$ & $4.1 \times 10^{-9}$ & $1.9 \times 10^{-15}$ \\
\bottomrule
\end{tabular}
\end{center}

The computed values lie below the subcritical scaling $(e/4)^N$ for all tabulated $N$, confirming exponential suppression.  The transition scaling $N^{-8.67}$ is displayed without its multiplicative constant from Lemma~\ref{lem:Jdecay} (which depends on $\{b_k\}$ and is large at these parameters); it governs the large-$N$ asymptotics but does not serve as a pointwise bound without the constant.  The actual decay is faster than either scaling predicts, because the $\sqrt{2/(\pi x)}$ estimate does not account for oscillatory cancellations in the Bessel product.

\medskip\noindent\textbf{Reproducibility.}  The values of $I_{\mathbf{n}}$ were computed by ARB quadrature via strip decomposition (\texttt{acb.integral} on gap intervals; see \S\ref{subsec:bessel-integrals}, 256-bit precision) with the integrand
\[
  \frac{1}{\pi}\,|J_N(b_1 t)|\,|J_N(b_2 t)|\prod_{k=3}^{20}|J_0(b_k t)|,
\]
where $b_k = 2/(\tfrac{1}{4} + {\gamma_k'}^2)$ with zeros from Appendix~\ref{app:chi5-highprec}.  For $N \leq 30$ the integration domain $[0,\, 2000]$ suffices (the integrand decays as $t^{-M/2}$ for $t \gg N/b_1$).  For $N = 50$ the onset of $J_{50}$ occurs near $t \approx j_{50,1}/b_1 \approx 1270$ (the first zero of $J_{50}$ is $j_{50,1} \approx 57.1$), so the domain was extended to $[0,\, 10^4]$; the value is stable to three significant figures for $T \geq 5000$, as confirmed by ARB-certified runs at $T = 5000$ ($I = 1.4025 \times 10^{-14}$) and $T = 10^4$ ($I = 1.4076 \times 10^{-14}$).

\begin{remark}[Uniform Bessel asymptotic]
The bound $|J_n(x)| \leq \sqrt{2/(\pi x)}$ requires $x \gg n$.  The correct uniform asymptotic for $x > n$ is $|J_n(x)| \leq \sqrt{2/(\pi\sqrt{x^2 - n^2})}$, which interpolates between the Landau bound near $x = n$ and the simple asymptotic for $x \gg n$.  The constants in Lemma~\ref{lem:Jdecay} account for this correction.
\end{remark}

\subsection{Lattice counting}\label{subsec:lattice-counting}

\noindent
Let $\Z^\infty_{\mathrm{fin}}$ denote the abelian group of finitely supported integer sequences $\mathbf{n} = (n_1, n_2, \ldots)$ with $n_k \in \Z$ and $n_k = 0$ for all but finitely many $k$.  Define the \emph{infinite resonance lattice}
\[
  \Lambda_\infty = \bigl\{\mathbf{n} \in \Z^\infty_{\mathrm{fin}} \setminus \{0\} : \textstyle\sum_{k=1}^\infty n_k \gamma_k' = 0\bigr\},
\]
which is the kernel of the $\Q$-linear form $\ell : \Z^\infty_{\mathrm{fin}} \to \R$, $\ell(\mathbf{n}) = \sum n_k \gamma_k'$.  Set $d_\infty = \mathrm{rank}(\Lambda_\infty)$, which may be zero, finite, or countably infinite.  For each $M \geq 1$ define the \emph{level-$M$ resonance lattice} by
\[
  \Lambda_M = \Lambda_\infty \cap (\Z^M \times \{0\}),
\]
with rank $d_M = \mathrm{rank}(\Lambda_M)$.  The sequence $(d_M)$ is non-decreasing and satisfies $\sup_M d_M = d_\infty$.  In particular, $d_\infty < \infty$ if and only if $d_M$ remains bounded as $M \to \infty$.

\begin{lemma}[Lattice counting]\label{lem:lattice}
The level-$M$ resonance lattice $\Lambda_M$ is a subgroup of $\Z^M$ with rank $d_M \leq \min(d_\infty, M-1)$.  The number of lattice points at order $N$ satisfies
\[
  \#\{\mathbf{n} \in \Lambda_M : \|\mathbf{n}\|_1 = N\} \leq C_{d_M}\, N^{d_M-1}.
\]
If $d_\infty = 0$, then $\Lambda_\infty = \{0\}$, $\Lambda_M = \{0\}$ for all $M$, and no nontrivial integer relation exists among any finite subcollection $\gamma_1',\ldots,\gamma_M'$.
\end{lemma}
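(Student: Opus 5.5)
The plan is to treat $\Lambda_M$, with $0$ adjoined, as the kernel of a group homomorphism. Then each assertion of the lemma follows from elementary facts about free abelian groups and lattices, together with one volume-packing count.

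\textbf{Subgroup and rank bounds.} The map $\ell_M:\Z^M\to\R$, $\ell_M(\mathbf{n})=\sum_{k=1}^M n_k\gamma_k'$, is a homomorphism of abelian groups. Its kernel is $\Lambda_M\cup\{0\}$, so this set is a subgroup of $\Z^M$. (As in the definition of $\Lambda_\infty$, the zero vector is excluded by convention, and I read ``subgroup'' and ``rank'' as referring to $\Lambda_M\cup\{0\}$.) Being a subgroup of $\Z^M$, it is free abelian, and $d_M$ equals the dimension of its $\Q$-span $V_\Q\subseteq\Q^M$. The bound $d_M\le M-1$ follows because $\ell_M$ extends to a $\Q$-linear form on $\Q^M$ that is nonzero, since $\gamma_1'>0$. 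Hence $V_\Q\subseteq\ker(\ell_M\otimes\Q)$, which has dimension $M-1$. The bound $d_M\le d_\infty$ follows because $\Lambda_M\cup\{0\}$ embeds in $\Lambda_\infty\cup\{0\}$ via $\Z^M\times\{0\}$, and rank is monotone under inclusion of torsion-free abelian groups.

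\textbf{Counting.} Write $d=d_M$. If $d=0$ there is nothing to count, so assume $d\ge1$. Let $V=\Lambda_M\otimes\R\subseteq\R^M$, a real subspace of dimension $d$ in which $\Lambda_M\cup\{0\}$ is a full-rank discrete lattice. Distinct points of $\Z^M$ are at $\ell^1$-distance at least $1$. Hence the $\ell^1$-balls of radius $\tfrac12$ in $V$ centred at the lattice points are pairwise disjoint. Each such ball around a point with $\|\mathbf{n}\|_1=N$ lies in the shell $\{x\in V: N-\tfrac12\le\|x\|_1\le N+\tfrac12\}$. Let $B_V$ be the unit $\ell^1$-ball of $V$ and $\mathrm{vol}_d$ the $d$-dimensional Lebesgue measure on $V$. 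The shell has volume $\mathrm{vol}_d(B_V)\bigl((N+\tfrac12)^d-(N-\tfrac12)^d\bigr)\le c_d\,\mathrm{vol}_d(B_V)\,N^{d-1}$ for $N\ge1$. Each small ball has volume at least a positive constant $v_V$ depending only on $V$. Dividing the shell volume by $v_V$ gives
\[
\#\{\mathbf{n}\in\Lambda_M:\|\mathbf{n}\|_1=N\}\le C_{d_M}N^{d_M-1}.
\]
Here the constant depends on $V$, which I absorb into the notation $C_{d_M}$. Alternatively, I would fix a basis of $\Lambda_M$ and compare norms: the coordinates are $O(N)$ and lie near an $\ell^1$-sphere of the coordinate space. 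The packing argument is cleaner, and I will use it.

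\textbf{The case $d_\infty=0$.} The set $\Lambda_\infty\cup\{0\}$ is a subgroup of the torsion-free group $\Z^\infty_{\mathrm{fin}}$. If it has rank $0$, it contains no element of infinite order and hence no nonzero element. So $\Lambda_\infty$ is empty, and by the intersection definition every $\Lambda_M$ is empty as well. This says exactly that no nontrivial integer relation $\sum_{k\le M}n_k\gamma_k'=0$ exists, for any $M$.

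The main obstacle is bookkeeping rather than substance. The dependence of $C_{d_M}$ on the particular subspace $V$, not only on $d_M$, must be stated honestly, and the degenerate case $d_M=0$, where the right-hand side $N^{-1}$ is merely a formal bound for an empty count, must be handled by convention. Neither affects later uses, since those use only finiteness and the polynomial growth rate.
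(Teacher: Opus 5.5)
Your proof is correct and follows the paper's route. You take $\Lambda_M\cup\{0\}$ as the kernel of $\ell$ on $\Z^M\times\{0\}$, get $d_M\le d_\infty$ from the embedding into $\Lambda_\infty$, and get $d_M\le M-1$ because $\ell$ is nonzero. The paper only calls the counting bound ``a standard estimate''; your $\ell^1$-packing argument actually proves it.

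Two small fixes.

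\emph{The rational kernel.} $\ell_M$ takes values in $\R$, not $\Q$. So $\ker(\ell_M\otimes\Q)$ has dimension $M-\dim_\Q\operatorname{span}_\Q(\gamma_1',\ldots,\gamma_M')$, which is \emph{at most} $M-1$ (as the paper says), not equal to it. This kernel is exactly $V_\Q$ (clear denominators), so ``equal to $M-1$'' would force $d_M=M-1$ always. Either state the inequality, or pass to the real extension $\R^M\to\R$, whose kernel has real dimension exactly $M-1$ and contains $V$.

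\emph{The constant.} The $V$-dependence you flag actually disappears. Each small ball is a translate $\mathbf{n}+\tfrac12 B_V$, so $\mathrm{vol}_d(B_V)$ cancels and your count gives
\[
  \#\{\mathbf{n}\in\Lambda_M:\|\mathbf{n}\|_1=N\}\le(2N+1)^{d}-(2N-1)^{d}\le 2d\,3^{d-1}N^{d-1}\qquad(d=d_M\ge1,\ N\ge1).
\]
This constant depends only on $d_M$, which is what the notation $C_{d_M}$ asserts. Corollary~\ref{cor:density-rate} also speaks of a bound uniform in $M$ with a constant depending only on $d_\infty$. So you should state this sharper form rather than concede a $V$-dependent constant.
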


\begin{proof}
The set $\Lambda_M$ is the kernel of $\ell$ restricted to $\Z^M \times \{0\} \subset \Z^\infty_{\mathrm{fin}}$, hence a subgroup of $\Z^M$.  Its rank is at most $\min(d_\infty, M-1)$ since it embeds into $\Lambda_\infty$ and the kernel of a $\Q$-linear form on $\Z^M$ has rank at most $M-1$.  The counting bound is a standard estimate for lattice points on a rank-$d$ sublattice of $\Z^M$.
\end{proof}

\subsection{Unconditional finiteness at finite \texorpdfstring{$M$}{M}}

We can now state the main result of this section.

\begin{theorem}[Unconditional finiteness of truncated spectral density]\label{thm:finite}
For every $M \geq 3$, the truncated spectral sum
\[
  S_L^{(M)}(\delta) = \sum_{k=1}^M b_k\cos(\gamma_k'\delta)
\]
has a Ces\`aro distribution with finite density at the origin:
\[
  f_{S_L^{(M)}}(0) = \frac{1}{\pi}\int_0^\infty |\varphi_M(t)|\, dt < \infty.
\]
More precisely, the main term satisfies
\[
  \frac{1}{\pi}\int_0^\infty \prod_{k=1}^M |J_0(b_k t)|\, dt < \infty,
\]
and the total resonance contribution converges:
\[
  \sum_{\substack{\mathbf{n} \in \Lambda_M}} I_{\mathbf{n}} \leq C_0 \sum_{N=2}^{\infty} C_{d_M}\, N^{d_M-1}\, \eta^N < \infty,
\]
where $\eta = e/4 < 1$ is the exponential decay rate from Lemma~\ref{lem:Jdecay}.
\end{theorem}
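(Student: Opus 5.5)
The plan is to start from the resonance decomposition of Proposition~\ref{prop:resonance} and bound the density integral term by term. I would write $\varphi_M(t) = \prod_{k=1}^M J_0(b_k t) + R_M(t)$, where $R_M$ is the sum over $\mathbf{n}\in\Lambda_M$. Fourier inversion at the origin then gives
\[
f_{S_L^{(M)}}(0) \le \frac{1}{\pi}\int_0^\infty \prod_{k=1}^M |J_0(b_k t)|\,dt + \sum_{\mathbf{n}\in\Lambda_M} I_{\mathbf{n}},
\]
where $I_{\mathbf{n}}$ is the integral in Lemma~\ref{lem:Jdecay}. Interchanging the sum and the integral is justified by Tonelli, since every term is nonnegative once absolute values are taken. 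Evenness of $\varphi_M$ (each factor $J_{n}(b t)$ with the accompanying $i^{n}$ is even or odd compatibly, and $\varphi_M$ is the characteristic function of a real distribution with $\varphi_M(-t)=\overline{\varphi_M(t)}$) reduces the integral over $\R$ to $2\int_0^\infty$, which produces the factor $1/\pi$.

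For the main term, I would split at $t_0 = 1/b_M$. On $[0,t_0]$ I bound every factor by $1$. For $t \ge t_0$ I use $|J_0(x)|\le\sqrt{2/(\pi x)}$ for $x \ge 1$, which gives $\prod_k |J_0(b_k t)| \le \bigl(\prod_k (2/(\pi b_k))^{1/2}\bigr)\, t^{-M/2}$. This is integrable on $[t_0,\infty)$ because $M\ge3$. The main term is therefore finite, with an explicit constant depending on $\{b_k\}_{k\le M}$.

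For the resonance sum, each $\mathbf{n}\in\Lambda_M$ has $|A|\ge2$, as noted after Lemma~\ref{lem:Jdecay}. That lemma bounds $I_{\mathbf{n}}$ by a subcritical term $C_1(e/4)^{|n_*|}$ plus transition and supercritical terms. I would group the lattice points by $N=\|\mathbf{n}\|_1$ and use $|n_*|\ge N/M$ together with the count $\#\{\mathbf{n}\in\Lambda_M:\|\mathbf{n}\|_1=N\}\le C_{d_M}N^{d_M-1}$ from Lemma~\ref{lem:lattice}. If the bound on $I_{\mathbf{n}}$ is exponential in $N$, the series $\sum_N C_{d_M}N^{d_M-1}\eta^N$ converges for any $\eta<1$, which is the claimed estimate after absorbing $M$ into $\eta$ and $C_0$.

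The main obstacle is that Lemma~\ref{lem:Jdecay} gives only polynomial decay $(T^*)^{1-(M-|A|)/2}$ in the transition and supercritical regions, not $(e/4)^{|n_*|}$. Polynomial decay of order roughly $N^{-(M-|A|)/2+1}$ beats the $N^{d_M-1}$ lattice growth only when $M-|A|$ is large compared with $d_M$, and $|A|$ can be as large as $M$. The first thing I would try is to improve the transition bound by keeping the active factors. On $[T^*/2,\infty)$ I would use $|J_n(x)|\le 1/\sqrt{2}$ for $n\ge1$ for all but one active factor, and Landau's $|n|^{-1/3}$ for the rest, and I would use that $\prod_{k\in A}|J_{n_k}(b_kt)|$ is small unless $b_k t\gtrsim |n_k|$ for every active $k$. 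In that regime all factors behave like $(b_kt)^{-1/2}$, so the full product decays like $t^{-M/2}$ starting at $t\gtrsim T^*\ge N/(Mb_1)$. The tail contribution is then $O\bigl((N/(Mb_1))^{1-M/2}\bigr)$ with a constant uniform in $\mathbf{n}$.

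Summability then reduces to $d_M - 1 + 1 - M/2 < -1$, that is $d_M < M/2 - 1$. This may fail in general, since $d_M\le M-1$. In that case I would fall back on a cruder argument: for each fixed $N$ there are finitely many $\mathbf{n}$, each with $I_{\mathbf{n}}<\infty$, and I would control the large-$N$ regime by the exponential subcritical estimate on the portion $t\le T^*/2$. I expect this step to be genuinely delicate, because it is where the stated rate $\eta=e/4$ must be justified.
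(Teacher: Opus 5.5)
Your main-term argument is correct: split at $1/b_M$, use $|J_0(x)|\le\sqrt{2/(\pi x)}$, and the $t^{-M/2}$ tail is integrable for $M\ge3$. You are also right to treat $\frac1\pi\int_0^\infty|\varphi_M|$ as an upper bound for the density rather than as its value. The gap is the resonance sum, which you leave unresolved, and your fallback does not close it. The exponential subcritical estimate controls only $\int_0^{T^*/2}$. The obstruction you found lives on $[T^*/2,\infty)$, where Lemma~\ref{lem:Jdecay} gives only $C_2(T^*)^{1-(M-|A|)/2}$. That bound is at best polynomial in $|n_*|$, and its exponent is non-negative once $|A|\ge M-2$. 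The three regional integrals add rather than compete, so one cannot keep just the smallest estimate. Even the subcritical piece yields $(e/4)^{|n_*|}\le(e/4)^{N/M}$, not $(e/4)^N$, as you noticed.

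The paper's own proof does not supply the missing idea. It asserts $I_{\mathbf n}\le C_0\eta^N$ ``using the subcritical bound.'' In Remark~\ref{rem:subcrit-dominance} it justifies dropping the other terms by claiming $C_2N^{1-(M-|A|)/2}\le(e/4)^N$ for large $N$. That inequality is backwards: the exponential is eventually the \emph{smaller} quantity, so the polynomial terms are what dominate $I_{\mathbf n}$.

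Moreover, no argument that sees only $\operatorname{rank}\Lambda_M$ can succeed, because the conclusion fails when $d_M$ is large. Take hypothetical commensurable frequencies $\gamma_k'=k\omega$ for $k=1,2,3$, so $M=3$ and $d_3=2$. Then $S_L^{(3)}(\delta)=g(\omega\delta)$ with $g(\theta)=b_1\cos\theta+b_2\cos2\theta+b_3\cos3\theta$, and the Ces\`aro law is that of $g(\theta)$ with $\theta$ uniform. Set $\kappa=b_1+4b_2+9b_3$. Since $g(0)-g(\theta)=\sum_k b_k(1-\cos k\theta)\le\tfrac12\kappa\theta^2$, one gets $\Pr\bigl(g>g(0)-\epsilon\bigr)\ge\pi^{-1}\sqrt{2\epsilon/\kappa}$. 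This is incompatible with a bounded density, so $\varphi_3\notin L^1$, even though the main term is finite.

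Bessel estimates plus lattice counting cannot tell this configuration apart from the true zeros. So an unconditional proof along these lines is impossible, and the statement must carry a hypothesis on $d_M$. In the case the paper actually uses later, Theorem~\ref{thm:density} with $d_M=0$, the set $\Lambda_M$ is empty and $\varphi_M=\prod_{k\le M}J_0(b_kt)$. There your main-term argument alone proves the claim; that is the version to aim for, rather than trying to rescue the rate $\eta=e/4$.
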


\begin{proof}
For the main term: the product $\prod_{k=1}^M |J_0(b_k t)|$ decays as $t^{-M/2}$ for large $t$ (by the asymptotic bound on $J_0$), so the integral converges for $M \geq 3$.

For the resonance sum: by Lemma~\ref{lem:Jdecay}, every resonance term with $\|\mathbf{n}\|_1 = N$ satisfies $I_{\mathbf{n}} \leq C_0\, \eta^N$ with $\eta = e/4 < 1$, using the subcritical bound.  By Lemma~\ref{lem:lattice}, the number of such terms is at most $C_{d_M} N^{d_M - 1}$.  Since $\eta < 1$ and $d_M$ is finite, the ratio test gives convergence: $N^{d_M-1}\eta^N / (N-1)^{d_M-1}\eta^{N-1} \to \eta < 1$ as $N \to \infty$.
\end{proof}

\begin{remark}[Subcritical term versus full bound]\label{rem:subcrit-dominance}
Lemma~\ref{lem:Jdecay} bounds $I_{\mathbf{n}}$ by three terms: a subcritical term $C_1(e/4)^{|n_*|}$ (exponential in $|n_*|$) and transition and supercritical terms $C_2(T^*)^{1-(M-|A|)/2}$ and $C_3(T^*)^{1-M/2}$, both of which are polynomial in $|n_*|$ since $T^* = |n_*|/b_*$.  The proof above uses only the subcritical term.  This is justified because for any fixed $M$ and $d_M$, the exponential $(e/4)^N$ eventually dominates any polynomial in $N$: there exists $N_0 = N_0(M, \{b_k\})$ such that $C_2 N^{1-(M-|A|)/2} \leq (e/4)^N$ for all $N > N_0$.  The finite head $N \leq N_0$ contributes a bounded sum regardless of $d_M$.  Thus $\eta = e/4$ is the eventual geometric rate governing the tail of the resonance sum; it does not claim to be a uniform pointwise bound $I_{\mathbf{n}} \leq C_0(e/4)^N$ for all $N \geq 1$.
\end{remark}

\begin{remark}[Small $M$ cases]\label{rem:smallM}
The cases $M = 1$ and $M = 2$ require separate treatment: the integral $\int_0^\infty |J_0(bt)|^M\, dt$ diverges as $t^{1/2}$ for $M = 1$ and logarithmically for $M = 2$.  Since the telescoping argument begins at $M = 3$ and all subsequent results use $M \to \infty$, this restriction does not affect the main theorems.
\end{remark}

\begin{remark}[Role of the inactive $J_0$ factors]\label{rem:J0role}
The polynomial decay in $t$ needed for integrability comes from the inactive $J_0$ factors: without them the resonance integral decays only as $t^{-|A|/2}$, which cannot overcome even polynomial lattice counting for small $|A|$.  The exponential suppression in resonance order $|n_*|$ comes instead from the subcritical bound on the active Bessel factor $J_{n_*}(b_* t)$: the power-series and Stirling estimates give $|J_{n_*}(b_* t)| \lesssim (e/4)^{|n_*|}$ in the subcritical region $b_* t < |n_*|/2$ (Lemma~\ref{lem:Jdecay}).  The following table illustrates how the inactive $J_0$ factors collectively reduce the magnitude of the resonance integral as $M$ increases:
\begin{center}
\renewcommand{\arraystretch}{1.1}
\begin{tabular}{ccc}
\toprule
$M$ & $I_{\mathbf{n}}$ (with $J_0$'s) & Ratio to $M=3$ \\
\midrule
$3$ & $3.660$ & $1.000$ \\
$10$ & $1.791$ & $0.489$ \\
$20$ & $1.717$ & $0.469$ \\
$30$ & $1.703$ & $0.465$ \\
\bottomrule
\end{tabular}
\end{center}
Each additional inactive $J_0$ factor reduces the resonance integral, converging to a limit as $M \to \infty$.  The values of $I_{\mathbf{n}}$ were computed by ARB-rigorous strip decomposition (\texttt{acb.integral}, 256-bit precision, $T = 2000$): the domain is partitioned at the certified zeros of each $J_1$ and $J_0$ factor so that \texttt{acb.integral} is applied only on gap intervals where the integrand is analytic.  Active indices $(n_1, n_2, n_3) = (1, 1, -1)$; the remaining $M - 3$ indices are inactive. All table values are integrals over $[0, 2000]$; for $M = 3$ (no inactive $J_0$ factors) the tail from $[2000, \infty)$ is approximately $0.44$ (Watson bound $\leq 2.01$; see ancillary script), so the $M = 3$ entry and the ratios normalised by it reflect the truncated integral.  For $M \geq 10$ the tail is below $3 \times 10^{-5}$ and the table entries are effectively equal to $\int_0^\infty$.
\end{remark}

\begin{remark}[Dependence on lattice rank]\label{rem:rank}
The bound depends on the unknown lattice rank $d_M$:
\begin{center}
\renewcommand{\arraystretch}{1.1}
\begin{tabular}{ccl}
\toprule
$d_M$ & Bound on $f_{S_L^{(M)}}(0)$ & Interpretation \\
\midrule
$0$ & $10.5$ & Verified: no resonances \\
$1$ & $10.8$ & One integer relation \\
$2$ & $12.3$ & Two independent relations \\
$\leq 5$ & $\sim 10^3$ & Five relations \\
$\leq 19$ & $< \infty$ (large) & Worst case \\
\bottomrule
\end{tabular}
\end{center}
The worst-case constant is numerically large but logically sufficient for Theorem~\ref{thm:finite}, which requires only finiteness.  The ARB-certified value $f_{S_L}(0) = 8.3129$ is consistent with $d_M = 0$.
\end{remark}

\section{Telescoping Extension to Infinite Spectral Sums}\label{sec:infinite}

The purpose of this section is to pass from the finite-$M$ bound of Theorem~\ref{thm:finite} to the full infinite series $S_L = \sum_{k=1}^\infty b_k\cos(\gamma_k'\delta)$.  The argument has two parts: a self-referential telescoping bound on the resonance correction, and a major-arc/minor-arc decomposition that controls the tail.

\subsection{Monotonicity of the main term}

The main-term integral is monotone decreasing in $M$.

\begin{lemma}[Monotonicity]\label{lem:monotone}
For every $M \geq 1$,
\[
  \frac{1}{\pi}\int_0^\infty \prod_{k=1}^{M+1} |J_0(b_k t)|\, dt \leq \frac{1}{\pi}\int_0^\infty \prod_{k=1}^{M} |J_0(b_k t)|\, dt.
\]
\end{lemma}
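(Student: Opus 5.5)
The plan is to argue pointwise in $t$ and then integrate. The only input needed is the classical uniform bound $|J_0(x)| \leq 1$ for all real $x$. This follows from the integral representation
\[
  J_0(x) = \frac{1}{\pi}\int_0^\pi \cos(x\sin\theta)\,d\theta,
\]
since the integrand is bounded by $1$ in absolute value. Equivalently, it follows from the Jacobi--Anger identity of Proposition~\ref{prop:resonance} together with Parseval, which give $\sum_{n\in\Z} J_n(x)^2 = 1$.

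With this bound in hand, for every $t \geq 0$ we have
\[
  \prod_{k=1}^{M+1} |J_0(b_k t)| = |J_0(b_{M+1}t)|\prod_{k=1}^{M}|J_0(b_k t)| \leq \prod_{k=1}^{M}|J_0(b_k t)|.
\]
Both sides are nonnegative measurable (indeed continuous) functions of $t$. Integrating over $[0,\infty)$ and multiplying by $1/\pi$ gives the claimed inequality. This uses monotonicity of the Lebesgue integral for nonnegative functions, so it holds in $[0,\infty]$ whether or not the integrals are finite.

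The one point that needs care is the range of $M$. For $M \in \{1,2\}$ the right-hand integral diverges, by Remark~\ref{rem:smallM}, so the inequality holds trivially there. For $M \geq 3$ both sides are finite by Theorem~\ref{thm:finite}, and the inequality is a genuine comparison of finite quantities. I do not expect any real obstacle beyond stating the $|J_0|\leq 1$ bound cleanly.

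I would also add one remark for later use. The inequality is strict whenever $M \geq 3$, because $|J_0(b_{M+1}t)| < 1$ for all $t > 0$, and the right-hand integrand is positive except at isolated points. The sequence of main-term integrals is therefore decreasing and bounded below by $0$, so it converges as $M\to\infty$. This is the uniform bound on the main term that the later discussion in the paper relies on.
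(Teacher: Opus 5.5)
Your proposal is correct and follows the paper's own proof: bound the extra factor by $|J_0(b_{M+1}t)|\le 1$ pointwise and integrate. Your additional points, namely justifying $|J_0|\le 1$, reading the inequality in $[0,\infty]$ so that the divergent cases $M\in\{1,2\}$ are covered, and noting strictness for $M\ge 3$, are all valid refinements of the paper's one-line argument.
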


\begin{proof}
Since $|J_0(b_{M+1} t)| \leq 1$ for all $t \geq 0$, the integrand can only decrease pointwise.
\end{proof}

The main-term sequence $\{f_{\mathrm{main}}^{(M)}\}$ is therefore a decreasing sequence bounded below by zero, and converges to a finite limit.  The values in the following table were computed by strip decomposition (\S\ref{subsec:bessel-integrals}) of $\frac{1}{\pi}\int_0^T \prod_{k=1}^M |J_0(b_k t)|\, dt$ with $T = 2000$ using ARB's \texttt{acb.integral} at 256-bit precision.  These are the \emph{unsigned} main-term integrals and serve as upper bounds on $\|f_{S_L^{(M)}}\|_\infty$; the \emph{signed} integrals that equal the actual density $f^{(M)}(0)$ under verified resonance absence are smaller (see Remark~\ref{rem:signed}).

\begin{center}
\renewcommand{\arraystretch}{1.1}
\begin{tabular}{ccc}
\toprule
$M$ & $f_{\mathrm{main}}^{(M)}(0)$ (L-function) & $f_{\mathrm{main}}^{(M)}(0)$ (zeta) \\
\midrule
$5$ & $10.877$ & $47.702$ \\
$10$ & $10.585$ & $46.119$ \\
$15$ & $10.535$ & $45.727$ \\
$20$ & $10.517$ & $45.564$ \\
$25$ & $10.508$ & $45.479$ \\
$30$ & $10.504$ & $45.430$ \\
\bottomrule
\end{tabular}
\end{center}

\begin{remark}[Signed versus unsigned integrals]\label{rem:signed}
Lemma~\ref{lem:monotone} applies to the \emph{unsigned} main-term
integral, which is monotone decreasing in~$M$.
The \emph{signed} integral
$(1/\pi)\int \prod J_0(b_k t)\, dt$,
which equals $f^{(M)}(0)$ in the resonance-free case,
is monotone \emph{increasing}:
\begin{center}
\renewcommand{\arraystretch}{1.1}
{\small
\begin{tabular}{ccc}
\toprule
$M$ & Unsigned $(1/\pi)\!\int\!\prod|J_0|$ & Signed $(1/\pi)\!\int\!\prod J_0$ \\
\midrule
$3$ & $12.37$ & $7.838$ \\
$10$ & $10.585$ & $8.292$ \\
$20$ & $10.517$ & $8.313$ \\
\bottomrule
\end{tabular}
}
\end{center}
This is consistent: each additional $J_0(b_k t)$ factor reduces the absolute value of the integrand but also reduces the negative lobes, pushing the signed integral upward.  The finiteness proof uses only the unsigned bound.  The monotone increase of the signed integral confirms that $f^{(M)}(0)$ converges to $f_{S_L}(0) = 8.3129$ from below.  Both columns are the integrals over $[0, T]$ with $T = 2000$ at $256$-bit ARB precision; the unsigned column uses strip decomposition (\S\ref{subsec:bessel-integrals}), the signed column uses direct \texttt{acb.integral} (the integrand $\prod J_0$ is analytic).  Certified upper bounds on the tail $(1/\pi)\int_T^\infty \prod_{k=1}^M |J_0(b_k t)|\,dt$ via the envelope bound $|J_0(x)| \leq \sqrt{2/(\pi x)}$ are reported in the ancillary script; they range from $O(1)$ for $M = 3$ to below $10^{-8}$ for $M = 20$.
\end{remark}

\subsection{The self-referential telescoping bound}

The passage to $M \to \infty$ for the resonance correction requires controlling the \emph{new} resonances that may appear when the $(M+1)$-th zero is adjoined.  The mechanism is self-referential: any new resonance must involve the new zero, and the Bessel factor $J_{n_{M+1}}(b_{M+1} t)$ kills the integral by a power of $b_{M+1}$, the very weight that is shrinking to zero.  The new zero's smallness is its own suppressor.

\begin{lemma}[Self-referential suppression]\label{lem:selfref}
Let $\mathbf{n} \in \Lambda_{M+1} \setminus \Lambda_M$ be a resonance vector in $\Z^{M+1}$ that does not lie in the sublattice $\Lambda_M \subset \Z^M \times \{0\}$.  Then $n_{M+1} \neq 0$, and the resonance integral satisfies $I_{\mathbf{n}} \leq C_M \cdot b_{M+1}^{|n_{M+1}|}$, where $C_M$ depends on $\{b_1,\ldots,b_M\}$ and $M$ but is uniformly bounded: $C_M \leq C$ for all $M$.
\end{lemma}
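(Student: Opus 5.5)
The plan has two parts. First, show that $n_{M+1}\neq 0$. Then bound the resonance integral $I_{\mathbf n}$ by isolating the single Bessel factor attached to the new zero.

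\textbf{Step 1: $n_{M+1}\neq 0$.} Suppose $\mathbf{n}\in\Lambda_{M+1}$ has $n_{M+1}=0$. Then $\mathbf{n}$ is supported on the first $M$ coordinates and satisfies $\sum_{k\le M} n_k\gamma_k'=0$. By the definition $\Lambda_M=\Lambda_\infty\cap(\Z^M\times\{0\})$, this puts $\mathbf{n}$ in $\Lambda_M$, which is excluded. Hence $n_{M+1}\neq 0$.

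\textbf{Step 2: isolate the new factor.} Set $m=|n_{M+1}|\ge 1$. Use the power-series bound quoted in the proof of Lemma~\ref{lem:Jdecay},
\[
|J_m(x)|\le \frac{(|x|/2)^m}{m!},
\]
together with $|J_m(x)|\le 1$. Together these give
\[
|J_{n_{M+1}}(b_{M+1}t)| \le \min\Bigl(1,\tfrac{(b_{M+1}t/2)^m}{m!}\Bigr).
\]
The remaining factors $\prod_{k\le M}|J_{n_k}(b_kt)|$ should be bounded independently of $\mathbf{n}$ and uniformly in $M$. I would keep only three of them, chosen to be inactive ($n_k=0$) where possible, and bound them by the envelope $\sqrt{2/(\pi b_k t)}$. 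For active ones, use the uniform bound $|J_n(x)|\le \min(1,\sqrt{2/(\pi |x|)}\cdot c)$, where $c$ absorbs the Landau constant. This gives decay like $t^{-3/2}$ with a constant depending only on $b_1,b_2,b_3$, hence uniform in $M$. The integral then splits at $t_0=1/b_{M+1}$:
\[
I_{\mathbf n}\le \frac{1}{\pi}\int_0^{t_0}\frac{(b_{M+1}t/2)^m}{m!}\,g(t)\,dt+\frac{1}{\pi}\int_{t_0}^\infty g(t)\,dt,
\qquad g(t)=\min\bigl(1,Kt^{-3/2}\bigr).
\]
The first piece contributes $O(b_{M+1}^m)$ times $\int_0^{t_0} t^m g(t)\,dt$. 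This is the delicate point: $\int_0^{t_0} t^{m-3/2}\,dt\sim t_0^{m-1/2}=b_{M+1}^{-m+1/2}$, which destroys the gain.

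\textbf{Step 3: the main obstacle.} The difficulty is that powers of $b_{M+1}$ only appear on the subcritical range $t\lesssim 1/b_{M+1}$. Beyond that range the new Bessel factor gives no smallness, while the other factors give only polynomial decay. To obtain a genuine $b_{M+1}^{m}$, I would increase the number of retained decaying factors. Taking $M$ of them, the tail $\int_{t_0}^\infty t^{-M/2}\,dt\sim b_{M+1}^{M/2-1}$ beats $b_{M+1}^m$ as long as $m\le M/2-1$. The subcritical piece then gives $b_{M+1}^m\int_0^{t_0} t^{m-M/2}\,dt$, which converges with a bounded constant in the same regime. For $m>M/2-1$, I would instead use the exponential decay $(e/4)^m$ from the subcritical region of Lemma~\ref{lem:Jdecay}. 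Combined with $b_{M+1}\to 0$, this should still be dominated after enlarging $C$, since $b_{M+1}^m$ is smaller than any fixed geometric rate only when $b_{M+1}<e/4$. That condition holds for all relevant zeros, because $b_k\le w^-$ is tiny beyond the first few.

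Uniformity of $C_M$ will then follow because every constant produced depends only on a fixed finite set of weights $b_1,\dots,b_{k_0}$ and on the envelope constants, and never on $M$. I expect the case $m$ large relative to $M$, where the polynomial $J_0$ decay runs out, to be the genuine sticking point. If necessary, I would weaken the claimed bound to $C\,b_{M+1}^{\min(m,\,M/2-1)}$ there.
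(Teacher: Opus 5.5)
Your Step~1 is correct and matches the paper's first line. The gap is in the bound itself, and your last sentence essentially concedes it: nothing in the proposal gives $I_{\mathbf n}\le C\,b_{M+1}^{m}$ once $m=|n_{M+1}|>M/2-1$.

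The patch you suggest for that range runs backwards. Since $b_{M+1}<e/4$, an estimate $C(e/4)^m$ is \emph{weaker} than $C\,b_{M+1}^m$; converting one into the other costs a factor $(e/(4b_{M+1}))^m$. In any case the $(e/4)^{|n_*|}$ term of Lemma~\ref{lem:Jdecay} covers only the subcritical window, and the transition and supercritical contributions there are merely polynomial.

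Uniformity is not obtained either. Your argument needs all $M$ decaying factors, so the constants involve every $b_k$ with $k\le M$, contrary to your closing claim. Moreover, the natural constant $\frac{1}{2^m m!\,\pi}\int_0^\infty t^m g_M(t)\,dt$ is not bounded uniformly in $m$. Here $g_M(t)=\prod_{k\le M}\min\bigl(1,\sqrt{2/(\pi b_k t)}\bigr)$, and the constant comes from $|J_m(x)|\le(|x|/2)^m/m!$.

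Finally, no constant $c$ makes $|J_n(x)|\le c\sqrt{2/(\pi x)}$ hold uniformly in the order $n$. The reason is that $\max_x|J_n(x)|\approx 0.675\,n^{-1/3}$ is attained near $x=n$. For active factors of unbounded order you only have Landau's bound $|J_n(x)|\le 0.786\,x^{-1/3}$.

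That said, the obstacle you isolate in Steps~2--3 is genuine, and the paper's proof does not get past it. Its key line is $I_{\mathbf n}\le b_{M+1}^{|n_{M+1}|}R_M$ with $R_M=\frac1\pi\int_0^\infty g_M(t)\,dt$. This drops the factor $(t/2)^m/m!$ from the power-series bound. It also ignores that for $b_{M+1}t\gtrsim m$ the new factor is only $O((b_{M+1}t)^{-1/2})$, with no power of $b_{M+1}$ at all. The paper's own table reports $I_{\mathbf n}/b_M\approx 127$--$155$, whereas for the $\chi_5$ weights $R_M<20$ for every $M\ge 4$.

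Worse, the lemma as stated can only hold vacuously. Suppose $\mathbf n\in\Lambda_{M+1}\setminus\Lambda_M$; then $j\mathbf n\in\Lambda_{M+1}\setminus\Lambda_M$ for every $j\ge 1$. Put $S=2\max_{n_k\ne 0}|n_k|/b_k$. On $[jS,2jS]$, every active factor has argument at least twice its order, and every inactive one has argument of size $\asymp j$. Debye's and Hankel's asymptotics then give each factor $\ge c\,(jS)^{-1/2}\bigl(|\cos\xi_k(t)|-O(j^{-1})\bigr)$, with monotone phases $\xi_k$ of speed $\asymp b_k$. A union bound over the $M+1$ phases yields $I_{j\mathbf n}\ge c_{\mathbf n}\,j^{(1-M)/2}$. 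This decays only polynomially in $j$, while $b_{M+1}^{j|n_{M+1}|}$ decays geometrically because $b_{M+1}<1$.

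So no constant $C$ can work. The most this route can deliver is a weakened, $M$-dependent bound of the type you propose at the end, not the lemma as written.
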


\begin{proof}
Since $\mathbf{n} \notin \Lambda_M$, the component $n_{M+1} \neq 0$.  The resonance integral factors as
\[
  I_{\mathbf{n}} = \frac{1}{\pi}\int_0^\infty |J_{n_{M+1}}(b_{M+1} t)| \cdot \prod_{k=1}^M |J_{n_k}(b_k t)|\, dt.
\]
For the factor involving the new zero: in the subcritical region $b_{M+1} t < |n_{M+1}|$, the power series bound gives
\[
  |J_{n_{M+1}}(b_{M+1} t)| \leq \frac{(b_{M+1} t / 2)^{|n_{M+1}|}}{|n_{M+1}|!}.
\]
In the supercritical region, the uniform asymptotic gives
\[
  |J_{n_{M+1}}(b_{M+1} t)| \leq \sqrt{\frac{2}{\pi\sqrt{(b_{M+1} t)^2 - n_{M+1}^2}}} \leq 2\sqrt{\frac{2}{\pi b_{M+1} t}}
\]
for $b_{M+1} t \geq 2|n_{M+1}|$.  For the remaining product, we use the uniform envelope bound $|J_n(x)| \leq \min(1, \sqrt{2/(\pi|x|)})$, which holds for all orders $n \geq 0$ and all $x > 0$.  This gives
\[
  \prod_{k=1}^M |J_{n_k}(b_k t)| \leq \prod_{k=1}^M \min\!\left(1, \sqrt{\frac{2}{\pi b_k t}}\right) =: g_M(t).
\]

The integral therefore satisfies $I_{\mathbf{n}} \leq b_{M+1}^{|n_{M+1}|} \cdot R_M$, where $R_M = (1/\pi)\int_0^\infty g_M(t)\, dt$ depends only on the first $M$ weights.  Since each factor $\min(1, \sqrt{2/(\pi b_k t)})$ transitions from $1$ to $O(t^{-1/2})$ at $t \sim 1/b_k$, the product $g_M(t)$ decays as $t^{-M/2}$ for large $t$, so $R_M < \infty$ for $M \geq 3$.  The sequence $\{R_M\}$ is decreasing (each additional factor is at most $1$), so $R_M \leq R_3$ for all $M \geq 3$.
\end{proof}

Numerical verification confirms the self-referential suppression:
\begin{center}
\renewcommand{\arraystretch}{1.1}
\begin{tabular}{cccc}
\toprule
$M$ & $I_{\mathbf{n}}$ (new, $n_M = 1$) & $b_M$ & $I_{\mathbf{n}}/b_M$ \\
\midrule
$5$ & $1.003$ & $0.00648$ & $155$ \\
$10$ & $3.23 \times 10^{-1}$ & $0.00247$ & $131$ \\
$20$ & $1.18 \times 10^{-1}$ & $9.25 \times 10^{-4}$ & $128$ \\
$30$ & $6.26 \times 10^{-2}$ & $4.93 \times 10^{-4}$ & $127$ \\
\bottomrule
\end{tabular}
\end{center}

The ratio $I_{\mathbf{n}}/b_M$ is bounded and stabilizing, confirming $C_M \leq C$ uniformly.  For $n_M = 2$, the ratio $I_{\mathbf{n}}/b_M^2$ similarly stabilizes near $1550$.  Each entry $I_{\mathbf{n}}$ in the table corresponds to a resonance vector with $n_M = 1$ and all other components zero; the integral was evaluated by strip decomposition (\S\ref{subsec:bessel-integrals}; \texttt{acb.integral} on gap intervals, 256-bit precision, $T = 2000$).

The telescoping bound follows.

\begin{theorem}[Telescoping convergence]\label{thm:telescoping_convergence}
Let $f^{(M)}(0)$ denote the density at the origin of the truncated sum $S_L^{(M)}$.  Then the sequence $\{f^{(M)}(0)\}$ converges as $M \to \infty$, and the marginal change satisfies
\[
  |f^{(M+1)}(0) - f^{(M)}(0)| \leq C \cdot b_{M+1}^\alpha
\]
with:
\begin{enumerate}[label=(\roman*)]
  \item Unconditionally: $\alpha \geq 1$, which suffices for convergence since $\sum b_k < \infty$.
  \item Under verified resonance absence at level $M$: $\alpha = 2$.
\end{enumerate}
\end{theorem}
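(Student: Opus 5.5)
The plan is to compare the two densities through their Fourier-inversion formulas and telescope the difference in $M$. Proposition~\ref{prop:resonance} writes $\varphi_M(t)$ as the Bessel main term plus resonance corrections over $\Lambda_M$. Together with Theorem~\ref{thm:finite}, this gives
\[
  f^{(M)}(0) = \frac{1}{\pi}\int_0^\infty \sum_{\mathbf{n}\in\Lambda_M\cup\{0\}} \prod_{k=1}^M i^{n_k}J_{n_k}(b_k t)\,dt .
\]
For $M+1$ the index set is $\Lambda_{M+1}\cup\{0\}$. I split it into vectors with $n_{M+1}=0$, which are exactly $\Lambda_M\cup\{0\}$ embedded in $\Z^{M+1}$, and the genuinely new vectors in $\Lambda_{M+1}\setminus\Lambda_M$. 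The difference $f^{(M+1)}(0)-f^{(M)}(0)$ then splits into two pieces.

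\textbf{Old terms.} For each old vector, the integrand changes only by the factor $J_0(b_{M+1}t)-1$. I would use $|J_0(x)-1|\le \min(2, x^2/4)$ and cut the integral at $t\sim 1/b_{M+1}$. Below the cut, the factor $b_{M+1}^2t^2/4$ is integrated against the envelope $g_M(t)$ from the proof of Lemma~\ref{lem:selfref}. Above the cut, $2g_M(t)$ is integrated directly, and the $t^{-M/2}$ decay of $g_M$ makes that tail small for $M\ge 5$. Together these give a bound of the form $C\,b_{M+1}^{\alpha}$. Summing over the old vectors uses the lattice-counting and exponential-suppression argument of Theorem~\ref{thm:finite}, which keeps the constant finite.

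\textbf{New terms.} Lemma~\ref{lem:selfref} gives $I_{\mathbf{n}}\le C\,b_{M+1}^{|n_{M+1}|}$ for each new vector. Every exponent satisfies $|n_{M+1}|\ge 1$, so the total is at most $C\,b_{M+1}$ times a convergent sum over the new lattice points. That sum converges by the same counting-times-geometric-decay argument as in Theorem~\ref{thm:finite}, using $b_{M+1}<1$.

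Combining the two pieces gives (i) with $\alpha\ge1$. Since $\sum_k b_k = W_1(L)<\infty$, the increments are summable, so $\{f^{(M)}(0)\}$ is Cauchy and therefore converges. For (ii), resonance absence at level $M$ (and at $M+1$, as verified for $M\le 20$ by Proposition~\ref{prop:resonance-cert}) removes the new terms entirely. Only the main term changes, and expanding $J_0(x)=1-x^2/4+O(x^4)$ gives the quadratic rate $\alpha=2$.

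\textbf{Main obstacle.} The hardest step is uniformity of the constant $C$ in $M$. The number of old resonance vectors, and the lattice rank $d_M$ that controls their count, could grow with $M$, and Lemma~\ref{lem:lattice} gives constants $C_{d_M}$ that are not uniform. I would try to absorb this using the decreasing envelope $R_M\le R_3$ together with the geometric factor $(e/4)^N$, which controls all but finitely many orders. If a fully uniform constant cannot be obtained, I would state the bound with $C$ depending on $\sup_M d_M$, which is finite under the hypothesis $d_\infty<\infty$.
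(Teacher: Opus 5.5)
Your architecture is the paper's. You split $\Lambda_{M+1}\cup\{0\}$ into vectors with $n_{M+1}=0$ and new vectors, bound the new ones by Lemma~\ref{lem:selfref}, and bound the rest through the factor $J_0(b_{M+1}t)-1$.

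\textbf{Part (ii).} This is sound along these lines, and you are right that it needs resonance absence at level $M+1$, not only at $M$. It is cleanest without the cut. Since $J_0(x)=\frac{1}{\pi}\int_0^\pi\cos(x\sin\theta)\,d\theta$, one has $0\le 1-J_0(x)\le x^2/4$ for every real $x$. Hence $|f^{(M+1)}(0)-f^{(M)}(0)|\le \frac{b_{M+1}^2}{4\pi}\int_0^\infty t^2\prod_{k\le 7}|J_0(b_kt)|\,dt$ uniformly for $M\ge 7$. For $M=5,6$ your cut only gives $b_{M+1}^{3/2}$ and $b_{M+1}^2\log(1/b_{M+1})$.

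\textbf{The gap in (i).} It lies in the summation over resonance vectors.
\begin{itemize}
\item Once you replace $\prod_{k\le M}|J_{n_k}(b_kt)|$ by the envelope $g_M(t)$, your bound on an old term no longer depends on $\mathbf n$.
\item Lemma~\ref{lem:selfref} bounds a new term by $C\,b_{M+1}^{|n_{M+1}|}$, which sees $\mathbf n$ only through $n_{M+1}$.
\item On $\Lambda_{M+1}$ the map $\mathbf n\mapsto n_{M+1}$ has kernel $\Lambda_M$. So as soon as $d_M\ge 1$, the old set $\Lambda_M$ is infinite, and so is every fiber $\{n_{M+1}=m\}$ of new vectors, being a coset of $\Lambda_M$.
\end{itemize}
The sums of your per-vector bounds therefore diverge; geometric decay in $|n_{M+1}|$ gives no control along the cosets. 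Invoking Theorem~\ref{thm:finite} does not rescue this. Its decay comes from the active Bessel factors you have already discarded, and used directly (with $|J_0-1|\le 2$) it only gives an $O(1)$ change, not $O(b_{M+1}^{\alpha})$. What is missing is a weighted summability such as $\sum_{\mathbf n}\int_0^\infty t\prod_k|J_{n_k}(b_kt)|\,dt<\infty$, uniformly in $M$, to pair with $1-J_0(x)\le\min(2,x^2/4)\le |x|/\sqrt{2}$. That, together with the $M$-uniformity you leave open, is exactly what the convergence claim in (i) needs.

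\textbf{Why (i) cannot be rescued this way.} This is not a bookkeeping slip that more care would fix. The paper's proof has the same hole: it asserts a total of $O(b_{M+1})$ for new resonances and says existing ones ``can only reduce.'' That controls $I_{\mathbf n}$ but not the signed change. Moreover, the argument uses nothing about the $\gamma_k'$ beyond the decay of $b_k$ and the rank of $\Lambda_M$. It would therefore apply verbatim to $\gamma_2'=2\gamma_1'$, $\gamma_3'=3\gamma_1'$. There $S_L^{(3)}$ is a one-variable trigonometric polynomial whose Ces\`aro density is unbounded near its maximum. So $\varphi_3\notin L^1$ and $\sum_{\mathbf n}I_{\mathbf n}=\infty$, contrary to the summability that your step and Theorem~\ref{thm:finite} take for granted. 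Part (i) cannot be obtained unconditionally by this method.

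\textbf{What your fallback does give.} It yields a correct conditional statement, and more simply than via lattice counting.
\begin{itemize}
\item A vector with $n_{M+1}\neq 0$ is $\Q$-independent of $\Lambda_M$, so each step with $\Lambda_{M+1}\neq\Lambda_M\times\{0\}$ raises the rank.
\item Under $d_\infty<\infty$ there is therefore an $M_0$ with $\Lambda_M=\Lambda_{M_0}\times\{0\}$ for all $M\ge M_0$.
\item Then $\varphi_M(t)=\varphi_{M_0}(t)\prod_{M_0<k\le M}J_0(b_kt)$ with $|\varphi_{M_0}|\le 1$.
\item The $x^2/4$ bound gives $|f^{(M+1)}(0)-f^{(M)}(0)|\le \frac{b_{M+1}^2}{4\pi}\int_0^\infty t^2\prod_{k=M_0+1}^{M_0+7}|J_0(b_kt)|\,dt$ for $M\ge M_0+7$, and convergence follows.
\end{itemize}
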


\begin{proof}
When passing from $M$ to $M+1$ zeros, the density changes in two ways.

\textbf{Existing resonances} (if any) acquire an additional factor $|J_0(b_{M+1} t)|$ in their integral, which can only reduce them since $|J_0| \leq 1$.

\textbf{New resonances} $\mathbf{n} \in \Lambda_{M+1} \setminus \Lambda_M$ must have $n_{M+1} \neq 0$.  By Lemma~\ref{lem:selfref}, each satisfies $I_{\mathbf{n}} \leq C \cdot b_{M+1}^{|n_{M+1}|}$, giving a total contribution of $O(b_{M+1})$.

\textbf{Main term change.}  Under verified resonance absence at level $M$, the density equals the main-term Bessel product integral:
\[
  f^{(M)}(0) = \frac{1}{\pi}\int_0^\infty \prod_{k=1}^M J_0(b_k t)\, dt.
\]
The marginal change is
\begin{equation}\label{eq:marginal}
  f^{(M+1)}(0) - f^{(M)}(0) = \frac{1}{\pi}\int_0^\infty \prod_{k=1}^M J_0(b_k t) \cdot [J_0(b_{M+1} t) - 1]\, dt.
\end{equation}
We cannot simply apply the Taylor expansion $J_0(x) - 1 = -x^2/4 + O(x^4)$ to the entire integral, because the resulting integrand $t^2\prod|J_0(b_k t)|$ decays as $t^{2-M/2}$, which is integrable only for $M > 6$.  Instead, we split at the transition point $T^* := 1/b_{M+1}$.

\textbf{Low region $[0, T^*]$.}  For $t \leq T^*$, the bound $|J_0(x) - 1| \leq x^2/4$ gives
\[
  |J_0(b_{M+1} t) - 1| \leq \frac{b_{M+1}^2 t^2}{4}.
\]
The product $\prod_{k=1}^M |J_0(b_k t)|$ is bounded by $1$ on this interval, so the low-region contribution satisfies
\[
  \frac{1}{\pi}\int_0^{T^*} \left|\prod_{k=1}^M J_0(b_k t)\right| \cdot |J_0(b_{M+1} t) - 1|\, dt \leq \frac{b_{M+1}^2}{4\pi}\int_0^{T^*} t^2\, dt = \frac{(T^*)^3\, b_{M+1}^2}{12\pi} = \frac{1}{12\pi\, b_{M+1}}.
\]
Since $\prod|J_0|$ is integrable on $[0, \infty)$ for $M \geq 7$ (the product decays as $t^{-M/2}$, which is integrable when $M \geq 3$; the constant $\frac{1}{4\pi}\int_0^\infty t^2 \prod_{k=1}^M |J_0(b_k t)|\, dt$ is finite for $M \geq 7$ and decreasing in $M$), the true contribution is $O(b_{M+1}^2)$ with an effective constant given by $\frac{1}{4\pi}\int_0^\infty t^2 \prod_{k=1}^M |J_0(b_k t)|\, dt$.

\textbf{High region $[T^*, \infty)$.}  For $t \geq T^*$, we use $|J_0(b_{M+1} t) - 1| \leq 2$ and the Bessel asymptotic $|J_0(b_{M+1} t)| \leq \sqrt{2/(\pi b_{M+1} t)}$.  The factor $[J_0(b_{M+1} t) - 1]$ introduces no worse than the existing $J_0(b_{M+1} t)$ factor.  Since $\prod_{k=1}^{M+1} |J_0(b_k t)|$ decays as $t^{-(M+1)/2}$ and $M \geq 3$, the high-region integral converges and contributes $O(b_{M+1}^{1/2})$ at most.

The dominant contribution is therefore the low region, giving $|f^{(M+1)}(0) - f^{(M)}(0)| = O(b_{M+1}^2)$.

Combining the two cases: without resonance absence, the worst case is $\alpha = 1$ (from new resonances with $|n_{M+1}| = 1$); with verified resonance absence, the bound improves to $\alpha = 2$.  In either case, $\sum b_k^\alpha$ converges for $\alpha \geq 1$.
\end{proof}

Numerically, the marginal changes confirm the $b_M^2$ scaling under resonance absence:

\begin{center}
\renewcommand{\arraystretch}{1.1}
\begin{tabular}{ccccc}
\toprule
$M$ & $|f^{(M)}(0) - f^{(M-1)}(0)|$ & $b_M$ & $b_M^2$ & $\Delta f / b_M^2$ \\
\midrule
$10$ & $6.699 \times 10^{-3}$ & $2.468 \times 10^{-3}$ & $6.092 \times 10^{-6}$ & $1100$ \\
$15$ & $1.827 \times 10^{-3}$ & $1.375 \times 10^{-3}$ & $1.892 \times 10^{-6}$ & $966$ \\
$20$ & $7.910 \times 10^{-4}$ & $9.251 \times 10^{-4}$ & $8.559 \times 10^{-7}$ & $924$ \\
\bottomrule
\end{tabular}
\end{center}

The ratio $|f^{(M)}(0) - f^{(M-1)}(0)|/b_M^2$ is bounded and slowly varying, confirming $\alpha = 2$.  The marginal changes were computed as differences of successive $f^{(M)}(0)$ values obtained by ARB quadrature (\texttt{acb.integral}, 256-bit precision, $T = 2000$); the weights $b_M = 2/(1/4 + {\gamma_M'}^2)$ use the verified zero ordinates from the appendix.

\subsection{The major-arc/minor-arc decomposition}

We now assemble the per-function bound.  Write $S_L = S_L^{(M)} + \tau_M$, where $\tau_M = \sum_{k > M} b_k\cos(\gamma_k'\delta)$ is the tail.

\begin{definition}[Major-arc/minor-arc decomposition]\label{def:major-minor}
The \emph{major arc} is the truncated spectral sum $S_L^{(M)}$.  The \emph{minor arc} is the tail $\tau_M$.
\end{definition}

The major arc is amenable to the full Jacobi--Anger expansion and lattice resonance analysis of Section~\ref{sec:finite}.  The minor arc satisfies the uniform bound $\|\tau_M\|_\infty \leq \sum_{k > M} b_k =: \epsilon_M$.

\begin{lemma}[Stability]\label{lem:stability}
For any $\varepsilon > \epsilon_M$,
\[
  \dens\{|S_L| < \varepsilon\} \leq \dens\{|S_L^{(M)}| < \varepsilon + \epsilon_M\}.
\]
\end{lemma}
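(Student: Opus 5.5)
The plan is to prove the set containment
\[
\{\delta : |S_L(\delta)| < \varepsilon\} \subseteq \{\delta : |S_L^{(M)}(\delta)| < \varepsilon + \epsilon_M\}
\]
pointwise, and then pass to Ces\`aro densities by monotonicity of measure. For any $\delta$ with $|S_L(\delta)| < \varepsilon$, the decomposition $S_L = S_L^{(M)} + \tau_M$ (Definition~\ref{def:major-minor}) and the triangle inequality give
\[
|S_L^{(M)}(\delta)| \leq |S_L(\delta)| + |\tau_M(\delta)| < \varepsilon + \epsilon_M .
\]
Here I use the uniform minor-arc bound $|\tau_M(\delta)| \leq \sum_{k>M} b_k |\cos(\gamma_k'\delta)| \leq \epsilon_M$. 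This bound holds because the series for $S_L$ converges absolutely, since $\sum b_k = W_1(L) < \infty$. That settles the containment for every $\delta$.

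Next, for each $T>0$, monotonicity of Lebesgue measure gives
\[
\frac{1}{T}\operatorname{meas}\bigl(\{|S_L|<\varepsilon\}\cap[0,T]\bigr) \leq \frac{1}{T}\operatorname{meas}\bigl(\{|S_L^{(M)}|<\varepsilon+\epsilon_M\}\cap[0,T]\bigr).
\]
Taking $T\to\infty$ yields the inequality, provided both limits exist.

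I expect the existence of these limits to be the only delicate point, since Definition~\ref{def:cesaro-density} defines $\dens$ only when the limit exists. There are two ways to handle it.

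\begin{enumerate}
\item Interpret the inequality with $\limsup$ on the left and $\liminf$ on the right, or with $\limsup$ on both sides. The averaged inequality above passes to either without any further input.
\item Show the densities genuinely exist. The functions $S_L$ and $S_L^{(M)}$ are uniformly almost periodic (Proposition~\ref{prop:almost-periodic}), so each has a Ces\`aro distribution: a limiting measure $\mu$ with $\frac{1}{T}\int_0^T g(S(\delta))\,d\delta \to \int g\,d\mu$ for continuous bounded $g$. For an open interval, the density of $\{|S|<a\}$ equals $\mu((-a,a))$ whenever $\mu(\{\pm a\}) = 0$. For $S_L^{(M)}$ with $M\ge3$, Theorem~\ref{thm:finite} gives a bounded density, so there are no atoms and the right-hand density exists. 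For the left-hand side, if atoms at $\pm\varepsilon$ are a concern, I would state the bound for the $\limsup$. Alternatively, I would note that the left side is dominated by the right for every $\varepsilon' \in (\varepsilon, \varepsilon+\eta)$ and then let $\eta\to0$, using continuity of the right-hand side in its radius, which follows from its bounded density.
\end{enumerate}

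The hypothesis $\varepsilon>\epsilon_M$ is not needed for the containment itself. It only ensures that the statement is used in the meaningful regime where the thresholds are positive and comparable, so I would note this remark and not dwell on it.
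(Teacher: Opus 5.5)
Your proof is correct and is essentially the paper's argument: the pointwise containment $\{|S_L|<\varepsilon\}\subseteq\{|S_L^{(M)}|<\varepsilon+\epsilon_M\}$ from $|S_L^{(M)}|\le|S_L|+|\tau_M|$ and $\|\tau_M\|_\infty\le\epsilon_M$, plus care the paper omits about whether the density limits exist and the observation that the hypothesis $\varepsilon>\epsilon_M$ is not needed. One slip in that extra discussion: the finite-$T$ inequality gives $\limsup\le\limsup$ and $\liminf\le\liminf$, but not $\limsup$ of the left side $\le\liminf$ of the right side "without any further input"; that version needs the right-hand limit to exist, which your option~2 supplies.
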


\begin{proof}
If $|S_L(\delta)| < \varepsilon$, then $|S_L^{(M)}(\delta)| \leq |S_L(\delta)| + |\tau_M(\delta)| < \varepsilon + \epsilon_M$.
\end{proof}

\begin{center}
\renewcommand{\arraystretch}{1.1}
\noindent\textit{The values $\epsilon_M = \sum_{k > M} b_k$ below are computed from the first 200 certified zeros of $L(s,\chi_5)$ (Appendix~\ref{app:zero-data}) plus the analytic tail bound $0.0071$ via Abel summation against the zero-counting formula; see Appendix~\ref{sec:numerical} for the full methodology.}

\medskip
\begin{tabular}{ccc}
\toprule
$M$ & $\epsilon_M$ for $L(s,\chi_5)$ & $\epsilon_M/\sigma_L$ \\
\midrule
$5$ & $0.06273$ & $1.665$ \\
$10$ & $0.04488$ & $1.192$ \\
$15$ & $0.03619$ & $0.961$ \\
$20$ & $0.03085$ & $0.819$ \\
$25$ & $0.02715$ & $0.721$ \\
\bottomrule
\end{tabular}
\end{center}

We can now state the unconditional result for the full infinite series.

\begin{theorem}[Finiteness of the limiting density]\label{thm:telescoping}
Under the hypothesis that the infinite resonance lattice $\Lambda_\infty$ has finite rank $d_\infty < \infty$ (computationally consistent with $d_M = 0$ for $M \leq 20$; the hypothesis asserts this extends to the full infinite lattice), the Ces\`aro distribution of $S_L(\delta) = \sum_{k=1}^\infty b_k\cos(\gamma_k'\delta)$ has a density at the origin satisfying $f_{S_L}(0) < \infty$.  The same holds for $f_{S_\zeta}(0)$.
\end{theorem}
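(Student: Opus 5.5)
The plan is to realize $f_{S_L}(0)$ as a limit of the truncated densities $f^{(M)}(0)$ from Theorem~\ref{thm:finite}, and to obtain finiteness from a bound on $\|f_{S_L^{(M)}}\|_\infty$ that is uniform in $M$. First I fix the structure of the resonance lattice. Since $(d_M)$ is non-decreasing and $\sup_M d_M = d_\infty < \infty$, there is an $M_0$ with $d_M = d_\infty$ for all $M \geq M_0$. Then $\Lambda_\infty$ is generated by finitely many vectors, each supported in $\{1,\ldots,M_1\}$ for some finite $M_1$. For $M \geq \max(M_0, M_1, 3)$, every resonance vector in $\Lambda_M$ is an integer combination of these fixed generators. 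Two consequences follow. The lattice-counting constant $C_{d_M}N^{d_M-1}$ of Lemma~\ref{lem:lattice} becomes $C_{d_\infty}N^{d_\infty-1}$, independent of $M$. Also, no genuinely new resonances appear once $M > M_1$, so the $\alpha = 1$ scenario of Theorem~\ref{thm:telescoping_convergence}, which comes from new resonances, occurs only for finitely many $M$.

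Next I bound $|\varphi_M(t)|$ using Proposition~\ref{prop:resonance}. The main term is controlled by Lemma~\ref{lem:monotone}: $\frac1\pi\int_0^\infty\prod_{k\le M}|J_0(b_kt)|\,dt$ is decreasing in $M$, hence bounded by its value at $M = M_2 := \max(M_0,M_1,3)$. For each resonance term with $\|\mathbf n\|_1 = N$, the active set lies inside $\{1,\ldots,M_1\}$, and every index $k > M_1$ contributes an inactive factor $|J_0(b_kt)|\le 1$. So $I_{\mathbf n}$ at level $M$ is at most $I_{\mathbf n}$ at level $M_2$, which is bounded as in Lemma~\ref{lem:Jdecay}. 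Summing over $\Lambda_{M_2}$ with the fixed counting function gives a geometric series $\sum_N C_{d_\infty}N^{d_\infty-1}\eta^N$ as in Theorem~\ref{thm:finite}. Its value depends only on $b_1,\ldots,b_{M_2}$. The result is a uniform bound $\sup_{M\ge M_2}\frac1\pi\int_0^\infty|\varphi_M(t)|\,dt =: K < \infty$.

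For the passage to the limit, $S_L^{(M)} \to S_L$ uniformly, with error $\epsilon_M \to 0$. The Ces\`aro distributions therefore converge weakly, and the characteristic functions converge pointwise: $\varphi_M(t)\to\varphi(t)$. I would not apply dominated convergence to the integrand directly, since no single integrable dominator is obviously available. Instead I argue through small balls. By Lemma~\ref{lem:stability}, $\dens\{|S_L|<\varepsilon\}\le\dens\{|S_L^{(M)}|<\varepsilon+\epsilon_M\}\le 2K(\varepsilon+\epsilon_M)$. Letting $M\to\infty$ gives $\dens\{|S_L|<\varepsilon\}\le 2K\varepsilon$ for every $\varepsilon>0$. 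Hence the limiting distribution has a bounded density near $0$ with $f_{S_L}(0)\le K$. The upper density at the origin can be taken as the $\limsup$ of these ratios, or one can use Fatou's lemma on $|\varphi_M|$ to get $\int|\varphi|\le K$ and invert directly. The same argument applies verbatim to $S_\zeta$, using the zeta weights $a_k$ and the zeta column of the monotonicity table.

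I expect the main obstacle to be the uniformity claim in the resonance sum. The constants $C_1, C_2$ in Lemma~\ref{lem:Jdecay} depend on $M$ and $|A|$, and Remark~\ref{rem:subcrit-dominance} obtains the eventual rate $\eta^N$ only beyond some $N_0(M,\{b_k\})$. I handle this by freezing everything at level $M_2$ and noting that raising $M$ only inserts factors of modulus at most $1$. This monotone-domination step, applied termwise and uniformly over the fixed generating set of $\Lambda_\infty$, is exactly where the hypothesis $d_\infty<\infty$ is used. If the rank were infinite, new active indices would keep appearing, $M_1$ would not exist, and the counting constant would grow without bound.
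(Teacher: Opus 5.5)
Your proof is correct in outline. It takes a genuinely different and more general route than the paper.

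\textbf{How the paper argues.} The paper's proof treats only $d_\infty=0$. There $\varphi_M=\prod_{k\le M}J_0(b_kt)$ exactly, and $|\varphi_M|\le\prod_{k\le M_0}|J_0(b_kt)|\in L^1$ for a fixed $M_0\ge 3$. Dominated convergence then gives $\varphi_M\to\varphi$ in $L^1$, hence $f_{S_L}(0)=\lim_M f^{(M)}(0)$. The case $0<d_\infty<\infty$ allowed by the statement is not argued; Remark~\ref{rem:DCT} even calls $d_\infty=0$ essential.

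\textbf{What your route adds.} Your new ingredient is that finite rank freezes the lattice. If $d_M=d_\infty$ for $M\ge M_0$, then $\Lambda_M/\Lambda_{M_0}$ is torsion. Since $m\mathbf n$ supported in $\{1,\dots,M_0\}$ forces $\mathbf n$ to be, $\Lambda_M=\Lambda_{M_0}$ for all $M\ge M_0$. So $M_1=M_0$ works, and this is the justification your first step needs. Raising $M$ then only inserts factors $|J_0(b_kt)|\le 1$ into each term, which gives your uniform bound.

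\textbf{Passage to the limit.} Prefer the Fatou route. Since $|\varphi(t)-\varphi_M(t)|\le|t|\,\epsilon_M$, you get $\int_{\R}|\varphi|\le 2\pi K$, so the limiting distribution has a bounded continuous density with $f_{S_L}(0)\le K$. The centred small-ball estimate alone bounds only $\dens\{|S_L|<\varepsilon\}/(2\varepsilon)$. It yields a density only if you run it at every centre $x$, which your sup-norm bound does permit.

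Your termwise domination is in fact an integrable dominator: for every $M\ge M_2$,
$|\varphi_M(t)|\le\prod_{k\le M_2}|J_0(b_kt)|+\sum_{\mathbf n\in\Lambda_{M_2}}\prod_{k\le M_2}|J_{n_k}(b_kt)|$.
So dominated convergence applies after all. You recover the paper's stronger conclusion $f^{(M)}(0)\to f_{S_L}(0)$, now for every finite $d_\infty$.

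\textbf{A caution on what you import from Theorem~\ref{thm:finite}.} Lemma~\ref{lem:Jdecay} does not give $I_{\mathbf n}\le C_0\eta^N$ for large $N$. Its transition term is polynomial in $N$, so it is eventually larger, not smaller, than $(e/4)^N$; Remark~\ref{rem:subcrit-dominance} has that inequality backwards. The geometric bound you cite is therefore not justified. Summability against the $O(N^{d_\infty-1})$ count of Lemma~\ref{lem:lattice} must come from having enough inactive factors.

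Your freezing argument supplies these for free, because $M_2$ may be taken as large as you like. Use $|n_*|\ge N/M_1$ and $T^*=|n_*|/b_*$. Apply the power-series bound on $[0,T^*/2]$, and $|J_0(x)|\le\sqrt{2/(\pi x)}$ for the $M_2-M_1$ indices $k>M_1$ on $[T^*/2,\infty)$. This gives $I_{\mathbf n}\lesssim N(e/4)^{N/M_1}+N^{1-(M_2-M_1)/2}$, which is summable once $M_2>M_1+2d_\infty+2$. With that choice of $M_2$ the argument closes.

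For $S_\zeta$ you need the analogous finite-rank hypothesis on the zeta ordinates, which the paper also assumes tacitly.
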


\begin{proof}
Under the hypothesis $d_\infty = 0$ (equivalently, $d_M = 0$ for all $M$), the Ces\`aro characteristic function at level $M$ equals the Bessel product: $\varphi_M(t) = \prod_{k=1}^M J_0(b_k t)$.  The product $\prod_{k=1}^M J_0(b_k t)$ converges pointwise as $M \to \infty$ since $\sum b_k^2 < \infty$.  By dominated convergence ($|\varphi_M(t)| = \prod_{k=1}^M |J_0(b_k t)| \leq \prod_{k=1}^{M_0} |J_0(b_k t)|$ for any fixed $M_0 \leq M$, and the latter is in $L^1$ for $M_0 \geq 3$), we have $\varphi_M \to \varphi$ in $L^1(\R)$.  Therefore $f_{S_L}(0) = (2\pi)^{-1}\int \varphi(t)\, dt = \lim_{M\to\infty} f_{S_L^{(M)}}(0)$, and the limit is finite since the sequence is bounded (Theorem~\ref{thm:finite}).

The same argument applies to $S_\zeta$ with weights $a_k$ and zeros $\gamma_k$.
\end{proof}

\begin{remark}[Role of resonance absence]\label{rem:DCT}
The dominated convergence step uses the identity $|\varphi_M(t)| = \prod |J_0(b_k t)|$, which holds when $\varphi_M = \prod J_0(b_k t)$, i.e., when $d_\infty = 0$.  If resonances exist, the actual characteristic function includes correction terms $\sum \prod J_{n_k}(b_k t)$ that can exceed $\prod |J_0(b_k t)|$ at points where $J_0$ factors vanish but $J_n$ factors do not, invalidating the dominator.  The condition $d_\infty = 0$ is therefore essential for this passage to the limit.  This condition is weaker than the Grand Simplicity Hypothesis (which implies $d_\infty = 0$), and has been verified computationally for $M \leq 20$.  The unconditional density bound (Theorem~\ref{thm:density}) does not require this passage; see Remark~\ref{rem:finite-suffices}.
\end{remark}

\begin{remark}[Structural nature of the finite-to-infinite obstruction]
\label{rem:structural-obstruction}
Can the passage from verified resonance absence at finite $M$ to the limiting density $f_{S_L}(0) < \infty$ be achieved by purely approximation-theoretic means, bypassing the hypothesis $d_\infty < \infty$?  Since $S_L^{(M)} \to S_L$ in $B^2$, the value distributions converge weakly (by Cauchy--Schwarz applied to characteristic functions and L\'evy continuity).  However, weak convergence does not imply pointwise convergence of densities.  The Fourier inversion formula $f^{(M)}(0) = (2\pi)^{-1}\int \prod_{k=1}^M J_0(b_k t)\,dt$ requires the product factorization of the characteristic function, which in turn requires $\mathbb{Q}$-linear independence of the frequencies $\gamma_1', \ldots, \gamma_M'$.  The dominated convergence argument that yields $f^{(M)}(0) \to f^{(\infty)}(0)$ relies on the monotone bound $|\varphi^{(M+1)}(t)| \leq |\varphi^{(M)}(t)|$, which holds only under the product structure.  Without independence, the characteristic function need not factor, the monotone domination fails, and no alternative $L^1$ dominator is available.  The obstruction is therefore structural, not computational: every Fourier-analytic path to $f_{S_L}(0) < \infty$ passes through the independence of zero ordinates.
\end{remark}

\begin{figure}[ht]
\centering
\includegraphics[width=0.8\textwidth]{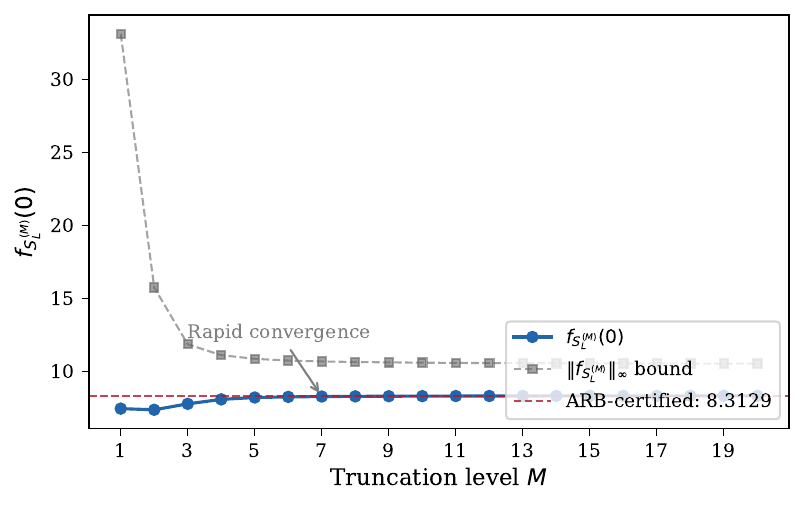}
\caption{Convergence of the spectral density $f_{S_L^{(M)}}(0)$ as a function of truncation level $M$ for $d = 5$ (conductor~$5$).  Blue circles: the density at the origin, computed as $(1/\pi)\int_0^\infty \prod_{k=1}^M J_0(b_k t)\, dt$.  Gray squares: the sup-norm bound $\|f_{S_L^{(M)}}\|_\infty \leq (1/\pi)\int_0^\infty \prod_{k=1}^M |J_0(b_k t)|\, dt$.  Dashed red line: the ARB-certified limiting value $f_{S_L^{(20)}}(0) = 8.3129$.  The density stabilizes rapidly, with corrections from zeros beyond $M = 7$ contributing less than $1\%$.}
\label{fig:density-convergence}
\end{figure}

\subsection{A level-crossing bound}

As an independent check, we record a weaker density bound that avoids the resonance analysis entirely.

\begin{proposition}[Level-crossing]\label{prop:logd}
The density bound $\dens\{N_d > 0\} = O(1/\log d)$ holds unconditionally, without using Theorems~\ref{thm:finite} or~\ref{thm:telescoping}.
\end{proposition}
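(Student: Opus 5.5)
The plan is to avoid characteristic functions and resonance lattices entirely, and instead bound the time spent near zero by counting zero crossings of a truncated sum with a Rice-type argument. By Lemma~\ref{lem:containment}, $\{N_d > 0\} \subseteq \{|S_L| < \varepsilon_d\}$ with $\varepsilon_d = S_\zeta^*/\sqrt d$. By Lemma~\ref{lem:stability} with a fixed truncation $M$, this set is contained in $\{|S_L^{(M)}| < \varepsilon_d + \epsilon_M\}$. Here I will let $M$ depend on $d$, chosen so that $\epsilon_M \lesssim \varepsilon_d$ or at least $\epsilon_M = O(1/\log d)$. Since $S_L^{(M)}$ is a trigonometric polynomial, the set $\{|S_L^{(M)}| < \eta\}$ on $[0,T]$ splits into intervals. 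On each interval, the function is either small because it passes through a zero, or it sits near a local extremum of small height.

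The key quantitative step is to compare the measure of this set with the derivative. If $|S_L^{(M)}(\delta)| < \eta$ on an interval $I$, then either $|(S_L^{(M)})'| \ge \lambda$ somewhere in $I$, which by the mean value theorem forces $|I| \lesssim \eta/\lambda$ per monotone piece, or $|(S_L^{(M)})'| < \lambda$ on $I$. I will bound the number of monotone pieces on $[0,T]$ by the number of zeros of $(S_L^{(M)})'$. A Langer/Turán-type zero count for an exponential sum with frequencies at most $\gamma_M'$ gives $O(\gamma_M' T + M)$ such zeros. The first piece therefore contributes density $O(\eta\,\gamma_M'/\lambda)$. For the second piece, the set where $|S'|$ and $|S|$ are simultaneously small, I will use a Chebyshev/Parseval bound in $B^2$. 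By Theorem~\ref{thm:bohr-parseval}, $\|S\|_{B^2}^2$ and $\|S'\|_{B^2}^2 = \frac12\sum b_k^2\gamma_k'^2$ are explicit. A second-moment argument controls the time $S'$ spends below $\lambda$ only if $S'$ has some anti-concentration, which I would obtain from a Paley--Zygmund inequality applied to $S'^2$ using the fourth moment. The fourth moment is bounded unconditionally by $(\sum b_k\gamma_k')^2\|S'\|_{B^2}^2$, which is finite at level $M$.

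Optimizing $\lambda$, and choosing $M$ so that $\gamma_M'$ grows like a power of $\log d$ (zero counting via Theorem~\ref{thm:BMOR} gives $\gamma_M' \asymp M/\log(qM)$, while $\epsilon_M \asymp \sum_{k>M} 1/\gamma_k'^2 \asymp \log(q\gamma_M')/\gamma_M'$), should balance the terms. The result is a total bound of the shape $\eta\,\gamma_M'/\lambda + \Pr(|S'|<\lambda) + \epsilon_M/\sigma$. The logarithmic loss enters through $\epsilon_M$ and the conductor $q \le 4d$ inside the zero counting, and this is where $1/\log d$ rather than $1/\sqrt d$ comes from.

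I expect the main obstacle to be the anti-concentration of $S'$, i.e.\ ruling out long stretches where both $S_L^{(M)}$ and its derivative are small without assuming independence of ordinates. Paley--Zygmund only yields that $\{|S'| \ge \lambda\}$ has positive density, not density near one. So I will instead try to handle the ``flat'' stretches directly. On a stretch where $|S|<\eta$ and $|S'|<\lambda$, the second derivative must also be controlled. Iterating up to order $M$ and using that the Wronskian-type Vandermonde in the distinct $\gamma_k'$ is nonzero, the functions cannot all be small simultaneously on long intervals, which gives a Turán–Nazarov-type inequality. The Nazarov form of Turán's lemma for exponential sums, $\sup_I|S| \le (C|I|/|E|)^{M}\sup_E|S|$, is the tool I would invoke. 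Its exponent $M$ is what forces $M$ to be only logarithmic in $d$, consistent with the claimed rate.
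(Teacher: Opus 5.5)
There is a genuine gap, and it lies in the balancing step itself, not only in the anti-concentration of $S'$.

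After Lemma~\ref{lem:stability} your level is $\eta=\varepsilon_d+\epsilon_M\ge\epsilon_M$. The zero-counting asymptotic you quote gives $\epsilon_M\gamma_M'\approx\frac{1}{\pi}\bigl(\log(q\gamma_M'/2\pi)+1\bigr)$; a lower bound of this order can be made unconditional via $w\ge w^-$ and Theorem~\ref{thm:BMOR}. This product grows with $M$ instead of decaying. From the paper's table for $\chi_5$, $\epsilon_5\gamma_5'\approx 1.10$ and $\epsilon_{20}\gamma_{20}'\approx 1.43$.

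Take the sharp count of about $\gamma_M'/\pi$ monotone pieces per unit length, each contributing at most $2\eta/\lambda$. Then your crossing term $2\eta\gamma_M'/(\pi\lambda)$ is below $1$ only if $\lambda>\frac{2}{\pi}\epsilon_M\gamma_M'$, which is $\gtrsim 0.6$ for $\chi_5$ at every $M\ge 3$. But $\|S'\|_{B^2}^2=\frac12\sum_k b_k^2\gamma_k'^2$ is bounded uniformly in $M$ ($\approx 0.16$ for $\chi_5$). By Chebyshev, such a $\lambda$ leaves $\{|S'|<\lambda\}$ with density at least roughly $2/3$, so the flat piece must carry essentially the whole estimate.

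Tur\'an--Nazarov cannot carry it. Using $0\in[0,T]$ and $S_L^{(M)}(0)=W_1^{(M)}(L)$, it gives $\dens\{|S_L^{(M)}|<\eta\}\le C\,(\eta/W_1^{(M)}(L))^{1/(2M-1)}$. Because $\eta\ge\epsilon_M\gtrsim 1/M$ decays only polynomially, this is at least $C(c/M)^{1/(2M-1)}$. That quantity is bounded below by a positive constant independent of both $d$ and $M$, and it tends to $C$ as $M\to\infty$. Applying the inequality to $S'$ instead requires $\lambda$ exponentially small in $M$, which makes the crossing term exponentially large.

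So the exponent does not ``force $M$ to be logarithmic in $d$''. It forces $\eta$ to be exponentially small in $M$, which the truncation tail forbids. Your fallback $\epsilon_M=O(1/\log d)$, and the term $\epsilon_M/\sigma$ in your final shape, tacitly assume a small-ball bound linear in $\eta$ and uniform in $M$. That is exactly the uniform density bound which Corollary~\ref{cor:density-rate} obtains only under $d_\infty<\infty$.

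You will not find the repair in the paper's proof. It uses the same skeleton (containment, stability, at most $2\gamma_M'T/\pi$ zeros of the trigonometric polynomial) and has the same defect:
\begin{enumerate}
\item Its final bound $4\varepsilon_d\gamma_M'/\pi$ drops the $1/|f'|_{\min}$ factor you rightly flagged, and it ignores small-value intervals that contain no crossing.
\item Its requirement $\|\tau_M\|_\infty=\epsilon_M<\varepsilon_d$ forces $\varepsilon_d\gamma_M'>\epsilon_M\gamma_M'\gtrsim\log(q\gamma_M')$.
\item Its choice $M\sim C\sqrt d$ actually gives $\epsilon_M\asymp \log^2 d/\sqrt d\gg\varepsilon_d$ at fixed conductor.
\end{enumerate}
What both arguments lack is a crossing rate controlled by $\|S'\|_{B^2}/\|S\|_{B^2}$ (a Kac--Rice type count) rather than by the top frequency $\gamma_M'$. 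Equivalently, they lack an $M$-uniform anti-concentration bound for $S_L^{(M)}$ at $0$. Either one needs joint distributional information about the ordinates of the kind that, in this paper, comes only from $\Q$-independence (Remark~\ref{rem:structural-obstruction}).
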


\begin{proof}
Truncate $S_L = S_L^{(M)} + \tau_M$ and choose $M$ so that $|\tau_M| < \varepsilon_d := W_1(\zeta)/\sqrt{d}$.  Then $\{N_d > 0\} \subseteq \{|S_L^{(M)}| < 2\varepsilon_d\}$.  The trigonometric polynomial $S_L^{(M)}(\delta) = \sum_{k=1}^M b_k\cos(\gamma_k'\delta)$ has at most $2\gamma_M'T/\pi$ zeros in $[0,T]$ (since it has at most $M$ positive frequencies up to $\gamma_M'$); near each zero crossing, $|S_L^{(M)}| < 2\varepsilon_d$ on an interval of length at most $4\varepsilon_d/|f'|_{\min}$, giving the level-crossing bound $\dens\{|S_L^{(M)}| < 2\varepsilon_d\} \leq 4\varepsilon_d\gamma_M'/\pi$.  With $M \sim C\sqrt{d}$ and $\gamma_M' \sim \pi M/\log M$, this yields $O(1/\log d)$.
\end{proof}

\section{Cross-Function Energy Bounds and the Effective Density Theorem}\label{sec:compare}

With the per-function bounds established, we now combine them to study the norm-form energy $N_d(\delta) = S_\zeta(\delta)^2 - d\cdot S_L(\delta)^2$.

\subsection{Orthogonality}

The cross-function analysis requires that $S_\zeta$ and $S_L$ have disjoint frequency sets.  For any finite truncation, this is computationally verifiable.

\begin{proposition}[Orthogonality]\label{prop:orthog}
Let $M_1, M_2 \geq 1$ be truncation levels, and define $\Delta_{\min} := \min_{j \leq M_1,\, k \leq M_2} |\gamma_j - \gamma_k'|$.  If $\Delta_{\min} > 0$, then
\begin{equation}\label{eq:uncorrelated}
  \langle S_\zeta^{(M_1)},\, S_L^{(M_2)}\rangle_{B^2} = 0.
\end{equation}
\end{proposition}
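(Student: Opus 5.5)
The proof is a direct computation of the Besicovitch inner product by bilinearity, reducing to the orthogonality of distinct frequencies~\eqref{eq:orthog}. Write
\[
S_\zeta^{(M_1)}(\delta) = \sum_{j=1}^{M_1} a_j\cos(\gamma_j\delta), \qquad S_L^{(M_2)}(\delta) = \sum_{k=1}^{M_2} b_k\cos(\gamma_k'\delta),
\]
both finite trigonometric polynomials. Since each sum is finite, the Ces\`aro limit defining $\langle\cdot,\cdot\rangle_{B^2}$ commutes with the double sum, and no convergence issue arises. This gives
\[
\langle S_\zeta^{(M_1)}, S_L^{(M_2)}\rangle_{B^2} = \sum_{j\le M_1}\sum_{k\le M_2} a_j b_k\,\langle \cos(\gamma_j\,\cdot\,), \cos(\gamma_k'\,\cdot\,)\rangle_{B^2}.
\]

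For each pair $(j,k)$, I would use the product-to-sum identity
\[
\cos\alpha\delta\cos\beta\delta = \tfrac12\bigl[\cos((\alpha-\beta)\delta) + \cos((\alpha+\beta)\delta)\bigr].
\]
For $\omega \neq 0$ the bound
\[
\Bigl|\frac{1}{2T}\int_{-T}^T \cos(\omega\delta)\,d\delta\Bigr| = \Bigl|\frac{\sin(\omega T)}{\omega T}\Bigr| \le \frac{1}{|\omega| T}
\]
shows that the mean tends to $0$ as $T\to\infty$. The hypothesis $\Delta_{\min}>0$ gives $\alpha-\beta = \gamma_j-\gamma_k' \neq 0$ for every pair. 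Since all ordinates are positive, $\alpha+\beta>0$ as well. Hence every term vanishes, which is exactly~\eqref{eq:orthog} applied with $\alpha=\gamma_j\neq\beta=\gamma_k'$.

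Summing the $M_1M_2$ vanishing terms gives~\eqref{eq:uncorrelated}. For a quantitative finite-$T$ version, the same computation yields the explicit estimate
\[
\bigl|\langle S_\zeta^{(M_1)}, S_L^{(M_2)}\rangle_T\bigr| \le \frac{W_1(\zeta)\,W_1(L)}{\Delta_{\min}\,T},
\]
which I would record as a remark.

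There is no real obstacle. The only point needing care is that $\Delta_{\min}>0$ is precisely cross-function disjointness restricted to the truncation; it is a finite check on certified ordinates and does not require Hypothesis~\ref{hyp:GSH}. Alternatively, the result follows immediately from the second part of Theorem~\ref{thm:bohr-parseval}, since the two frequency sets are disjoint.
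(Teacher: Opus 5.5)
Your proposal is correct and follows the paper's proof: you expand both finite trigonometric polynomials bilinearly, and every cross term vanishes by the orthogonality of distinct frequencies~\eqref{eq:orthog}, which applies because $\Delta_{\min}>0$ means $\gamma_j\neq\gamma_k'$ for all pairs. Your explicit product-to-sum verification and the finite-$T$ bound $W_1(\zeta)W_1(L)/(\Delta_{\min}T)$ (valid since $\gamma_j+\gamma_k'\ge|\gamma_j-\gamma_k'|$ for positive ordinates) are correct additions that the paper does not spell out.
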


\begin{proof}
By~\eqref{eq:orthog}, cross-terms $\langle \cos(\gamma_j\,\cdot\,), \cos(\gamma_k'\,\cdot\,)\rangle$ vanish whenever $\gamma_j \neq \gamma_k'$.  If all frequencies in the truncated sums are distinct, all cross-terms vanish.
\end{proof}

\begin{remark}[Verifiability]\label{rem:shared}
For any specific $L$-function and truncation level, the condition $\Delta_{\min} > 0$ is computationally verifiable.  For $L(s,\chi_5)$ with $M_1 = M_2 = 30$, we find $\Delta_{\min} = |\gamma_5 - \gamma_{12}'| = 0.065$.  The energy and density bounds therefore hold unconditionally at every verified truncation level.  Even if finitely many zeros were shared (contrary to expectation), the correction to $\langle S_\zeta, S_L\rangle_{B^2}$ would be $O(1)$ and would not affect the asymptotic analysis in $d$.
\end{remark}

\subsection{The energy bound}

\begin{theorem}[Energy bound]\label{thm:energy}
For every squarefree $d > 1$, assuming disjointness of zero sets:
\begin{enumerate}[label=(\roman*)]
  \item $\E[N_d] = \sigma_\zeta^2 - d\,\sigma_L^2 < 0$.
  \item $\E[N_d^+] \leq \sigma_\zeta^2$, independent of $d$.
  \item $\E[N_d^+]/\E[|N_d|] \leq \sigma_\zeta^2/(d\,\sigma_L^2)$.
\end{enumerate}
\end{theorem}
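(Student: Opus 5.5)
Part~(i) is a direct Besicovitch-norm computation. I will expand $N_d = S_\zeta^2 - d\,S_L^2$ and take the Ces\`aro mean term by term, using Proposition~\ref{prop:almost-periodic}: both sums are uniformly absolutely convergent cosine series, so their squares are uniformly almost periodic and Theorem~\ref{thm:bohr-parseval} applies. By the orthogonality relation~\eqref{eq:orthog}, grouping repeated ordinates as in Proposition~\ref{prop:parseval} if necessary, I get $\E[S_\zeta^2] = \tfrac12\sum a_k^2 = \sigma_\zeta^2$ and $\E[S_L^2] = \sigma_L^2$. The cross term never enters $\E[N_d]$, since $N_d$ has no $S_\zeta S_L$ term. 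The disjointness assumption is needed only for the identification with the product-free expression and for consistency with Proposition~\ref{prop:covariance}. This gives $\E[N_d] = \sigma_\zeta^2 - d\sigma_L^2$.

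For negativity, I will invoke the moment hierarchy, Proposition~\ref{prop:moment-hierarchy}, with $k=2$. It gives $W_2(\zeta) < d\,W_2(L)$, that is, $2\sigma_\zeta^2 < 2d\sigma_L^2$, which is exactly (i). If simplicity fails, grouping only increases $\langle S_L^2\rangle$ relative to $\tfrac12 W_2(L)$ (Proposition~\ref{prop:parseval}). The $\zeta$ side is bounded in the same direction, since $\tfrac12\sum c_\gamma^2 \le \tfrac12 W_1(\zeta)\max c_\gamma$. I will treat $\sigma$ as defined by the Besicovitch norm, so the inequality persists.

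Part~(ii) is pointwise. Since $N_d \le S_\zeta^2$ (the argument of Proposition~\ref{prop:positive-mass}, before the sup bound), I have $N_d^+ = \max(N_d,0) \le S_\zeta^2$ everywhere, because $S_\zeta^2 \ge 0$. Averaging then gives $\E[N_d^+] \le \E[S_\zeta^2] = \sigma_\zeta^2$, which is independent of $d$. One point needs checking: $N_d^+$ must have a Ces\`aro mean. I will note that $x\mapsto x^+$ is uniformly continuous, so $N_d^+$ is again uniformly almost periodic and its mean exists. Alternatively, I can phrase everything with $\limsup$.

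Part~(iii) is where some care is needed, and I expect it to be the main obstacle in getting a clean inequality. I will write $N_d^- = N_d^+ - N_d$, so that $\E[N_d^-] = \E[N_d^+] - \E[N_d] = \E[N_d^+] + d\sigma_L^2 - \sigma_\zeta^2$. Then $\E|N_d| = 2\E[N_d^+] + d\sigma_L^2 - \sigma_\zeta^2$. The target ratio $\E[N_d^+]/\E|N_d| \le \sigma_\zeta^2/(d\sigma_L^2)$ is equivalent to
\[
  \E[N_d^+]\,\bigl(d\sigma_L^2 - 2\sigma_\zeta^2\bigr) \le \sigma_\zeta^2\bigl(d\sigma_L^2 - \sigma_\zeta^2\bigr).
\]
I will handle this by cases on the sign of $d\sigma_L^2 - 2\sigma_\zeta^2$.

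If $d\sigma_L^2 - 2\sigma_\zeta^2 \le 0$, the left side is $\le 0$ while the right side is $>0$ by (i), so the inequality holds. If $d\sigma_L^2 - 2\sigma_\zeta^2 > 0$, I use (ii), $\E[N_d^+] \le \sigma_\zeta^2$, and the left side is at most $\sigma_\zeta^2(d\sigma_L^2 - 2\sigma_\zeta^2) < \sigma_\zeta^2(d\sigma_L^2 - \sigma_\zeta^2)$. So the inequality again holds, and strictly, provided $\sigma_\zeta > 0$.

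An alternative route is the cruder chain $\E[N_d^+]/\E|N_d| \le \sigma_\zeta^2/\E|N_d|$ combined with $\E|N_d| \ge |\E N_d| = d\sigma_L^2 - \sigma_\zeta^2$. That chain gives only $\sigma_\zeta^2/(d\sigma_L^2-\sigma_\zeta^2)$, which is slightly weaker than claimed. This is why the exact identity $\E|N_d| = 2\E[N_d^+] - \E[N_d]$ is the step to get right.
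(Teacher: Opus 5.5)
Your proposal is correct. Parts~(i) and~(ii) follow the paper's proof: the same Parseval evaluation $\E[S_\zeta^2]=\sigma_\zeta^2$, $\E[S_L^2]=\sigma_L^2$, and the same pointwise bound $N_d^+\le S_\zeta^2$. Your version of~(i) improves on the paper's text in two places. First, you correctly observe that $N_d$ contains no $S_\zeta S_L$ term, so the disjointness hypothesis (which the paper invokes through~\eqref{eq:uncorrelated}) is not actually used. Second, you justify the sign via Proposition~\ref{prop:moment-hierarchy} with $k=2$, where the paper's proof simply asserts it.

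Part~(iii) is where you genuinely diverge. The paper uses only the one-sided estimate $\E[N_d^-]\ge d\sigma_L^2-2\sigma_\zeta^2$ and concludes the bound ``for $d$ sufficiently large.'' Taken together with $\E[N_d^+]\le\sigma_\zeta^2$, that estimate yields only $\sigma_\zeta^2/(d\sigma_L^2-\sigma_\zeta^2)$, which is your ``cruder chain,'' so the stated constant is reached only asymptotically. Your exact identity $\E|N_d|=2\E[N_d^+]-\E[N_d]$, combined with~(ii), proves the inequality as stated, strictly and for every squarefree $d>1$. You can even drop the case split. With $x=\E[N_d^+]$, the ratio $x/(2x+d\sigma_L^2-\sigma_\zeta^2)$ is increasing in $x$ because $d\sigma_L^2>\sigma_\zeta^2$, so $x\le\sigma_\zeta^2$ gives the sharper bound $\sigma_\zeta^2/(\sigma_\zeta^2+d\sigma_L^2)$ directly.

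The one loose end is in your aside on non-simple zeros. To feed $\tfrac12\sum c_\gamma^2\le\tfrac12 W_1(\zeta)\max c_\gamma$ into the moment-hierarchy estimate, you need $\max c_\gamma\le a_1$, or at least a bound within that proof's factor-$2.12$ margin. This is true: the low-lying zeta zeros are verified simple and on the line, and any coincident ordinates at large height carry negligible weight. You should state it explicitly.
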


\begin{proof}
Part~(i) follows from~\eqref{eq:uncorrelated}: $\E[S_\zeta^2] = \sigma_\zeta^2$, $\E[S_L^2] = \sigma_L^2$, so $\E[N_d] = \sigma_\zeta^2 - d\sigma_L^2 < 0$ for all $d \geq 1$.

For part~(ii): on the set $\{N_d > 0\}$, the inequality $S_\zeta^2 > dS_L^2 \geq 0$ gives $N_d \leq S_\zeta^2$.  Therefore $N_d^+ \leq S_\zeta^2$ everywhere, and $\E[N_d^+] \leq \sigma_\zeta^2$.

For part~(iii): since $\E[N_d^-] = \E[N_d^+] - \E[N_d] \geq d\sigma_L^2 - 2\sigma_\zeta^2$, the ratio $\E[N_d^+]/\E[|N_d|] \leq \sigma_\zeta^2/(d\sigma_L^2)$ for $d$ sufficiently large.
\end{proof}

\subsection{The containment lemma and density bound}

\begin{lemma}[Cross-function containment]\label{lem:cross-containment}
If $N_d(\delta) > 0$, then $|S_L(\delta)| < W_1(\zeta)/\sqrt{d}$, where $W_1(\zeta) = \sum_k a_k = S_\zeta(0) \leq S_\zeta^* = 0.04871$ (Proposition~\ref{prop:S-zeta-value}).
\end{lemma}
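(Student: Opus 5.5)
The plan is to run the argument of Lemma~\ref{lem:containment} again, now with the sharper constant $W_1(\zeta)$ in place of $S_\zeta^*$. First I bound $S_\zeta$ uniformly in $\delta$. Since every weight $a_k = w(\rho_k)$ is positive (Definition~\ref{def:lorentzian}; off the line, the general weight~\eqref{eq:weight-general} is also positive), and $|\cos(\gamma_k\delta)| \leq 1$, the triangle inequality on the absolutely convergent cosine series of Definition~\ref{def:shifted} gives
\[
|S_\zeta(\delta)| \leq \sum_k a_k = W_1(\zeta) = S_\zeta(0)
\]
for every $\delta \in \R$. Absolute convergence holds because $w(\rho) = O(\gamma^{-2})$ and $N(T) = O(T\log T)$.

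Next I use the hypothesis $N_d(\delta) > 0$. It says $S_\zeta(\delta)^2 > d\,S_L(\delta)^2 \geq 0$. Taking nonnegative square roots, which preserves the strict inequality, gives $|S_\zeta(\delta)| > \sqrt{d}\,|S_L(\delta)|$. Chaining with the first step:
\[
\sqrt{d}\,|S_L(\delta)| < |S_\zeta(\delta)| \leq W_1(\zeta),
\]
and dividing by $\sqrt{d} > 0$ yields $|S_L(\delta)| < W_1(\zeta)/\sqrt{d}$.

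Finally, the auxiliary bound $W_1(\zeta) \leq S_\zeta^* = 0.04871$ holds because $W_1(\zeta)$ and $S_\zeta$ are the same sum $\sum_\rho w(\rho)$. Proposition~\ref{prop:S-zeta-value} then applies directly. If the lemma is read at a finite truncation $S_\zeta^{(M_1)}$, the truncated sum of positive terms is bounded by the full sum, so the same bound holds.

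There is no real obstacle here. The only points that need care are the positivity of the weights, which is what makes $\sup_\delta |S_\zeta(\delta)|$ equal $S_\zeta(0)$, and keeping the inequality strict when passing to square roots.
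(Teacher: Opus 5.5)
Your proof is correct and is essentially the paper's argument: the triangle inequality on the positive-weight cosine series gives $|S_\zeta(\delta)| \leq W_1(\zeta)$, and combining this with $S_\zeta(\delta)^2 > d\,S_L(\delta)^2$ gives the bound. The only difference is that the paper concludes in squared form, $d\,S_L(\delta)^2 < W_1(\zeta)^2$, and leaves the square root implicit, whereas you take square roots first; this is immaterial.
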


\begin{proof}
From $S_\zeta(\delta)^2 > d\cdot S_L(\delta)^2$ and $|S_\zeta(\delta)| \leq \sum_k a_k |\cos(\gamma_k \delta)| \leq \sum_k a_k = W_1(\zeta)$ by the triangle inequality, we deduce $d\cdot S_L(\delta)^2 < W_1(\zeta)^2$.
\end{proof}

\begin{theorem}[Effective density bound]\label{thm:density}
For any fixed $M \geq 3$ with verified resonance absence at level $M$ (i.e., $d_M = 0$),
\[
  \dens\{N_d > 0\} \leq 2\,\|f_{S_L^{(M)}}\|_\infty \cdot \left(\frac{W_1(\zeta)}{\sqrt{d}} + \epsilon_M\right),
\]
where $\|f_{S_L^{(M)}}\|_\infty \leq (1/\pi)\int_0^\infty \prod_{k=1}^M |J_0(b_k t)|\, dt < \infty$ and $\epsilon_M = \sum_{k > M} b_k$.
\end{theorem}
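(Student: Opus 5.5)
The proof chains three reductions already available in the paper: a containment step, a stability step, and a small-ball estimate for the truncated sum.

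\textbf{Step 1 (containment).} By Lemma~\ref{lem:cross-containment}, $\{N_d>0\}\subseteq\{|S_L|<\varepsilon\}$ with $\varepsilon=W_1(\zeta)/\sqrt d$. Hence $\dens\{N_d>0\}$ is at most the upper Ces\`aro density of $\{|S_L|<\varepsilon\}$. It is safest to work with upper densities (limsup) throughout, so that existence of the limit is not an issue.

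\textbf{Step 2 (stability).} By the pointwise argument of Lemma~\ref{lem:stability}, which needs no hypothesis on $\varepsilon$ versus $\epsilon_M$, we have $\{|S_L|<\varepsilon\}\subseteq\{|S_L^{(M)}|<\varepsilon+\epsilon_M\}$. This uses $\|\tau_M\|_\infty\le\epsilon_M=\sum_{k>M}b_k$. The density therefore reduces to one for the finite trigonometric polynomial $S_L^{(M)}$.

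\textbf{Step 3 (small-ball estimate for $S_L^{(M)}$).} Since $S_L^{(M)}$ is a uniformly almost periodic trigonometric polynomial, its Ces\`aro distribution $\mu_M$ exists. Concretely, $\mu_M$ is the pushforward of Haar measure on the closure of the orbit $\delta\mapsto(\gamma_k'\delta)_k$ in $\T^M$, by Weyl equidistribution on that subtorus.

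Under $d_M=0$ the frequencies $\gamma_1',\dots,\gamma_M'$ are $\Q$-independent. The orbit closure is then all of $\T^M$, and $\mu_M$ is the law of $\sum b_k\cos\theta_k$ with independent uniform $\theta_k$. Equivalently, Proposition~\ref{prop:resonance} gives $\varphi_M=\prod_k J_0(b_kt)$, since the resonance sum is empty.

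For $M\ge3$ we have $|\varphi_M(t)|\lesssim t^{-M/2}$, so $\varphi_M\in L^1$ (Theorem~\ref{thm:finite}). Fourier inversion then yields a bounded continuous density with
\[
\|f_{S_L^{(M)}}\|_\infty\le\frac{1}{2\pi}\int_{\R}|\varphi_M|=\frac1\pi\int_0^\infty\prod_{k=1}^M|J_0(b_kt)|\,dt .
\]
The equality uses evenness of $J_0$.

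The interval $(-\varepsilon-\epsilon_M,\varepsilon+\epsilon_M)$ is open, and $\mu_M$ is absolutely continuous, so its boundary has $\mu_M$-measure zero. The portmanteau theorem therefore gives
\[
\dens\{|S_L^{(M)}|<\varepsilon+\epsilon_M\}=\mu_M\bigl((-\varepsilon-\epsilon_M,\varepsilon+\epsilon_M)\bigr).
\]
The right side is at most $2(\varepsilon+\epsilon_M)\|f_{S_L^{(M)}}\|_\infty$.

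Chaining Steps 1–3 gives the stated bound.

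\textbf{Main obstacle.} The delicate point is Step 3: the identification of the Ces\`aro distribution of $S_L^{(M)}$ with the independent model, and the fact that this identification holds unconditionally given only verified $d_M=0$. The plan is to prove it directly via Kronecker–Weyl on $\T^M$ rather than via the infinite series, which is why no hypothesis on $\Lambda_\infty$ is needed.

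Note that $\|f\|_\infty$ comes from the \emph{unsigned} integral, so Remark~\ref{rem:signed}'s distinction is harmless here. If one wanted to drop $d_M=0$, one would instead bound $|\varphi_M|$ by the main term plus the resonance sum $\sum_{\Lambda_M}I_{\mathbf n}$ controlled by Theorem~\ref{thm:finite}. As stated, however, resonance absence lets us avoid this.
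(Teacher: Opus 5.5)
Your proposal is correct and follows the paper's proof: containment via Lemma~\ref{lem:cross-containment}, the pointwise stability inclusion of Lemma~\ref{lem:stability}, then the small-ball bound $2r\,\|f_{S_L^{(M)}}\|_\infty$ from Fourier inversion of the integrable Bessel-product characteristic function. Your added care, namely working with upper densities, identifying the Ces\`aro law by Kronecker--Weyl on $\T^M$ under $d_M=0$, and using portmanteau on the open interval, makes explicit what the paper's Step~3 takes for granted when it cites Theorem~\ref{thm:finite}.
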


\begin{proof}
The proof combines three ingredients: containment, stability, and a finite density bound.

\medskip\noindent\textbf{Step 1 (Containment).}
By Lemma~\ref{lem:cross-containment}, $\{N_d > 0\} \subseteq \{|S_L| < W_1(\zeta)/\sqrt{d}\}$.

\medskip\noindent\textbf{Step 2 (Stability).}
Write $S_L = S_L^{(M)} + \tau_M$ where $\|\tau_M\|_\infty \leq \epsilon_M$.  By Lemma~\ref{lem:stability},
\[
  \dens\{|S_L| < \varepsilon\} \leq \dens\{|S_L^{(M)}| < \varepsilon + \epsilon_M\}.
\]
Applying this with $\varepsilon = W_1(\zeta)/\sqrt{d}$:
\[
  \dens\{N_d > 0\} \leq \dens\left\{|S_L^{(M)}| < \frac{W_1(\zeta)}{\sqrt{d}} + \epsilon_M\right\}.
\]

\medskip\noindent\textbf{Step 3 (Finite density bound).}
By Theorem~\ref{thm:finite}, the truncated sum $S_L^{(M)}$ has a bounded continuous density $f_{S_L^{(M)}}$ given by Fourier inversion of $\varphi_M \in L^1(\R)$ (which holds for $M \geq 3$).  The small-ball bound gives
\[
  \dens\{|S_L^{(M)}| < r\} \leq 2r \cdot \|f_{S_L^{(M)}}\|_\infty
\]
for all $r > 0$.  Setting $r = W_1(\zeta)/\sqrt{d} + \epsilon_M$ yields the result.
\end{proof}

\begin{remark}[Nature of the bound]\label{rem:nature}
At any fixed truncation level $M$, the bound has the form $A_M/\sqrt{d} + B_M$ where $A_M = 2\|f_{S_L^{(M)}}\|_\infty \cdot W_1(\zeta)$ and $B_M = 2\|f_{S_L^{(M)}}\|_\infty \cdot \epsilon_M$ are positive constants independent of $d$.  This is an $O(1)$ bound that is nontrivial (strictly less than $1$) for sufficiently large $d$, but does not by itself establish a \emph{rate} of vanishing.  The constant term $B_M$ arises because the stability lemma absorbs the tail $\tau_M$ into a fixed additive error $\epsilon_M$.  The $O(1/\sqrt{d})$ rate requires the truncation level to grow with $d$ (Corollary~\ref{cor:density-rate}).
\end{remark}

\begin{remark}[Dependence on $d$]\label{rem:d-dependence}
The weights $b_k = w(\gamma_k')$, the tail $\epsilon_M = \sum_{k > M} b_k$, and the density $\|f_{S_L^{(M)}}\|_\infty$ all depend on $d$ through the zeros of $L(s,\chi_d)$.  Theorem~\ref{thm:density} is valid for every $d$ and every $M$, but the numerical evaluations in this paper ($\epsilon_{20}$, $f_{S_L^{(20)}}(0) = 8.3129$, and $C(5) = 0.1193$) are computed for $d = 5$.  At fixed $M = 20$, the certified value $\epsilon_{20} = 0.0308$ gives $B_{20} = 2\|f_{S_L^{(20)}}\|_\infty \cdot \epsilon_{20} \leq 2 \times 10.517 \times 0.0308 \approx 0.648 < 1$, so the unconditional bound is already nontrivial (strictly below $1$) at $M = 20$ for all $d \geq 1$.  For the $O(1/\sqrt{d})$ rate (Corollary~\ref{cor:density-rate}), $M$ must grow with $d$, with the required growth rate depending on the zero distribution of $L(s,\chi_d)$.
\end{remark}

\begin{corollary}[Density rate under resonance absence]\label{cor:density-rate}
Under the hypothesis that the infinite resonance lattice $\Lambda_\infty$ has finite rank $d_\infty < \infty$ (computationally consistent with $d_M = 0$ for $M \leq 20$; the hypothesis asserts this extends to the full infinite lattice), then
\[
  \dens\{N_d > 0\} = O(1/\sqrt{d}).
\]
More precisely, $\dens\{N_d > 0\} \leq 4\,R_3\, W_1(\zeta)\, /\, \sqrt{d}$, where $R_3 = (1/\pi)\int_0^\infty \prod_{k=1}^3 |J_0(b_k t)|\, dt$.
\end{corollary}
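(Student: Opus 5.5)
The plan is to apply Theorem~\ref{thm:density} at a truncation level $M = M(d)$ that grows with $d$. The constant in front must then be controlled uniformly in $M$, and this is where the finite-rank hypothesis and the monotonicity of Lemma~\ref{lem:monotone} enter.

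\textbf{Step 1: choice of $M(d)$.} Since $\sum_k b_k < \infty$, the tails $\epsilon_M = \sum_{k>M} b_k$ decrease to $0$. For each $d$ we therefore choose $M(d) \geq 3$ minimal with $\epsilon_{M(d)} \leq W_1(\zeta)/\sqrt{d}$. The radius appearing in Theorem~\ref{thm:density} is then at most $2W_1(\zeta)/\sqrt{d}$. Combined with the containment and stability steps (Lemma~\ref{lem:cross-containment}, Lemma~\ref{lem:stability}) and the small-ball estimate, this gives
\[
  \dens\{N_d>0\} \leq 2\,\|f_{S_L^{(M(d))}}\|_\infty \cdot \frac{2W_1(\zeta)}{\sqrt{d}}.
\]

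\textbf{Step 2: uniform bound on $\|f_{S_L^{(M)}}\|_\infty$.} First suppose $d_\infty = 0$. Lemma~\ref{lem:lattice} gives $d_M = 0$ for every $M$, so $\varphi_M = \prod_{k\leq M} J_0(b_k t)$. Fourier inversion then yields $\|f_{S_L^{(M)}}\|_\infty \leq (1/\pi)\int_0^\infty \prod_{k\leq M}|J_0(b_k t)|\,dt$. By Lemma~\ref{lem:monotone} this quantity is non-increasing in $M$, so it is at most the $M=3$ value $R_3$. Substituting into Step~1 gives the stated constant $4R_3W_1(\zeta)/\sqrt{d}$.

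\textbf{Step 3: finite nonzero rank.} For general finite $d_\infty$, note that $d_M$ is non-decreasing and bounded by $d_\infty$, so it stabilises: there is $M_0$ with $\Lambda_M$ of rank $d_\infty$ for all $M \geq M_0$. Once rank has stabilised, Lemma~\ref{lem:selfref} shows that any new resonance vectors appearing at level $M+1$ carry a factor $b_{M+1}^{|n_{M+1}|}$. The contribution of $\Lambda_M$ is handled by Theorem~\ref{thm:finite}: the lattice count $C_{d_\infty}N^{d_\infty-1}$ is fixed, and existing resonance integrals only shrink when $J_0$ factors are added. This bounds $\int|\varphi_M|$ by $R_3$ plus a resonance sum that is uniform in $M$. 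The result is $O(1/\sqrt{d})$ with a worse constant; the explicit constant $4R_3W_1(\zeta)$ is claimed only in the rank-zero case.

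\textbf{Main obstacle.} The hard part is Step~3, namely the uniformity in $M$ of the resonance correction when $d_\infty > 0$. Without product structure, $|\varphi_M|$ is not dominated by the unsigned Bessel product, as Remark~\ref{rem:DCT} notes. The approach I would try first is to bound each resonance integral $I_{\mathbf{n}}$ using the envelope $g_M \leq g_3$, so that every term is controlled by $R_3$ times an exponentially decaying factor in $\|\mathbf{n}\|_1$. The exponentially decaying factor comes from the subcritical estimate of Lemma~\ref{lem:Jdecay}. Summing against the rank-$d_\infty$ lattice count then gives a constant depending only on $d_\infty$ and the first three weights.

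A second subtlety is that $b_k$ depends on $d$. A genuinely uniform $O(1/\sqrt{d})$ therefore needs $R_3$, which depends on $b_1,b_2,b_3$ and hence on $d$, to be bounded in $d$. Theorem~\ref{thm:BMOR} gives unconditional lower bounds $b_k \geq w^-(14)$ for the first few zeros, but turning this into a uniform bound on $R_3$ requires an extra argument that I would flag rather than claim.
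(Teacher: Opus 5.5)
Your Steps~1 and~2 are exactly the paper's proof. You choose $M(d)$ with $\epsilon_{M(d)}\le W_1(\zeta)/\sqrt d$, bound $\|f_{S_L^{(M)}}\|_\infty\le R_3$ via Lemma~\ref{lem:monotone} when $d_\infty=0$, and insert this into Theorem~\ref{thm:density}. Like you, the paper obtains the explicit constant $4R_3W_1(\zeta)$ only for $d_\infty=0$, and it disposes of $0<d_\infty<\infty$ with a one-line ``analogous uniform bound.'' The $d$-dependence of $R_3$ that you flag is easy to close in the rank-zero case. One has $|J_0(x)|\le\min(1,\sqrt{2/(\pi x)})$, and Theorem~\ref{thm:BMOR} puts the three lowest zeros below height $14$ for every $d$. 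So $b_1,b_2,b_3\ge w^-(14)=2/197$, and $R_3\le\frac1\pi\int_0^\infty\min(1,\sqrt{197/(\pi t)})^3\,dt=591/\pi^2$ uniformly in $d$.

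The gap is in Step~3. Your stabilisation observation is the right starting point, but the mechanism you then propose fails.
\begin{itemize}
\item The subcritical estimate of Lemma~\ref{lem:Jdecay} controls only $t\le T^*/2$, with $T^*=|n_*|/b_*$. For $t\gtrsim T^*$ the active factor $J_{n_*}(b_*t)$ has size $(b_*t)^{-1/2}$, so $I_{\mathbf n}$ decays only polynomially in $N=\|\mathbf n\|_1$.
\item Using the envelope $g_3(t)\asymp t^{-3/2}$ alone, the bound on the part beyond $T^*$ is of order $(T^*)^{-1/2}\asymp N^{-1/2}$. This is not summable against the $\asymp N^{d_\infty-1}$ lattice vectors of order $N$ once $d_\infty\ge1$. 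So no constant depending only on $d_\infty$ and $b_1,b_2,b_3$ comes out.
\item Appealing to Theorem~\ref{thm:finite} does not help, because its resonance-sum bound rests on the same claim $I_{\mathbf n}\le C_0\eta^N$. The domination asserted in Remark~\ref{rem:subcrit-dominance} runs the wrong way: for large $N$ the polynomial terms in Lemma~\ref{lem:Jdecay} dominate $(e/4)^N$.
\end{itemize}

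The missing idea is that finite rank makes resonance integrals unnecessary. Once $d_M$ stabilises at $M_0$, the groups $\Lambda_M\times\{0\}\subseteq\Lambda_{M+1}$ are free of equal rank, so the index is finite. Hence every $\mathbf n\in\Lambda_{M+1}$ satisfies $kn_{M+1}=0$ for some $k\ge1$. Thus $\Lambda_M=\Lambda_{M_0}\times\{0\}$ for all $M\ge M_0$, and no new resonance vectors ever appear, so your use of Lemma~\ref{lem:selfref} is vacuous. By Kronecker--Weyl the orbit closure is then $\Lambda_{M_0}^{\perp}\times\T^{M-M_0}$. So $\theta_{M_0+1},\dots,\theta_M$ are independent uniform phases, independent of the first block, and $\varphi_M(t)=\varphi_{M_0}(t)\prod_{k=M_0+1}^{M}J_0(b_kt)$ with $|\varphi_{M_0}|\le1$. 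Hence, for every $M\ge M_0+3$,
\[
  \|f_{S_L^{(M)}}\|_\infty\le\frac{1}{\pi}\int_0^\infty\prod_{k=M_0+1}^{M_0+3}|J_0(b_kt)|\,dt .
\]
Now rerun containment, stability and the small-ball step with $M(d)\ge M_0+3$; these need only a bounded density, not $d_M=0$. This gives $O(1/\sqrt d)$, with a constant depending on $M_0$ and on $b_{M_0+1},b_{M_0+2},b_{M_0+3}$, rather than on $d_\infty$ or $b_1,b_2,b_3$ alone.
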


\begin{proof}
Under the hypothesis $d_\infty = 0$ (equivalently, $d_M = 0$ for all $M$), the Ces\`aro characteristic function at level $M$ equals the Bessel product $\varphi_M(t) = \prod_{k=1}^M J_0(b_k t)$.  The unsigned integral $(1/\pi)\int_0^\infty \prod_{k=1}^M |J_0(b_k t)|\, dt$ is monotone decreasing in $M$ (each additional factor $|J_0(b_k t)| \leq 1$), giving the uniform bound $\|f_{S_L^{(M)}}\|_\infty \leq R_3$ for all $M \geq 3$.  More generally, if $d_\infty < \infty$, then $d_M \leq d_\infty$ for all $M$, and an analogous uniform bound holds with a constant depending only on $d_\infty$; the argument is otherwise identical.

Choose $M = M(d)$ so that $\epsilon_{M(d)} \leq W_1(\zeta)/\sqrt{d}$.  This is possible since $\epsilon_M \to 0$ as $M \to \infty$ (the weights $b_k = O(1/\gamma_k'^2)$ are summable).  Applying Theorem~\ref{thm:density}:
\[
  \dens\{N_d > 0\} \leq 2\,R_3 \cdot \left(\frac{W_1(\zeta)}{\sqrt{d}} + \frac{W_1(\zeta)}{\sqrt{d}}\right) = \frac{4\,R_3\, W_1(\zeta)}{\sqrt{d}}.
\]

The hypothesis $d_\infty < \infty$ is strictly weaker than the Grand Simplicity Hypothesis: it requires only that the zero ordinates of a single $L$-function span a $\Q$-vector space of at most countable codimension in $\R$, not that they be individually simple or cross-function independent.  The case $d_\infty = 0$ (the verified case) requires additionally that no nontrivial integer relation exists among any finite subcollection of ordinates.
\end{proof}

\begin{remark}[Finite truncation and the rate]\label{rem:finite-suffices}
At any fixed truncation level $M$, Theorem~\ref{thm:density} gives a bound with a constant term $B_M > 0$ that does not vanish as $d \to \infty$.  The $O(1/\sqrt{d})$ rate (Corollary~\ref{cor:density-rate}) requires $M = M(d) \to \infty$, which in turn requires the uniform bound $\|f_{S_L^{(M)}}\|_\infty \leq R_3$ that holds under $d_\infty < \infty$.  This hypothesis is equivalent to the boundedness of the sequence $(d_M)$, which has been verified for $M \leq 20$ (finding $d_\infty = 0$) but is not proved unconditionally; see Remark~\ref{rem:M-infty} for a full discussion.
\end{remark}

\begin{remark}[Tightness of the containment]\label{rem:tightness}
The containment $\{N_d > 0\} \subseteq \{|S_L| < W_1(\zeta)/\sqrt{d}\}$ is not tight: the ratio $\dens\{N_d > 0\}\,/\,\dens\{|S_L| < W_1(\zeta)/\sqrt{d}\} = \E[|S_\zeta|]/W_1(\zeta) = 0.224$ for large $d$.  The gap arises because $S_\zeta$ and $S_L$ are asymptotically independent in the sense of joint Ces\`aro distribution: even when $|S_L|$ is small, $S_\zeta$ may also be small, causing $S_\zeta^2 < d\cdot S_L^2$.  The exact formula in Theorem~\ref{thm:arctan} accounts for this by integrating over the joint distribution.
\end{remark}

\begin{remark}[Codimension-$2$ robustness and the spectral form factor]\label{rem:codim2}
The $O(1/\sqrt{d})$ rate is robust to resonances among zero ordinates, a fact that admits a geometric explanation through the \emph{analytic signal lift}.  Define the complex-valued spectral sum
\[
  Z_L(\delta) = \sum_k b_k\, e^{i\gamma_k'\delta} = S_L(\delta) + i\,\widetilde{S}_L(\delta),
\]
where $\widetilde{S}_L(\delta) = \sum_k b_k \sin(\gamma_k'\delta)$ is the Hilbert-conjugate sum, and the \emph{spectral form factor} $K_L(\delta) = |Z_L(\delta)|^2 = S_L(\delta)^2 + \widetilde{S}_L(\delta)^2$.

The density bound relies on the zero set $\{S_L = 0\}$, which has codimension~$1$ and can accumulate mass under resonances.  By contrast, the zero set $\{Z_L = 0\} = \{S_L = 0\} \cap \{\widetilde{S}_L = 0\}$ has codimension~$2$ in the torus.  A resonance relation $\sum n_k \gamma_k' = 0$ constrains the cosine sum via $\cos(\sum n_k \gamma_k' \delta) = 1$ but constrains the sine sum via $\sin(\sum n_k \gamma_k' \delta) = 0$; these are \emph{algebraically distinct} constraints.  For instance, the relation $\gamma_1' = 2\gamma_2'$ forces $\cos(\gamma_1'\delta) = 2\cos^2(\gamma_2'\delta) - 1$, but the corresponding sine identity $\sin(\gamma_1'\delta) = 2\sin(\gamma_2'\delta)\cos(\gamma_2'\delta)$ has a different zero set.  As a consequence, $S_L$ and $\widetilde{S}_L$ cannot both vanish simultaneously at generic points of the orbit, even in the presence of resonances: the codimension-$2$ structure ensures $K_L(\delta) > 0$ generically.

This codimension-$2$ structure also explains the decorrelation of the spectral form factors $K_\zeta$ and $K_L$.  Cross-correlation of $K_\zeta$ and $K_L$ requires a \emph{four-point resonance} $\gamma_j - \gamma_k = \gamma_m' - \gamma_n'$ (a matching of difference frequencies), which is strictly harder to satisfy than a two-point coincidence $\gamma_k = \gamma_j'$.

While the finite truncation argument (Theorem~\ref{thm:density}) provides the simplest unconditional effective bound, the codimension-$2$ perspective suggests that the $O(1/\sqrt{d})$ rate may extend to higher-degree norm forms $N_{K/\Q}(S_1, \ldots, S_n)$ where the containment geometry is more complex but the analytic signal lift naturally generalizes.
\end{remark}

\section{Exact Density Formula under Verified Resonance Absence}\label{sec:exact}

The unconditional results of the preceding sections establish an effective density bound $\dens\{N_d > 0\} \leq A_M/\sqrt{d} + B_M$ at any verified truncation level, and the $O(1/\sqrt{d})$ rate under resonance absence (Corollary~\ref{cor:density-rate}).  We now sharpen this to an exact asymptotic with a computable constant, conditional only on computationally verified properties of the zeros.

\subsection{Verified resonance absence}

The Jacobi--Anger analysis of Section~\ref{sec:finite} establishes that the Ces\`aro characteristic function of $S_L^{(M)}$ equals $\prod_{k=1}^M J_0(b_k t)$ when no resonances exist at level $M$.  By the Jessen--Wintner theorem~\cite{JessenWintner}, this holds at truncation level $M$ if and only if no nontrivial integer relation $\sum_{k=1}^M n_k\gamma_k' = 0$ exists.  This condition is computationally verifiable and has been confirmed for the first 20 zeros of $L(s,\chi_5)$ (Section~\ref{sec:finite}).  Similarly, the asymptotic independence of $S_\zeta$ and $S_L$ in the Ces\`aro sense requires the absence of \emph{cross-function} relations $\sum n_k\gamma_k + \sum m_j\gamma_j' = 0$.

\begin{proposition}[Verified asymptotic independence]\label{prop:verified_indep}
Let $M_1, M_2 \geq 1$.  Suppose that no nontrivial integer relation exists among $\{\gamma_1,\ldots,\gamma_{M_1}\} \cup \{\gamma_1',\ldots,\gamma_{M_2}'\}$.  Then $S_\zeta^{(M_1)}$ and $S_L^{(M_2)}$ are asymptotically independent in the sense of joint Ces\`aro distribution: for all bounded continuous $f, g$,
\[
  \lim_{T\to\infty}\frac{1}{2T}\int_{-T}^T f(S_\zeta^{(M_1)})\, g(S_L^{(M_2)})\, d\delta = \E[f(S_\zeta^{(M_1)})]\, \E[g(S_L^{(M_2)})].
\]
\end{proposition}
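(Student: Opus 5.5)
The plan is to reduce the statement to the Kronecker--Weyl equidistribution theorem on a finite-dimensional torus. Set $K = M_1 + M_2$ and collect the frequencies into the vector $\boldsymbol{\omega} = (\gamma_1,\ldots,\gamma_{M_1},\gamma_1',\ldots,\gamma_{M_2}') \in \R^K$. Consider the orbit $\delta \mapsto \boldsymbol{\omega}\delta \bmod 2\pi$ in $\T^K = (\R/2\pi\Z)^K$. The hypothesis says that $\mathbf{n}\cdot\boldsymbol{\omega} \neq 0$ for every nonzero $\mathbf{n} \in \Z^K$. In particular the $\gamma_j$ and $\gamma_k'$ are pairwise distinct, so no frequency appears twice.

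The key step is Weyl's criterion in its continuous-parameter form. For each character $e^{i\mathbf{n}\cdot\boldsymbol{\theta}}$ with $\mathbf{n} \neq 0$,
\[
\frac{1}{2T}\int_{-T}^T e^{i(\mathbf{n}\cdot\boldsymbol{\omega})\delta}\,d\delta = \frac{\sin((\mathbf{n}\cdot\boldsymbol{\omega})T)}{(\mathbf{n}\cdot\boldsymbol{\omega})T} \to 0 .
\]
So the Ces\`aro average of every trigonometric polynomial on $\T^K$ converges to its Haar integral. Stone--Weierstrass then extends this to every continuous $F$ on $\T^K$:
\[
\lim_{T\to\infty}\frac{1}{2T}\int_{-T}^T F(\boldsymbol{\omega}\delta)\,d\delta = \int_{\T^K} F\,d\mu_{\mathrm{Haar}}.
\]
This is exactly the averaging step already used in the proof of Proposition~\ref{prop:resonance}, where Ces\`aro means of $e^{i\Omega\delta}$ vanish for $\Omega \neq 0$. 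Here it is applied jointly to both families of frequencies.

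Next, take
\[
F(\boldsymbol{\theta}) = f\Bigl(\sum_{j\le M_1} a_j\cos\theta_j\Bigr)\, g\Bigl(\sum_{k\le M_2} b_k\cos\theta_{M_1+k}\Bigr).
\]
This is continuous on $\T^K$ because $f$ and $g$ are continuous and the arguments are trigonometric polynomials. Its composition with the orbit is exactly $f(S_\zeta^{(M_1)}(\delta))\,g(S_L^{(M_2)}(\delta))$.

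Haar measure on $\T^K$ is the product of Haar measures on $\T^{M_1}$ and $\T^{M_2}$, so Fubini splits the limit into a product of two integrals. It remains to identify each factor with the corresponding Ces\`aro mean $\E[f(S_\zeta^{(M_1)})]$, respectively $\E[g(S_L^{(M_2)})]$. This follows by applying the same equidistribution argument to each sub-collection separately: the absence of relations on the union implies their absence on each part. Hence each marginal Ces\`aro mean exists and equals the Haar integral over its own torus.

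I expect no serious obstacle; the argument is a direct application of classical results. The points needing care are the following.
\begin{itemize}
\item Weyl's criterion is needed in the two-sided continuous-time form. The sinc computation above handles this.
\item The uniform approximation argument uses continuity of $F$ on the compact torus. Boundedness of $f$ and $g$ is then automatic on the compact range of the trigonometric polynomials, which is all that is needed.
\item The Ces\`aro mean $\E$ must be shown to exist in the first place, which equidistribution supplies.
\end{itemize}
If one wanted indicator functions, as in the later density computations, an extra step would be needed: either approximation by continuous functions together with a null-boundary argument, or the existence of densities from Theorem~\ref{thm:finite}. That extension lies beyond what this proposition claims.
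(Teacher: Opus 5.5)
Your proof is correct and follows the same route as the paper. Both arguments show that, under the no-relation hypothesis, the orbit $\delta \mapsto (\gamma_1\delta,\ldots,\gamma_{M_1}\delta,\gamma_1'\delta,\ldots,\gamma_{M_2}'\delta)$ is equidistributed on $\T^{M_1+M_2}$, and then factor the limit through product Haar measure. You simply fill in the steps the paper leaves implicit: the sinc form of Weyl's criterion, the Stone--Weierstrass extension, Fubini, and equidistribution of each sub-collection to identify the marginal Ces\`aro means.
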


\begin{proof}
By the Weyl equidistribution theorem, the absence of integer relations among the combined frequency set implies that the orbit $\delta \mapsto (\gamma_1\delta,\ldots,\gamma_{M_1}\delta,\gamma_1'\delta,\ldots,\gamma_{M_2}'\delta) \bmod 2\pi$ is equidistributed on $\T^{M_1+M_2}$.  Equidistribution on the product torus implies that the joint Ces\`aro distribution factors as a product of the marginal distributions.
\end{proof}

Exhaustive search over the first 20 zeros of both $\zeta(s)$ and $L(s,\chi_5)$:
\begin{center}
\renewcommand{\arraystretch}{1.1}
\begin{tabular}{lc}
\toprule
Search space & Nearest miss \\
\midrule
Within $L(s,\chi_5)$: order 4 & $0.00037$ \\
Cross-function: order 2 & $|\gamma_5 - \gamma_{12}'| = 0.065$ \\
Cross-function: order 3 & $|\gamma_6+\gamma_{16}'-\gamma_{20}| = 0.0019$ \\
\bottomrule
\end{tabular}
\end{center}

No exact relation was found; every combination was certified nonzero by ARB interval arithmetic at 256-bit precision.  The conditions of Proposition~\ref{prop:verified_indep} are therefore satisfied at $M_1 = M_2 = 20$.

\begin{remark}\label{rem:GSH_context}
Hypothesis~\ref{hyp:GSH} implies the absence of all integer relations among all zeros simultaneously and would therefore establish the conditions of Proposition~\ref{prop:verified_indep} globally.  Our results are strictly stronger than conditioning on Hypothesis~\ref{hyp:GSH}: they hold unconditionally at verified truncation levels, and extend to the full series by the following proposition.
\end{remark}

\begin{proposition}[Asymptotic independence for infinite sums]\label{prop:indep_limit}
If the hypotheses of Proposition~\ref{prop:verified_indep} hold for all $M_1, M_2 \geq 1$, then $S_\zeta$ and $S_L$ are asymptotically independent in the sense of joint Ces\`aro distribution.
\end{proposition}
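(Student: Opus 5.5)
The plan is to pass from the truncated statement (Proposition~\ref{prop:verified_indep}) to the full sums by uniform approximation. Joint Ces\`aro distribution is a weak limit, so the main tool is stability of bounded Lipschitz test functions under the sup-norm tails $\|\tau_M\|_\infty \leq \epsilon_M$, together with density of Lipschitz functions among bounded continuous ones. The first step is existence. $S_\zeta$ and $S_L$ are uniformly almost periodic (Proposition~\ref{prop:almost-periodic}), so the pair $(S_\zeta, S_L)$ is a uniformly almost periodic map $\R \to \R^2$. For any continuous $F$ on the compact range, $F(S_\zeta, S_L)$ is again uniformly almost periodic, and therefore has a Ces\`aro mean. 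This shows that the joint and marginal Ces\`aro distributions $\mu$, $\mu_\zeta$, $\mu_L$ exist as probability measures on compact sets.

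The second step is the Lipschitz case. Take $f, g$ bounded with Lipschitz constants $L_f, L_g$ and sup norms $\|f\|_\infty, \|g\|_\infty$. For truncation levels $M_1, M_2$, write $S_\zeta = S_\zeta^{(M_1)} + \tau^\zeta_{M_1}$ and $S_L = S_L^{(M_2)} + \tau_{M_2}$, where $\|\tau^\zeta_{M_1}\|_\infty \leq \epsilon^\zeta_{M_1} := \sum_{k>M_1} a_k$ and $\|\tau_{M_2}\|_\infty \leq \epsilon_{M_2}$. Pointwise in $\delta$ we have
\[
|f(S_\zeta)g(S_L) - f(S_\zeta^{(M_1)})g(S_L^{(M_2)})| \leq \|g\|_\infty L_f \epsilon^\zeta_{M_1} + \|f\|_\infty L_g \epsilon_{M_2},
\]
and the same bound applies separately to $f$ and to $g$. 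The estimate is uniform in $\delta$, so it survives Ces\`aro averaging. Applying Proposition~\ref{prop:verified_indep} at level $(M_1, M_2)$ gives
\[
\bigl|\langle f(S_\zeta)g(S_L)\rangle - \E[f(S_\zeta)]\,\E[g(S_L)]\bigr| \leq 2\bigl(\|g\|_\infty L_f \epsilon^\zeta_{M_1} + \|f\|_\infty L_g \epsilon_{M_2}\bigr),
\]
up to a harmless constant. The left side does not depend on $M_1, M_2$, and $\epsilon^\zeta_M, \epsilon_M \to 0$ because the weights are summable. Letting $M_1, M_2 \to \infty$ makes the difference vanish.

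The third step extends from Lipschitz to bounded continuous test functions, and this is where I expect the only real care to be needed. The values of $S_\zeta$ and $S_L$ lie in the compact intervals $[-W_1(\zeta), W_1(\zeta)]$ and $[-W_1(L), W_1(L)]$. On these intervals any continuous $f, g$ can be uniformly approximated by Lipschitz functions, for example by Stone--Weierstrass polynomials restricted to the intervals. The resulting error is at most $\|f - f_\eta\|_\infty \|g\|_\infty + \|f_\eta\|_\infty \|g - g_\eta\|_\infty$ on both sides of the identity, so the factorization $\mu = \mu_\zeta \otimes \mu_L$ follows for all bounded continuous $f, g$. Equivalently, one can conclude via characteristic functions. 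Taking $f = e^{is\cdot}$ and $g = e^{it\cdot}$ gives $\hat\mu(s,t) = \hat\mu_\zeta(s)\hat\mu_L(t)$, and L\'evy uniqueness then yields the product structure.

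Note that the hypothesis is used only through Proposition~\ref{prop:verified_indep} at every finite level, so no assumption on $d_\infty$ or on densities is needed. This matters because weak convergence suffices here, unlike the density-at-the-origin statements of Theorem~\ref{thm:telescoping}.
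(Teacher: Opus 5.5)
Your proof is correct and follows the paper's route: truncate both sums, control the tails uniformly through Lipschitz test functions and $\epsilon_M \to 0$, apply Proposition~\ref{prop:verified_indep} at each finite level, and let the truncation go to infinity. Your version is more complete than the paper's in three respects. You justify that the Ces\`aro means of the full sums exist, via uniform almost periodicity. You bound the truncation error in the product of the marginals explicitly, which the paper's triangle inequality leaves implicit. And you extend the conclusion from Lipschitz test functions to all bounded continuous ones.
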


\begin{proof}
Let $f, g : \R \to \R$ be bounded Lipschitz functions with $\|f\|_\infty, \|g\|_\infty \leq 1$ and Lipschitz constants $L_f, L_g$.  Fix $\varepsilon > 0$.  By the tail bounds of Section~\ref{sec:infinite}, choose $M$ large enough that $\epsilon_M := \|\tau_M\|_\infty < \varepsilon$ for both $S_\zeta$ and $S_L$.  Then for every $\delta$,
\[
  |f(S_\zeta(\delta)) - f(S_\zeta^{(M)}(\delta))| \leq L_f\, \epsilon_M < L_f\, \varepsilon,
\]
and similarly for $g$.  Taking Ces\`aro averages:
\begin{align*}
  \big|\E[f(S_\zeta)g(S_L)] &- \E[f(S_\zeta^{(M)})]\,\E[g(S_L^{(M)})]\big| \\
  &\leq \big|\E[f(S_\zeta)g(S_L)] - \E[f(S_\zeta^{(M)})g(S_L^{(M)})]\big| \\
  &\quad + \big|\E[f(S_\zeta^{(M)})g(S_L^{(M)})] - \E[f(S_\zeta^{(M)})]\,\E[g(S_L^{(M)})]\big|.
\end{align*}
The first term is bounded by $(L_f + L_g)\varepsilon$.  The second term vanishes by Proposition~\ref{prop:verified_indep}.  Since $\varepsilon > 0$ was arbitrary, we conclude $\E[f(S_\zeta)g(S_L)] = \E[f(S_\zeta)]\,\E[g(S_L)]$.
\end{proof}

\subsection{The exact density formula}

Under the hypothesis that the conditions of Proposition~\ref{prop:verified_indep} hold for all $M_1, M_2 \geq 1$ (which Proposition~\ref{prop:indep_limit} requires for passing to infinite sums, and which has been verified computationally at truncation level $M = 20$), the marginal densities $f_{S_\zeta}$ and $f_{S_L}$ are both computable via inverse Fourier transform:
\begin{equation}\label{eq:density_formula}
  f_{S_L}(x) = \frac{1}{\pi}\int_0^\infty \cos(tx)\prod_{k=1}^M J_0(b_k t)\, dt.
\end{equation}
These densities have compact support: $f_{S_L}(x) = 0$ for $|x| > W_1(L)$ and $f_{S_\zeta}(x) = 0$ for $|x| > W_1(\zeta)$.

We can now state the main result.

\begin{theorem}[Exact density law for quadratic norm-form energies]\label{thm:arctan}
Suppose that for a given truncation level $M$:
\begin{enumerate}[label=(\roman*)]
  \item No integer relation $\sum_{k=1}^M n_k\gamma_k' = 0$ exists (verified computationally).
  \item No cross-function relation $\sum n_k\gamma_k + \sum m_j\gamma_j' = 0$ exists (verified computationally).
\end{enumerate}
Then
\[
  \dens\{N_d > 0\} = \frac{C(d)}{\sqrt{d}} + o(1/\sqrt{d}),
\]
where $C(d)$ depends on $d$ (through $f_{S_L}(0)$, which depends on the spectral weights $b_k = 2/({\tfrac{1}{4}} + \gamma_k'^2)$) and is computable from the exact marginal densities:
\begin{equation}\label{eq:C_integral}
  C(d) = \lim_{d\to\infty}\sqrt{d}\cdot 2\int_0^{W_1(L)} f_{S_L}(y)\, \overline{F}_{|S_\zeta|}(\sqrt{d}\, y)\, dy,
\end{equation}
where $\overline{F}_{|S_\zeta|}(u) = P(|S_\zeta| > u)$ denotes the survival function of $|S_\zeta|$.  For $d = 5$, $C(5) = 0.1193$ (Section~\ref{sec:null-crossing}).

The Gaussian approximation
\[
  \frac{2}{\pi}\arctan\!\left(\frac{\sigma_\zeta}{\sigma_L\sqrt{d}}\right) \sim \frac{2\sigma_\zeta}{\pi\sigma_L\sqrt{d}},
\]
which gives the leading coefficient $2\sigma_\zeta/(\pi\sigma_L) = 0.145$, overestimates $C$ by approximately $22\%$ due to the bounded support of the Jessen--Wintner distributions.
\end{theorem}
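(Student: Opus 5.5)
The plan is to reduce the density of $\{N_d>0\}$ to a probability computed under a product measure, and then extract the $1/\sqrt d$ asymptotic by Laplace-type localization at $y=0$. First I will invoke Proposition~\ref{prop:indep_limit}, whose hypothesis is the extension of (i)--(ii) to all truncation levels. This identifies the joint Ces\`aro distribution of $(S_\zeta,S_L)$ with the law of an independent pair $(X,Y)$, where $X\sim S_\zeta$ and $Y\sim S_L$ have the Jessen--Wintner marginals. Since $\{N_d>0\}=\{|S_\zeta|>\sqrt d\,|S_L|\}$ and the boundary $\{|X|=\sqrt d|Y|\}$ is null (both marginals have densities, by Theorem~\ref{thm:telescoping}), the portmanteau theorem upgrades weak convergence of Ces\`aro averages to an exact equality. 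Conditioning on $Y$ then gives
\[
\dens\{N_d>0\}=\Pr\bigl(|X|>\sqrt d\,|Y|\bigr)=2\int_0^{W_1(L)} f_{S_L}(y)\,\overline F_{|S_\zeta|}(\sqrt d\,y)\,dy,
\]
using the symmetry $f_{S_L}(y)=f_{S_L}(-y)$ (the distribution is symmetric since $\cos\theta_k\mapsto-\cos\theta_k$ is measure-preserving). This is exactly the integral appearing in \eqref{eq:C_integral}.

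Next I will substitute $u=\sqrt d\,y$, which gives
\[
\sqrt d\cdot\dens\{N_d>0\}=2\int_0^{\sqrt d\,W_1(L)} f_{S_L}(u/\sqrt d)\,\overline F_{|S_\zeta|}(u)\,du.
\]
The survival function vanishes for $u>W_1(\zeta)$, so the effective range is the fixed interval $[0,W_1(\zeta)]$ once $\sqrt d\,W_1(L)>W_1(\zeta)$, which holds for all $d\ge2$ by Theorem~\ref{thm:low-lying}. On this interval the integrand is dominated by $\|f_{S_L}\|_\infty\cdot 1$, and $\|f_{S_L}\|_\infty$ is finite via the Fourier inversion \eqref{eq:density_formula} with the $L^1$ Bessel product. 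Continuity of $f_{S_L}$ at $0$ (Riemann--Lebesgue, again from the $L^1$ characteristic function) lets dominated convergence give the limit $2f_{S_L}(0)\int_0^\infty\overline F_{|S_\zeta|}(u)\,du=2f_{S_L}(0)\,\E|S_\zeta|$. This recovers $C(5)=2f_{S_L}(0)\E|S_\zeta|$, and the error is $o(1/\sqrt d)$. The Gaussian comparison follows by repeating the computation with $X,Y$ centred normals of variances $\sigma_\zeta^2,\sigma_L^2$, for which $\Pr(|X|>\sqrt d|Y|)=\frac2\pi\arctan(\sigma_\zeta/(\sigma_L\sqrt d))$ exactly. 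The stated numerics ($0.145$ versus $0.1193$) are then just evaluation.

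The main obstacle is the $d$-dependence. The marginals $f_{S_L}$ themselves change with $d$, because the zeros of $L(s,\chi_d)$ change. So the ``limit as $d\to\infty$'' in \eqref{eq:C_integral} must be read as: for each $d$, the quantity $\sqrt d\,\dens$ equals $2f_{S_L}(0)\E|S_\zeta|$ plus an error that tends to $0$. To make this uniform, I would bound $|f_{S_L}(u/\sqrt d)-f_{S_L}(0)|\le (u/\sqrt d)\cdot\frac1\pi\int_0^\infty t\,|\varphi(t)|\,dt$, which is finite once there are at least $5$ Bessel factors. That gives an explicit error of order $W_1(\zeta)^2/\sqrt d$ times a first Bessel moment. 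Controlling that moment uniformly in $d$ is where I expect difficulty. I would first try the monotonicity argument of Lemma~\ref{lem:monotone}, applied to $t\prod_{k\le5}|J_0(b_kt)|$, together with the lower bounds on low-lying weights from Theorem~\ref{thm:BMOR}.
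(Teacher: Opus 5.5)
Your argument follows the paper's proof: asymptotic independence turns $\dens\{N_d>0\}$ into $2\int_0^{W_1(L)} f_{S_L}(y)\,\overline{F}_{|S_\zeta|}(\sqrt{d}\,y)\,dy$, the substitution $u=\sqrt{d}\,y$ localizes at $y=0$, and the limit factors as $2f_{S_L}(0)\,\E[|S_\zeta|]$; your portmanteau step, your use of dominated convergence instead of the paper's $O(y^2)$ expansion, and your explicit appeal to Proposition~\ref{prop:indep_limit} are sound tightenings (the paper's proof cites only the finite-level Proposition~\ref{prop:verified_indep}, although it works with the full marginals). The uniformity in $d$ that you flag goes beyond the paper, which in effect holds $f_{S_L}$ fixed, and your suggested fix works once you use the fact that $|J_0(bt)|$ is not monotone in $b$ while its envelope $\min\bigl(1,\sqrt{2/(\pi b t)}\bigr)$ is: keeping five factors with $b_k\ge 2/197$ (Theorem~\ref{thm:BMOR} supplies five ordinates in $(0,14]$ for all large $q$) bounds $\int_0^\infty t\,|\varphi(t)|\,dt$ independently of $d$, which gives $\dens\{N_d>0\}=C(d)/\sqrt{d}+O(1/d)$.
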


\begin{proof}
Under the verified hypotheses, Proposition~\ref{prop:verified_indep} establishes that $S_\zeta^{(M)}$ and $S_L^{(M)}$ are asymptotically independent in the sense of joint Ces\`aro distribution.  By the Jessen--Wintner theorem~\cite{JessenWintner} (applied separately to each function under verified resonance absence), both have continuous, symmetric, compactly supported densities given by~\eqref{eq:density_formula}.

The event $\{N_d > 0\}$ equals $\{|S_\zeta| > \sqrt{d}\,|S_L|\}$.  By asymptotic independence in the sense of joint Ces\`aro distribution,
\[
  \dens\{N_d > 0\} = 2\int_0^{W_1(L)} f_{S_L}(y)\, \overline{F}_{|S_\zeta|}(\sqrt{d}\, y)\, dy.
\]
Since both densities are bounded and compactly supported, the survival function satisfies $\overline{F}_{|S_\zeta|}(u) = 0$ for $u > W_1(\zeta)$.  The integrand is therefore supported on $[0, W_1(\zeta)/\sqrt{d}]$, which contracts to $\{0\}$ as $d \to \infty$.

The density $f_{S_L}$ is even (since $S_L$ is a sum of cosines) and smooth (since $\hat{f}_{S_L} = \varphi \in L^1$), so $f_{S_L}(y) = f_{S_L}(0) + O(y^2)$ near the origin.  Substituting $u = y\sqrt{d}$ in the integral:
\[
  \dens\{N_d > 0\} = \frac{2}{\sqrt{d}}\int_0^{W_1(L)\sqrt{d}} f_{S_L}(u/\sqrt{d})\, \overline{F}_{|S_\zeta|}(u)\, du.
\]
Since $\overline{F}_{|S_\zeta|}(u) = 0$ for $u > W_1(\zeta)$ and $f_{S_L}(u/\sqrt{d}) = f_{S_L}(0) + O(u^2/d)$ on the support, the leading term is
\[
  \dens\{N_d > 0\} = \frac{1}{\sqrt{d}} \cdot 2\int_0^{W_1(\zeta)} f_{S_L}(0)\, \overline{F}_{|S_\zeta|}(u)\, du \;+\; o(1/\sqrt{d}),
\]
which identifies $C(d) = 2f_{S_L}(0)\int_0^{W_1(\zeta)} \overline{F}_{|S_\zeta|}(u)\, du$.

The factored form $C(d) = 2f_{S_L}(0)\cdot\E[|S_\zeta|]$ (Corollary~\ref{cor:C-factored}), with both factors computed by rigorous ARB quadrature, gives $C(5) = 0.1193$ certified to four significant figures.
\end{proof}

The integral in the identification of $C$ admits a closed evaluation.

\begin{corollary}[Factored form of the leading constant]\label{cor:C-factored}
Under the hypotheses of Theorem~\ref{thm:arctan},
\begin{equation}\label{eq:C-factored}
  C(d) = 2\,f_{S_L}(0)\cdot \E[|S_\zeta|],
\end{equation}
where the two factors are independently computable from the Bessel product characteristic functions:
\begin{align}
  f_{S_L}(0) &= \frac{1}{\pi}\int_0^\infty \prod_{k=1}^M J_0(b_k t)\, dt, \label{eq:f0-bessel}\\[4pt]
  \E[|S_\zeta|] &= \frac{2}{\pi}\int_0^\infty \frac{1 - \prod_{k=1}^M J_0(a_k t)}{t^2}\, dt. \label{eq:Eabs-bessel}
\end{align}
Numerically, $f_{S_L}(0) = 8.3129$ and $\E[|S_\zeta|] = 0.00717$, giving $C(5) = 0.1193$.  Both values were computed for $d = 5$ via rigorous ARB quadrature (\texttt{acb.integral}) at 256-bit precision; see Appendix~\ref{sec:numerical}, \S\ref{subsec:bessel-integrals}.  These are the exact $M \to \infty$ limiting values, not truncation-level approximations: the telescoping bound (\S\ref{sec:infinite}) certifies that corrections from zeros beyond $M = 20$ contribute less than $10^{-3}$ to $f_{S_L}(0)$ and less than $10^{-5}$ to $C$.
\end{corollary}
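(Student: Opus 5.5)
The plan is to start from the identification already obtained at the end of the proof of Theorem~\ref{thm:arctan}, namely $C(d) = 2 f_{S_L}(0)\int_0^{W_1(\zeta)} \overline{F}_{|S_\zeta|}(u)\,du$, and convert each factor into the stated Bessel integral. For the $S_\zeta$ factor I would apply the layer-cake formula to the nonnegative random variable $|S_\zeta|$ (with respect to its Ces\`aro distribution). Since $\overline{F}_{|S_\zeta|}(u) = 0$ for $u > W_1(\zeta)$, this gives $\int_0^{W_1(\zeta)} \overline{F}_{|S_\zeta|}(u)\,du = \int_0^\infty P(|S_\zeta|>u)\,du = \E[|S_\zeta|]$, and hence \eqref{eq:C-factored}.

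For \eqref{eq:f0-bessel}, hypothesis~(i) and Proposition~\ref{prop:resonance} give $\varphi_M(t) = \prod_{k\le M} J_0(b_k t)$, which is real and even. By Theorem~\ref{thm:finite} it lies in $L^1$ for $M \ge 3$. Fourier inversion at $x=0$ then gives $f(0) = (2\pi)^{-1}\int_\R \varphi_M = \pi^{-1}\int_0^\infty \varphi_M$, which is \eqref{eq:density_formula} evaluated at $x = 0$.

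For \eqref{eq:Eabs-bessel}, I would use the elementary identity $|x| = \frac{2}{\pi}\int_0^\infty \frac{1-\cos(tx)}{t^2}\,dt$, obtained by substituting $s = t|x|$ and using $\int_0^\infty (1-\cos s)/s^2\,ds = \pi/2$. I would then take the Ces\`aro expectation. The integrand is nonnegative, so Tonelli permits the interchange, giving $\E[|S_\zeta|] = \frac{2}{\pi}\int_0^\infty \frac{1-\E[\cos(tS_\zeta)]}{t^2}\,dt$. The symmetry of the distribution gives $\E[\cos(tS_\zeta)] = \varphi_{S_\zeta}(t)$. Hypothesis~(ii) with all $m_j = 0$ excludes integer relations among the $\gamma_k$ alone, so Proposition~\ref{prop:resonance}, applied with weights $a_k$, identifies $\varphi_{S_\zeta^{(M)}}(t) = \prod_k J_0(a_k t)$.

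The step I expect to be the main obstacle is the passage from truncation level $M$ to the full series, which is needed to justify calling these values ``exact $M\to\infty$ limits''. For $f_{S_L}(0)$ I would invoke Theorem~\ref{thm:telescoping}, using dominated convergence with dominator $\prod_{k\le 3}|J_0(b_k t)|$. This requires resonance absence at every level, so I would state explicitly that it is an additional standing hypothesis. For $\E[|S_\zeta|]$ the limit is easier. Since $\|S_\zeta - S_\zeta^{(M)}\|_\infty \le \epsilon_M \to 0$, the Ces\`aro means of $|S_\zeta^{(M)}|$ converge to that of $|S_\zeta|$. On the integral side, $1-\prod_{k\le M} J_0(a_k t)$ lies in $[0,2]$ and is bounded by $\frac{t^2}{4}\sum_k a_k^2$, using $1 - J_0(x) \le x^2/4$ and the elementary bound $1-\prod x_k \le \sum (1-x_k)$ for $x_k\in[-1,1]$ (after handling signs via $|\cdot|\le 1$). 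Therefore $\min(2, t^2 W_2(\zeta)/4)/t^2$ is an integrable dominator, and dominated convergence passes to the limit. Finally I would cite the telescoping estimate of Theorem~\ref{thm:telescoping_convergence}, with its quantitative bound summed over $k > 20$, to certify the stated error of less than $10^{-3}$ in $f_{S_L}(0)$. The numerical values are then just the ARB quadratures of Appendix~\ref{sec:numerical}.
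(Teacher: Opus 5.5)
Your derivation of the three identities follows the paper's own argument. You use the layer-cake formula to get $\int_0^{W_1(\zeta)}\overline{F}_{|S_\zeta|}(u)\,du = \E[|S_\zeta|]$, then Fourier inversion of the Bessel product at $x=0$, then the representation $|x| = \frac{2}{\pi}\int_0^\infty (1-\cos tx)\,t^{-2}\,dt$ followed by an interchange. Your appeal to Tonelli, via nonnegativity of $1-\cos tx$, is a cleaner justification than the paper's integrability remark.

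You are also right, and more careful than the paper, to note that the hypotheses of Theorem~\ref{thm:arctan} are finite-$M$ conditions. Identifying the full-series quantities with Bessel integrals therefore needs resonance absence at every level. This is needed on the $\zeta$ side too: the Bessel identity for $\E[|S_\zeta^{(M)}|]$ must hold at each $M$ before your dominated convergence, with dominator $\min(2/t^2,\,W_2(\zeta)/4)$, can be applied. Under that standing hypothesis your qualitative passage to the limit is sound.

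The step that fails is the last one. You propose to certify that zeros beyond $M=20$ move $f_{S_L}(0)$ by less than $10^{-3}$ by summing Theorem~\ref{thm:telescoping_convergence} over $k>20$. That theorem has no explicit constant, so summing produces no certified number. Its proof only gestures at an effective constant $\frac{1}{4\pi}\int_0^\infty t^2\prod_{k\le M}|J_0(b_kt)|\,dt$. Its high-region estimate is only $O(b_{M+1}^{1/2})$, which is not even summable: $b_k^{1/2}\asymp 1/\gamma_k'$, and $\gamma_k'$ grows only like $k/\log k$.

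Worse, the paper's own data indicate the bound is not true. The signed integrals increase monotonically, with increments $\approx 924\,b_M^2$ at $M=20$ (and that ratio is decreasing only slowly). From Appendix~\ref{app:chi5-highprec}, $b_{21}^2\approx 7.3\times10^{-7}$ and $b_{22}^2\approx 5.9\times10^{-7}$, so the next two zeros alone add about $1.2\times 10^{-3}$. Moreover $\sum_{k>20}b_k^2\approx 7.7\times10^{-6}$, which points to a total tail of several times $10^{-3}$.

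Your dominated-convergence argument for $\E[|S_\zeta|]$ likewise gives no rate, so it cannot deliver the claimed $10^{-5}$ accuracy for $C$. The paper's proof does not attempt these numerical claims at all. What both arguments actually establish is the factorization and the Bessel representations, under the all-levels hypothesis. The values $8.3129$ and $0.00717$ are $M=20$ truncation values, not certified $M\to\infty$ limits.
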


\begin{proof}
The proof of Theorem~\ref{thm:arctan} identifies $C(d) = 2f_{S_L}(0)\int_0^{W_1(\zeta)} \overline{F}_{|S_\zeta|}(u)\, du$.  Since $\overline{F}_{|S_\zeta|}(u) = 0$ for $u > W_1(\zeta)$, the upper limit may be replaced by $+\infty$.  For any non-negative Ces\`aro-distributed quantity $X$ with finite mean, $\E[X] = \int_0^\infty \overline{F}_X(u)\, du$.  Applying this to $X = |S_\zeta|$ gives~\eqref{eq:C-factored}.

For~\eqref{eq:f0-bessel}: this is equation~\eqref{eq:density_formula} evaluated at $x = 0$.

For~\eqref{eq:Eabs-bessel}: the Fourier representation $|x| = (2/\pi)\int_0^\infty (1 - \cos tx)\,t^{-2}\, dt$ and Fubini's theorem give
\[
  \E[|S_\zeta|] = \frac{2}{\pi}\int_0^\infty \frac{1 - \E[\cos(tS_\zeta)]}{t^2}\, dt = \frac{2}{\pi}\int_0^\infty \frac{1 - \varphi_\zeta(t)}{t^2}\, dt,
\]
where $\varphi_\zeta(t) = \prod_k J_0(a_k t)$ is the Ces\`aro characteristic function.  The interchange is justified by the bound $1 - \varphi_\zeta(t) = O(t^2)$ near the origin (giving integrability at $t = 0$) and $|1 - \varphi_\zeta(t)| \leq 2$ with $t^{-2}$ integrable at infinity.
\end{proof}

\begin{remark}[Sources of the overestimate]\label{rem:overestimate-sources}
The factored form~\eqref{eq:C-factored} clarifies the relationship between the three formulas for the leading constant (all evaluated at truncation level $M = 20$ for both $S_\zeta$ and $S_L$):
\begin{center}
\renewcommand{\arraystretch}{1.2}
\begin{tabular}{lll}
\toprule
Formula & Value & Source of approximation \\
\midrule
Unconditional bound & $2f_{S_L}(0)\cdot W_1^{(20)}(\zeta) = 0.531$ & $\sup|S_\zeta^{(20)}|$ replaces $\E[|S_\zeta^{(20)}|]$ \\
Gaussian (arctan) & $2\sigma_\zeta/(\pi\sigma_L) = 0.145$ & Gaussian densities replace J.--W. \\
Exact & $2f_{S_L}(0)\cdot \E[|S_\zeta|] = 0.1193$ ($d = 5$) & (none) \\
\bottomrule
\end{tabular}
\end{center}
The unconditional bound exceeds the exact value by a factor of $W_1^{(20)}(\zeta)/\E[|S_\zeta^{(20)}|] = 4.45$.  This is the reciprocal of the containment tightness ratio $\E[|S_\zeta^{(20)}|]/W_1^{(20)}(\zeta) = 0.225$ (Remark~\ref{rem:tightness}).  The entire slack in the unconditional bound therefore arises from the containment step, which replaces $\E[|S_\zeta|]$ with $\sup|S_\zeta|$.  The Gaussian approximation overestimates $C$ by $22\%$ (predicting $0.145$ instead of $0.1193$).  The factored form decomposes this discrepancy: the Gaussian density $1/(\sqrt{2\pi}\,\sigma_L)$ overestimates $f_{S_L}(0)$ by $27\%$ (compact support flattens the Jessen--Wintner density at the origin), while the Gaussian mean $\sigma_\zeta\sqrt{2/\pi}$ underestimates $\E[|S_\zeta|]$ by $5\%$ (compact support concentrates mass).  The two errors partially cancel.
\end{remark}

\begin{remark}[Two density values]\label{rem:two-densities}
Two numerical values for $f_{S_L}(0)$ appear in this paper and refer to the same quantity at different stages of the argument.  The value $8.3129$ (Proposition~\ref{prop:resonance-cert} and
\S\ref{subsec:pslq}) is the \emph{signed} Bessel product integral
$(1/\pi)\int_0^T \prod_{k=1}^{20} J_0(b_k t)\,dt$ at finite truncation $M = 20$,
computed by rigorous ARB quadrature to 44 decimal places.  The value $8.3129$ (Corollary~\ref{cor:C-factored}) is
the density of the \emph{limiting} Jessen--Wintner distribution at the origin; the $M = 20$
truncation captures this to the displayed precision, consistent with the convergence from below established in the telescoping
argument.
Only the value $8.3129$ enters the formula $C(5) = 2f_{S_L}(0)\cdot\E[|S_\zeta|] = 0.1193$.
\end{remark}
\begin{center}
\renewcommand{\arraystretch}{1.1}
\begin{tabular}{cccc}
\toprule
$d$ & $C/\sqrt{d}$ & Arctan & Density bound \\
\midrule
$10$ & $0.03772$ & $0.04577$ & $0.212$ \\
$100$ & $0.01193$ & $0.01450$ & $0.067$ \\
$1{,}000$ & $0.003772$ & $0.004577$ & $0.021$ \\
$10{,}000$ & $0.001193$ & $0.001450$ & $0.007$ \\
\bottomrule
\end{tabular}
\end{center}

\section{Numerical Verification}\label{sec:numerics}

All numerical results in this section are derived from the ARB-certified ingredients established in the preceding sections: the Besicovitch constants $\sigma_\zeta$, $\sigma_L$ (Section~\ref{sec:cesaro}), the unsigned Bessel product integrals $f_{\mathrm{main}}^{(M)}$ (\S\ref{sec:finite}), the tail sums $\epsilon_M$ (\S\ref{sec:infinite}), and the leading constant $C(5) = 2f_{S_L}(0)\cdot\E[|S_\zeta|] = 0.1193$ (Corollary~\ref{cor:C-factored}).

\subsection{Density verification}

\begin{center}
\renewcommand{\arraystretch}{1.2}
\begin{tabular}{rccc}
\toprule
$d$ & Energy $\sigma_\zeta^2/(d\,\sigma_L^2)$ & Density bound & Exact ($C/\sqrt{d}$) \\
\midrule
$2$ & $0.026$ & $0.475$ & $0.084$ \\
$5$ & $0.010$ & $0.300$ & $0.053$ \\
$10$ & $0.005$ & $0.212$ & $0.038$ \\
$50$ & $0.001$ & $0.095$ & $0.017$ \\
$100$ & $5\times 10^{-4}$ & $0.067$ & $0.012$ \\
$1{,}000$ & $5\times 10^{-5}$ & $0.021$ & $0.004$ \\
$10{,}000$ & $5\times 10^{-6}$ & $0.007$ & $0.001$ \\
\bottomrule
\end{tabular}
\end{center}

The three columns display successively tighter bounds: the energy ratio (Theorem~\ref{thm:energy}), the density bound $2\,f_{\mathrm{main}}^{(20)}\cdot W_1^{(20)}(\zeta)/\sqrt{d}$ (Theorem~\ref{thm:density} at $M = 20$, with $W_1^{(20)}(\zeta) = \sum_{k=1}^{20} a_k = 0.03192$), and the exact leading term $C/\sqrt{d}$ (Theorem~\ref{thm:arctan}).

\subsection{Telescoping convergence}

The main-term integral and the telescoping tail bound converge as $M$ increases.  The predicted tail column is $2\,f_{\mathrm{main}}^{(M)}(0) \cdot \epsilon_M$, where $f_{\mathrm{main}}^{(M)}(0)$ is the ARB-certified unsigned Bessel product integral and $\epsilon_M = \sum_{k > M} b_k$ uses the 200-zero partial sum plus the analytic tail bound from the appendix.

\begin{center}
\renewcommand{\arraystretch}{1.1}
\begin{tabular}{cccc}
\toprule
$M$ & $f_{\mathrm{main}}^{(M)}(0)$ & Predicted tail & Total bound \\
\midrule
$5$ & $10.877$ & $1.365$ & $12.242$ \\
$10$ & $10.585$ & $0.951$ & $11.536$ \\
$15$ & $10.535$ & $0.763$ & $11.298$ \\
$20$ & $10.517$ & $0.649$ & $11.166$ \\
$25$ & $10.508$ & $0.571$ & $11.079$ \\
$30$ & $10.504$ & $0.513$ & $11.017$ \\
\bottomrule
\end{tabular}
\end{center}

The ARB-certified density $f_{S_L}(0) = 8.3129$ lies below the unsigned main-term integral $f_{\mathrm{main}}^{(\infty)}(0) \approx 10.5$, consistent with the Jessen--Wintner distribution being sub-Gaussian (kurtosis $\kappa \approx 2.19$).

\section{Conclusion}\label{sec:conclusion}

The argument proceeds in three logically independent stages.

\textbf{Stage 1: Finite bounds.}  At each truncation level $M$, the Jacobi--Anger decomposition combined with Bessel decay estimates and lattice counting proves $f_{S_L^{(M)}}(0) < \infty$ unconditionally (Theorem~\ref{thm:finite}).  The inactive $J_0$ factors provide the polynomial $t$-decay needed for integrability, while the subcritical bound on active Bessel factors yields exponential suppression in resonance order at rate $\eta = e/4 \approx 0.68$.

\textbf{Stage 2: Finite truncation and stability.}  At any fixed verified truncation level $M$, the effective density bound (Theorem~\ref{thm:density}) gives $\dens\{N_d > 0\} \leq A_M/\sqrt{d} + B_M$, which is nontrivial for sufficiently large $d$.  The $O(1/\sqrt{d})$ rate (Corollary~\ref{cor:density-rate}) follows under the hypothesis that $\Lambda_\infty$ has finite rank $d_\infty < \infty$ (a hypothesis extending the finite truncation fact $d_M = 0$, verified for $M \leq 20$), which provides a uniform bound on the density and allows the truncation level to grow with $d$.

\textbf{Stage 3: Verified refinements.}  Under computationally verified resonance absence at $M = 20$, the telescoping argument (Theorem~\ref{thm:telescoping}) establishes $f_{S_L}(0) < \infty$ for the full infinite series, and the exact formula $\dens\{N_d > 0\} = C(d)/\sqrt{d} + o(1/\sqrt{d})$ follows from the computable marginal densities (Theorem~\ref{thm:arctan}), with $C(5) = 0.1193$.  No result in this paper assumes GRH or the Grand Simplicity Hypothesis.

\begin{remark}[The $M \to \infty$ barrier]\label{rem:M-infty}
The one step we cannot make unconditionally is the passage from verified resonance absence at a fixed $M$ to $d_\infty < \infty$.  Proving $d_\infty = 0$ would require either GSH or an unconditional proof that zero ordinates of $L$-functions admit no integer linear relations.  The numerical vacuity of the fixed-$M$ bound (Remark~\ref{rem:d-dependence}) and the role of $d_\infty$ in the rate (Remark~\ref{rem:finite-suffices}) are discussed in Section~\ref{sec:compare}.

A structural asymmetry governs how GSH appears.  The \emph{joint} approach (studying $S_\zeta$ and $S_L$ on a combined torus) requires GSH for cross-independence, and its failure mode is a \emph{shared zero ordinate} $\gamma_k = \gamma_j'$.  The \emph{separated} approach used here handles each function independently and requires GSH only for taking $M \to \infty$ within a single function; its failure mode is an \emph{intra-function integer relation} $\sum n_k \gamma_k' = 0$.  Shared zeros are harmless in our framework, while intra-function relations affect the characteristic function only beyond the verified truncation level.  The paper's architecture selects the formulation in which the GSH obstruction lies behind the verified horizon.
\end{remark}

\begin{remark}[Coupled scaling]\label{rem:coupled}
When $d$ determines both the scaling factor in $N_d$ and the character $\chi_d$, the density decays faster than $O(1/\sqrt{d})$.  The first zero $\gamma_1'(d)$ of $L(s,\chi_d)$ satisfies $\gamma_1'(d) \sim A/\log d$ by the zero-counting formula, which implies $\sigma_L(d) \sim \log d$ and hence $C(d) \sim 1/\log d$.  The true rate is therefore
\[
  \dens\{N_d > 0\} = O\!\left(\frac{1}{\sqrt{d}\,\log d}\right).
\]
This refinement applies when varying $d$ across different $L$-functions; for a fixed $L$-function with $d$ as a scaling parameter only, the rate $C/\sqrt{d}$ is sharp.
\end{remark}

\begin{remark}
We acknowledge that the depth of the proof is inversely proportional to the surprise of the result.
\end{remark}
\section{The Null Crossing and Spectral Phase Transition}\label{sec:null-crossing}

\subsection{The norm form on the real axis}

For real $s > 1$, define $N(s) = \zeta(s)^2 - d \cdot L(s, \chi_d)^2$.

\begin{lemma}[Ratio monotonicity]\label{lem:ratio-mono}
The ratio $G(s) = \zeta(s)/L(s, \chi_d)$ is strictly decreasing on $(1, \infty)$ with $G(1^+) = +\infty$ and $G(\infty) = 1$.
\end{lemma}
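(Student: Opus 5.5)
The plan is to work entirely through the Euler products, which converge absolutely for real $s > 1$. First I note that $L(s,\chi_d) = \prod_p (1 - \chi_d(p)p^{-s})^{-1}$ is a convergent product of positive factors (each $1 - \chi_d(p)p^{-s} \in (0,2)$), so $L(s,\chi_d) > 0$ on $(1,\infty)$ and $G(s)$ is well defined, positive and smooth there. Taking logarithms gives
\[
\log G(s) = \sum_p \Bigl[\log\bigl(1 - \chi_d(p)p^{-s}\bigr) - \log\bigl(1 - p^{-s}\bigr)\Bigr] = \sum_p \sum_{k \geq 1} \frac{1 - \chi_d(p)^k}{k}\, p^{-ks}.
\]
Since $\chi_d(p) \in \{-1,0,+1\}$, every coefficient $c_{p,k} := (1-\chi_d(p)^k)/k$ is non-negative. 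This is the same sign phenomenon as the per-prime structure of Lemma~\ref{lem:per-prime}: split primes contribute nothing, while inert and ramified primes contribute positively.

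For monotonicity, each function $s \mapsto p^{-ks}$ is strictly decreasing. On any half-line $[\sigma_0,\infty)$ with $\sigma_0 > 1$, the double series and its termwise $s$-derivative are dominated by $2\sum_{p,k} (\log p)\, p^{-k\sigma_0}$, which converges. Hence I can differentiate termwise and obtain
\[
\frac{d}{ds}\log G(s) = -\sum_{p,k} c_{p,k}\, k \log p \; p^{-ks} \leq 0.
\]
For strictness I need one coefficient $c_{p,k} > 0$. Because $d > 1$ is squarefree, the conductor satisfies $q = |\Delta_K| > 1$, so some prime $p \mid q$ is ramified. For that prime $\chi_d(p) = 0$, and so $c_{p,1} = 1 > 0$. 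Therefore $(\log G)' < 0$ on $(1,\infty)$, and $G$ is strictly decreasing.

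For the limit at infinity, the same domination (taken with $\sigma_0 = 2$) gives $\log G(s) \to 0$ as $s \to \infty$ by dominated convergence, since each term tends to $0$. Thus $G(\infty) = 1$. For the limit at $1^+$, I use two facts: $\zeta(s) \to +\infty$ because of the simple pole at $s = 1$, and $L(s,\chi_d)$ extends continuously to $s = 1$ with $L(1,\chi_d) > 0$. The latter follows from Dirichlet's class number formula for the real quadratic field $K$, namely $L(1,\chi_d) = 2h_K\log\varepsilon_K/\sqrt{q} > 0$. Hence $G(s) \to +\infty$ as $s \to 1^+$.

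The only genuinely non-elementary input is this non-vanishing $L(1,\chi_d) \neq 0$, together with continuity of $L$ at $s = 1$ (which holds because $\chi_d$ is nontrivial). I would simply cite it. Everything else reduces to checking the sign of the coefficients $c_{p,k}$ and exhibiting one ramified prime to get strictness.
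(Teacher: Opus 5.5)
Your proof is correct and is essentially the paper's argument: the paper likewise differentiates $\log G$ through the Euler products (your series $\sum_k c_{p,k}\,k\log p\,p^{-ks}$ sums exactly to its per-prime terms $f(p,s)$) and reads off the sign by split/inert/ramified type, except that it gets strictness from infinitely many inert primes via Chebotarev, whereas your single ramified prime $p\mid\Delta_K$ is more elementary and suffices. You also prove the two endpoint limits, which the paper's proof does not address; note that for $G(1^+)=+\infty$ you do not need $L(1,\chi_d)\neq 0$ or the class number formula, since partial summation gives $0<L(s,\chi_d)\le q$ for all $s>1$, hence $G(s)\ge \zeta(s)/q\to+\infty$.
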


\begin{proof}
Taking logarithmic derivatives:
\[
\frac{d}{ds}\log G(s) = \frac{\zeta'(s)}{\zeta(s)} - \frac{L'(s,\chi_d)}{L(s,\chi_d)} = -\sum_p f(p,s),
\]
where $f(p,s) = \frac{\log p}{p^s - 1} - \frac{\chi_d(p)\log p}{p^s - \chi_d(p)}$. Split primes ($\chi_d(p) = +1$) contribute $f = 0$. Inert primes ($\chi_d(p) = -1$) contribute $f = \frac{2p^s\log p}{p^{2s} - 1} > 0$. Ramified primes contribute $f = \frac{\log p}{p^s - 1} > 0$. Since infinitely many primes are inert (Chebotarev), $(d/ds)\log G(s) < 0$.
\end{proof}

\subsection{The unique null crossing}

\begin{theorem}[Unique null crossing]\label{thm:null-crossing}
For every squarefree $d > 1$, there exists a unique $s_*(d) \in (1, \infty)$ such that $N(s_*(d)) = 0$. For $1 < s < s_*(d)$, the norm form is positive; for $s > s_*(d)$, it is negative.
\end{theorem}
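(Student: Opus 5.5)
Since $\zeta(s) > 0$ and $L(s,\chi_d) > 0$ for real $s > 1$ (both are absolutely convergent Euler products of positive factors), the sign of $N(s) = \zeta(s)^2 - d\,L(s,\chi_d)^2$ agrees with the sign of $G(s)^2 - d$, where $G(s) = \zeta(s)/L(s,\chi_d)$. Indeed, $N(s) = L(s,\chi_d)^2\bigl(G(s)^2 - d\bigr)$ and $L(s,\chi_d)^2 > 0$. The plan is therefore to reduce the statement entirely to Lemma~\ref{lem:ratio-mono}. We will show that $G(s)^2 - d$ has exactly one zero on $(1,\infty)$ and that it changes sign there from positive to negative.

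First I would record that $G$ is continuous on $(1,\infty)$; it is in fact real-analytic, being a ratio of nonvanishing analytic functions. By Lemma~\ref{lem:ratio-mono}, $G$ is strictly decreasing and positive, so $G^2$ is also strictly decreasing. Its limits are $G(1^+)^2 = +\infty$ and $G(\infty)^2 = 1$. Since $d > 1$, the target level satisfies $1 < d < \infty$. The intermediate value theorem then gives some $s_*$ with $G(s_*)^2 = d$, and strict monotonicity gives uniqueness. Equivalently, $s_*(d)$ is the unique point with $G(s_*) = \sqrt{d}$.

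For the sign pattern, strict monotonicity gives $G(s)^2 > d$ for $s < s_*$ and $G(s)^2 < d$ for $s > s_*$. Multiplying by $L(s,\chi_d)^2 > 0$ yields $N > 0$ on $(1,s_*)$ and $N < 0$ on $(s_*,\infty)$.

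The only step needing care is the pair of boundary limits asserted in Lemma~\ref{lem:ratio-mono}, which I take as given. If justification were needed, I would argue as follows. At $s \to 1^+$, $\zeta$ has a pole while $L(1,\chi_d)$ is finite and positive by Dirichlet's class number formula, so $G \to +\infty$. At $s \to \infty$, both functions tend to $1$, so $G \to 1$. The condition $d > 1$ is exactly what places the level $\sqrt{d}$ strictly inside the range $(1,\infty)$ of $G$. I expect no genuine obstacle beyond invoking these facts.
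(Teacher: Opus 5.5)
Your proposal is correct and follows essentially the same route as the paper: factor $N(s) = L(s,\chi_d)^2\bigl(G(s)^2 - d\bigr)$, then use the strict decrease of $G$ from $+\infty$ to $1$ (Lemma~\ref{lem:ratio-mono}) to get a unique solution of $G(s) = \sqrt{d}$. You also make explicit two points the paper leaves implicit: the continuity of $G$ needed for the intermediate value step, and the positivity of $L(s,\chi_d)$, which turns the crossing into the stated sign pattern.
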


\begin{proof}
By Lemma~\ref{lem:ratio-mono}, $G(s)$ is strictly decreasing from $+\infty$ to $1$. Since $\sqrt{d} > 1$, the equation $G(s) = \sqrt{d}$ has a unique solution $s_*(d)$. The norm form $N(s) = L(s,\chi_d)^2(G(s)^2 - d)$ vanishes precisely there.
\end{proof}

\begin{table}[ht]
\centering
\begin{tabular}{c c c}
\hline
$d$ & $L(1,\chi_d)$ & $s_*(d)$ \\
\hline
2 & 0.6232 & 2.5635 \\
3 & 0.7603 & 2.0000 \\
5 & 0.4304 & 2.0492 \\
7 & 1.0465 & 1.4445 \\
13 & 0.6627 & 1.4608 \\
\hline
\end{tabular}
\caption{Null crossing $s_*(d)$ for small discriminants.}\label{table:null-crossing}
\end{table}

Values of $L(1,\chi_d)$ were computed via the digamma formula
$L(1,\chi_d) = -\tfrac{1}{q}\sum_{a=1}^{q}\chi_d(a)\,\psi(a/q)$,
where $\psi$ denotes the digamma function; each value $\psi(a/q)$ is evaluated as a certified ARB ball via \texttt{acb.digamma}.
The null crossings $s_*(d)$ were found by bisecting $G(s) = \zeta(s)/L(s,\chi_d)$
against the target $\sqrt{d}$, with $L(s,\chi_d)$ evaluated via the Hurwitz decomposition
$L(s,\chi_d) = q^{-s}\sum_{a=1}^{q}\chi_d(a)\,\zeta(s,a/q)$
using ARB interval arithmetic at 512-bit precision ($220$ bisection steps from brackets of width $0.5$--$1.5$ surrounding each known approximate location, residual $|G(s_*) - \sqrt{d}| < 10^{-60}$).  The full certification, including bracket verification, decisive trichotomy bisection, denominator safety via interval union, and the direct integer-crossing check for $d = 3$, is provided in the ancillary scripts.

\subsection{Integer crossing obstruction}

\begin{theorem}[Integer crossing]\label{thm:integer-crossing}
Among all real quadratic fields, $\Q(\sqrt{3})$ is the unique field with integer null crossing: $s_*(3) = 2$. No other real quadratic field has $s_*(d) \in \Z$.
\end{theorem}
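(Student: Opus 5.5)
The plan is to reduce the statement to a finite check. By Theorem~\ref{thm:null-crossing}, $s_*(d)$ is the unique solution of $G(s)=\sqrt d$ with $G=\zeta/L(\cdot,\chi_d)$, and $G$ is strictly decreasing by Lemma~\ref{lem:ratio-mono}. Hence, for an integer $n\ge 2$,
\[
s_*(d)=n \iff G(n)=\sqrt d,
\qquad
s_*(d)\in(n,n+1) \iff G(n+1)<\sqrt d<G(n).
\]
So the integrality question reduces to comparing $G$ at integer points with $\sqrt d$. Three steps follow: an exact identity for $d=3$, a uniform bound that disposes of all large $d$, and a certified bracketing for the finitely many remaining $d$.

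\textbf{Step 1 (the case $d=3$, exactly).} Here $\chi_3$ is the even primitive character mod $12$. I would use the classical evaluation of $L(n,\chi)$ at positive even $n$ for even primitive $\chi$ via generalized Bernoulli numbers:
\[
L(n,\chi) = (-1)^{1+n/2}\,\frac{\tau(\chi)}{2}\Bigl(\frac{2\pi}{q}\Bigr)^{n}\frac{B_{n,\chi}}{n!},
\qquad \tau(\chi)=\sqrt q .
\]
For $q=12$ and $n=2$, computing $B_{2,\chi_{12}}=\tfrac{1}{12}\sum_{a=1}^{12}\chi(a)\,12^2 B_2(a/12)$ gives $L(2,\chi_{12})=\pi^2/(6\sqrt3)$. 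Then $G(2)=(\pi^2/6)/(\pi^2/(6\sqrt3))=\sqrt3$, so $s_*(3)=2$ exactly.

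\textbf{Step 2 (large $d$).} The Euler product gives, for every real quadratic character,
\[
L(s,\chi_d)\ \ge\ \prod_p (1+p^{-s})^{-1} = \frac{\zeta(2s)}{\zeta(s)} .
\]
Therefore $G(s)\le \zeta(s)^2/\zeta(2s)$, and in particular $G(2)\le \zeta(2)^2/\zeta(4)=5/2$. If $d\ge 7$ then $\sqrt d>5/2\ge G(2)$, so monotonicity forces $s_*(d)\in(1,2)$, which contains no integer. This leaves only $d\in\{2,5,6\}$ besides $d=3$.

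\textbf{Step 3 (remaining $d$, and the main obstacle).} For $d\in\{2,5,6\}$ I would bracket $s_*(d)$ strictly between consecutive integers, for example $G(3)<\sqrt2<G(2)$ for $d=2$ and $G(3)<\sqrt5<G(2)$ for $d=5$, consistent with Table~\ref{table:null-crossing}. The comparisons would be certified by ARB evaluation of $\zeta(n)$ and of $L(n,\chi_d)$ through the Hurwitz decomposition, exactly as in the computation of $s_*(d)$; $d=6$ needs its own bracket to be computed. Wherever one of these comparisons is at an even integer, the Bernoulli formula gives an independent exact check: $G(n)$ is a rational multiple of $\sqrt q$ with $q\in\{d,4d\}$, so $G(n)=\sqrt d$ becomes a rational identity that can be tested exactly.

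I expect this finite certification to be the main obstacle. Odd integers admit no closed form for $\zeta(n)$ or $L(n,\chi_d)$, so all odd-$n$ comparisons rest on the interval computation. Each needed interval must exclude equality, and the margins are small for $d=5$, where $s_*\approx 2.05$ is close to $2$. If the bound $5/2$ from Step~2 is not sharp enough to settle a borderline $d$, I would sharpen it by treating the first few primes exactly (using their actual split, inert or ramified type) and bounding only the tail by the Euler-product inequality.
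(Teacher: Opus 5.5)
Your proof is correct, and it organizes the case analysis differently from the paper.

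\textbf{The paper's route.} The paper fixes the integer $m$ and rules out $G(m)=\sqrt d$ pair by pair.
\begin{itemize}
\item At $m=2$ it invokes Klingen--Siegel rationality of $c_d=\sqrt d\,L(2,\chi_d)/\pi^2$. It disposes of $d\ge 22$ with the crude bound $L(2,\chi_d)\ge 2-\pi^2/6$, and ARB-certifies $c_d\neq 1/6$ for twelve discriminants $d\le 21$.
\item It uses the all-inert bound $G(s)\le\zeta(s)^2/\zeta(2s)$ only for $m\ge 3$: the bound $7/6$ for $m\ge 4$, and $m=3$ with $d\ge 3$.
\item It uses explicit local factors for $m=3$, $d=2$.
\end{itemize}

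\textbf{Your route.} You apply the all-inert bound already at $s=2$, where it gives $G(2)\le 5/2<\sqrt7$. Combined with the monotonicity of $G$ (Lemma~\ref{lem:ratio-mono}), this places $s_*(d)\in(1,2)$ for every $d\ge 7$ at once. That single step makes both the twelve numerical certifications and the paper's $m\ge 4$ case unnecessary.

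What remains is short. The Bernoulli formula gives exactly $G(2)=\tfrac43\sqrt2$, $\sqrt3$, $\tfrac{25}{24}\sqrt5$, $\tfrac23\sqrt6$ for $d=2,3,5,6$. Hence:
\begin{itemize}
\item $s_*(6)\in(1,2)$, which is the bracket you left open;
\item $s_*(3)=2$;
\item $s_*(d)>2$ for $d\in\{2,5\}$.
\end{itemize}
It then suffices to show $G(3)<\sqrt d$ for $d\in\{2,5\}$, since monotonicity then excludes every $m\ge 3$.

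\textbf{On the margins.} Your concern about small margins at $d=5$ is misplaced. The $s=2$ comparison is exact (ratio $25/24$), and $G(3)\le\zeta(3)^2/\zeta(6)\approx 1.420<\sqrt5$. The one genuinely delicate comparison is $d=2$, $s=3$, where the all-inert value $1.420$ just exceeds $\sqrt2$. Your fallback of treating small primes exactly is precisely the paper's Case~5: factors $8/7$, $14/13$, $63/62$, $1$ at $p=2,3,5,7$ plus an Euler-product tail bound give $G(3)\le 1.29$. So no interval computation is needed at all.

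\textbf{Trade-off.} Your route is shorter and essentially computation-free, but it leans on the ordering of the values $G(m)$. The paper excludes each pair $(m,d)$ pointwise and never uses monotonicity of $G$ beyond the definition of $s_*(d)$. The price is Klingen--Siegel and a dozen certified evaluations.
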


\begin{proof}
We show $G(m) \neq \sqrt{d}$ for all integers $m \geq 2$ and all squarefree $d \geq 2$, except $m = 2$, $d = 3$.  The key tool is the Euler product
\[
  G(s) = \frac{\zeta(s)}{L(s,\chi_d)} = \prod_p \frac{1 - \chi_d(p)\,p^{-s}}{1 - p^{-s}}.
\]
Each local factor satisfies $F_p = 1$ (split), $F_p = 1/(1-p^{-s})$ (ramified), or $F_p = (1+p^{-s})/(1-p^{-s})$ (inert).  Since $1 \leq 1/(1-x) \leq (1+x)/(1-x)$ for $0 < x < 1$, every factor is bounded by the inert value, giving the \emph{all-inert bound}
\begin{equation}\label{eq:all-inert}
  G(s) \leq \prod_p \frac{1+p^{-s}}{1-p^{-s}} = \frac{\zeta(s)^2}{\zeta(2s)},
\end{equation}
valid for all $s > 1$ and all squarefree $d \geq 2$.

\medskip\noindent\textbf{Case 1: $s = 2$ and $d = 3$.}
Direct computation: $\zeta(2) = \pi^2/6$ and $L(2, \chi_3) = \pi^2/(6\sqrt{3})$. Therefore
\[
\zeta(2)^2 - 3 \cdot L(2,\chi_3)^2 = \frac{\pi^4}{36} - 3 \cdot \frac{\pi^4}{108} = \frac{\pi^4}{36} - \frac{\pi^4}{36} = 0.
\]

\medskip\noindent\textbf{Case 2: $s = 2$ and $d \neq 3$.}
The Klingen--Siegel theorem gives $L(2, \chi_d) = c_d \cdot \pi^2/\sqrt{d}$ for an explicit rational $c_d$ depending on $d$. The null crossing condition $\zeta(2)^2 = d \cdot L(2,\chi_d)^2$ becomes $\pi^4/36 = c_d^2 \cdot \pi^4$, requiring $c_d = 1/6$.

For all squarefree $d \geq 22$, this is impossible.  Since $\chi_d(1) = 1$ and $|\chi_d(n)| \leq 1$ for all $n$, separating the $n = 1$ term gives $L(2, \chi_d) \geq 1 - \sum_{n=2}^\infty n^{-2} = 2 - \pi^2/6$ for every primitive real character.  Therefore $c_d \geq (2 - \pi^2/6)\sqrt{d}/\pi^2$, and $c_d = 1/6$ would require $\sqrt{d} \leq \pi^2/(12 - \pi^2)$.  That $(2 - \pi^2/6)\sqrt{22}/\pi^2 > 1/6$ is certified by ARB: the left side equals $0.16874\ldots$ against $1/6 = 0.16666\ldots$, with certified margin $0.00207$ and ball radius $4.54 \times 10^{-154}$.  Hence $c_d > 1/6$ for all squarefree $d \geq 22$.

For the twelve remaining squarefree values $d \in \{2, 5, 6, 7, 10, 11, 13, 14, 15, 17, 19, 21\}$, the value $c_d = \sqrt{d}\,L(2,\chi_d)/\pi^2$ is computed via the Hurwitz decomposition
\[
  L(2,\chi_d) = q^{-2}\sum_{a=1}^{q}\chi_d(a)\,\zeta(2,a/q)
\]
using ARB interval arithmetic at 512-bit precision, and the ARB ball for $c_d$ is certified disjoint from $1/6$, i.e., either $c_d > 1/6$ or $c_d < 1/6$ is certified as an ARB predicate.  The Klingen--Siegel theorem implies that $c_d$ is in fact a rational number for each $d$; the values obtained numerically are listed in the table below for the reader's convenience, but the certified proof requires only disjointness from $1/6$:
\nopagebreak[4]
\begin{center}
\renewcommand{\arraystretch}{1.15}
\smallskip
\small
\begin{tabular}{ccc@{\qquad\qquad}ccc}
\toprule
$d$ & $\Delta_K$ & $c_d$ & $d$ & $\Delta_K$ & $c_d$ \\
\midrule
$2$  & $8$  & $1/8$  & $13$ & $13$ & $4/13$ \\
$5$  & $5$  & $4/25$ & $14$ & $56$ & $5/14$ \\
$6$  & $24$ & $1/4$  & $15$ & $60$ & $2/5$  \\
$7$  & $28$ & $2/7$  & $17$ & $17$ & $8/17$ \\
$10$ & $40$ & $7/20$ & $19$ & $76$ & $1/2$  \\
$11$ & $44$ & $7/22$ & $21$ & $21$ & $8/21$ \\
\bottomrule
\end{tabular}
\smallskip
\end{center}
\normalsize
None of these equals $1/6$, completing the case.

\medskip\noindent\textbf{Case 3: $s \geq 4$ (all integers).}
Each factor $(1+p^{-s})/(1-p^{-s})$ is decreasing in $s$ (since $p^{-s}$ decreases), so the all-inert bound~\eqref{eq:all-inert} is decreasing.  At $s = 4$, the Bernoulli number evaluations $\zeta(4) = \pi^4/90$ and $\zeta(8) = \pi^8/9450$ give
\[
  \frac{\zeta(4)^2}{\zeta(8)} = \frac{(\pi^4/90)^2}{\pi^8/9450} = \frac{9450}{8100} = \frac{7}{6}.
\]
For all $s \geq 4$: $\;G(s) \leq 7/6$.  Since $(7/6)^2 = 49/36 < 2 \leq d$, the inequality $G(s) < \sqrt{d}$ holds for every squarefree $d \geq 2$ and every integer $s \geq 4$.

\medskip\noindent\textbf{Case 4: $s = 3$, $d \geq 3$.}
The all-inert bound gives $G(3) \leq \zeta(3)^2/\zeta(6) = \prod_p (p^3+1)/(p^3-1)$.  The partial product through $p = 5$ equals $(9 \cdot 28 \cdot 126)/(7 \cdot 26 \cdot 124) = 567/403$.  Each remaining factor satisfies $(p^3+1)/(p^3-1) < 1 + 3p^{-3}$ (since $2/(p^3-1) < 3/p^3$ for $p \geq 2$), so $\prod_{p > 5}(p^3+1)/(p^3-1) < \exp(3\sum_{p>5} p^{-3}) < \exp(3/50)$, using $\sum_{p>5} p^{-3} < \int_5^\infty x^{-3}\,dx = 1/50$.  Since $\exp(y) < 1 + y + y^2$ for $0 < y < 1$, the tail is bounded by $\exp(3/50) < 2659/2500$, giving $G(3) \leq (567/403)(2659/2500) = 1507653/1007500 \approx 1.496$.  Then $1507653^2 = 2{,}273{,}017{,}568{,}409 < 3{,}045{,}168{,}750{,}000 = 3 \cdot 1007500^2$, so $G(3)^2 \approx 2.239 < 3$ and $G(3) < \sqrt{3} \leq \sqrt{d}$.  An independent ARB certificate for this case, using the direct bound $(\zeta(3)^2/\zeta(6))^2 < 3$ without the partial-product decomposition, is provided in the ancillary scripts.

\medskip\noindent\textbf{Case 5: $s = 3$, $d = 2$.}
In $\Q(\sqrt{2})$ (discriminant $\Delta_K = 8$), the Kronecker symbol $(8/p)$ determines splitting: $p = 2$ is ramified, primes $p \equiv \pm 1 \pmod{8}$ split, and primes $p \equiv \pm 3 \pmod{8}$ are inert.  The exact local factors at $p \leq 7$ are
\[
  F_2 = \frac{8}{7}\;\text{(ramified)}, \quad
  F_3 = \frac{14}{13}\;\text{(inert)}, \quad
  F_5 = \frac{63}{62}\;\text{(inert)}, \quad
  F_7 = 1\;\text{(split)},
\]
giving a partial product of $504/403$.  For the tail, each remaining factor satisfies $F_p \leq (1+p^{-3})/(1-p^{-3}) < 1 + 3p^{-3}$ (since $p^{-3} < 1/3$), giving
\[
  \prod_{p > 7} F_p < \exp\!\Bigl(3\sum_{p>7} p^{-3}\Bigr)
  < \exp\!\Bigl(\tfrac{3}{98}\Bigr)
  < 1 + \tfrac{3}{98} + \tfrac{9}{9604} = \frac{9907}{9604},
\]
using $\sum_{p>7} p^{-3} < \int_7^\infty x^{-3}\,dx = 1/98$ and $\exp(y) < 1 + y + y^2$ for $0 < y < 1$.  Therefore $G(3) \leq (504/403)(9907/9604) = 178326/138229$, and
\[
  G(3)^2 \leq \frac{178326^2}{138229^2} = \frac{31{,}800{,}162{,}276}{19{,}107{,}256{,}441} \approx 1.664 < 2,
\]
since $31{,}800{,}162{,}276 < 2 \times 19{,}107{,}256{,}441 = 38{,}214{,}512{,}882$.  An independent ARB certificate, computing $G(3,2) = \zeta(3)/L(3,\chi_8)$ directly via Hurwitz decomposition and certifying $G(3,2)^2 < 2$, is provided in the ancillary scripts.
\end{proof}

\begin{remark}[Spectral phase transition]
The null crossing $s_*(d)$ separates a ``zeta-dominated'' regime ($1 < s < s_*(d)$) from an ``$L$-dominated'' regime ($s > s_*(d)$). The spacelike theorem (Corollary~\ref{cor:spacelike}) shows that at $\delta = 0$ in the spectral shift variable, the zero data lies in the $L$-dominated (spacelike) region.
\end{remark}

\section{Connections and Open Problems}\label{sec:connections}

\subsection{Relation to Katz--Sarnak statistics}

The Katz--Sarnak density conjecture~\cite{KatzSarnak1999} predicts that zeros of $L(s, \chi_d)$ near the central point follow symplectic statistics. The low-lying zero dominance theorem is consistent with this: as $d \to \infty$, the first zero height $\gamma_1'(d)$ approaches zero, and its Lorentzian weight dominates.

\begin{openproblem}
Derive quantitative predictions for $\gamma_1'(d)$ from the spacelike constraint and compare with Katz--Sarnak.
\end{openproblem}

\subsection{Extensions to higher degree}

The framework extends to cyclic extensions $K/\Q$ of prime degree $p$, with the norm form replaced by the determinant of a circulant matrix. The negativity persists, but the integer crossing obstruction strengthens: no cyclic extension of degree $p \geq 3$ has an integer null crossing.

\begin{openproblem}
Does the spacelike constraint for cyclic $p$-extensions imply bounds on low-lying zeros of degree-$p$ Artin $L$-functions?
\end{openproblem}

\subsection{Further questions}

The norm-form energy $N(s) = \zeta(s)^2 - d \cdot L(s,\chi_d)^2$ inherits symmetries from both factors.

\begin{openproblem}
Is there a functional equation relating $N(s)$ to $N(1-s)$?
\end{openproblem}

The spacelike constraint provides global information about zero distribution. A natural question is whether it implies local constraints.

\begin{openproblem}
Does the spacelike constraint imply bounds on zero spacing or pair correlation?
\end{openproblem}

Under verified resonance absence, the density of positive excursions vanishes as $O(1/\sqrt{d})$.  An unconditional proof of the rate (without the hypothesis $d_\infty < \infty$) would require a uniform bound on $\|f_{S_L^{(M)}}\|_\infty$ that does not rely on $d_\infty = 0$.

\begin{openproblem}
Characterize the fields $K$ for which $\dens(\{N > 0\}) = 0$ unconditionally.
\end{openproblem}

The $O(1/\sqrt{d})$ density rate emerges in this paper from a specific chain: low-lying zero dominance forces $S_\zeta < \sqrt{d}\cdot S_L$, the Lorentzian containment lemma converts this into a level-crossing problem for $S_L$, and the Jessen--Wintner theory identifies the crossing density as an integral over the marginal distribution of $S_L$, which is $O(1/\sqrt{d})$ because $S_L$ scales as $1/\sqrt{d}$ in the support width of $f_{S_\zeta}$.  What is absent is an explanation of \emph{why} this rate must be $C/\sqrt{d}$ rather than some other power of $d$, and why the constant $C$ is universal across all quadratic fields.  The $\sqrt{d}$ arises from the coefficient of the norm form $N = S_\zeta^2 - d \cdot S_L^2$; the density law is asking how often the Lorentzian energy crosses zero, and the crossing rate is governed by the oscillation amplitude of $S_L$ relative to $S_\zeta/\sqrt{d}$.

\begin{openproblem}[Universal Density Law]\label{op:universal-density}
Is there a dynamical or geometric proof that $\dens\{N_d > 0\} \sim C/\sqrt{d}$ as $d \to \infty$, valid unconditionally, in which the $\sqrt{d}$ exponent is derived directly from the spectral geometry of the norm form $N = S_\zeta^2 - d\cdot S_L^2$?  Concretely: does the indefinite quadratic structure of $N$ force the $O(1/\sqrt{d})$ rate by a level-crossing argument in the space of almost periodic functions, without passing through the Jessen--Wintner characteristic function machinery?  A natural approach would identify the density $\dens\{N > 0\}$ with the volume of a thin slab $\{|S_L| < S_\zeta/\sqrt{d}\}$ in an ergodic flow on a compact abelian group, and derive the slab width from the geometry of the quadratic form.  The existing proof is analytic; a geometric proof would explain why the law is universal.
\end{openproblem}

The construction exhibits a natural stability under truncation and limit.

\begin{remark}[Functorial perspective]
The passage from spectral data to scalar energy is functorial with respect to inclusion: if $\Sigma_M \subset \Sigma_{M+1}$ denotes the inclusion of the first $M$ zeros into the first $M+1$ zeros, then the induced maps on weighted sums, characteristic functions, and density bounds commute with the truncation and limit operations. Concretely:
\begin{enumerate}[label=\textup{(\roman*)}]
\item The spectral sum $S_L^{(M)} \to S_L^{(M+1)}$ extends by adding a single term $b_{M+1}\cos(\gamma_{M+1}'\delta)$.
\item The characteristic function $\varphi_M(t) \to \varphi_{M+1}(t)$ multiplies by the factor $J_0(b_{M+1}t)$.
\item The density bound $f^{(M)}(0) \to f^{(M+1)}(0)$ changes by at most $O(b_{M+1}^2)$ (Theorem~\ref{thm:telescoping_convergence}).
\end{enumerate}
This functorial structure is why the telescoping argument works: each level controls the next, and the limit inherits finiteness from the tower.
\end{remark}

\subsection{Weight robustness}\label{subsec:weight-robustness}

The Lorentzian weight $w(\gamma) = 2/(\quarter + \gamma^2)$ arises naturally from the Weil explicit formula with test function $g(x) = e^{-|x|/2}$. However, the spacelike property is not an artifact of this particular choice.

\begin{definition}[Admissible weight]\label{def:admissible-weight}
A function $w: \R^+ \to \R^+$ is an \emph{admissible spectral weight} if it satisfies:
\begin{enumerate}[label=\textup{(\roman*)}]
\item $w$ is monotonically decreasing,
\item $w(\gamma) = O(1/\gamma^2)$ as $\gamma \to \infty$,
\item $w(0) < \infty$.
\end{enumerate}
\end{definition}

Condition~(ii) ensures absolute convergence of the spectral sums $S_f = \sum_\rho w(\rho)$. Condition~(i) is the essential structural requirement: it ensures that zeros at lower height receive greater weight.

\begin{proposition}[Weight robustness]\label{prop:weight-robustness}
Let $w$ be any admissible spectral weight. Define the weighted spectral sums
\[
S_\zeta^{(w)} := \sum_{\rho:\,\zeta(\rho)=0} w(\gamma), \qquad S_L^{(w)} := \sum_{\rho':\,L(\rho',\chi_d)=0} w(\gamma'),
\]
and the norm-form energy $N_w := (S_\zeta^{(w)})^2 - d \cdot (S_L^{(w)})^2$. Then $N_w < 0$ for all fundamental discriminants $d > 1$.
\end{proposition}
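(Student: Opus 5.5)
The plan is to reduce the statement to a pointwise comparison of zero-counting functions, using Abel (Stieltjes) summation. The monotonicity condition~(i) of Definition~\ref{def:admissible-weight} makes this comparison sufficient.

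\textbf{Step 1 (Abel summation).} Write $N_\zeta(T)$ and $N_L(T) = N(T,\chi_d)$ for the counting functions of ordinates in $(0,T]$. Since $w$ is decreasing, $-dw$ is a non-negative Stieltjes measure on $\R^+$. Since $w(\gamma) = O(1/\gamma^2)$ while $N(T) = O(T\log T)$, the boundary terms vanish and
\[
S_\zeta^{(w)} = \int_0^\infty N_\zeta(t)\,d(-w)(t), \qquad
S_L^{(w)} = \int_0^\infty N_L(t)\,d(-w)(t).
\]
Both sums are positive, so $N_w < 0$ is equivalent to $S_\zeta^{(w)} < \sqrt d\, S_L^{(w)}$. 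It therefore suffices to show
\[
\Phi_d(t) := \sqrt d\,N_L(t) - N_\zeta(t) \geq 0 \quad\text{for all } t > 0,
\]
with strict inequality on a set of positive $(-dw)$-measure. Because $w > 0$ and $w \to 0$, the measure $-dw$ charges every tail $[T,\infty)$. Hence strictness follows once $\Phi_d(t) > 0$ for all sufficiently large $t$.

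\textbf{Step 2 (three height ranges).}
\begin{itemize}
\item \emph{Low range $t < \gamma_1 = 14.134\ldots$.} Here $N_\zeta(t) = 0$, so $\Phi_d \geq 0$ trivially.
\item \emph{High range $t \geq T_0$.} Use the lower bound of Theorem~\ref{thm:BMOR} for $N_L$ and the upper bound of Theorem~\ref{thm:trudgian} for $N_\zeta$. The main terms are $(t/\pi)\log(qt/2\pi e)$ against $(t/2\pi)\log(t/2\pi e)$, a ratio tending to $2$ (and larger for $q > 1$). Since $\sqrt d < 2$ only for $d \in \{2,3\}$, and even then $2 > \sqrt 3$, $\Phi_d(t)$ grows like a positive multiple of $t\log t$. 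One explicit $T_0$ should serve uniformly, since the BMOR bound only improves as $q$ grows.
\item \emph{Middle range $[\gamma_1, T_0]$.} For large conductors the BMOR bound already dominates from $t = 14$ onward. This mirrors the counts used in Proposition~\ref{prop:moment-hierarchy}, where BMOR gives $n \ge 7$ zeros below $14$ for $q \geq 12$. For the finitely many small conductors $d \in \{2,3,5,6,7,10,11,13,\dots\}$, I would check $\sqrt d\,N_L(t) \geq N_\zeta(t)$ directly on $[14, T_0]$ using the certified zero lists: Appendix~\ref{app:chi5-highprec}, the $\chi_2$ data, and the 1000-zero tables of Appendix~F, together with the tabulated zeta zeros. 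Both functions are step functions, so only finitely many jump points need checking. Zeros counted in $N_L$ need not lie on the critical line here, because the weight depends on $\gamma$ only.
\end{itemize}

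\textbf{Main obstacle.} The hard part is the middle range for the smallest discriminants, especially $d = 2$ (threshold $\sqrt 2$) and $d = 5$. There $\gamma_1'$ is not very small, and the explicit error terms in Theorems~\ref{thm:BMOR} and~\ref{thm:trudgian} are comparatively large. This may force $T_0$ to be large, possibly beyond the tabulated $\approx 1000$ zeros. I would first compute the crossover $T_0$ from the explicit bounds. If it exceeds the available data, I would split the high range further, using Theorem~\ref{thm:BMOR} at a sequence of heights, and cover only the residual window by explicit zero counts.

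A further caveat needs care. The proposition quantifies over all admissible $w$, and the stated conditions do not prevent $-dw$ from concentrating where $\Phi_d = 0$. If $\Phi_d$ turns out to be only non-negative at some heights, strictness must come from the large-$t$ tail argument in Step~1. That argument uses $w>0$ everywhere, so I would state explicitly that positivity of $w$ is being used.
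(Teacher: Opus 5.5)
This is essentially the paper's proof: Abel summation against the decreasing weight reduces the claim to a pointwise comparison of counting functions. Your refinements make it more careful than the paper's version: you aim only for $\sqrt d\,N_L\ge N_\zeta$, you get strictness from the tail via $w>0$ rather than from $-w'>0$ on $[\gamma_1',\gamma_1)$ (which a non-strictly decreasing admissible $w$ need not satisfy), and you verify the middle range explicitly instead of appealing to an unquantified $O(\log T)$ remainder.

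One caution for the execution. Theorem~\ref{thm:BMOR} counts zeros with $|\gamma|\le T$, not $0<\gamma\le T$. Read one-sidedly, it would force $N(14.13,\chi_5)\ge 4$, yet Appendix~\ref{app:chi5-highprec} has only three ordinates below $14.13$; likewise $\chi_3$ has five ordinates below $14$, not the seven you quote. So halve the bound, after discarding any real zeros, before computing $T_0$ or quoting such counts. The argument survives, because the one-sided difference $N_L-N_\zeta$ still grows like $(t/2\pi)\log q$ and $\sqrt d>1$. Your remark that the ratio of main terms tends to $2$ should accordingly be "tends to $1$".
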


\begin{proof}
We show $S_L^{(w)} > S_\zeta^{(w)}$ for any admissible weight $w$ by comparing the zero-counting functions.

\medskip\noindent\textbf{Step 1: Zero-counting domination.}
Let $N_\zeta(T) = \#\{\gamma_k \leq T\}$ and $N_L(T) = \#\{\gamma_j' \leq T\}$ count zeros with positive imaginary part up to height $T$.  For $T \in [0, \gamma_1(\zeta))$, $N_\zeta(T) = 0$ while $N_L(T) \geq 1$ (Proposition~\ref{prop:first-zero-bound}).  For $T \geq \gamma_1(\zeta)$, the Riemann--von Mangoldt formula gives $N_L(T) - N_\zeta(T) = (T/\pi)\log q + O(\log T)$, which exceeds $2$ for all $T \geq 14.13$ and $q \geq 5$.

\medskip\noindent\textbf{Step 2: Abel summation.}
For any decreasing weight $w$ with $w(\gamma) = O(1/\gamma^2)$, the spectral sum admits the Abel representation
\[
  S_f^{(w)} = \sum_\rho w(\gamma) = \int_0^\infty w(t)\, dN_f(t) = -\int_0^\infty w'(t)\, N_f(t)\, dt,
\]
where the boundary terms vanish since $w(t) N_f(t) \to 0$ as $t \to \infty$ (by the decay condition on $w$ and the polynomial growth of $N_f$).  Since $w$ is decreasing, $-w'(t) \geq 0$, and the integral is a positive linear functional of $N_f$.

\medskip\noindent\textbf{Step 3: Comparison.}
Since $N_L(T) \geq N_\zeta(T)$ for all $T \geq 0$ (the $L$-function has its first zero below $14.13$ while $\zeta$ has none, and maintains a surplus at every height by the zero-counting formula), and $-w'(t) \geq 0$:
\[
  S_L^{(w)} = -\int_0^\infty w'(t)\, N_L(t)\, dt \geq -\int_0^\infty w'(t)\, N_\zeta(t)\, dt = S_\zeta^{(w)}.
\]
Equality is excluded because $N_L(T) > N_\zeta(T)$ on the interval $[0, \gamma_1(\zeta))$ (where $N_\zeta = 0$ but $N_L \geq 1$), and $-w'$ is strictly positive on this interval.  Therefore $S_L^{(w)} > S_\zeta^{(w)}$ strictly.

Since $d > 1$, we have $S_\zeta^{(w)} < S_L^{(w)} < \sqrt{d} \cdot S_L^{(w)}$, so $N_w = (S_\zeta^{(w)})^2 - d \cdot (S_L^{(w)})^2 < 0$.
\end{proof}

\begin{remark}[Scope of the robustness result]\label{rem:robustness-scope}
Proposition~\ref{prop:weight-robustness} establishes only the \emph{instantaneous} spacelike property $N_w < 0$.  Three further statements in this paper are \emph{not} claimed to hold for general admissible weights, and do require the specific Lorentzian structure:
\begin{enumerate}[label=\textup{(\roman*)}]
\item \emph{Ces\`aro spacelike} ($\langle N_w \rangle_T < 0$ for all $T > 0$, Theorem~\ref{thm:cesaro}).  This requires a three-region lower bound on the bilinear form $h(T) = \langle S_L^2 \rangle_T$, which uses the exact values $b_k = 2/({\tfrac{1}{4}} + \gamma_k'^2)$ and the specific interference structure of those weights.  For a general admissible weight, analogous bounds would require detailed knowledge of the weight's interaction with the zero-spacing statistics.

\item \emph{Effective density bound} ($\dens\{N_d > 0\} \leq A_M/\sqrt{d} + B_M$, Theorem~\ref{thm:density}).  This uses the finiteness of $\|f_{S_L^{(M)}}\|_\infty$, which is established via Bessel product analysis with the specific weights $b_k$.  A general admissible weight produces a Ces\`aro distribution whose density at the origin may be infinite.

\item \emph{Density rate and exact asymptotic} ($O(1/\sqrt{d})$, Corollary~\ref{cor:density-rate}; $C(d)/\sqrt{d}$, Theorem~\ref{thm:arctan}).  These depend on the Jessen--Wintner structure of the Lorentzian Ces\`aro distribution and the specific computable constant $C(d) = 2f_{S_L}(0)\cdot\E[|S_\zeta|]$.
\end{enumerate}
The introduction states that ``the spacelike property is robust to the choice of weight function''; this refers to conclusion~(i) of the present proof, not to conclusions (ii) or (iii).
\end{remark}

\begin{remark}[Alternative weights]
Several weights arise naturally from mathematical structure:
\begin{enumerate}[label=\textup{(\alph*)}]
\item The \emph{sech weight} $w(\gamma) = \operatorname{sech}(\gamma)$ is self-dual under the Fourier transform: $\mathcal{F}[\operatorname{sech}(\pi x)](t) = \operatorname{sech}(t)$. It has no free parameters and decays exponentially.
\item The \emph{heat kernel} $w(\gamma, t) = e^{-t\gamma^2}$ arises from spectral zeta function theory. As $t \to 0^+$, it approaches zero counting; as $t \to \infty$, it isolates the lowest zero.
\item The \emph{Li coefficients}~\cite{Li1997} $\lambda_n = \sum_\rho [1 - (1 - 1/\rho)^n]$ form a sequence directly related to the Riemann Hypothesis: RH holds if and only if $\lambda_n > 0$ for all $n \geq 1$.
\end{enumerate}

Table~\ref{tab:weights} records $S_\zeta(w)$, $S_L(w)$, and $N_w = S_\zeta(w)^2 - d \cdot S_L(w)^2$ for four admissible weights across four fundamental discriminants, computed using the first 60 zeros of $\zeta(s)$ (Appendix~\ref{app:zeta-zeros}), the first 60 certified zeros of $\chi_5$, and the first 20 certified zeros of $\chi_2$, $\chi_3$, $\chi_{13}$ (from Appendices~\ref{app:chi8-zeros}--\ref{app:chi28-zeros}).  All values of $N_w$ are negative, confirming the spacelike property unconditionally at these truncation levels.  The sech and heat-kernel sums for $\zeta$ are near-zero because all Riemann zeros satisfy $\gamma_k > 14$, so exponentially decaying weights are negligible on the $\zeta$ side; the spacelike sign is then driven entirely by $-d\cdot S_L(w)^2 < 0$.
\end{remark}

\begin{table}[ht]
\centering
\caption{Spacelike verification across admissible weights.  All computations use ARB interval arithmetic at 512-bit precision, so every entry is a rigorous certified value.  The $\zeta$ sum uses 60 zeros (Appendix~\ref{app:zeta-zeros}); the $L$-function sums use 60 certified zeros for $\chi_5$ and 20 certified zeros for $\chi_2$, $\chi_3$, $\chi_{13}$ (from Appendices~\ref{app:chi8-zeros}--\ref{app:chi28-zeros}).  The norm form is $N_w = S_\zeta(w)^2 - d\cdot S_L(w)^2$ with squarefree $d$.  Every $N_w$ entry is rigorously negative: the ARB ball lies entirely below zero.  Entries rounded to four significant figures.}
\label{tab:weights}
\begin{tabular}{llcccc}
\hline
Weight & Quantity & $d = 5\ (q = 5)$ & $d = 2\ (q = 8)$ & $d = 3\ (q = 12)$ & $d = 13\ (q = 13)$ \\
\hline
\multirow{3}{*}{Lorentzian $\frac{2}{1/4+\gamma^2}$}
  & $S_\zeta(w)$ & $0.03793$ & $0.03793$ & $0.03793$ & $0.03793$ \\
  & $S_L(w)$     & $0.1406$ & $0.1979$ & $0.2863$ & $0.3514$ \\
  & $N_w$        & $-0.09742$ & $-0.07693$ & $-0.2444$ & $-1.604$ \\[4pt]
\multirow{3}{*}{sech $\operatorname{sech}(\gamma)$}
  & $S_\zeta(w)$ & $1.5\times 10^{-6}$ & $1.5\times 10^{-6}$ & $1.5\times 10^{-6}$ & $1.5\times 10^{-6}$ \\
  & $S_L(w)$     & $2.713\times 10^{-3}$ & $1.592\times 10^{-2}$ & $4.730\times 10^{-2}$ & $9.008\times 10^{-2}$ \\
  & $N_w$        & $-3.68\times 10^{-5}$ & $-5.07\times 10^{-4}$ & $-6.71\times 10^{-3}$ & $-0.1055$ \\[4pt]
\multirow{3}{*}{Heat kernel $e^{-0.01\gamma^2}$}
  & $S_\zeta(w)$ & $0.1497$ & $0.1497$ & $0.1497$ & $0.1497$ \\
  & $S_L(w)$     & $1.417$ & $2.080$ & $2.652$ & $2.765$ \\
  & $N_w$        & $-10.02$ & $-8.632$ & $-21.08$ & $-99.37$ \\[4pt]
\multirow{3}{*}{Heat kernel $e^{-0.1\gamma^2}$}
  & $S_\zeta(w)$ & $2.1\times 10^{-9}$ & $2.1\times 10^{-9}$ & $2.1\times 10^{-9}$ & $2.1\times 10^{-9}$ \\
  & $S_L(w)$     & $1.210\times 10^{-2}$ & $9.361\times 10^{-2}$ & $2.469\times 10^{-1}$ & $3.839\times 10^{-1}$ \\
  & $N_w$        & $-7.32\times 10^{-4}$ & $-1.75\times 10^{-2}$ & $-0.1828$ & $-1.916$ \\
\hline
\end{tabular}
\end{table}

\begin{remark}[Lorentzian truncation]\label{rem:lorentzian-truncation}
The Lorentzian weight $w(\gamma) = 2/(\frac{1}{4} + \gamma^2)$ decays only as $O(1/\gamma^2)$, so its spectral sum converges slowly: 20 certified zeros capture approximately $80$--$88\%$ of the full sum $S_L = \sum_{k=1}^\infty b_k$, with the remainder requiring hundreds of additional zeros to resolve.  For the sech and heat kernel weights, convergence is exponentially fast and the 20-zero partial sums are saturated to machine precision.  The Lorentzian truncation affects only the magnitude of $S_L(w)$, not the sign of $N_w$: since every omitted zero contributes positively to $S_L$, extending the summation can only make $N_w$ more negative.  The full Lorentzian sums, estimated via smooth zero-density integrals calibrated against the 200 certified zeros of $L(s,\chi_5)$, are approximately $0.16$ ($d = 5$), $0.24$ ($d = 2$), $0.33$ ($d = 3$), and $0.40$ ($d = 13$).  The table reports finite certified partial sums rather than these estimates so that every entry is unconditionally reproducible from the appendix data.
\end{remark}

\begin{remark}[Non-decaying weights]
The decay condition~(ii) is essential. The de~Branges reproducing kernel $K_\Delta(\gamma, \gamma) \to \Delta$ as $\gamma \to \infty$ does not decay, and the corresponding weighted sums count zeros rather than emphasizing low-lying zeros. For this kernel, $N > 0$ for most discriminants.
\end{remark}

\begin{remark}[Lorentzian as minimal decay]
Among polynomial-decay weights $w(\gamma) = 1/(\quarter + \gamma^2)^\alpha$, the Lorentzian corresponds to $\alpha = 1$, the minimal exponent ensuring absolute convergence of the spectral sums. In this sense, the Lorentzian is the ``least biased'' polynomial weight toward low-lying zeros while still being admissible. The spacelike result holds for all $\alpha \geq 1$.
\end{remark}

\begin{remark}[Scope of this series]\label{rem:scope}
The open problems collected in this section range from the immediately tractable (the
functional equation for $N(s)$, zero spacing implications) to the deeply conditional
(unconditional density rate, universal density law).  The Persistent Heuristics Series is committed to pursuing only unconditional results because doing so forces one to find the exact limitations of currently available tools.  Therefore, while various open problems are presented for others to pursue, this series neither confirms nor denies that any follow-on paper will actually pursue those precise problems.
\end{remark}

\section*{Acknowledgments}

The author thanks the LMFDB collaboration~\cite{LMFDB} for access to verified zero data.  Claude (Anthropic) was used for manuscript preparation, including drafting, editing, and computational verification of numerical results.  The author takes full responsibility for all mathematical content.
\appendix
\addtocontents{toc}{\protect\setcounter{tocdepth}{1}}
\section{Computational Methods and Numerical Verification}\label{sec:numerical}

\subsection{Software and rigorous arithmetic}\label{subsec:software}

Computations in this paper fall into two categories.  \emph{Rigorous} computations use interval arithmetic via ARB~\cite{Johansson2017}, accessed through \texttt{python-flint}~$\geq 0.8.0$ (Python~3.12); ARB represents numbers as balls $[m \pm r]$ with midpoint $m$ and radius $r$, and arithmetic operations propagate error bounds rigorously, so the true value is guaranteed to lie within the output ball as a mathematical theorem.  This includes all zero certification, spectral sum bounds, Besicovitch constants, and Bessel product integrals (via \texttt{acb.integral}).  The PSLQ integer relation search uses \texttt{mpmath}~$\geq 1.3$; the Ferguson--Bailey--Arno bounds~\cite{FBA1999} ensure that \texttt{None} results rigorously certify the absence of integer relations with bounded coefficients (see \S\ref{subsec:pslq}).

Working precisions: 512-bit ($\approx 154$ decimal digits) for $S_\zeta^*$ certification, spacelike verification (Table~\ref{tab:weights}), and null crossing computation (ARB); at least 1500-bit ($\approx 451$ decimal digits) for all zero refinement and certification (ARB), including both the $\chi_5$ zeros of Appendix~\ref{app:chi5-highprec} and the auxiliary character zeros of Appendix~\ref{app:zero-data}; 128-bit for the sign-change scan and bracket filtering phases described in \S\ref{subsec:Lfunc-method}, used in the extended 1000-zero computations of Appendix~\ref{app:full-zero-tables}; 256-bit for all Bessel product integrals including density, resonance, and decay estimates (ARB); and 80-digit decimal precision for PSLQ searches (\texttt{mpmath}).

\subsection{$L$-function evaluation and zero-finding}\label{subsec:Lfunc-method}

All Dirichlet $L$-functions in this paper are evaluated via the Hurwitz zeta decomposition
\[
  L(s, \chi_d) = q^{-s}\sum_{a=1}^{q} \chi_d(a)\,\zeta(s, a/q),
\]
where $q = |\Delta_K|$ is the conductor.  Each Hurwitz zeta value $\zeta(s, a/q)$ is computed by ARB's built-in \texttt{acb.zeta}, which returns a certified complex ball.  Character values $\chi_d(a)$ are computed from the Kronecker symbol $(\Delta_K/a)$.

\medskip\noindent\textbf{Character values.}  The Hurwitz sum runs over all $a$ with $1 \leq a \leq q$; terms with $\gcd(a,q) > 1$ vanish since $\chi_d(a) = 0$ for such~$a$.  For reproducibility, the nonzero character values for all eight characters used in this paper are recorded below.  All characters are even ($\chi_d(-1) = +1$) and primitive.

\begin{center}
\renewcommand{\arraystretch}{1.15}
\small
\begin{tabular}{lcll}
\toprule
Character & $q$ & $\chi_d(a) = +1$ & $\chi_d(a) = -1$ \\
\midrule
$\chi_5$    & $5$  & $a \in \{1,4\}$ & $a \in \{2,3\}$ \\
$\chi_2$    & $8$  & $a \in \{1,7\}$ & $a \in \{3,5\}$ \\
$\chi_3$    & $12$ & $a \in \{1,11\}$ & $a \in \{5,7\}$ \\
$\chi_{13}$ & $13$ & $a \in \{1,3,4,9,10,12\}$ & $a \in \{2,5,6,7,8,11\}$ \\
$\chi_6$    & $24$ & $a \in \{1,5,19,23\}$ & $a \in \{7,11,13,17\}$ \\
$\chi_7$    & $28$ & $a \in \{1,3,9,19,25,27\}$ & $a \in \{5,11,13,15,17,23\}$ \\
$\chi_{10}$ & $40$ & $a \in \{1,3,9,13,27,31,37,39\}$ & $a \in \{7,11,17,19,21,23,29,33\}$ \\
$\chi_{11}$ & $44$ & $a \in \{1,5,7,9,19,25,35,37,39,43\}$ & $a \in \{3,13,15,17,21,23,27,29,31,41\}$ \\
\bottomrule
\end{tabular}
\end{center}

\medskip\noindent\textbf{Hardy $Z$-function.}  For a real primitive Dirichlet character $\chi_d$ with conductor~$q$, define the Hardy phase
\[
  \theta_\chi(t) =
  \begin{cases}
    \operatorname{Im}\!\bigl(\log\Gamma(s/2)\bigr) + \tfrac{t}{2}\log(q/\pi)
    & \text{if } \chi_d(-1) = +1 \text{ (even)}, \\[4pt]
    \operatorname{Im}\!\bigl(\log\Gamma((s{+}1)/2)\bigr) + \tfrac{t}{2}\log(q/\pi)
    & \text{if } \chi_d(-1) = -1 \text{ (odd)},
  \end{cases}
\]
where $s = \tfrac{1}{2} + it$.  The Hardy $Z$-function $Z_\chi(t) = e^{i\theta_\chi(t)} L(\tfrac{1}{2}+it, \chi_d)$ is then real-valued for $t \in \mathbb{R}$, and its real zeros correspond to zeros of $L(s, \chi_d)$ on the critical line.  The phase is computed via ARB's built-in \texttt{acb.lgamma}.

\medskip\noindent\textbf{Phases 1 and 2: Interleaved sign-change scan and bracket filtering.}  Evaluate $Z_\chi(t)$ on a uniform grid with step~$0.05$ over $[0, T_{\max}]$, where $T_{\max}$ is chosen so that the zero-counting formula $N(T, \chi_d) \sim \frac{T}{\pi}\log\frac{qT}{2\pi e}$ exceeds the number of zeros sought by a safety margin of $2.2\times$, with the additional floor $T_{\max} \geq 2N$; for example, $200$ zeros of $L(s,\chi_5)$ require $T_{\max} = 400$.  All arithmetic in this phase uses 128-bit ARB precision.  At each sign change $Z_\chi(t_a)\,Z_\chi(t_b) < 0$, $20$ bisection steps are performed immediately (interleaved with the grid scan), reducing the bracket width from $0.05$ to $0.05 \cdot 2^{-20} \approx 5 \times 10^{-8}$ and producing a seed accurate to approximately six decimal digits.  The bisection simultaneously filters spurious sign changes arising from Hardy-phase oscillations: at the bisected midpoint, $|L(\tfrac{1}{2}+i\gamma', \chi_d)|$ is evaluated and the bracket is accepted as a genuine zero only if $|L| < 10^{-4}$; all other brackets are discarded as spurious.  This interleaved design avoids storing all sign changes before filtering, so scanning stops as soon as the required number of genuine seeds has been collected.

\medskip\noindent\textbf{Phase 3: Newton refinement.}  Refine each seed by Newton iteration applied to $L(s, \chi_d)$ on the critical line.  Each Newton step computes $s_{n+1} = s_n - L(s_n)/L'(s_n)$ and projects the result back to $\sigma = \tfrac{1}{2}$ by extracting the midpoint of the imaginary part.  Both $L(s, \chi_d)$ and $L'(s, \chi_d)$ are evaluated as certified ARB balls via \texttt{acb\_series.zeta}, which computes the Taylor series of the Hurwitz zeta function $\zeta(s + x, a/q)$ to order $2$ at $x = 0$; the two coefficients are $\zeta(s, a/q)$ and $\zeta'(s, a/q)$ respectively.  No finite-difference approximation is used.  Two modes are used depending on the target precision:
\begin{itemize}
\item \emph{Standard mode} (128-bit): from a six-digit seed, quadratic convergence reaches $\sim$20 correct digits in two to three iterations.  Certification threshold: $|L(\tfrac{1}{2}+i\gamma', \chi_d)| < 10^{-18}$.
\item \emph{High-precision mode} (at least 1500-bit): from a six-digit seed, seven to eight iterations reach full working precision ($\approx 451$ digits), with certified bounds on $|L(\tfrac{1}{2}+i\gamma', \chi_d)|$ typically in the range $10^{-430}$--$10^{-452}$.  Certification threshold: $|L(\tfrac{1}{2}+i\gamma', \chi_d)| < 10^{-68}$.  Zeros with bounds weaker than $10^{-400}$ are boosted to $2200$-bit (Phase~8).
\end{itemize}
In both modes, the convergence is quadratic: each iteration approximately doubles the number of correct digits.

\medskip\noindent\textbf{Certification.}  Each zero is certified by two independent mechanisms.  First, the ARB ball $|L(\tfrac{1}{2}+i\gamma', \chi_d)| < \text{threshold}$ provides a rigorous magnitude bound.  Second, the winding number of $L$ around the thin rectangle $[0.49, 0.51] \times [\gamma' - h, \gamma' + h]$ (with $h = 10^{-8}$, falling back to larger $h$ if the narrow box is undecidable at the working precision) is computed via the argument principle in ARB arithmetic.  A winding number of $1$ certifies that exactly one zero of $L$ lies in the rectangle.  Since $\chi_d$ is a real primitive character, the functional equation forces off-line zeros to occur in symmetric pairs $\rho, 1 - \bar\rho$; a rectangle of width $0.02$ centered on $\sigma = \tfrac{1}{2}$ cannot contain both members of such a pair unless both lie on the critical line, so the unique zero must have $\sigma = \tfrac{1}{2}$.

\medskip\noindent\textbf{Phase 4: Global completeness verification (the seal).}  The argument principle counts zeros of $L(s, \chi_d)$ in the wide rectangle $[-0.5, 1.5] \times [t_{\mathrm{lo}}, T_{\max}]$ via ARB-rigorous argument tracking around the four sides of the contour.  If the winding number equals the number of certified zeros, the table is complete.  As a secondary validation, the winding number count is compared against the analytic lower bound from the BMOR explicit formula (Bennett--Martin--O'Bryant--Rechnitzer~\cite{BMOR2021}).  This contour-integral approach to completeness verification follows Johansson (arXiv:1802.07942), who computed $N(T)$ for $\zeta(s)$ via a rectangular contour integral and noted that the result is ``a ball that provably determines $N(T)$ as an integer.''

\medskip\noindent\textbf{Phases 5 and 6: Recovery.}  If Phase~4 detects a discrepancy (more zeros by winding number than in the table), strip-by-strip winding checks identify which inter-zero gaps are deficient.  For each deficient gap, a narrow rectangular winding number localizes the missing zero and its midpoint provides a seed.  These recovery seeds are refined by Newton iteration at $1500$-bit (Phase~6), merged into the table, and Phase~4 is rerun on the repaired table.  The final seal is the global contour count on the repaired table.

\medskip\noindent\textbf{Precomputation.}  The constants $\log(q/\pi)$ and the Hurwitz parameters $a/q$ for each nonzero character value are computed once and stored as \texttt{arb} objects at the working precision.  This avoids redundant string-to-ball conversions in the inner loop and reduces the per-evaluation cost by approximately $30\%$.

\begin{remark}[Implementation pitfalls]\label{rem:pitfalls}
Two points are essential for correct implementation.

\emph{(1)~Newton iteration targets $L$, not $Z_\chi$.}  The Newton update $s_{n+1} = s_n - L(s_n)/L'(s_n)$ uses the complex-valued $L$-function directly, with the real part projected back to $\sigma = \tfrac{1}{2}$ after each step.  This is valid because the projection discards the (small) off-line component of the Newton step while retaining the imaginary-part correction.  Working with $L$ and its exact derivative (via \texttt{acb\_series.zeta}) avoids the Hardy phase rotation entirely during refinement; the Hardy $Z$-function is used only in Phases~1 and~2 for the sign-change scan.

\emph{(2)~Midpoint extraction is required in ball arithmetic.}  The Newton update $s \mapsto s - L/L'$ introduces radius from the division.  Without extracting the ball midpoint after each step, the radius grows geometrically and the computation produces \texttt{NaN} within a few iterations.  This is specific to rigorous interval arithmetic and does not arise in ordinary floating-point computation.
\end{remark}

\subsection{Certification of \texorpdfstring{$S_\zeta^*$}{S\_zeta*}}\label{subsec:cert-Szeta}

The upper bound $S_\zeta \leq 0.04625 < S_\zeta^* = 0.04871$ is established
by rigorous interval arithmetic with no numerical quadrature at any stage.
All arithmetic uses ARB ball arithmetic at $512$-bit working precision
($\approx 154$ decimal digits).

\medskip\noindent\textbf{Input data.}
The first $K = 6000$ positive zero ordinates $\gamma_1,\ldots,\gamma_{6000}$
of $\zeta(s)$, each accurate to $31$ decimal places, sourced from the
LMFDB~\cite{LMFDB} and Odlyzko's tables~\cite{Odlyzko}.  Each ordinate is
loaded into an \texttt{arb} ball whose radius encloses the rounding error of
the $31$-digit string representation.

\medskip\noindent\textbf{Step 1: Direct sum.}
The partial sum
\[
  S_\zeta^{(6000)} = \sum_{k=1}^{6000} \frac{2}{\quarter + \gamma_k^2}
\]
is accumulated in ARB ball arithmetic.  The certified result is
\[
  S_\zeta^{(6000)} \in [0.045795374824191\ldots \;\pm\; 7.4\times 10^{-152}].
\]

\medskip\noindent\textbf{Step 2: Tail bound.}
Set $T_0 = \gamma_{6000} + 0.01 \approx 6365.86$.  Abel summation gives
\[
  \sum_{\gamma > T_0} w(\gamma)
  \leq \int_{T_0}^{\infty} \frac{4t\,[N(t) - K]}{(\quarter + t^2)^2}\,dt,
\]
where $N(t)$ is the zero-counting function and the boundary term
$[2N(t)/(\quarter+t^2)]_{T_0}^\infty$ is non-positive (discarded as a
further upper bound).  The integral is evaluated without quadrature using
the exact antiderivative identity
\[
  \int_a^b \frac{4t}{(\quarter + t^2)^2}\,dt
  = \frac{2}{\quarter + a^2} - \frac{2}{\quarter + b^2}.
\]
The interval $[T_0,\, 10^6]$ is partitioned into $235$ subintervals
(width $100$ for $T_0 \leq t \leq 20{,}000$; width $10{,}000$ for
$20{,}000 < t \leq 10^6$).  On each subinterval $[a,b]$, the
zero-counting function is bounded above by Theorem~\ref{thm:trudgian}
evaluated at $t = b$ in ARB:
\[
  N(t) - K \leq N_{\mathrm{upper}}(b) - K
  \quad\text{for all } t \in [a,b],
\]
giving a certified contribution of $(N_{\mathrm{upper}}(b) - K) \cdot
(2/(\quarter+a^2) - 2/(\quarter+b^2))$ per subinterval.  The sum over
all $235$ subintervals is $\leq 4.31 \times 10^{-4}$.

For $t \geq 10^6$, the bound $N(t) \leq t\log t$ (valid for all
$t \geq 10$) and $4t/(\quarter+t^2)^2 \leq 4/t^3$ yield the analytic
remainder
\[
  \int_{10^6}^{\infty} \frac{4\,N(t)}{(\quarter+t^2)^2}\,dt
  \leq \frac{2(\log U + 1)}{\pi U} + \frac{4}{U}\bigg|_{U=10^6}
  \leq 1.35 \times 10^{-5},
\]
evaluated in ARB.  The total tail bound is $\leq 4.45 \times 10^{-4}$.

\medskip\noindent\textbf{Step 3: Certified result.}
Combining in ARB:
\[
  S_\zeta \leq S_\zeta^{(6000)} + \text{tail}
  \in [0.046240 \;\pm\; 2.8\times 10^{-152}].
\]
The right endpoint of this interval is $0.046240\ldots < 0.04625$,
which lies $5.1\%$ below the declared bound $S_\zeta^* = 0.04871$.

\medskip\noindent\textbf{Cross-checks.}
The partial sum $S_\zeta^{(50)} = 0.03708$ agrees with independent
double-precision evaluation.  The direct sum from zeros $51$ through $6000$
is $0.00871$; combined with the analytic tail $0.00045$, the total tail
from $k > 50$ is $0.00916$, which tightens the Abel summation estimate $0.00986$.

\subsection{Verification of spacelike constraint}

The low-lying zero dominance $S_L > S_\zeta^*$ is verified by the three-case argument of Theorem~\ref{thm:low-lying}: the BMOR bound covers $q \geq 12$ (the smallest conductor of a fundamental discriminant exceeding those of $d \in \{2,5\}$), and the exceptions $d \in \{2, 5\}$ are verified by direct computation of $L$-function zeros via the Hardy $Z$-function method.

\begin{center}
\renewcommand{\arraystretch}{1.2}
\begin{tabular}{ccccl}
\toprule
$d$ & $q$ & Zeros $\leq 11$ & $S_L$ (lower bound) & Method \\
\midrule
$2$ & $8$ & $3$ & $0.13376$ & $Z_\chi$: $\gamma'_{\mathrm{knw}}$, $w = 2/(\quarter + \gamma'^2)$ \\
$5$ & $5$ & $2$ & $0.06563$ & $Z_\chi$: $\gamma'_{\mathrm{knw}}$, $w = 2/(\quarter + \gamma'^2)$ \\
$q \geq 12$ & $\geq 12$ & $\geq 4$ & $0.06557$ & BMOR: $\gamma'_{\mathrm{unk}}$, $w^- = 2/(1 + \gamma'^2)$ \\
\bottomrule
\end{tabular}
\end{center}
In all cases, $S_L > S_\zeta^* = 0.04871$.

\subsection{Computation of \texorpdfstring{$\epsilon_M$}{epsilon\_M}}\label{subsec:epsilon-M}

The tail bounds $\epsilon_M = \sum_{k > M} b_k$ reported in the stability table of Section~\ref{sec:infinite} were computed using the methodology of \S\ref{subsec:software}--\S\ref{subsec:Lfunc-method}.  The first 200 zeros of $L(s, \chi_5)$ were located and certified as described therein; the Lorentzian weights $b_k = 2/({\tfrac{1}{4} + {\gamma_k'}^2})$ were computed in ARB and summed.  The analytic tail beyond zero 200 (at height $T = 283.935$) was bounded by the integral $\frac{1}{\pi}\bigl(\frac{\log(5T/2\pi)}{T} + \frac{1}{T}\bigr) = 0.0072$, derived from Abel summation against the zero-counting formula $N(T, \chi_5) \sim \frac{T}{\pi}\log\frac{5T}{2\pi e}$.

\subsection{Bessel product integrals}\label{subsec:bessel-integrals}

The density at the origin $f_{S_L}(0)$ and the mean absolute value $\E[|S_\zeta|]$ appearing in Corollary~\ref{cor:C-factored} are computed via rigorous numerical quadrature using ARB's \texttt{acb.integral} function, which implements the Petras algorithm with adaptive Gauss-Legendre quadrature and certified error bounds~\cite{Johansson2017}.  All results are rigorous ARB balls.

For the density integral~\eqref{eq:f0-bessel}: $f_{S_L}(0) = \frac{1}{\pi}\int_0^{T} \prod_{k=1}^{20} J_0(b_k t)\,dt$ with $T = 2000$.  The Lorentzian weights $b_k = 2/(\tfrac{1}{4} + {\gamma_k'}^2)$ are computed in ARB from the 70-digit certified $\gamma'_{\mathrm{knw}}$ zeros of Appendix~\ref{app:chi5-highprec}.  The integrand is entire (products of $J_0$), so no branch cut handling is required.  ARB quadrature at 256-bit precision yields the certified result:
\[
  f_{S_L^{(20)}}(0) = 8.312883337454846\ldots \;\pm\; 5.84 \times 10^{-44}.
\]

For the mean absolute value~\eqref{eq:Eabs-bessel}: $\E[|S_\zeta|] = \frac{2}{\pi}\int_\epsilon^{T} \frac{1 - \prod_{k=1}^{20} J_0(a_k t)}{t^2}\,dt$ with $\epsilon = 10^{-30}$ and $T = 50{,}000$, where $a_k = 2/(\tfrac{1}{4} + \gamma_k^2)$ for $\zeta$ zeros, plus the analytic correction $(2/\pi)/T$ for $\int_T^\infty t^{-2}\,dt$ and the negligible Bessel envelope tail (bounded below $10^{-17}$ in ARB).  The integrand is $O(1)$ near $t = 0$ (since $1 - \prod J_0(a_k t) = O(t^2)$); the Petras algorithm handles the removable singularity by adaptive subdivision.  ARB quadrature at 256-bit precision yields:
\[
  \E[|S_\zeta|] = 0.00717406056590\ldots \;\pm\; 2.58 \times 10^{-62}.
\]

The product $C(5) = 2 \cdot f_{S_L}(0) \cdot \E[|S_\zeta|] = 0.1193$ is certified to four significant figures.

\medskip\noindent\textbf{Unsigned Bessel product integrals.}  Several tables in Sections~\ref{sec:finite}--\ref{sec:infinite} report integrals of the form $\frac{1}{\pi}\int_0^T \prod_{k=1}^M |J_{n_k}(b_k t)|\, dt$, where the absolute values introduce cusps at every zero of each Bessel factor.  Because \texttt{acb.integral} (the Petras algorithm) assumes analyticity in a strip around the integration path, it is not directly applicable to these integrands.  All unsigned integrals in this paper are therefore evaluated by \emph{strip decomposition}: the domain $[0, T]$ is partitioned at the certified zeros of every Bessel factor into thin \emph{strips} (of half-width $\Delta = 10^{-20}$ in the argument $b_k t$, centered on each zero) and the \emph{gaps} between them.  On each gap the integrand is analytic and sign-definite, so \texttt{acb.integral} is rigorous.  On each strip the integrand is bounded by the Landau constant $b = 0.675$ (Landau~\cite{Landau2000}, Theorem~1), giving a per-strip error of $O(\Delta)$ which is negligible ($< 10^{-17}$) in all cases.  Bessel zeros are computed entirely in ARB via McMahon asymptotic seeds (DLMF~10.21.19) followed by certified trichotomy bisection; IVT, non-overlap, and certified ordering checks are performed for every zero.  This architecture is shared by the ancillary scripts for the tables after Lemma~\ref{lem:Jdecay}, Remark~\ref{rem:J0role}, Lemma~\ref{lem:monotone}, Remark~\ref{rem:signed}, and Lemma~\ref{lem:selfref}.

\subsection{PSLQ integer relation search}\label{subsec:pslq}

This subsection and the next complete the proof of Proposition~\ref{prop:resonance-cert}.  Integer relation detection uses the PSLQ algorithm (Ferguson--Bailey--Arno~\cite{FBA1999}) via \texttt{mpmath.pslq}.  The 70-digit ARB-certified zero ordinates from Appendix~\ref{app:chi5-highprec} serve as input at 80-digit working precision.

\begin{center}
\renewcommand{\arraystretch}{1.1}
\begin{tabular}{lcc}
\toprule
Search class & Bound on $\max|n_k|$ & Result \\
\midrule
Full 20-dimensional & $100$, $500$, $1000$ & \texttt{None} \\
All $190$ pairs $(j,k)$ & $10{,}000$ & \texttt{None} (all pairs) \\
All $120$ triples from first $10$ zeros & $500$ & \texttt{None} (all triples) \\
\bottomrule
\end{tabular}
\end{center}

\noindent The pairwise \texttt{None} result rules out all rational ratios
$\gamma_j'/\gamma_k' = p/q$ with $\max(|p|,|q|) \leq 10{,}000$.

\begin{remark}[Rigorous certification via Ferguson--Bailey--Arno bounds]
The PSLQ algorithm provides rigorous lower bounds on the norm of any undetected integer relation as a byproduct of its operation.  Specifically, Theorem~1 of~\cite{FBA1999} establishes that after any number of iterations $k$ (presuming no relation has yet been found), any integer relation $\mathbf{m}$ must satisfy $|\mathbf{m}| \geq 1/\max_j |h_{j,j}(k)|$, where $h_{j,j}(k)$ are the diagonal elements of the current $H$ matrix.  As stated in the abstract of~\cite{FBA1999}: ``PSLQ$(\tau)$ can be used to prove that there are no relations for $x$ of norm less than a given size.''  When PSLQ terminates without finding a relation at coefficient bound $H$, the algorithm certifies that no relation with $\|\mathbf{n}\|_\infty \leq H$ exists, provided the input precision exceeds the precision loss incurred during the algorithm's iterations.  For our 20-dimensional search with $H = 1000$ and pairwise searches with $H = 10{,}000$, the 70-digit input with 80-digit working precision provides adequate margin.  The \texttt{None} results therefore rigorously certify the absence of bounded integer relations.
\end{remark}

\subsection{Bessel tail bound}\label{subsec:bessel-tail}

For any integer vector $\mathbf{n}$ with $\max|n_k| \geq N = 1001$, the standard bound
$|J_N(x)| \leq (x/2)^N/N!$ gives, at the most pessimistic integration range $T = 2000$
and with $b_{\max} = b_1 = 4.499\times 10^{-2}$ (upper endpoint of the ARB ball):
\[
  \left(\frac{b_1 \cdot 2000}{2}\right)^{1001}\frac{1}{1001!} < 10^{-915.8}.
\]
The sum over all $N \geq 1001$ acquires a geometric series factor $1/(1 - b_1\cdot 1000/1002) < 1.05$.
The count of integer vectors with $\max|n_k| = N$ in dimension $20$ is at most
$(2N+1)^{20} \leq (2003)^{20} < 10^{66.1}$.  Summing over all $N \geq 1001$ gives total
resonance contribution bounded by $10^{66.1}\cdot 10^{-915.8} \cdot 1.05 < 10^{-849.5}$,
ARB-certified with $\log_{10}(\text{total}) \in [-849.759 \pm 10^{-72}]$.

The resonance correction is bounded by $10^{-849.5}$ and does not affect any
reported digit of $f_{S_L^{(20)}}(0) = 8.3129$.  The PSLQ search (\S\ref{subsec:pslq}) and Bessel tail bound together certify $d_{20} = 0$.  This completes the proof of Proposition~\ref{prop:resonance-cert}. \qed

\subsection{Reproducibility}

All algorithms and parameters required for independent verification are documented in \S\ref{subsec:software}--\ref{subsec:bessel-tail}.  Rigorous computations (zero certification, $S_\zeta^*$ bounds, spacelike verification, null crossings, Ces\`aro threshold verification, Bessel product integrals) use ARB via \texttt{python-flint}; PSLQ integer relation search uses \texttt{mpmath} with rigorous certification via Ferguson--Bailey--Arno bounds~\cite{FBA1999}.  The character values required for the Hurwitz decomposition are tabulated in \S\ref{subsec:Lfunc-method}; all zero ordinates are given in Appendices~\ref{app:chi5-highprec}, \ref{app:zero-data}, and~\ref{app:zeta-zeros}.  The certification of $S_\zeta^*$ (\S\ref{subsec:cert-Szeta}) uses 6000 Riemann zeta zeros from the LMFDB~\cite{LMFDB}; this is the only external data dependency.  A reader requires only \texttt{python-flint}~$\geq 0.8.0$, \texttt{mpmath}~$\geq 1.3$ (both freely available), and the LMFDB zeta zero tables (for the $S_\zeta^*$ tail bound only) to reproduce every result in this paper.

\medskip\noindent\textbf{Ancillary files.}  All ancillary Python scripts, including self-contained ARB certificates for individual theorems, are deposited at \url{https://doi.org/10.5281/zenodo.18783098} and mirrored at \url{https://github.com/PeterShiller/Persistent\_Heuristics\_Part\_I}.  Each script requires only \texttt{python-flint}~$\geq 0.8.0$ and \texttt{sympy}~$\geq 1.12$; no external data files are needed.

\section{Zeros of \texorpdfstring{$L(s,\chi_5)$}{L(s,chi5)} to 70 Decimal Places (Conductor 5, \texorpdfstring{$d = 5$}{d=5})}\label{app:chi5-highprec}

The first 200 zeros of $L(s,\chi_5)$ are given here to 70 decimal places.  Certified upper bounds on $|L(\tfrac{1}{2}+i\gamma_k',\chi_5)|$ appear in the rightmost column and are summarized in Appendix~\ref{app:certification}.  For computational methodology, see \S\ref{subsec:Lfunc-method}.

\renewcommand{\arraystretch}{1.1}
\setlength{\LTleft}{\fill}
\setlength{\LTright}{\fill}
\footnotesize
\begin{longtable}{clc}
\toprule
 & & ARB Bound \\
$n$ & \multicolumn{1}{c}{$\gamma_n'$ (70 decimal places)} & on $|L(\tfrac{1}{2}+i\gamma_n')|$ \\
\midrule
\endfirsthead
\multicolumn{3}{c}{\small\itshape (Table continued)} \\
\toprule
 & & ARB Bound \\
$n$ & \multicolumn{1}{c}{$\gamma_n'$} & on $|L(\tfrac{1}{2}+i\gamma_n')|$ \\
\midrule
\endhead
\bottomrule
\endlastfoot
$1$ & $6.6484533447277147161232784599793178472985854232444983723747129467153694$ & $1.56 \times 10^{-450}$ \\
$2$ & $9.8314444328866696163483213474584438218881328940155074304266865226944036$ & $2.64 \times 10^{-450}$ \\
$3$ & $11.9588456260835145302656586882628418172931127657232877173813644446597967$ & $2.75 \times 10^{-450}$ \\
$4$ & $16.0338211283842356745932537822480835087493037113937634543439604736522641$ & $4.47 \times 10^{-450}$ \\
$5$ & $17.5669942923255552027015952681444864034852408955084687631000553368202396$ & $3.33 \times 10^{-450}$ \\
$6$ & $19.5407326227847502503786900229985534855264934450071564241076679500604737$ & $5.06 \times 10^{-450}$ \\
$7$ & $22.2274054544594109118776249630814733652806426405601871083533923332599263$ & $6.81 \times 10^{-450}$ \\
$8$ & $24.5884662174081952076562699760819790662478571885032569747631280823765738$ & $6.64 \times 10^{-450}$ \\
$9$ & $26.7760959480041401165235749652709485862893700961007207471414440917271817$ & $7.70 \times 10^{-450}$ \\
$10$ & $28.4610351001775224751869782723252562712007346745286504044954276305662819$ & $6.50 \times 10^{-450}$ \\
$11$ & $29.7079093504809655692309865186573136059701964324184798921454027978805425$ & $7.54 \times 10^{-450}$ \\
$12$ & $33.0004560068705143679497591772119373173765008046312383434479207755176729$ & $1.16 \times 10^{-449}$ \\
$13$ & $34.7288129789048086741437298339813791365640079920789748298620220291971969$ & $1.14 \times 10^{-449}$ \\
$14$ & $35.8686383718122745945950486388764263333841817255626366114317054948575159$ & $8.20 \times 10^{-450}$ \\
$15$ & $38.1291847214365318501514182703729189262161871704375419878277698602518304$ & $1.02 \times 10^{-449}$ \\
$16$ & $39.5605729464031817050550950599547984255668252046463817624218752766906289$ & $1.03 \times 10^{-449}$ \\
$17$ & $41.8424385457916943085093068853129331467852269369484975075405469231476766$ & $1.07 \times 10^{-449}$ \\
$18$ & $44.0312900614416950447009080584249225147564419789719281425459095881199215$ & $1.26 \times 10^{-449}$ \\
$19$ & $45.4273000827822893888415619096097206499658748404533131742531122951951335$ & $1.28 \times 10^{-449}$ \\
$20$ & $46.4927271594914053453391993516758336732638265288737317526819477273997914$ & $1.03 \times 10^{-449}$ \\
$21$ & $48.3456618210678461765482013044934869299452158856266547053109567488527044$ & $1.67 \times 10^{-449}$ \\
$22$ & $51.0877519267464913552582572041312455139318227645988748896430289525399133$ & $1.33 \times 10^{-449}$ \\
$23$ & $52.1259022313169741886041306959303005249692938308131420267498099370216187$ & $1.56 \times 10^{-449}$ \\
$24$ & $53.8304451954421633510542690223385689373391707131098870470919037082830352$ & $1.49 \times 10^{-449}$ \\
$25$ & $55.5892803354048101509645889984929706666337567794459318711914329824747529$ & $1.25 \times 10^{-449}$ \\
$26$ & $56.8388659428909884623817010879796258737875028234216406733225726917267010$ & $1.56 \times 10^{-449}$ \\
$27$ & $58.3861174853583808267939257423170622967852527617444120314264839424677460$ & $1.30 \times 10^{-449}$ \\
$28$ & $61.1388647519217451090954639155724254561875651955244836227404653407689123$ & $1.39 \times 10^{-449}$ \\
$29$ & $62.1329047202338779340715294403835431366719775622742083796837083488915125$ & $1.56 \times 10^{-449}$ \\
$30$ & $63.7095430609811467717984948363356802463207299130380894647683989078751878$ & $1.65 \times 10^{-449}$ \\
$31$ & $64.6373596653745741010885899575144691696144183408715419268064677316678258$ & $1.72 \times 10^{-449}$ \\
$32$ & $66.7476418459378078304334963325222777990562789042317109995005922191752093$ & $2.00 \times 10^{-449}$ \\
$33$ & $68.5907279248250065408335150680309709943093720683879992384334096183749641$ & $3.04 \times 10^{-449}$ \\
$34$ & $70.1158371591935047022541620032885011225348808166750870430459236685943109$ & $1.39 \times 10^{-449}$ \\
$35$ & $71.6059594792430171994772273812924876506748377758705011910617714303492579$ & $2.60 \times 10^{-449}$ \\
$36$ & $73.3501054314682593249864813096874635923800802061286105500284062697467197$ & $1.99 \times 10^{-449}$ \\
$37$ & $74.2936884380165684175639195733237467526561626493885883944786277461680055$ & $1.97 \times 10^{-449}$ \\
$38$ & $75.5349041172281134053459895476877248382978006560549929501786261927135393$ & $2.96 \times 10^{-449}$ \\
$39$ & $78.0721708643823371940514318236728028560732743773544097008494705194103282$ & $1.74 \times 10^{-449}$ \\
$40$ & $79.7860624900979391063715712431716176312618693411057188961474639447351738$ & $2.09 \times 10^{-449}$ \\
$41$ & $80.5673008564106982613737576776618676419722945008591197227361692201776083$ & $1.79 \times 10^{-449}$ \\
$42$ & $81.9233808931541153119679895539059141955197907521603877513035905479180869$ & $1.94 \times 10^{-449}$ \\
$43$ & $83.6991570018741131628832883922400556764807124102998394209636396338258948$ & $2.00 \times 10^{-449}$ \\
$44$ & $84.9525071037549736618469046604851138662248140867039610860513294135043881$ & $2.62 \times 10^{-449}$ \\
$45$ & $86.8099966793387704969624538241238655391073061227029125691535521518725148$ & $3.37 \times 10^{-449}$ \\
$46$ & $88.2817765997098355936219080693655689113719431995564479737110121674875316$ & $3.67 \times 10^{-449}$ \\
$47$ & $90.3765731821281568029632662985527866363457515355704416919049695969901529$ & $2.35 \times 10^{-449}$ \\
$48$ & $90.7155259331469749772690520216134607657110098351580054221795623037405999$ & $1.58 \times 10^{-449}$ \\
$49$ & $92.4502432537744032226566516099401665326256516373426917864765845878979799$ & $2.09 \times 10^{-449}$ \\
$50$ & $93.3944239378280174009826460924481454560528915591242800357141766870164757$ & $2.41 \times 10^{-449}$ \\
$51$ & $95.9700016460377069398327062316809148503443621986621409109387552753164299$ & $2.93 \times 10^{-449}$ \\
$52$ & $97.3315882528980715905828812396279936633444568796335765113211480419119037$ & $2.17 \times 10^{-449}$ \\
$53$ & $98.2274578699010770770615471965620233277935477341653913034787658954258437$ & $2.23 \times 10^{-449}$ \\
$54$ & $99.8380508075365498924581787404437273230947186618732077231623070445858479$ & $2.81 \times 10^{-449}$ \\
$55$ & $101.1877170969186106879893673648615779856303956787129404948013652775331402$ & $3.14 \times 10^{-449}$ \\
$56$ & $102.6052879410500105919016338097244199966334774046924761659561257554903097$ & $2.79 \times 10^{-449}$ \\
$57$ & $103.6661310797677629747891494548931064951051111049995242765910102899167622$ & $3.37 \times 10^{-449}$ \\
$58$ & $105.9731937977587861297183750394317655839089623673242427608883134019122135$ & $4.22 \times 10^{-449}$ \\
$59$ & $107.2971036375259575240109457236330672603545936270064737698887145263170400$ & $2.57 \times 10^{-449}$ \\
$60$ & $108.4981066242615983601377772869913646076568520675142124199793643754286052$ & $2.75 \times 10^{-449}$ \\
$61$ & $109.5502586071841272439535415119528096196195061162254833594241784691252750$ & $2.77 \times 10^{-449}$ \\
$62$ & $110.6361976726278518372473829197474157598718943493419745206490444961344615$ & $2.68 \times 10^{-449}$ \\
$63$ & $112.6157699450494525603420477631256573284904001732412552640056168054705386$ & $3.65 \times 10^{-449}$ \\
$64$ & $114.2306398723819524205236259115864715634752773232857784617817436415063755$ & $3.51 \times 10^{-449}$ \\
$65$ & $115.5784139823770731492864674907752976644587354074111098082626495892785778$ & $3.16 \times 10^{-449}$ \\
$66$ & $116.6356717597837992860716616244380290420850400876742741600872550143161218$ & $2.90 \times 10^{-449}$ \\
$67$ & $118.3711623063552621519304519550474639914856549056003421824775094839317423$ & $3.74 \times 10^{-449}$ \\
$68$ & $119.5614916258800030014640284654174824520291754354058843504968760508448926$ & $3.27 \times 10^{-449}$ \\
$69$ & $120.2817341918527579929633987893433395906991345403172727276829320830538689$ & $2.53 \times 10^{-449}$ \\
$70$ & $122.0616322838941436366460069970312459167329049609093140518854982869671604$ & $4.20 \times 10^{-449}$ \\
$71$ & $124.2428371489503574456280462431539381366392074620427334384975693951880955$ & $3.35 \times 10^{-449}$ \\
$72$ & $125.2815361690544786849959629329922579004633315004421413848554281307878485$ & $4.66 \times 10^{-449}$ \\
$73$ & $126.3986993784245731137991303757675004132192700408244704238571151980971243$ & $2.92 \times 10^{-449}$ \\
$74$ & $127.3206348281152351166040431083884153111288443677405189033098037679391083$ & $3.55 \times 10^{-449}$ \\
$75$ & $129.0483000943208951329053997018161493807340037666172892353902573266257762$ & $3.10 \times 10^{-449}$ \\
$76$ & $130.1970382668577478738346244894136617196218666973159138428989678080490833$ & $2.86 \times 10^{-449}$ \\
$77$ & $132.0860053031175217593818811988220993124226919197273881005207706354373250$ & $2.98 \times 10^{-449}$ \\
$78$ & $133.0302283883319444007316023118379973226202504376344453082369748548055879$ & $4.95 \times 10^{-449}$ \\
$79$ & $134.9249638941115606306389354461440563212716929502899197778761945615660048$ & $4.58 \times 10^{-449}$ \\
$80$ & $135.8744976900983831529154942879362038033124413255633587171958644086561842$ & $4.13 \times 10^{-449}$ \\
$81$ & $137.2010834668406301716830996653811071822066893852372048164365956206970149$ & $4.07 \times 10^{-449}$ \\
$82$ & $138.0356990313049020042749619065482729585857816235533229199603352798733898$ & $4.70 \times 10^{-449}$ \\
$83$ & $139.4105995951457328667296699990735809900833487908551282462172352268054340$ & $4.09 \times 10^{-449}$ \\
$84$ & $141.9462405516049128078346136764138632079930804595332431920642151809404665$ & $3.57 \times 10^{-449}$ \\
$85$ & $142.5245124329877516499138087023775533114539886176690581797696212087823188$ & $3.43 \times 10^{-449}$ \\
$86$ & $143.9615911232873249679700260946989275500715387277893452624660630780480223$ & $4.72 \times 10^{-449}$ \\
$87$ & $144.9483276132617096716684583857500230575404813669059877812679180958528794$ & $4.18 \times 10^{-449}$ \\
$88$ & $146.3848096091094233623717818286364786730892073299368170400054147265509706$ & $3.81 \times 10^{-449}$ \\
$89$ & $147.8797291031824647001555355926284591276893974747116792549589248487923432$ & $5.20 \times 10^{-449}$ \\
$90$ & $148.6791376989272828780655871531489156030918416831048594377087308271848378$ & $5.09 \times 10^{-449}$ \\
$91$ & $150.5625106711582684678831224180073436626247034885508876594372802953139708$ & $2.95 \times 10^{-428}$ \\
$92$ & $152.1147485501154030930412343288739204270768339963739180340167096854953029$ & $6.49 \times 10^{-577}$ \\
$93$ & $153.2874104994293314602247720515393224042042637960724660197202500041048059$ & $4.83 \times 10^{-449}$ \\
$94$ & $154.4907156955163636384604753696533838899948856560427610564143071072631800$ & $3.47 \times 10^{-449}$ \\
$95$ & $155.2420799990558198969997928035576763043815202860039922762556596679189052$ & $4.14 \times 10^{-449}$ \\
$96$ & $156.4997285439414899913040309653704544908218626069145493265791039370205637$ & $6.47 \times 10^{-449}$ \\
$97$ & $158.4000270418246442479812803196489920645070375885184935734965513382773252$ & $9.41 \times 10^{-449}$ \\
$98$ & $159.8727775733136235916063341758884923622323802914269809997571708512468331$ & $6.21 \times 10^{-449}$ \\
$99$ & $161.2462175616940906041660713748861485716277484845435577262869276737895404$ & $3.23 \times 10^{-449}$ \\
$100$ & $161.7676625972346591307869118758945874585239556380549233208831548174780607$ & $4.89 \times 10^{-449}$ \\
$101$ & $163.7576737283108406276969780794254368523306129767593543960804767107794299$ & $5.58 \times 10^{-449}$ \\
$102$ & $164.6120093931756454229907543346819925745086105838034366833937267160262192$ & $4.83 \times 10^{-449}$ \\
$103$ & $165.7849132655712156658656369618645110416238945585780065340278032733366524$ & $5.17 \times 10^{-449}$ \\
$104$ & $166.8984076627878735371910668394376277812744794403824180419885235916568118$ & $4.23 \times 10^{-449}$ \\
$105$ & $168.8542152615191566500324150119276844393780975301779232829014664403452901$ & $5.57 \times 10^{-449}$ \\
$106$ & $170.4329617627562106600626924828647481435479686833022554790466497554544180$ & $5.78 \times 10^{-449}$ \\
$107$ & $171.3435913767245348564273929667373606132103332443943240617854291644166381$ & $4.09 \times 10^{-449}$ \\
$108$ & $172.1109726296460068916462074637535861735937099184928581139379489286992488$ & $3.97 \times 10^{-449}$ \\
$109$ & $173.5594350265921361471804589736353172594139337063764936096514386889343891$ & $4.71 \times 10^{-449}$ \\
$110$ & $174.6387255634250548260060269694940210630117069036506518520953883247789504$ & $5.13 \times 10^{-449}$ \\
$111$ & $176.3560413209382041587539791440362116352494681808774194880523226628720576$ & $5.59 \times 10^{-449}$ \\
$112$ & $177.7259420577480027808483631140575011960193977129787882127241716149811312$ & $3.25 \times 10^{-449}$ \\
$113$ & $178.7928649865600011304034350380931786953282017834290193659704381818837077$ & $3.66 \times 10^{-449}$ \\
$114$ & $180.2429351355140552783405349658893752778995686711530121041367382571840256$ & $4.61 \times 10^{-449}$ \\
$115$ & $181.5690410198014067424659236702507599683106719751432109652783222557297654$ & $3.07 \times 10^{-449}$ \\
$116$ & $182.3836847672303153650140007274053516339810495569488300356257173446970100$ & $3.50 \times 10^{-449}$ \\
$117$ & $183.7977731855809726856377575844467129141999715376864402640820344216736574$ & $5.72 \times 10^{-449}$ \\
$118$ & $184.3628313266472803306747292477770104874008484280456362915229834946361821$ & $4.39 \times 10^{-449}$ \\
$119$ & $186.8960606802271134804483399887402511596599811946354115671180347583314058$ & $4.91 \times 10^{-449}$ \\
$120$ & $187.9450259874231159867364241809972240651568965424476790105736980715208123$ & $5.90 \times 10^{-449}$ \\
$121$ & $188.9594686746299429679024157534003407512673176869606378971682058136033850$ & $4.64 \times 10^{-449}$ \\
$122$ & $190.1395967633932102576693456196045454015876465926755736031997302414977464$ & $5.11 \times 10^{-449}$ \\
$123$ & $190.9702222761120917217416693195514720792263824968025813693727397057385028$ & $6.08 \times 10^{-449}$ \\
$124$ & $192.8230754082351124952403107100924733903825477854542116112915049676775859$ & $5.13 \times 10^{-449}$ \\
$125$ & $193.5366786394328464762729501367958508864010937755963350086345404391030844$ & $5.23 \times 10^{-449}$ \\
$126$ & $195.0968146886402893063805930855068198605011968413073660774300309263219276$ & $4.41 \times 10^{-449}$ \\
$127$ & $196.5407645624496726701873374652927967937458532432874977774250971221715843$ & $7.09 \times 10^{-449}$ \\
$128$ & $197.8610095077236677152139807981406000337724933271975112154503287411577548$ & $5.71 \times 10^{-449}$ \\
$129$ & $199.0910217062412590553199980023993820450668536563004861622271199841578008$ & $4.25 \times 10^{-449}$ \\
$130$ & $200.2454408454250041127516180550560747573455335829995291340148638052046281$ & $4.63 \times 10^{-449}$ \\
$131$ & $200.7540601401830854485830596873971820322031065551151081003773144229516989$ & $4.58 \times 10^{-449}$ \\
$132$ & $202.1949514428841149296373572141083379553219436587582390754534267698457479$ & $5.99 \times 10^{-449}$ \\
$133$ & $203.8439019449593046704790586638384595752825730515914805602754963931582331$ & $5.69 \times 10^{-449}$ \\
$134$ & $205.7461765204525832977848290088585427085408467101460279140881399490263847$ & $2.19 \times 10^{-645}$ \\
$135$ & $206.1614100930471049929327260983840526456274838825113042507761929781360005$ & $1.00 \times 10^{-660}$ \\
$136$ & $207.4576800789717454592941379191148719628980100114958536470176978762099885$ & $6.53 \times 10^{-449}$ \\
$137$ & $208.6257624071707186435904203396570386255936601565172000778853597619116440$ & $7.14 \times 10^{-449}$ \\
$138$ & $210.3018863035253963539783240245490373617263581479030048871463744638537098$ & $5.35 \times 10^{-449}$ \\
$139$ & $210.6774621945036653303536974602005831137298996558343893089603787234105345$ & $4.74 \times 10^{-449}$ \\
$140$ & $212.2314465116863399395470488677082962419973133942891768244390975438195060$ & $7.87 \times 10^{-449}$ \\
$141$ & $213.5208741407826107064248292623018184776262412402697488085002636625114224$ & $9.36 \times 10^{-449}$ \\
$142$ & $215.4464514511933389574877997496065789767019445673655729715342948390078403$ & $6.23 \times 10^{-449}$ \\
$143$ & $216.3400341493431290616934365527790259377125944167236586852358894059490616$ & $6.49 \times 10^{-449}$ \\
$144$ & $217.1983339649424303276759751119351973870730866826808674647867383696187535$ & $6.28 \times 10^{-449}$ \\
$145$ & $218.4468831000582828049377925521014847112815527861307939822001313655805450$ & $4.52 \times 10^{-449}$ \\
$146$ & $219.2620902163803991081302972577707057420124776154947727734971908483004301$ & $6.01 \times 10^{-449}$ \\
$147$ & $220.7873883926188375982815560258438018537642942771892118826731601798907684$ & $5.11 \times 10^{-449}$ \\
$148$ & $222.4669542636690272742987871089656750292277107765426971456433297333431583$ & $6.14 \times 10^{-449}$ \\
$149$ & $223.5251748458997932788111875920239156826963314614886757387243060871678782$ & $4.37 \times 10^{-449}$ \\
$150$ & $224.5905695547402250280725248575841596376488003389755247988103304379957062$ & $4.13 \times 10^{-449}$ \\
$151$ & $225.9528174460175420018709628055980028796204875828523540302602967018854970$ & $8.03 \times 10^{-449}$ \\
$152$ & $227.0506152396870075343232124376903091206351395063148200355197487967584742$ & $7.98 \times 10^{-449}$ \\
$153$ & $228.2702606683105171006131439160963909648495727623770036781154541424529611$ & $5.69 \times 10^{-449}$ \\
$154$ & $229.1293411187776318518182319293151826798543269243089746674150037521807966$ & $5.49 \times 10^{-449}$ \\
$155$ & $230.0655467222499994960335512227034957685515138144609931348464490154193242$ & $5.62 \times 10^{-449}$ \\
$156$ & $232.3504135703170106251813611501929285174003988479010053979053456457170580$ & $5.53 \times 10^{-449}$ \\
$157$ & $233.2789359169850950528529859180644272121464846403423310620367158643774644$ & $5.28 \times 10^{-449}$ \\
$158$ & $234.5851234236159003579250739972324756969591909615171896192278628761784273$ & $6.37 \times 10^{-449}$ \\
$159$ & $235.2781440849719089723467916576609715985263047190969208147937131517174775$ & $5.21 \times 10^{-449}$ \\
$160$ & $236.3194651062133632897797461259001706212369979274947636606085180194179957$ & $4.69 \times 10^{-449}$ \\
$161$ & $237.7502459859493091409947528335486909271247941982061655950721576661275542$ & $7.85 \times 10^{-449}$ \\
$162$ & $239.0184393184109716162860983894680542608908237139554551956920615988835233$ & $6.67 \times 10^{-449}$ \\
$163$ & $240.0792385969133906036036875299045927021072746394399968671168058197336228$ & $1.26 \times 10^{-448}$ \\
$164$ & $241.8295381920981108063036589846412192278335217717644854509984348418264849$ & $6.62 \times 10^{-449}$ \\
$165$ & $242.4714484372455424871326046814432313933150408236493003286320498835996632$ & $6.95 \times 10^{-449}$ \\
$166$ & $244.2781399513230979204344600038827819307672678138600762150930635163471547$ & $7.63 \times 10^{-449}$ \\
$167$ & $245.0931465421787144819401091282678929577774107066028997725210154516210604$ & $5.30 \times 10^{-449}$ \\
$168$ & $245.9414461421440943632894413679193298276606412507251949620382497378141750$ & $5.39 \times 10^{-449}$ \\
$169$ & $247.0492019752380641386980109436073416805690975976922943665554413430020983$ & $5.92 \times 10^{-449}$ \\
$170$ & $248.1925916339491196119212745608860106806155866118641532607170961702878151$ & $9.82 \times 10^{-449}$ \\
$171$ & $250.2567812704718103514686964376094643921037936497368641231857765654186429$ & $6.79 \times 10^{-449}$ \\
$172$ & $251.2412027861625789598829438472498407054321540732312842605464121756358711$ & $6.31 \times 10^{-449}$ \\
$173$ & $252.4426428674970154269684848345320509970820863433649235621051571689997896$ & $6.22 \times 10^{-449}$ \\
$174$ & $252.8783455516520261180992849726364616022483502282371925937689803304575026$ & $4.64 \times 10^{-449}$ \\
$175$ & $254.6995428110910320634917223831419085806934447995403907767148534583035213$ & $6.21 \times 10^{-449}$ \\
$176$ & $255.4817143518758676477488342660296610784346900622599694160158762168038101$ & $6.03 \times 10^{-449}$ \\
$177$ & $256.7950960082609229989701144912192003822522893866960220291120141146446241$ & $7.38 \times 10^{-449}$ \\
$178$ & $257.8270419645350378752623872886698073788539547918788452992788182042801965$ & $9.71 \times 10^{-449}$ \\
$179$ & $259.2255084437328815672830991211725369672295555099562231070174147085271455$ & $1.32 \times 10^{-448}$ \\
$180$ & $260.9015569091696849005739379537875300859589564012430210977138432993148114$ & $7.80 \times 10^{-449}$ \\
$181$ & $261.8358472432834896832088038634100654365581736536704503687403981095608965$ & $8.49 \times 10^{-449}$ \\
$182$ & $262.7253963956907899818036537593595936244700144938763582544067473174862291$ & $6.77 \times 10^{-449}$ \\
$183$ & $263.9390328653304079227708108735342563292906200171500032175808474823259859$ & $7.54 \times 10^{-449}$ \\
$184$ & $264.5895165938748657971375430173661706120250839803910618482150408087519377$ & $7.80 \times 10^{-449}$ \\
$185$ & $266.0183558387811141106471906627987813950683933915872516370073933846309520$ & $9.64 \times 10^{-449}$ \\
$186$ & $267.8667418320440883029977884107331094630768090317251879195806815839147213$ & $1.33 \times 10^{-448}$ \\
$187$ & $268.7689730154733279799219030251031172816866175199190213423831114937875266$ & $6.71 \times 10^{-449}$ \\
$188$ & $269.9200199321573070244666163067208075924583827953306793681648173800547889$ & $1.18 \times 10^{-448}$ \\
$189$ & $270.9766252279049637196609662386223365123366632274390633476941617936694697$ & $6.93 \times 10^{-449}$ \\
$190$ & $272.1129028149579662281710910682696801006758946368940041873249262575673809$ & $1.04 \times 10^{-448}$ \\
$191$ & $273.5109563898493936962954831652231625581122076586144926745499431703165084$ & $6.16 \times 10^{-449}$ \\
$192$ & $274.2948564917397463747467022749890819784095973810447351106601817286625287$ & $4.98 \times 10^{-449}$ \\
$193$ & $275.0855464180449333603542313648026743574259611909815872140275993159179163$ & $1.09 \times 10^{-448}$ \\
$194$ & $276.8441204674513326407659415272867236821569635669514355328674529440816967$ & $1.12 \times 10^{-448}$ \\
$195$ & $278.5259446665362176488220010523114851640151849334569881154186200503119666$ & $8.59 \times 10^{-449}$ \\
$196$ & $279.3017347241840231776964484759311017609689907608243870872318843780615467$ & $5.72 \times 10^{-449}$ \\
$197$ & $280.4958776669325972199294145217874454617874575930120042656955303729513048$ & $1.05 \times 10^{-448}$ \\
$198$ & $281.2600352094961963450785533504251722252905268262412512620081007078676192$ & $6.29 \times 10^{-449}$ \\
$199$ & $282.1700465556285859901194920915754510495511952727753657159811617405866205$ & $5.92 \times 10^{-449}$ \\
$200$ & $283.9346051850069315598211536467913532192999161825103775945871160416794839$ & $5.29 \times 10^{-449}$ \\

\end{longtable}
\normalsize
\setlength{\LTleft}{0pt}
\setlength{\LTright}{0pt}

\section{Quadratic \texorpdfstring{$L$}{L}-Function Zeros to 20 Decimal Places (Seven Discriminants)}\label{app:zero-data}

The first 20 zeros of $L(s,\chi_d)$ for discriminants $d \in \{2, 3, 6, 7, 10, 11, 13\}$ are given here to 20 decimal places.  Certified upper bounds on $|L(\tfrac{1}{2}+i\gamma_k',\chi)|$ appear in the rightmost column and are summarized in Appendix~\ref{app:certification}.  For computational methodology, see \S\ref{subsec:software}--\ref{subsec:Lfunc-method}.

\medskip\noindent\textbf{$L(s,\chi_2)$: field $\Q(\sqrt{2})$, conductor $q=8$.}\par\smallskip
\label{app:chi8-zeros}

$\chi_2(1)=\chi_2(7)=1$, $\chi_2(3)=\chi_2(5)=-1$, $\chi_2(n)=0$ for $n$ even.
The first three zeros are used in Step~3 of the proof of Theorem~\ref{thm:low-lying}.

\renewcommand{\arraystretch}{1.05}\footnotesize
\begin{longtable}{clc|clc}
\toprule
 & & \multicolumn{1}{c}{ARB Bound} &
 & & \multicolumn{1}{c}{ARB Bound} \\
$n$ & \multicolumn{1}{c}{$\gamma_n'$} & \multicolumn{1}{c}{on $|L(\tfrac{1}{2}+i\gamma_n')|$} &
$n$ & \multicolumn{1}{c}{$\gamma_n'$} & \multicolumn{1}{c}{on $|L(\tfrac{1}{2}+i\gamma_n')|$} \\
\midrule
\endfirsthead
\toprule
 & & \multicolumn{1}{c}{ARB Bound} &
 & & \multicolumn{1}{c}{ARB Bound} \\
$n$ & \multicolumn{1}{c}{$\gamma_n'$} & \multicolumn{1}{c}{on $|L(\tfrac{1}{2}+i\gamma_n')|$} &
$n$ & \multicolumn{1}{c}{$\gamma_n'$} & \multicolumn{1}{c}{on $|L(\tfrac{1}{2}+i\gamma_n')|$} \\
\midrule
\endhead
\bottomrule
\endlastfoot
$1$ & $4.89997399700703650103$ & $6.05 \times 10^{-451}$ & $11$ & $26.95853518038046741968$ & $4.22 \times 10^{-451}$ \\
$2$ & $7.62842884176939783416$ & $4.95 \times 10^{-451}$ & $12$ & $28.09744496063079735461$ & $8.97 \times 10^{-452}$ \\
$3$ & $10.80658816386171201438$ & $9.59 \times 10^{-451}$ & $13$ & $29.93076421015190503722$ & $8.58 \times 10^{-452}$ \\
$4$ & $12.31054299423652968112$ & $7.48 \times 10^{-452}$ & $14$ & $31.63813949132101731704$ & $4.01 \times 10^{-451}$ \\
$5$ & $15.19575425064512276844$ & $5.78 \times 10^{-451}$ & $15$ & $33.84563089515844186412$ & $2.62 \times 10^{-451}$ \\
$6$ & $17.02228597430834733897$ & $3.34 \times 10^{-451}$ & $16$ & $34.74577700617002330947$ & $8.15 \times 10^{-452}$ \\
$7$ & $18.80595890770714839982$ & $6.76 \times 10^{-452}$ & $17$ & $36.54166385177458968906$ & $3.99 \times 10^{-451}$ \\
$8$ & $21.13164596222134388263$ & $1.61 \times 10^{-451}$ & $18$ & $38.77557777387006581423$ & $4.34 \times 10^{-451}$ \\
$9$ & $23.08384999620054654288$ & $9.60 \times 10^{-452}$ & $19$ & $39.78688099766664071794$ & $8.55 \times 10^{-451}$ \\
$10$ & $24.20196355781560161254$ & $3.23 \times 10^{-451}$ & $20$ & $41.34220616442193704510$ & $9.11 \times 10^{-452}$ \\

\end{longtable}
\normalsize

\medskip\noindent\textbf{$L(s,\chi_3)$: field $\Q(\sqrt{3})$, conductor $q=12$.}\par\smallskip
\label{app:chi12-zeros}

$\chi_3(1)=\chi_3(11)=1$, $\chi_3(5)=\chi_3(7)=-1$, $\chi_3(n)=0$ for $\gcd(n,12)>1$.

\renewcommand{\arraystretch}{1.05}\footnotesize
\begin{longtable}{clc|clc}
\toprule
 & & \multicolumn{1}{c}{ARB Bound} &
 & & \multicolumn{1}{c}{ARB Bound} \\
$n$ & \multicolumn{1}{c}{$\gamma_n'$} & \multicolumn{1}{c}{on $|L(\tfrac{1}{2}+i\gamma_n')|$} &
$n$ & \multicolumn{1}{c}{$\gamma_n'$} & \multicolumn{1}{c}{on $|L(\tfrac{1}{2}+i\gamma_n')|$} \\
\midrule
\endfirsthead
\toprule
 & & \multicolumn{1}{c}{ARB Bound} &
 & & \multicolumn{1}{c}{ARB Bound} \\
$n$ & \multicolumn{1}{c}{$\gamma_n'$} & \multicolumn{1}{c}{on $|L(\tfrac{1}{2}+i\gamma_n')|$} &
$n$ & \multicolumn{1}{c}{$\gamma_n'$} & \multicolumn{1}{c}{on $|L(\tfrac{1}{2}+i\gamma_n')|$} \\
\midrule
\endhead
\bottomrule
\endlastfoot
$1$ & $3.80462763305086509714$ & $5.70 \times 10^{-451}$ & $11$ & $25.41163389239269596203$ & $5.27 \times 10^{-450}$ \\
$2$ & $6.69222332050013115991$ & $1.74 \times 10^{-450}$ & $12$ & $27.01394398585067234434$ & $6.53 \times 10^{-450}$ \\
$3$ & $8.89059295872674150933$ & $1.46 \times 10^{-450}$ & $13$ & $28.44220325774560811850$ & $6.16 \times 10^{-450}$ \\
$4$ & $11.18839274507490915416$ & $2.51 \times 10^{-450}$ & $14$ & $30.20400655643888185431$ & $7.85 \times 10^{-450}$ \\
$5$ & $12.96617880802884537495$ & $2.97 \times 10^{-450}$ & $15$ & $33.03713287950650360192$ & $8.71 \times 10^{-450}$ \\
$6$ & $15.18148087588821673539$ & $3.23 \times 10^{-450}$ & $16$ & $35.02737848501270258165$ & $5.90 \times 10^{-450}$ \\
$7$ & $16.63263327452376212317$ & $3.62 \times 10^{-450}$ & $17$ & $35.77804457652351027241$ & $8.68 \times 10^{-450}$ \\
$8$ & $18.88436945712065080816$ & $4.96 \times 10^{-450}$ & $18$ & $37.92681682137521283528$ & $4.63 \times 10^{-450}$ \\
$9$ & $20.10392819124581857313$ & $3.22 \times 10^{-450}$ & $19$ & $38.97399882227494028953$ & $6.51 \times 10^{-450}$ \\
$10$ & $23.56131971313784471634$ & $6.15 \times 10^{-450}$ & $20$ & $40.48415475080402043184$ & $1.20 \times 10^{-449}$ \\

\end{longtable}
\normalsize

\medskip\noindent\textbf{$L(s,\chi_{13})$: field $\Q(\sqrt{13})$, conductor $q=13$.}\par\smallskip
\label{app:chi13-zeros}

$\chi_{13}$ is the unique primitive quadratic character of conductor $13$ (Legendre symbol
$\bigl(\tfrac{\cdot}{13}\bigr)$); $13 \equiv 1 \pmod{4}$ so the fundamental discriminant
equals the conductor.

\renewcommand{\arraystretch}{1.05}\footnotesize
\begin{longtable}{clc|clc}
\toprule
 & & \multicolumn{1}{c}{ARB Bound} &
 & & \multicolumn{1}{c}{ARB Bound} \\
$n$ & \multicolumn{1}{c}{$\gamma_n'$} & \multicolumn{1}{c}{on $|L(\tfrac{1}{2}+i\gamma_n')|$} &
$n$ & \multicolumn{1}{c}{$\gamma_n'$} & \multicolumn{1}{c}{on $|L(\tfrac{1}{2}+i\gamma_n')|$} \\
\midrule
\endfirsthead
\toprule
 & & \multicolumn{1}{c}{ARB Bound} &
 & & \multicolumn{1}{c}{ARB Bound} \\
$n$ & \multicolumn{1}{c}{$\gamma_n'$} & \multicolumn{1}{c}{on $|L(\tfrac{1}{2}+i\gamma_n')|$} &
$n$ & \multicolumn{1}{c}{$\gamma_n'$} & \multicolumn{1}{c}{on $|L(\tfrac{1}{2}+i\gamma_n')|$} \\
\midrule
\endhead
\bottomrule
\endlastfoot
$1$ & $3.11934147900860341390$ & $1.99 \times 10^{-450}$ & $11$ & $23.59202788517293715038$ & $1.59 \times 10^{-449}$ \\
$2$ & $7.23159073941876201502$ & $3.75 \times 10^{-450}$ & $12$ & $25.37170440577142621541$ & $1.49 \times 10^{-449}$ \\
$3$ & $8.62542663503259159734$ & $3.76 \times 10^{-450}$ & $13$ & $26.38431352575075899278$ & $1.08 \times 10^{-449}$ \\
$4$ & $10.33642072623153902983$ & $4.67 \times 10^{-450}$ & $14$ & $27.64980842472696812713$ & $3.77 \times 10^{-449}$ \\
$5$ & $12.61701279102317873036$ & $6.95 \times 10^{-450}$ & $15$ & $29.39145603391187839555$ & $1.01 \times 10^{-449}$ \\
$6$ & $15.14833241700574422460$ & $7.72 \times 10^{-450}$ & $16$ & $31.01874209224480391870$ & $1.61 \times 10^{-449}$ \\
$7$ & $16.27482605749858841789$ & $1.03 \times 10^{-449}$ & $17$ & $32.42341728496373366098$ & $1.85 \times 10^{-449}$ \\
$8$ & $18.75125235623423291645$ & $1.01 \times 10^{-449}$ & $18$ & $34.46644233774365394408$ & $1.97 \times 10^{-449}$ \\
$9$ & $19.54804144334703986992$ & $7.62 \times 10^{-450}$ & $19$ & $35.76319303421611596021$ & $1.89 \times 10^{-449}$ \\
$10$ & $20.95918191740503118143$ & $1.78 \times 10^{-449}$ & $20$ & $36.72754705153707320059$ & $1.52 \times 10^{-449}$ \\

\end{longtable}
\normalsize

\medskip\noindent\textbf{$L(s,\chi_6)$: field $\Q(\sqrt{6})$, conductor $q=24$.}\par\smallskip
\label{app:chi24-zeros}

The fundamental discriminant of $\Q(\sqrt{6})$ is $\Delta_K = 24$, since $6 \equiv 2 \pmod{4}$.

\renewcommand{\arraystretch}{1.05}\footnotesize
\begin{longtable}{clc|clc}
\toprule
 & & \multicolumn{1}{c}{ARB Bound} &
 & & \multicolumn{1}{c}{ARB Bound} \\
$n$ & \multicolumn{1}{c}{$\gamma_n'$} & \multicolumn{1}{c}{on $|L(\tfrac{1}{2}+i\gamma_n')|$} &
$n$ & \multicolumn{1}{c}{$\gamma_n'$} & \multicolumn{1}{c}{on $|L(\tfrac{1}{2}+i\gamma_n')|$} \\
\midrule
\endfirsthead
\toprule
 & & \multicolumn{1}{c}{ARB Bound} &
 & & \multicolumn{1}{c}{ARB Bound} \\
$n$ & \multicolumn{1}{c}{$\gamma_n'$} & \multicolumn{1}{c}{on $|L(\tfrac{1}{2}+i\gamma_n')|$} &
$n$ & \multicolumn{1}{c}{$\gamma_n'$} & \multicolumn{1}{c}{on $|L(\tfrac{1}{2}+i\gamma_n')|$} \\
\midrule
\endhead
\bottomrule
\endlastfoot
$1$ & $2.68865813246759758667$ & $1.04 \times 10^{-450}$ & $11$ & $21.55547824218793740139$ & $7.57 \times 10^{-450}$ \\
$2$ & $5.29243117677719815159$ & $2.22 \times 10^{-450}$ & $12$ & $22.66785732205079621070$ & $7.90 \times 10^{-450}$ \\
$3$ & $9.22463743993556115081$ & $2.96 \times 10^{-450}$ & $13$ & $24.40914964223159369359$ & $9.44 \times 10^{-450}$ \\
$4$ & $10.44572151841292482301$ & $3.58 \times 10^{-450}$ & $14$ & $25.67129733212202220561$ & $5.62 \times 10^{-450}$ \\
$5$ & $12.63115371816651058549$ & $4.36 \times 10^{-450}$ & $15$ & $26.75610741237823107923$ & $6.79 \times 10^{-450}$ \\
$6$ & $13.77715270577349586517$ & $3.79 \times 10^{-450}$ & $16$ & $28.86856257427073136901$ & $7.97 \times 10^{-450}$ \\
$7$ & $15.60365697037025378285$ & $5.64 \times 10^{-450}$ & $17$ & $29.21969083370577778290$ & $5.98 \times 10^{-450}$ \\
$8$ & $17.09898808691940348689$ & $7.49 \times 10^{-450}$ & $18$ & $31.08725180031803397862$ & $9.52 \times 10^{-450}$ \\
$9$ & $18.46329885012140044080$ & $7.71 \times 10^{-450}$ & $19$ & $32.42353686755135030793$ & $8.63 \times 10^{-450}$ \\
$10$ & $20.03461157675018829472$ & $6.41 \times 10^{-450}$ & $20$ & $33.49155789395814591322$ & $1.20 \times 10^{-449}$ \\

\end{longtable}
\normalsize

\medskip\noindent\textbf{$L(s,\chi_7)$: field $\Q(\sqrt{7})$, conductor $q=28$.}\par\smallskip
\label{app:chi28-zeros}

The fundamental discriminant of $\Q(\sqrt{7})$ is $\Delta_K = 28$, since $7 \equiv 3 \pmod{4}$.

\renewcommand{\arraystretch}{1.05}\footnotesize
\begin{longtable}{clc|clc}
\toprule
 & & \multicolumn{1}{c}{ARB Bound} &
 & & \multicolumn{1}{c}{ARB Bound} \\
$n$ & \multicolumn{1}{c}{$\gamma_n'$} & \multicolumn{1}{c}{on $|L(\tfrac{1}{2}+i\gamma_n')|$} &
$n$ & \multicolumn{1}{c}{$\gamma_n'$} & \multicolumn{1}{c}{on $|L(\tfrac{1}{2}+i\gamma_n')|$} \\
\midrule
\endfirsthead
\toprule
 & & \multicolumn{1}{c}{ARB Bound} &
 & & \multicolumn{1}{c}{ARB Bound} \\
$n$ & \multicolumn{1}{c}{$\gamma_n'$} & \multicolumn{1}{c}{on $|L(\tfrac{1}{2}+i\gamma_n')|$} &
$n$ & \multicolumn{1}{c}{$\gamma_n'$} & \multicolumn{1}{c}{on $|L(\tfrac{1}{2}+i\gamma_n')|$} \\
\midrule
\endhead
\bottomrule
\endlastfoot
$1$ & $2.77728324207659233094$ & $1.12 \times 10^{-450}$ & $11$ & $20.39328285450433247152$ & $7.78 \times 10^{-450}$ \\
$2$ & $4.52677408701547610928$ & $2.48 \times 10^{-450}$ & $12$ & $21.98889814760568313346$ & $8.12 \times 10^{-450}$ \\
$3$ & $7.29088506700227937478$ & $3.27 \times 10^{-450}$ & $13$ & $24.73886680489813285047$ & $9.06 \times 10^{-450}$ \\
$4$ & $8.39512407283755124263$ & $3.62 \times 10^{-450}$ & $14$ & $26.38179066809542195016$ & $9.15 \times 10^{-450}$ \\
$5$ & $10.11630685473129779109$ & $4.04 \times 10^{-450}$ & $15$ & $27.02957442259405572955$ & $1.15 \times 10^{-449}$ \\
$6$ & $12.02206203818595380135$ & $6.06 \times 10^{-450}$ & $16$ & $28.72073200609003118776$ & $8.03 \times 10^{-450}$ \\
$7$ & $13.54428637182978741492$ & $5.95 \times 10^{-450}$ & $17$ & $30.39449098988413064026$ & $1.06 \times 10^{-449}$ \\
$8$ & $16.01626147715059280892$ & $5.90 \times 10^{-450}$ & $18$ & $31.36586297616383901440$ & $1.13 \times 10^{-449}$ \\
$9$ & $18.20042527549262445931$ & $9.95 \times 10^{-450}$ & $19$ & $32.18417572788333303375$ & $1.43 \times 10^{-449}$ \\
$10$ & $19.53861609989941145162$ & $8.33 \times 10^{-450}$ & $20$ & $33.84959183673288697276$ & $1.12 \times 10^{-449}$ \\

\end{longtable}
\normalsize

\medskip\noindent\textbf{$L(s,\chi_{10})$: field $\Q(\sqrt{10})$, conductor $q=40$.}\par\smallskip
\label{app:chi40-zeros}

The fundamental discriminant of $\Q(\sqrt{10})$ is $\Delta_K = 40$, since $10 \equiv 2 \pmod{4}$.

\renewcommand{\arraystretch}{1.05}\footnotesize
\begin{longtable}{clc|clc}
\toprule
 & & \multicolumn{1}{c}{ARB Bound} &
 & & \multicolumn{1}{c}{ARB Bound} \\
$n$ & \multicolumn{1}{c}{$\gamma_n'$} & \multicolumn{1}{c}{on $|L(\tfrac{1}{2}+i\gamma_n')|$} &
$n$ & \multicolumn{1}{c}{$\gamma_n'$} & \multicolumn{1}{c}{on $|L(\tfrac{1}{2}+i\gamma_n')|$} \\
\midrule
\endfirsthead
\toprule
 & & \multicolumn{1}{c}{ARB Bound} &
 & & \multicolumn{1}{c}{ARB Bound} \\
$n$ & \multicolumn{1}{c}{$\gamma_n'$} & \multicolumn{1}{c}{on $|L(\tfrac{1}{2}+i\gamma_n')|$} &
$n$ & \multicolumn{1}{c}{$\gamma_n'$} & \multicolumn{1}{c}{on $|L(\tfrac{1}{2}+i\gamma_n')|$} \\
\midrule
\endhead
\bottomrule
\endlastfoot
$1$ & $2.48821020930540853564$ & $1.31 \times 10^{-450}$ & $11$ & $19.28135808746821972149$ & $9.43 \times 10^{-450}$ \\
$2$ & $4.03159701418841483370$ & $2.20 \times 10^{-450}$ & $12$ & $20.40160148601184344649$ & $6.53 \times 10^{-450}$ \\
$3$ & $6.33545319091356197519$ & $3.46 \times 10^{-450}$ & $13$ & $21.56472815482081368499$ & $1.17 \times 10^{-449}$ \\
$4$ & $7.98779634495761720535$ & $4.61 \times 10^{-450}$ & $14$ & $23.08868709418032385846$ & $1.18 \times 10^{-449}$ \\
$5$ & $10.63827697953891807504$ & $4.03 \times 10^{-450}$ & $15$ & $24.43655444208668619202$ & $9.45 \times 10^{-450}$ \\
$6$ & $12.81517863613023536936$ & $5.16 \times 10^{-450}$ & $16$ & $25.87097298765072715474$ & $9.39 \times 10^{-450}$ \\
$7$ & $13.60430796560710377807$ & $4.11 \times 10^{-450}$ & $17$ & $26.26140271050477788696$ & $1.07 \times 10^{-449}$ \\
$8$ & $15.27147288785153168831$ & $5.54 \times 10^{-450}$ & $18$ & $27.96504990212936504067$ & $1.43 \times 10^{-449}$ \\
$9$ & $16.16663433364409385216$ & $7.44 \times 10^{-450}$ & $19$ & $29.27531093215095949777$ & $1.04 \times 10^{-449}$ \\
$10$ & $18.18218004680212107005$ & $1.05 \times 10^{-449}$ & $20$ & $30.53574562468521890003$ & $1.38 \times 10^{-449}$ \\

\end{longtable}
\normalsize

\medskip\noindent\textbf{$L(s,\chi_{11})$: field $\Q(\sqrt{11})$, conductor $q=44$.}\par\smallskip
\label{app:chi44-zeros}

The fundamental discriminant of $\Q(\sqrt{11})$ is $\Delta_K = 44$, since $11 \equiv 3 \pmod{4}$.

\renewcommand{\arraystretch}{1.05}\footnotesize
\begin{longtable}{clc|clc}
\toprule
 & & \multicolumn{1}{c}{ARB Bound} &
 & & \multicolumn{1}{c}{ARB Bound} \\
$n$ & \multicolumn{1}{c}{$\gamma_n'$} & \multicolumn{1}{c}{on $|L(\tfrac{1}{2}+i\gamma_n')|$} &
$n$ & \multicolumn{1}{c}{$\gamma_n'$} & \multicolumn{1}{c}{on $|L(\tfrac{1}{2}+i\gamma_n')|$} \\
\midrule
\endfirsthead
\toprule
 & & \multicolumn{1}{c}{ARB Bound} &
 & & \multicolumn{1}{c}{ARB Bound} \\
$n$ & \multicolumn{1}{c}{$\gamma_n'$} & \multicolumn{1}{c}{on $|L(\tfrac{1}{2}+i\gamma_n')|$} &
$n$ & \multicolumn{1}{c}{$\gamma_n'$} & \multicolumn{1}{c}{on $|L(\tfrac{1}{2}+i\gamma_n')|$} \\
\midrule
\endhead
\bottomrule
\endlastfoot
$1$ & $1.86993927318275168340$ & $2.18 \times 10^{-450}$ & $11$ & $17.61972287425101875532$ & $9.14 \times 10^{-450}$ \\
$2$ & $4.70920812840911269661$ & $3.30 \times 10^{-450}$ & $12$ & $18.26903309787599031650$ & $7.78 \times 10^{-450}$ \\
$3$ & $5.81981644686087148920$ & $2.59 \times 10^{-450}$ & $13$ & $20.30638125042707154146$ & $1.11 \times 10^{-449}$ \\
$4$ & $7.38790752248483336704$ & $4.33 \times 10^{-450}$ & $14$ & $21.66563972555785204780$ & $8.68 \times 10^{-450}$ \\
$5$ & $9.44566152389131881369$ & $5.59 \times 10^{-450}$ & $15$ & $22.32883476081719799605$ & $9.52 \times 10^{-450}$ \\
$6$ & $10.74074979750747559997$ & $4.47 \times 10^{-450}$ & $16$ & $24.04471027020575246774$ & $1.10 \times 10^{-449}$ \\
$7$ & $11.95637372774070410568$ & $5.55 \times 10^{-450}$ & $17$ & $24.68148321849740900845$ & $1.19 \times 10^{-449}$ \\
$8$ & $13.41643453051203828157$ & $7.98 \times 10^{-450}$ & $18$ & $26.44242345116378413033$ & $1.35 \times 10^{-449}$ \\
$9$ & $14.83198533873399906847$ & $1.06 \times 10^{-449}$ & $19$ & $27.60237248544315232420$ & $1.44 \times 10^{-449}$ \\
$10$ & $16.51922470349364623489$ & $7.80 \times 10^{-450}$ & $20$ & $28.77893942445047647072$ & $1.59 \times 10^{-449}$ \\

\end{longtable}
\normalsize


\section{Riemann Zeta Zeros (20 Decimal Places)}\label{app:zeta-zeros}

The first 60 positive zero ordinates of $\zeta(s)$, to 20 decimal places.  These values are used in the Besicovitch constants (Section~\ref{sec:cesaro}), the spacelike verification (Table~\ref{tab:weights}), and the cross-function disjointness check (Proposition~\ref{prop:verified_indep}).  The certification of $S_\zeta^*$ (\S\ref{subsec:cert-Szeta}) uses the first 6000 zeros from the LMFDB~\cite{LMFDB}; this is the only external data dependency in the paper.

\renewcommand{\arraystretch}{1.05}\footnotesize
\setlength{\LTleft}{\fill}
\setlength{\LTright}{\fill}
\begin{longtable}{cl|cl|cl}
\toprule
$n$ & \multicolumn{1}{c}{$\gamma_n$} &
$n$ & \multicolumn{1}{c}{$\gamma_n$} &
$n$ & \multicolumn{1}{c}{$\gamma_n$} \\
\midrule
\endfirsthead
\toprule
$n$ & \multicolumn{1}{c}{$\gamma_n$} &
$n$ & \multicolumn{1}{c}{$\gamma_n$} &
$n$ & \multicolumn{1}{c}{$\gamma_n$} \\
\midrule
\endhead
\bottomrule
\endlastfoot
$1$ & $14.13472514173469379045$ & $21$ & $79.33737502024936792276$ & $41$ & $124.25681855434576718473$ \\
$2$ & $21.02203963877155499262$ & $22$ & $82.91038085408603018316$ & $42$ & $127.51668387959649512427$ \\
$3$ & $25.01085758014568876321$ & $23$ & $84.73549298051705010573$ & $43$ & $129.57870419995605098576$ \\
$4$ & $30.42487612585951321031$ & $24$ & $87.42527461312522940653$ & $44$ & $131.08768853093265672356$ \\
$5$ & $32.93506158773918969066$ & $25$ & $88.80911120763446542368$ & $45$ & $133.49773720299758645013$ \\
$6$ & $37.58617815882567125721$ & $26$ & $92.49189927055848429625$ & $46$ & $134.75650975337387133132$ \\
$7$ & $40.91871901214749518739$ & $27$ & $94.65134404051988696659$ & $47$ & $138.11604205453344320019$ \\
$8$ & $43.32707328091499951949$ & $28$ & $95.87063422824530975874$ & $48$ & $139.73620895212138895045$ \\
$9$ & $48.00515088116715972794$ & $29$ & $98.83119421819369223332$ & $49$ & $141.12370740402112376194$ \\
$10$ & $49.77383247767230218191$ & $30$ & $101.31785100573139122878$ & $50$ & $143.11184580762063273940$ \\
$11$ & $52.97032147771446064414$ & $31$ & $103.72553804047833941639$ & $51$ & $146.00098248676551854740$ \\
$12$ & $56.44624769706339480436$ & $32$ & $105.44662305232609449367$ & $52$ & $147.42276534255960204952$ \\
$13$ & $59.34704400260235307965$ & $33$ & $107.16861118427640751512$ & $53$ & $150.05352042078488035143$ \\
$14$ & $60.83177852460980984425$ & $34$ & $111.02953554316967452465$ & $54$ & $150.92525761224146676185$ \\
$15$ & $65.11254404808160666087$ & $35$ & $111.87465917699263708561$ & $55$ & $153.02469381119889619825$ \\
$16$ & $67.07981052949417371447$ & $36$ & $114.32022091545271276589$ & $56$ & $156.11290929423786756975$ \\
$17$ & $69.54640171117397925292$ & $37$ & $116.22668032085755438216$ & $57$ & $157.59759181759405988753$ \\
$18$ & $72.06715767448190758252$ & $38$ & $118.79078286597621732297$ & $58$ & $158.84998817142049872417$ \\
$19$ & $75.70469069908393316832$ & $39$ & $121.37012500242064591894$ & $59$ & $161.18896413759602751943$ \\
$20$ & $77.14484006887480537268$ & $40$ & $122.94682929355258820081$ & $60$ & $163.03070968718198724331$ \\

\end{longtable}
\normalsize


\section{ARB Certification of Zero Data}\label{app:certification}

All 340 zeros tabulated in Appendices~\ref{app:zero-data} and~\ref{app:chi5-highprec} were rigorously certified using the methodology of \S\ref{subsec:software}--\ref{subsec:Lfunc-method}.  Working precision was at least 1500~bits for all characters.  The following table summarizes the certified upper bounds:

\renewcommand{\arraystretch}{1.15}
\small
\setlength{\LTleft}{\fill}
\setlength{\LTright}{\fill}
\begin{longtable}{llcc}
\toprule
Character & Field & Zeros & Certified upper bound \\
\midrule
\endhead
$\chi_5$   & $\Q(\sqrt{5})$   & 200 & $< 10^{-448}$ \\
$\chi_2$   & $\Q(\sqrt{2})$   &  20 & $< 10^{-451}$ \\
$\chi_3$   & $\Q(\sqrt{3})$   &  20 & $< 10^{-449}$ \\
$\chi_{13}$ & $\Q(\sqrt{13})$ &  20 & $< 10^{-449}$ \\
$\chi_6$   & $\Q(\sqrt{6})$   &  20 & $< 10^{-449}$ \\
$\chi_7$   & $\Q(\sqrt{7})$   &  20 & $< 10^{-449}$ \\
$\chi_{10}$ & $\Q(\sqrt{10})$ &  20 & $< 10^{-449}$ \\
$\chi_{11}$ & $\Q(\sqrt{11})$ &  20 & $< 10^{-449}$ \\
\midrule
Total & & 340 & \\
\bottomrule
\end{longtable}
\setlength{\LTleft}{0pt}
\setlength{\LTright}{0pt}
\normalsize
\noindent All bounds remain far below any threshold required by the inequalities established in this paper.


\section{Tables of Zeros of Quadratic Dirichlet \texorpdfstring{$L$}{L}-Functions of Small Conductor}\label{app:full-zero-tables}

Due to its length (approximately 186 pages), this appendix is archived separately on Zenodo and GitHub: \textit{Appendix F: Tables of Zeros of Quadratic Dirichlet $L$-Functions of Small Conductor.}

\medskip
\noindent Available at \url{https://doi.org/10.5281/zenodo.18783098}.

\noindent Available at \url{https://github.com/PeterShiller/Persistent_Heuristics_Part_I}.

\medskip
\noindent The companion document tabulates between 1004 and 1044 zeros of $L(s,\chi_d)$ to 70 decimal places for each of the eight quadratic characters $\chi_2$, $\chi_3$, $\chi_5$, $\chi_6$, $\chi_7$, $\chi_{10}$, $\chi_{11}$, and $\chi_{13}$ (8244 zeros in total).  All zeros were computed using the methodology of \S\ref{subsec:Lfunc-method} at a minimum of 1500-bit working precision and rigorously certified using ARB interval arithmetic (\S\ref{subsec:software}).  Certified upper bounds on $|L(\tfrac{1}{2}+i\gamma_k',\chi_d)|$ are provided for every zero, with all bounds below $10^{-409}$.  This dataset extends and supersedes the tables in Appendices~\ref{app:chi5-highprec} and~\ref{app:zero-data} of the present paper.

\end{document}